%%Marco M., PRIN 2009, footnote 1, sublaplacian, Mihlin--H, Taylor,
%%[MauMe], Luck & Lott??,
%%weak 1,1?, core?, 

\documentclass[11pt]{amsart}

\usepackage{amssymb,latexsym}
\usepackage{amsmath,amscd}
\usepackage{graphicx,epsfig}
\usepackage{color}
\usepackage{verbatim}
\usepackage[justification=centering]{caption}% or e.g. [format=hang]

\usepackage{pdfsync} 

\usepackage{enumerate}

\def \R {\mathbb R}
\def \C {\mathbb C}
\def \N {\mathbb N}

\def \N {\mathbb N}
\def \F {\mathcal F}

\def \Ri {\mathcal R}
\def \S {\mathcal S}
\def \E {\mathcal E}
\def \A {\mathcal A}
\def \L {\mathcal L}
\def \P {\mathcal P}

\def \cC {\mathcal C}  
\def \cH {\mathcal H}  
\def \D {\mathcal D}  
\def \V {\mathcal V}
\def \cV {\mathcal V}
\def \W {\mathcal W}

\def \h {\mathfrak h}

\def \RE {\Re\text{\rm e}\,}
\def \IM {\Im\text{\rm m}\,}
\def \al {\alpha}
\def \ga {\gamma}
\def \la {\lambda}

\def \del {\delta}
\def \eps {\varepsilon}
\def \om {\omega}
\def \lan {\langle}
\def \ran {\rangle}
\def \de {\partial} 
\def \Box {\square}
\def \Boxbar {\overline{\square}}
\def \barBox {\overline\square}

\def \rtrans{\!{}^r}

\def \half{\frac12}

\def \inv{^{-1}}

\def \dom {\text{\rm dom\,}}

\def \span {\text{\rm span\,}}
\def \sgn {\text{\rm sgn\,}}
\def \range {\text{\rm ran\,}}
\def \tr {\text{\rm tr\,}}
\def \id {I}
\def \ex {\text{\rm -ex}}
\def \cl {\text{\rm -cl}}

%%%%Added by Marco 

%%%Heisenberg fan

%%%%%to denote what we used to call A:
\def\cD{{\mathcal D}} 
\def\cE{{\mathcal E}}   
\def\cA{{\mathcal A}}

\def\bpm{\begin{pmatrix}} 
\def\epm{\end{pmatrix}}
\def\Piu{+}
\def\Meno{-}
%%%%%End Marco's added ones.

\def\bee{\begin{enumerate}}
\def\ee{\end{enumerate}}

\def\qed{\smallskip\hfill Q.E.D.\medskip}

%%%%
\def\osum{\mathop{{\sum}^\oplus}}

\textwidth16cm 
\textheight20cm 
\evensidemargin.2cm
\oddsidemargin.2cm

\addtolength{\headheight}{3.2pt}    %% leave room for symbol in header

\newtheorem{thm}{Theorem}[section]
\newtheorem{prop}[thm]{Proposition}
\newtheorem{cor}[thm]{Corollary}
\newtheorem{lemma}[thm]{Lemma}
\newtheorem{remark}[thm]{Remark}
\newtheorem{defn}[thm]{Definition}

\begin{document}

\title [Hodge Laplacian on the Heisenberg group]{Analysis of the Hodge
  Laplacian on the Heisenberg group}

\author[D. M\"uller]{Detlef M\"uller}
\address{Mathematisches Seminar, Christian-Albrechts-Universit\"at zu Kiel,
Ludewig-Meyn-Stra\ss{}e 4, D-24098 Kiel, Germany} 
\email{{\tt mueller@math.uni-kiel.de}}
\urladdr{{http://analysis.math.uni-kiel.de/mueller/}}

\author[M. M. Peloso]{Marco M. Peloso}
\address{Dipartimento di Matematica, Universit\`a degli Studi di Milano,
Via C. Saldini 50,
20133 Milano, Italy } 
\email{{\tt marco.peloso@unimi.it}}
\urladdr{{http://users.mat.unimi.it/~peloso/}}

\author[F. Ricci]{Fulvio Ricci}
\address{Scuola Normale Superiore, Piazza dei Cavalieri
7, 56126 Pisa, Italy } 
\email{{\tt fricci@sns.it}}
\urladdr{{http://www.math.sns.it/HomePages/Ricci/}}

\thanks{2000 {\em Mathematical Subject Classification.}
43A80, 42B15}
\thanks{{\em Key words and phrases.}
Hodge Laplacian, Heisenberg group, spectral multiplier}

\thanks{This work has been supported by the IHP network HARP ``Harmonic  
Analysis and Related problems'' of the European Union and by the project PRIN 2007
 ``Analisi Armonica''.
Part of this work was done during a visit at the Centro De Giorgi,
Pisa.  We thank this institution for the generous hospitality and the 
support provided.}

\begin{abstract}
We consider the Hodge Laplacian $\Delta$ on  the
Heisenberg group $H_n$, endowed with a left-invariant and
$U(n)$-invariant Riemannian metric.  For $0\le k\le 2n+1$, let $\Delta_k$ denote the Hodge
Laplacian restricted to $k$-forms.  

Our first main result shows 
that $L^2\Lambda^k(H_n)$ 
decomposes
into finitely many mutually orthogonal
subspaces $\V_\nu$ with the properties: 
\begin{itemize}
\item $\dom \Delta_k$ splits along the $\V_\nu$'s as $\sum_\nu(\dom\Delta_k\cap \V_\nu)$;
\item $\Delta_k:(\dom\Delta_k\cap \V_\nu)\longrightarrow \V_\nu$ for every $\nu$;
\item for each $\nu$, there is a Hilbert space $\cH_\nu$ of
  $L^2$-sections of a $U(n)$-homogeneous vector bundle over $H_n$ such
  that the restriction of $\Delta_k$ to $\V_\nu$ is unitarily
  equivalent to an explicit scalar operator. 
\end{itemize}

Next, we consider $L^p\Lambda^k$, $1<p<\infty$, and
prove that the same kind of decomposition holds true.
More precisely we show that:
\begin{itemize}
\item the Riesz transforms $d\Delta_k^{-\half}$ are $L^p$-bounded;
\item  the orthogonal projection onto $\cV_\nu$ extends
from $(L^2\cap L^p)\Lambda^k$ to  a bounded operator from
  $L^p\Lambda^k$ to the the $L^p$-closure $\cV_\nu^p$ of $\cV_\nu\cap
  L^p\Lambda^k$. 
\end{itemize}
  
We then use this decomposition to prove a Mihlin--H\"ormander multiplier theorem for each
$\Delta_k$.  We show that the operator $m(\Delta_k)$ is bounded on
$L^p\Lambda^k(H_n)$ for all $p\in(1,\infty)$ and all $k=0,\dots,2n+1$, provided $m$  satisfies a Mihlin--H\"ormander condition of order 
$\rho>(2n+1)/2$.
We also prove that this restriction on $\rho$ is optimal and extend this result to the Dirac operator.
\end{abstract} 

\maketitle

%\vfill\newpage
\makeatletter
\renewcommand\l@subsection{\@tocline{2}{0pt}{3pc}{5pc}{}}
\makeatother
\tableofcontents

\thispagestyle{empty}

\setcounter{equation}{0}

%\color{blue}
\section*{Introduction}

The theory of the Hodge Laplacian $\Delta$ on a complete Riemannian
manifold $M$ shows deep connections between 
geometry, topology and analysis
on $M$. While this theory is well developed in the case of
functions, i.e., for the  Laplace--Beltrami operator, much less is
known for forms of higher degree on a non-compact manifold. In
particular, one basic question that one would like to answer is
whether the Riesz transform $d\Delta^{-\half}$ is $L^p$-bounded in the
range $1<p<\infty$. According to \cite{Str}, this property is relevant
for establishing the Hodge decomposition in $L^p$ for differential
forms, cf. \cite{ACDH, Li, Loh} and references therein.

In a similar way, functional calculus on self-adjoint, left-invariant
Laplacians and sublaplacians $L$ on Lie groups and more general
manifolds has been widely studied, cf. \cite{A, Ank, AnkLoh, Ch, CM, ClS, CKS, Hebish, HZ, H,Mar, LuM, LuMS, MM, 
  MS, Sik, Taylor}. A key
question concerns the possibility that, for a given  $L$, a
Mihlin--H\"ormander condition of finite order on the multiplier
$m(\la)$ implies that the operator $m(L)$ is bounded on $L^p$ for
$1<p<\infty$. A second fundamental question is the $L^p$-boundedness,
in the same range of $p$, of the Riesz trasforms $XL^{-\half}$ for
appropriate  left-invariant vector fields $X$ \cite{CD, CMZ, GS, Loh2, LohMu}. 

Also in these situations, not much is known for operators which act on
sections of some homogeneous linear bundle over a given group. The
most notable case is that of sublaplacians associated to  the
$\bar\de_b$-complex on homogeneous CR-manifold \cite{CKS, FS} 
\medskip

In this  paper we  consider the Hodge Laplacian $\Delta$ on  the
Heisenberg group $H_n$, endowed with a left-invariant and
$U(n)$-invariant Riemannian metric, and give answers to the above
questions.   

The rich structure of the Heisenberg group makes it a natural model to
explore such questions in detail.  
First of all, it has a natural CR-structure, with a well-understood
Kohn Laplacian \cite{FS}, and nice interactions with the Riemannian
structure \cite{MPR}.

For  operators on $H_n$ which act on scalar-valued functions and are left-
and $U(n)$-invariant, the methods of Fourier analysis are quite handy
to study  spectral resolution, and sharp multiplier theorems for
differential operators of this kind are known \cite{MRS1, MRS2}.   
This class of operators is based on two commuting differential operators,
namely the {\it sublaplacian} $L$ and the {\it central derivative}
$T$, in the following sense: 
\begin{itemize}
\item[--] the left- and $U(n)$-invariant  differential operators on 
$H_n$ are the polynomials in $L$ and~$T$;
\item[--] the left- and $U(n)$-invariant self-adjoint operators on
  $L^2(H_n)$ containing the Schwartz space in their domain are the
  operators $m(L,i\inv T)$, with $m$ a real spectral multiplier. 
\end{itemize}

The same methods also allow to study operators acting on
  differential forms,  like  the Kohn Laplacian, which have the
  property of acting 
componentwise with respect to a canonical basis of left-invariant
forms, cf. \eqref{zeta-j-def}.

\smallskip

On the other hand, the Hodge Laplacian restricted to $k$-forms, which
we denote by  $\Delta_k$ and whose explicit expression is given in \eqref{1.12} below, is far from acting componentwise.

Nevertheless, we are able to reduce the spectral analysis of
$\Delta_k$  to that of a finite family of explicit
{\it scalar} operators. We call scalar an operator on some  space of
differential forms which can be expressed as $D\otimes I$, i.e., which
acts separately on each  scalar component of a given form by the same
operator $D$. 

We do so
by introducing a decomposition
of $L^2\Lambda^k(H_n)$ into finitely many mutually orthogonal
subspaces $\V_\nu$ with the following properties: 
\begin{enumerate}
\item[(i)] $\dom \Delta_k$ splits along the $\V_\nu$'s as $\sum_\nu(\dom\Delta_k\cap \V_\nu)$;
\item[(ii)] $\Delta_k:(\dom\Delta_k\cap \V_\nu)\longrightarrow \V_\nu$ for every $\nu$;
\item[(iii)] for each $\nu$, there is a Hilbert space $\cH_\nu$ of
  $L^2$-sections of a $U(n)$-homogeneous vector bundle over $H_n$ such
  that the restriction of $\Delta_k$ to $\V_\nu$ is unitarily
  equivalent to a scalar operator $m_\nu(L,i\inv T)$ acting
  componentwise on $\cH_\nu$; 
\item[(iv)] there exist unitary operators
  $U_\nu:\cH_\nu\longrightarrow \cV_\nu$ intertwining $m_\nu(L,i\inv
  T)$ and $\Delta_k$ which are either bounded multiplier operators
  $u_\nu(L,i\inv T)$, or compositions of such operators with  the
  Riesz transforms 
$$
R=d\Delta^{-\half}\ ,\qquad \Ri=\de\Box^{-\half}\ ,\qquad \bar\Ri=\bar\de\Boxbar^{-\half}\ .
$$
\end{enumerate}

This is done in the first part of the paper (Sections
\ref{cores}-\ref{Hodge}), and we refer to this part as to the
``$L^2$-theory''.  The main results in this context are Theorems \ref{dec-k-le-n}
and \ref{dec-k>n}, resp., where we obtain the decomposition of $L^2\Lambda^k$
into the  $\Delta_k$-invariant subspaces  $\V_\nu$, respectively for $0\le k\le n$ and $n+1\le
k\le 2n+1$.

This decomposition is fundamental for all the second
  part of the paper, which we are going to describe next. A quick
  description of the logic and the basic ideas in the 
construction of the $\V_\nu$ is postponed to the last part of this introduction.
\medskip

In Sections \ref{Lp}-\ref{appendix},  we
develop the ``$L^p$-theory''\footnote{There is an
  unfortunate notational conflict, due to the fact that the  letter
  $p$ is the commonly used symbol  for both Lebesgue spaces and
  bi-degrees of forms. In this introduction and in the titles of
  sections we keep the notation $L^p$, while in
the body of the paper we will
  denote by $L^r$ the generic Lebesgue space.}. 
We prove that, for $1<p<\infty$, the same kind of decomposition also
takes place  in $L^p\Lambda^k$. Precisely: 
\begin{enumerate}
\item[(a)] the intertwining operators $U_\nu$ in~(iv) have $L^p$-bounded extensions;
\item[(b)] consequently, the orthogonal projections $U_\nu^*U_\nu$
  from $L^2\Lambda^k$ to $\cV_\nu$ extend to bounded operators from
  $L^p\Lambda^k$ to the the $L^p$-closure $\cV_\nu^p$ of $\cV_\nu\cap
  L^p\Lambda^k$;
\item[(c)] the Riesz transforms $R_k=d\Delta_k^{-\half}$ are $L^p$-bounded;
\item[(d)] the $L^p$-strong Hodge decomposition holds true for
  $k=0,\dots,2n+1$, and more precisely $L^p\Lambda^k$ is direct sums
  of the subspaces $V_\nu^p$'s.
\end{enumerate}

 The (much simpler) case of 1-forms was already considered in \cite{MPR}.
We expect that our results can be applied to the study on conformal
invariants on quotients of $H_n$, along the lines of \cite{MPR2},
cf. \cite{L, Lu}.  
\medskip

In our last main result, as consequence of the $L^p$-theory we develop, 
we prove  a Mihlin--H\"ormander multiplier theorem for
$\Delta_k$, for all $k=0,\dots,2n+1$. 
 We show that, if  $m:\R\to \C$ is a bounded, continuous
function  satisfying a Mihlin--H\"ormander condition of order 
$\rho>(2n+1)/2$, then, for $1<p<\infty$,
the operator $m(\Delta_k)$ is bounded on
$L^p(H_n)\Lambda^k$, with norm bounded by the appropriate norm of
$m$  (cf. Theorem \ref{hodgethm}). 

\medskip

We briefly comment on some interesting aspects of the proof and on some
consequences and applications. It is always assumed that
$1<p<\infty$. 
\begin{itemize}
\item  Our inductive strategy requires that two statements be proved
  simultaneously at each step:  property~(a) above for the given $k$
  and $L^p$-boundedness of the Riesz transform
  $R_k=d\Delta_k^{-1/2}$. Precisely, the validity of (a) for a given
  $k$ implies $L^p$-boundedness of $R_k$, and this, in turn, is
  required to prove (a) for $k+1$. 
\item In order to handle the complicated expressions of the
  intertwining operators $U_\nu$, we identify certain symbol classes, denoted by $\Psi^{\rho,
    \sigma}_\tau$, which satisfy simple composition properties,
  contain all the scalar components of the $U_\nu$, and, when bounded,
  give $L^p$-bounded operators (cf. Subsection \ref{Psi-classes}).  
\item Taking as the initial definition of  ``exact $L^p$-form''  a
  form $\omega$ which is the $L^p$-limit  of a sequence of exact test
  forms (cf. Proposition \ref{s4.5} for $p=2$), we prove in Subsection
  \ref{subsec-exact} that this condition is equivalent to saying  that
  $\omega$ is in $L^p$ and  a differential in the sense of
  distributions. Incidentally, this allows to prove that the reduced
  $L^p$-cohomology of $H_n$ is trivial for every $k$. 
\item The Mihlin--H\"ormander theorem for spectral multipliers of
  $\Delta_k$, proved in \cite{MPR} for $k=1$, extends to every $k$. 
\item Our  analysis of $\Delta$ easily yields analogous results for
  the Dirac operator $d+d^*$. Studying the Hodge laplacian first has
  the advantage of isolating one order of forms at a time.  
Corollary \ref{diracthm} is a multiplier theorem  for the Dirac
operator completely analogous to Theorem \ref{hodgethm}. 
\end{itemize}
\medskip

\noindent{\it Outline of the decomposition of $L^2\Lambda^k$.}
\medskip

We go back now to the construction of the subspaces $\V_\nu$ of $L^2\Lambda^k(H_n)$. 
\medskip

First of all, by Hodge duality, we may restrict ourselves to form of degree $k\le n$. 
We start from the primary decomposition into exact and $d^*$-closed forms:
$$
L^2\Lambda^k(H_n)=(L^2\Lambda^k)_{d\ex}\oplus (L^2\Lambda^k)_{d^*\cl}\ ,
$$
where each summand is $\Delta_k$-invariant.

Since the Riesz transform $R_{k-1}=d\Delta_{k-1}^{-\half}$ commutes
with $\Delta$ and transforms $(L^2\Lambda^{k-1})_{d^*\cl}$ onto
$(L^2\Lambda^k)_{d\ex}$ unitarily, any $\Delta$-invariant subspace
$\V_\nu$ of  $(L^2\Lambda^{k-1})_{d^*\cl}$ has a twin
$\Delta$-invariant subspace $R_{k-1}\V_\nu$ inside
$(L^2\Lambda^k)_{d\ex}$. 

The analysis is so reduced to the space of $d^*$-closed
forms. Associated with the CR-structure of $H_n$, there is a natural
notion of {\it horizontal $(p,q)$-form} as a section of the bundle 
$$
\Lambda^{p,q}=\Lambda^{p,q}T^*_\C H_n\ ,
$$
and of {\it horizontal $k$-form} as a section of
$\Lambda_H^k=\osum_{\!\!\!\!p+q=k}\Lambda^{p,q}$. 

Every differential form $\omega$ decomposes uniquely as
$$
\om=\omega_1+\theta\wedge\omega_2\ ,
$$
where $\om_1,\om_2$ are horizontal and $\theta$ is the contact
form. Moreover, a $d^*$-closed form $\om$ is uniquely determined by
its horizontal component $\om_1$. 

From now on it is very convenient to introduce a special ``test
space''  $\S_0$, contained in the Schwartz space, together with
its corresponding spaces of forms, $\S_0\Lambda^k$,
$\S_0\Lambda^{p,q}$ etc., which are cores  for $\Delta_k$ and the
other self-adjoint operators that will appear.  

For forms in the core, we have enough flexibility to perform all the
required operations in a rather formal way, leaving the extensions to
$L^2$-closures for the very end. 
For instance, we can say that to every horizontal form $\om_1$ in the
core we can associate a ``vertical component'' $\theta\wedge\om_2$,
also in the core, to form a $d^*$-closed form
$\om_1+\theta\wedge\om_2$ in the core. 

Setting $\Phi(\om_1)=\om_1+\theta\wedge\om_2$, we can replace
$\Delta_k$ by the conjugated (but no longer differential) operator
$D_k=\Phi\inv\circ\Delta_k\circ\Phi$, which acts now  on the space of
horizontal $k$-forms in the core and globally defined. 

Here comes into play another invariance property of $\Delta_k$, which
is easily read as a property of $D_k$ and involves the horizontal
symplectic form $d\theta$. The following identity holds (cf. Lemma
\ref{D-e}) for a horizontal form $\om$ of degree $k$: 
\begin{equation}\label{conj-dtheta}
D_k(d\theta\wedge\om)=d\theta\wedge (D_{k-2}+n-k+1)\om\ .
\end{equation}

This brings in the Lefschetz decomposition of the space of horizontal
forms, as adapted in \cite{MPR} from the classical context of K\"ahler
manifolds \cite{W}. Denoting by $e(d\theta)$ the operator of exterior
multiplication by $d\theta$ and by $i(d\theta)$ its adjoint, it is
then natural to think of the core $\S_0\Lambda^{p,q}$ in the space of
horizontal $(p,q)$-forms as the direct sum 
$$
\S_0\Lambda^{p,q}=\sum_{j=0}^{\min\{p,q\}}e(d\theta)^j\ker i(d\theta)\ .
$$

Here each summand is $D_k$-invariant,  and the conjugation formula
\eqref{conj-dtheta} allows us to focus our attention on $\ker
i(d\theta)$. 

Nevertheless, $\ker i(d\theta)$ still is too big a space to allow a
reduction of $D_k$ to scalar operators. It is  however easy to
identify, for each pair $(p,q)$ with $p+q=k$, a proper $D_k$-invariant
subspace of $\ker i(d\theta)\cap \S_0\Lambda^{p,q}$, namely 
$$
W_0^{p,q}=\{\om\in \S_0\Lambda^{p,q}:\de^*\om=\bar\de^*\om=0\}\ .
$$

It turns out that $D_k$ acts as a scalar operator on $W_0^{p,q}$, so
that the $L^2$-closure $\V_0^{p,q}$ of $\Phi(W_0^{p,q}\big)$ will be
one of the spaces $\V_\nu$ we are looking for\footnote{$W_0^{p,q}$ is
  nontrivial, unless $p+q=n$ and $0<p<n$,  cf. Proposition \ref{non-triviality-lemma}.} 

Next, we take to the orthogonal complement of 
$$
W_0^k=\osum_{p+q=k}W_0^{p,q}
$$ 
in $\S_0\Lambda_H^k$. We have
$$
(W_0^k)^\perp=\big\{\de\xi+\bar\de\eta:\xi,\eta\in \S_0\Lambda_H^{k-1}\big\}\ ,
$$
and we can telescopically expand this splitting to obtain that
$$
\begin{aligned}
 \S_0\Lambda_H^k
&=W_0^k\oplus \big\{\de\xi+\bar\de\eta:\xi,\eta\in \S_0\Lambda_H^{k-1}\big\}\\
 &=W_0^k\oplus \big\{\de\xi+\bar\de\eta:\xi,\eta\in
 W_0^{k-1}\big\}\oplus 
\big\{\bar\de\de\xi+\de\bar\de\eta:\xi,\eta\in \S_0\Lambda_H^{k-2}\big\}\\
 &=W_0^k\oplus \big\{\de\xi+\bar\de\eta:\xi,\eta\in
 W_0^{k-1}\big\}\oplus 
\big\{\bar\de\de\xi+\de\bar\de\eta:\xi,\eta\in W_0^{k-2}\big\}\oplus\cdots\\
 &=\osum_{p+q=k}W_0^{p,q}\oplus 
\osum_{p+q=k-1}\big\{\de\xi+\bar\de\eta:\xi,\eta\in W_0^{p,q}\big\}\\
 &\phantom{=sum_{p+q=k}W_0^{p,q}}\oplus 
\osum_{p+q=k-2}\big\{\bar\de\de\xi+\de\bar\de\eta:\xi,\eta\in
W_0^{p,q}\big\}\oplus \cdots
 \end{aligned}
 $$

The subspaces  
$$
\begin{aligned}
W_1^{p,q}&=\big\{\de\xi+\bar\de\eta:\xi,\eta\in W_0^{p,q}\big\}\qquad (p+q=k-1)\\
W_2^{p,q}&=\big\{\bar\de\de\xi+\de\bar\de\eta:\xi,\eta\in W_0^{p,q}\big\}\qquad (p+q=k-2)\\
{\rm etc.}\ &
\end{aligned}
$$
generated in this way are $D_k$-invariant and mutually orthogonal. 

Matters are simplified by the fact that, for $j\ge1$,
$$
W_{j+2}^{p,q}=e(d\theta)W_j^{p,q}\ .
$$

So only $W_0^{p,q}$, $W_1^{p,q}$ and part of $W_2^{p,q}$ are contained in $\ker i(d\theta)$. Setting
$$
W^{p,q}_{1,\ell}=e(d\theta)^\ell W^{p,q}_1\ ,\qquad W^{p,q}_{2,\ell}=e(d\theta)^\ell W^{p,q}_2\ ,
$$
we obtain that
\begin{eqnarray*}
\S_0\Lambda^k_H&=&\osum_{p+q=k}W^{p,q}_0\oplus
\osum_{p+q+2\ell=k-1}W^{p,q}_{1,\ell}\oplus \osum_{ p+q+2\ell=k-2}W^{p,q}_{2,\ell}\ .
\end{eqnarray*}

On each $W^{p,q}_0$ and $W^{p,q}_2$, $D_k$ acts as a scalar operator, as required. 

The situation is not so simple with $W^{p,q}_1$, because the best one
can obtain is a representation of $D_k$ as a $2\times 2$ matrix of
scalar operators, after parametrizing the elements of $W^{p,q}_1$ with
pairs $(\xi,\eta)$ of forms in $W^{p,q}_0$: 
$$
 \begin{pmatrix}\xi'\\
   \eta'\end{pmatrix}=\begin{pmatrix}m_{11}(L,i\inv T)&m_{12}(L,i\inv
   T)\\m_{21}(L,i\inv T)&m_{22}(L,i\inv
   T)\end{pmatrix}\begin{pmatrix}\xi\\ \eta\end{pmatrix}\ . 
$$

A formal computation can be used on the core to produce
``eigenvalues'' $\la_\pm(L,i\inv T)$ and the splitting of $W^{p,q}_1$
as the sum of the two ``eigenspaces'' $W^{p,q,\pm}_1$. 

The final decomposition is in formula \eqref{eq-dec-k}.
\bigskip

\section{Differential forms and the Hodge Laplacian on $H_n$}\label{difforms}

The Heisenberg group $H_n$ is $\C^n\times\R$ with product
\begin{equation}\label{product}
(z,t)(z',t')=\Big(z+z',t+t'-\half\IM\lan z,z'\ran\Big)\ .
\end{equation}
 
On its Lie algebra, also identified with $\C^n\times\R$, we introduce
the standard Euclidean inner product, and we consider the
left-invariant Riemannian metric on $H_n$ induced by it. The complex
vector fields 
$$
Z_j=\sqrt 2\Big(\de_{z_j}-\frac i4\bar z_j\de_t\Big)\ ,\qquad \bar
Z_j=\sqrt 2\Big(\de_{\bar z_j}+\frac i4 z_j\de_t\Big)\ ,\qquad T=\de_t 
$$
(with $1\le j\le n$) form an orthonormal basis of the complexified
tangent space at each point, and the only nontrivial commutators
involving the basis elements are  
\begin{equation}\label{1.1}
[Z_j,\bar Z_j]=iT\ .
\end{equation}

The dual basis of complex 1-forms is
\begin{equation}\label{zeta-j-def}
\zeta_j=\frac1{\sqrt2}dz_j\ ,\qquad \bar\zeta_j=\frac1{\sqrt2}d\bar
z_j\ ,\qquad \theta=dt+\frac i4\sum_{j=1}^n (\bar z_jdz_j-z_jd\bar
z_j)\ . 
\end{equation}

The differential of a function $f$ is therefore
$$
df=\sum_{j=1}^n\big(Z_jf\zeta_j+\bar Z_jf\bar\zeta_j\big)+Tf\theta\ .
$$

This formula extends to forms, once we observe that $d\zeta_j=d\bar
\zeta_j=0$ and the differential of the contact form $\theta$ is the
symplectic form on $\C^n$, 
\begin{equation}\label{1.2}
d\theta=-i\sum_{j=1}^n\zeta_j\wedge\bar\zeta_j\ .
\end{equation}

A differential form $\om$ is {\it horizontal} if
$\theta\lrcorner\om=0$, i.e. if 
\begin{equation}\label{1.3}
\om=\sum_{I,I'}f_{I,I'}\zeta^I\wedge\bar\zeta^{I'}\ .
\end{equation}

Every form $\om$ decomposes as 
\begin{equation}\label{1.4}
\om=\om_1+\theta\wedge\om_2\ ,
\end{equation}
with $\om_1,\om_2$ horizontal.

A differential operator $D$ acting on scalar-valued functions is
extended to forms by  letting $D$ act separately on each scalar component
\eqref{1.3} of each horizontal component \eqref{1.4}.
Such operators will be called {\it scalar operators}.

The partial differentials $\de$, $\bar\de$, $d_H$  (resp. {\it
  holomorphic, antiholomorphic, horizontal} differential) of a form
$\om$ are defined as 
\begin{equation}\label{1.5}
\de\om=\sum_{j=1}^n \zeta_j\wedge Z_j\om\ ,\qquad
\bar\de\om=\sum_{j=1}^n \bar\zeta_j\wedge \bar Z_j\om\ ,\qquad
d_H\om=\de\om+\bar\de\om\ . 
\end{equation}

As in \cite{MPR}, Prop. 2.2, for $\om=f\zeta^I\wedge \bar\zeta^{I'}$,
\begin{equation}\label{de-om}
\de\om=\sum_{\ell,J}\eps^J_{\ell,I}(Z_\ell f)\zeta^J\wedge\bar\zeta^{I'}\ ,\qquad 
\bar\de\om=(-1)^{|I|}\sum_{\ell,J'}\eps^{J'}_{\ell,I'}(\bar Z_\ell f)\zeta^I\wedge\bar\zeta^{J'}\ ,
\end{equation}
where $\eps^J_{\ell,I}=0$ unless $\ell\not \in I$ and $\{\ell\} \cup I = J$, in which case
$$
\eps^J_{\ell,I}=\prod_{i\in I}\sgn(i-\ell)\ .
$$

Obviously, they act separately on each horizontal component
\eqref{1.4} of $\om$,  
 and the same is true for their adjoints
$\de^*,\bar\de^*, d_H^*$, where 
\begin{equation}\label{de^*-om}
\de^*\om=-\sum_{\ell,J}\eps^I_{\ell,J}(\bar Z_\ell f)\zeta^J\wedge\bar\zeta^{I'}\ ,\qquad 
\bar\de^*\om=(-1)^{|I|+1}\sum_{\ell,J'}\eps^{I'}_{\ell,J'} (Z_\ell f)\zeta^I\wedge\bar\zeta^{J'}\ .
\end{equation}

Moreover,
$\de^2=\bar\de^2={\de^*}^2={\mathop{\bar\de}^*}^2=0$.

Two operators that  will play a fundamental role in this paper are
\begin{equation}\label{edtheta}
e(d\theta)\om=d\theta\,\wedge\om
\qquad\text{and}\quad
i(d\theta)\om=e(d\theta)^*\om=d\theta\lrcorner \om\ .
\end{equation} 
Together with $\de$ and $\bar\de$, they satisfy the following
identities: 
\begin{equation}\label{1.6}
\aligned
&\de\bar\de+\bar\de\de=d_H^2=-Te(d\theta)\ ,\\
&\de^*\bar\de^*+\bar\de^*\de^*={d_H^*}^2=Ti(d\theta)\ ,
\endaligned
\end{equation}
and
\begin{equation}\label{1.6bis}
\de\bar\de^*=-\bar\de^*\de
\qquad\text{and}\quad
\de^*\bar\de=-\bar\de\de^* \ .
\end{equation}

Other formulas involving $\de,\bar\de,e(d\theta)$ and their adjoints
are 
\begin{equation}\label{1.7}
\aligned
&\big[i(d\theta),\de\big]
 = -i\bar\de^*\ ,\qquad & \big[i(d\theta),\bar\de\big]=i\de^*\ ,\\
& \big[\de^*,e(d\theta)\big]
= i\bar\de\ ,\qquad & \big[\bar\de^*,e(d\theta)\big]=-i\de \ ,
\endaligned
\end{equation}
\begin{equation}\label{1.7bis}
\big[i(d\theta),\de^*\big] =\big[i(d\theta),\bar\de^*\big]=0
= \big[e(d\theta),\de\big]=\big[e(d\theta),\bar\de\big]
\end{equation}
and
\begin{equation}\label{1.7tris}
 \big[i(d\theta),e(d\theta)\big]=(n-k)\id\ .
\end{equation}

For these formulas and the following in this section, we refer 
to \cite{MPR}.\footnote{Perhaps we should add a few more formulas,
  while moving them into a section, later in the paper.}

We define the {\it holomorphic, antiholomorphic and horizontal
  Laplacians} as 
\begin{equation}\label{1.8}
\aligned
\Box&=\de\de^*+\de^*\de\ ,\\
\Boxbar&=\bar\de\bar\de^*+\bar\de^*\bar\de\ ,\\
\Delta_H&=d_Hd_H^*+d_H^*d_H=\Box+\Boxbar\ .
\endaligned
\end{equation}

Each of these Laplacians acts componentwise.
Calling $(p,q)$-form a horizontal form of type
$$
\om=\sum_{|I|=p\,,\,|I'|=q}f_{I,I'}\zeta^I\wedge\bar\zeta^{I'}\ ,
$$
and introducing the {\it sublaplacian}
\begin{equation}\label{1.9}
L=-\sum_{j=1}^n (Z_j\bar Z_j+\bar Z_jZ_j)\ ,
\end{equation}
the operators $\Box,\Boxbar,\Delta_H$ coincide on $(p,q)$-forms with
the following  scalar operators: 
\begin{equation}\label{1.10}
\aligned
\Box&=\half L+i\Big(\frac n2-p\Big)T\ ,\\
\Boxbar&=\half L-i\Big(\frac n2-q\Big)T\ ,\\
\Delta_H&=L+i(q-p)T\ .
\endaligned
\end{equation}

To be more explicit, we shall occasionally denote the ``box''' operators by
$\Box_p$ and~$\Boxbar_q$.  Some commutation relations that we will use
are (see \cite{MPR})
\begin{equation}\label{commutations}
\aligned
&\Box\bar\de=\bar\de(\Box-iT) \, , \qquad\Box\bar\de^*=\bar\de^*(\Box+iT) \, ,
\\
&\Boxbar\de=\de(\Boxbar+iT)\, ,\qquad
\Boxbar\de^*=\de^*(\Boxbar-iT)\, .
\endaligned
\end{equation}

The full differential $d$ of a form $\om=\om_1+\theta\wedge\om_2$ and
its adjoint $d^*$ are represented, in terms of the pair
$(\om_1,\om_2)$, by the matrices 
\begin{equation}\label{1.11}
\bpm d_H&e(d\theta)\\ T&-d_H\epm\ ,\qquad 
d^*=\bpm d_H^*&-T\\ i(d\theta)&-d_H^*\epm\ ,
\end{equation}
\smallskip

and the {\it Hodge Laplacian} $\Delta=dd^*+d^*d$ by the matrix
\begin{equation}\label{1.12}
\aligned
\Delta&=\bpm\Delta_H-T^2+e(d\theta)i(d\theta)&\big[d_H^*,e(d\theta)\big]\\  \\
\big[i(d\theta),d_H\big]&\Delta_H-T^2+i(d\theta)e(d\theta)\epm\\ \\
&=\bpm\Delta_H-T^2+e(d\theta)i(d\theta)&i\bar\de-i\de\\  \\
i\de^*-i\bar\de^*&\Delta_H-T^2+i(d\theta)e(d\theta)\epm\ .
\endaligned
\end{equation}

\smallskip
When $\Delta$ acts on $k$-forms, it will be denoted by $\Delta_k$. In particular,
$$
\Delta_0=L-T^2\ .
$$

We denote by $\Lambda^k$ the $k$-th exterior product of the dual
$\h_n^*$ of the Lie algebra of $H_n$ (identified with the linear span
of $\zeta_1,\dots,\zeta_n,\bar\zeta_1,\dots,\bar\zeta_n,\theta$), by
$\Lambda^k_H$ the $k$-th exterior product of the horizontal
distribution (i.e. the linear span of the $\zeta_j,\bar\zeta_j$), and
by $\Lambda^{p,q}$ the space of elements of bidegree $(p,q)$ in
$\Lambda^k_H$. Symbols like $L^p\Lambda^k$, $\S\Lambda^{p,q}$ etc.,
denote the space of $L^p$-sections, $\S$-sections etc., of the
corresponding bundle over $H_n$. Clearly, $L^p\Lambda^k\cong
L^p\otimes\Lambda^k$ etc..

\bigskip

\setcounter{equation}{0}
\section{Bargmann representations and sections of homogeneous bundles}\label{bargmann}

The $L^2$-Fourier analysis on the Heisenberg group involves the family of infinite dimensional
irreducible unitary representations $\{\pi_\la\}_{\la\ne0}$ such that
$\pi_\la(0,t)=e^{i\la t}\id$. These representations are most
conveniently realized for our purposes in
a modified version of
 the Bargmann form \cite{Fo}.

Let
$\F=\F(\C^n)$ be the space of entire
functions $F$ on $\C^n$ such that 
$$
\|F\|_\F^2=\int_{\C^n}|F(w)|^2e^{-\half|w|^2}\,dw<\infty\ .
$$

 The family of Bargmann representations
 $\pi_\la$ on $\F$ is defined, for $\la\ne0$, as follows:
 \begin{enumerate}
 \item[\rm(i)] for $\la=1$,
 \begin{equation}\label{pi1}
 \big(\pi_1(z,t)F\big)(w)=e^{i t}e^{-\half\lan w,z\ran-\frac14|z|^2}F(w+z)\ .
 \end{equation}
 \item[\rm(ii)] For $\la>0$,
 \begin{equation}\label{la>0}
 \pi_\la(z,t)=\pi_1(\la^\half z,\la t) \ ;
\end{equation}
\item[\rm(iii)] for $\la<0$,
\begin{equation}\label{la<0}
 \pi_{\la}(z,t)=\pi_{-\la}(\bar z,-t)\ .
 \end{equation}
 \end{enumerate}

 The unitary group $U(n)$ acts on $H_n$ through the automorphisms
 $$
 (z,t)\longmapsto (z,t)^g=(gz,t)\ ,\qquad \big(g\in U(n)\big)\ ,
 $$
 and on $L^2(H_n)$ through the representation
 $$
\big( \al(g)f\big)(z,t)=f\big((z,t)^{g\inv}\big)\ .
 $$

We also consider the pair of contragradient representations $U,\bar
 U$ of $U(n)$ on $\F$, given by  

\begin{equation}\label{2.1}
U_gF=F\circ g\inv\ ,\qquad \bar U_g=U_{\bar g}\ .
\end{equation}

Then 
\begin{equation}\label{U-intertw}
 \aligned
 \pi_\la(gz,t)&=U_g\,\pi_\la(z,t)\,U_{g\inv}\ ,&\text{for }\la>0\ ,\\
 \pi_\la(gz,t)&=\bar U_g\,\pi_\la(z,t)\,\bar U_{g\inv}\ ,&\text{for }\la<0\ .
 \endaligned
\end{equation}

The representation $U$ in \eqref{2.1} splits into irreducibles
according to the decomposition of $\F$  

\begin{equation}\label{2.2}
\F=\sum_{j\ge0}\P_j\ ,
\end{equation}
where $\P_j$ denotes the space of homogeneous polynomials of degree
$j$. 

We denote by $P_j$  the orthogonal projection of $\F$ on
$\P_j$, and by
$\F^\infty$
 the space of functions $F\in\F$ such that 
\begin{equation}\label{garding}
\|P_jF\|_\F=o(j^{-N})\ ,\qquad\forall\,N\in\N\ .
\end{equation}

 Then $\F^\infty$ is the
space of $C^\infty$-vectors for all representations $\pi_\la$.

 The differential of $\pi_\la$ is given by\footnote{ Even though $d\pi_\la(D)$ is the more standard notation, we prefer to
reduce the number of $d$'s around.} $\pi_\la(T)=i\la$ and
\begin{equation}\label{pi(Z)}
\pi_\la(Z_\ell)=\begin{cases} \sqrt{2\la}\,\de_{w_\ell}&\text{ if }\la>0\\ -\sqrt{\frac{|\la|}2}\,w_\ell&\text{ if }\la<0\ ;
\end{cases}
\qquad 
\pi_\la(\bar Z_\ell)=\begin{cases}-\sqrt{\frac\la2}\,w_\ell&\text{ if }\la>0\\  \sqrt{2|\la|}\,\de_{w_\ell}&\text{ if }\la<0\ .
\end{cases}
\end{equation}

We adopt the following definition of $\pi_\la(f)$: 

\begin{equation}\label{2.3}
\pi_\la(f)=\int_{H_n}f(x)\pi_\la(x)\inv\,dx\in \L(\F,\F)\ .
\end{equation}

Notice that $\pi_\la(f*g)=\pi_\la(g)\pi_\la(f)$, but this disadvantage
is compensated by a simpler formalism when dealing with forms or more
general vector-valued functions. 

The Plancherel formula for $f\in L^2$ is 
$$
\|f\|_2^2=c_n\int_{-\infty}^{+\infty} 
\|\pi_\la(f)\|^2_{HS}\,|\la|^n\,d\la=c_n\int_{-\infty}^{+\infty} 
\sum_{j,j'}\|P_j\pi_\la(f)P_{j'}\|^2_{HS}\,|\la|^n\,d\la\ .
$$

\vskip.2cm
 
Let $V$ be a finite dimensional Hilbert space.
Defining $\pi_\la(f)$ for $V$-valued functions $f$ by \eqref{2.3}, we have 
$$
\pi_\la(f)\in \L(\F,\F)\otimes V\cong \L(\F,\F\otimes V)\ .
$$

Suppose now that $V$ is the representation space of a unitary
representation $\rho$ of $U(n)$, and consider the two representations
$U\otimes\rho$, $\bar U\otimes\rho$ of $U(n)$ on $\F\otimes
V$.  Denote by $\Sigma^+=\Sigma^{\rho,+}$ (resp. $\Sigma^-=\Sigma^{\rho,-}$) the set of irreducible
representations $\sigma\in \widehat{U(n)}$ contained in $U\otimes\rho$
(resp. in $\bar U\otimes\rho$), and let 
\begin{equation}\label{2.4}
\F\otimes V=\bigoplus_{\sigma\in\Sigma^\pm} \E^\pm_\sigma
\end{equation}
be the corresponding orthogonal decompositions into $U(n)$-types. When
$V=\C$, the decomposition \eqref{2.4}   
  reduces to \eqref{2.2}. To indicate the dependence on $\rho,$ we shall sometime also  write
$\E^\pm_\sigma=\E^{\rho,\pm}_\sigma$.

\begin{lemma}\label{s2.1}
Each $\E_\sigma^\pm$ is finite dimensional and decomposes into
$U(n)$-invariant subspaces 
$$
\E_\sigma^\pm=\bigoplus_j \E_\sigma^\pm\cap \bigl( \P_j \otimes V\bigr) \ .
$$
In particular $\E_\sigma^\pm\subset\F\otimes V$. More precisely, $\E_\sigma^\pm\subset \F^\infty\otimes
V$, where $\F^\infty$ is defined in \eqref{garding}. 
\end{lemma}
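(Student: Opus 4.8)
The plan is to reduce everything to the degree decomposition $\F\otimes V=\bigoplus_{j\ge0}(\P_j\otimes V)$ coming from \eqref{2.2}, and then to use the center of $U(n)$ to show that a fixed $U(n)$-type $\sigma$ can be fed by only finitely many degrees $j$. First I would record the elementary structural facts. Each $\P_j\otimes V$ is finite dimensional and invariant under both $U\otimes\rho$ and $\bar U\otimes\rho$ --- invariance under $U$, and under $\bar U$ since $\bar U_g=U_{\bar g}$ with $\bar g\in U(n)$, holds because linear substitutions $w\mapsto g^{-1}w$ preserve homogeneity degree. Since $U(n)$ is compact, each $\P_j\otimes V$ splits into finitely many irreducibles, so $\F\otimes V$ is a Hilbert direct sum of irreducibles and the isotypic components $\E_\sigma^\pm$ in \eqref{2.4} are well defined. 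Writing $E_\sigma$ for the canonical projection onto the $\sigma$-isotypic part (averaging $\overline{\chi_\sigma}$ against $U\otimes\rho$, resp.\ $\bar U\otimes\rho$), $E_\sigma$ commutes with the group action and, being an average of operators each preserving $\P_j\otimes V$, preserves $\P_j\otimes V$. Hence $E_\sigma(\P_j\otimes V)=\E_\sigma^\pm\cap(\P_j\otimes V)$ and $\E_\sigma^\pm=E_\sigma(\F\otimes V)=\bigoplus_j\bigl(\E_\sigma^\pm\cap(\P_j\otimes V)\bigr)$, which already gives the asserted decomposition once we know only finitely many summands are nonzero.

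The key step is precisely this finiteness. Let $Z=\{e^{i\ph}I:\ph\in\R\}$ be the center of $U(n)$. On homogeneous polynomials of degree $j$ one computes $(U_{e^{i\ph}I}F)(w)=F(e^{-i\ph}w)=e^{-ij\ph}F(w)$ and $(\bar U_{e^{i\ph}I}F)(w)=F(e^{i\ph}w)=e^{ij\ph}F(w)$, so $Z$ acts on $\P_j$ by the single character $e^{\mp ij\ph}$ (upper sign for $U$, lower for $\bar U$). On $V$ the restriction $\rho|_Z$ is a sum of finitely many characters $e^{ic\ph}$ with $c$ in a finite set $C\subset\Z$ independent of $j$; hence $Z$ acts on $\P_j\otimes V$ only with the characters $\{\mp j+c:c\in C\}$. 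On the other hand, by Schur's lemma any irreducible $\sigma\in\widehat{U(n)}$ has $Z$ acting by one character $e^{ik_\sigma\ph}$ with $k_\sigma\in\Z$. Therefore $\sigma$ can occur in $\P_j\otimes V$ only when $k_\sigma\in\{\mp j+c:c\in C\}$, which pins $j$ down to a finite set $J_\sigma\subset\N$ (for instance $J_\sigma=\{c-k_\sigma:c\in C\}\cap\N$ in the $U\otimes\rho$ case). Thus $\E_\sigma^\pm\subset\bigoplus_{j\in J_\sigma}(\P_j\otimes V)$, so it is finite dimensional, and together with the first paragraph the stated direct-sum decomposition follows.

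Finally, $\E_\sigma^\pm\subset\F^\infty\otimes V$ is immediate: an element of $\bigoplus_{j\in J_\sigma}(\P_j\otimes V)$ has $(P_j\otimes I)$-component equal to zero for every $j\notin J_\sigma$, so it trivially satisfies the decay condition \eqref{garding}; equivalently, polynomials are $C^\infty$-vectors, so each $\P_j\otimes V\subset\F^\infty\otimes V$ and so does any finite sum of them.

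I expect the only point requiring real care to be the central-character computation on $\P_j$ together with the appeal to Schur's lemma for $Z$; once the character of $Z$ on $\P_j$ is seen to move off to infinity (linearly in $j$) while $V$ contributes only a fixed finite set, the rest is bookkeeping about isotypic projections respecting the degree grading. A highest-weight version works equally well --- the highest weight of $\P_j$ is $j$ times a fundamental weight, so the highest weights occurring in $\P_j\otimes V$ leave every bounded set as $j\to\infty$ --- but the center gives the cleanest formulation.
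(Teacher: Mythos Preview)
Your proof is correct, but the route is genuinely different from the paper's. You use the center $Z$ of $U(n)$: the central character of $\P_j$ under $U$ (resp.\ $\bar U$) is $e^{-ij\ph}$ (resp.\ $e^{ij\ph}$), while $V$ contributes a fixed finite set of central characters and $\sigma$ has a single central character $e^{ik_\sigma\ph}$; matching then pins $j$ to a finite set. The paper instead uses a reciprocity identity: the multiplicity of $\sigma$ in $\P_j\otimes V$ is $\int_{U(n)}\chi_j\chi_\rho\overline{\chi_\sigma}=\lan\chi_j,\overline{\chi_\rho}\chi_\sigma\ran$, which is the multiplicity of the irreducible $U|_{\P_j}$ inside the finite-dimensional representation $\bar\rho\otimes\sigma$; since the $\P_j$ are pairwise inequivalent irreducibles, only finitely many of them can occur there. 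Your argument is perhaps more elementary (it needs only Schur's lemma for the center and nothing about the irreducibility or mutual inequivalence of the $\P_j$), while the paper's argument is a clean one-line character computation once those standard facts about $U(n)$ are in hand. The decomposition $\E_\sigma^\pm=\bigoplus_j\E_\sigma^\pm\cap(\P_j\otimes V)$ and the inclusion into $\F^\infty\otimes V$ are handled the same way in both.
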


\proof We only discuss the case of $U\otimes\rho$. For every $j$,
$\P_j\otimes V$ is an invariant subspace. Therefore, for $\sigma\in
\widehat{U(n)}$, $\E^+_\sigma=\sum_j\E^+_\sigma\cap (\P_j\otimes
V)$. Let $\chi_j$, $\chi_\rho$, $\chi_\sigma$ be the characters of
$U_{|_{\P_j}}, \rho,\sigma$ respectively. The multiplicity of $\sigma$
in $\P_j\otimes V$ is then given by 
$$
\int_{U(n)}
\chi_j(g)\chi_\rho(g)\overline{\chi_\sigma(g)}\,dg
=\lan\chi_j,\overline{\chi_\rho}\chi_\sigma\ran\
, 
$$
which is the multiplicity of $U_{|_{\P_j}}$ in $\bar\rho\otimes\sigma$
(with $\bar\rho$ denoting the contragredient of $\rho$). Since this
representation is finite dimensional, the multiplicity can be positive
only for a finite number of $j$. It follows that $\E^+_\sigma$ has
finite dimension. 

Since $\E^\pm_\sigma$ consists of $V$-valued polynomials, it is
obviously contained in $G\otimes V$. 
\endproof

The decomposition of $\F\otimes V$ given above leads to the
following form of the Plancherel formula for $L^2V$, with
$P^\pm_\sigma$ denoting the orthogonal projection of
$\F\otimes V$ onto $\E^\pm_\sigma$: 
\begin{equation}\label{2.5}
\aligned
\|f\|_2^2
&=c_n\int_{-\infty}^{+\infty}\sum_{j\in\N\,,\,\sigma\in\Sigma^{\sgn\la}}\|P^{\sgn\la}_\sigma\pi_\la(f)P_j\|^2_{HS}\,
|\la|^n\,d\la\\
&=c_n\int_{-\infty}^{+\infty}\sum_{\sigma\in\Sigma^{\sgn\la}}\|P^{\sgn\la}_\sigma\pi_\la(f)\|^2_{HS}\,
|\la|^n\,d\la\ .
\endaligned
\end{equation}

\vskip.2cm

Let $\rho'$ be another unitary representation of $U(n)$ on a finite
dimensional Hilbert space $V'$. The convolution 
$$
f*K(x)=\int_{H_n}K(y\inv x)f(y)\,dy
$$
of integrable functions $f$ with values in $V$ and $K$ with values in
$\L(V,V')$ produces a function taking values in $V'$. In the
representations $\pi_\la$, $\la\ne0$,  
$$
\pi_\la(K)\in \L(\F,\F)\otimes \L(V,V')\cong
\L(\F\otimes V,\F\otimes V')\ , 
$$ 
and 
$$
\pi_\la(f*K)=\pi_\la(K)\pi_\la(f)\in \L(\F,\F\otimes V')\ .
$$

Let $\tilde\rho$ (resp. $\tilde\rho'$) be the representation
$\al\otimes\rho$ on $L^2V$ (resp. $\al\otimes\rho'$ on $L^2V'$) of
$U(n)$ and suppose that convolution by $K$ is an equivariant operator, 
i.e. 
\begin{equation}\label{2.6}
\tilde\rho'(g)(f*K)=\big(\tilde\rho(g)f\big)*K
\end{equation}
for $g\in U(n)$
and $f\in\S V$.
Since for $f\in\S V$ and $\xi\in\F$, with $\la>0$,
\begin{eqnarray*}
\pi_\la\Big(\tilde{\rho}'(g)(f*K)\Big)\xi
&=&\iint\rho'(g) K(y^{-1}x) f(y) U_g\pi_\la(x^{-1}) U_{g^{-1}}\xi\, dydx,\\
\pi_\la\Big((\tilde{\rho}(g)f)*K)\Big)\xi
&=&\iint K(y^{-1}x)\rho(g) f(y^{g^{-1}}) \pi_\la(x^{-1}) \xi\, dydx,
\end{eqnarray*}
 by letting $f$ tend weakly to $\delta_0\otimes v,$  with $v\in V,$ we
 see that  \eqref{2.6} implies  
 \begin{eqnarray*}
\int \rho'(g) K(x)v\, U_g\pi_\la(x^{-1}) U_{g^{-1}}\xi\,dx
=\int K(x)\rho(g)v\,  \pi_\la(x^{-1}) \xi\, dx.
\end{eqnarray*}
Replacing $\xi$ by $U_g\xi,$ we obtain
\begin{equation*}
U_g\otimes \rho'(g)\Big (\pi_\la(K)(\xi\otimes v)\Big)=\pi_\la(K) (U_g\,\xi\otimes \rho(g) v)
\end{equation*}
for every $\xi\in \F, v\in V.$ 
A similar formula holds for $\la<0$ with $\bar U$ in place of $U$.
Thus   \eqref{2.6} implies the
following identities, for $K$ defining an equivariant convolution operator:
\begin{equation}\label{2.7}
\aligned
(U\otimes\rho')(g)\pi_\la(K)
&=\pi_\la(K)(U\otimes\rho)(g)\ ,\qquad \la>0\,\\
(\bar U\otimes\rho')(g)\pi_\la(K)
&=\pi_\la(K)(\bar U\otimes\rho)(g)\ ,\qquad \la<0\,
\endaligned
\end{equation}
for $g\in U(n)$, i.e. $\pi_\la(K)$ intertwines $U\otimes\rho$ and
$U\otimes\rho'$, or $\bar U\otimes\rho$ and $\bar U\otimes\rho'$
depending on the sign of $\la$. The following is an immediate
consequence. 

\begin{lemma}\label{s2.2}
 Assume that convolution by $K\in L^1\otimes\L(V,V')$ is an
 equivariant operator. Then, setting $\Sigma^{\rho,\rho',\sgn\la}=\Sigma^{\rho,\sgn\la}\cap\Sigma^{\rho',\sgn\la}$,
$$
\pi_\la(K)=\bigoplus_{\sigma\in\Sigma^{\rho,\rho',\sgn\la}}\pi_{\la,\sigma}(K)\ ,
$$
with $\pi_{\la,\sigma}(K):\E^{\rho,\sgn\la}_\sigma\rightarrow
\E^{\rho',\sgn\la}_\sigma$. 
\end{lemma}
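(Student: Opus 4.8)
The plan is to combine the intertwining relations \eqref{2.7} with Schur orthogonality, applied to the two decompositions into $U(n)$-types that are available on the source and target sides. First I would fix $\la>0$ (the case $\la<0$ being identical with $U$ replaced by $\bar U$ throughout). By Lemma \ref{s2.1} applied to $\rho$ on $V$ we have the orthogonal decomposition $\F\otimes V=\bigoplus_{\sigma\in\Sigma^{\rho,+}}\E^{\rho,+}_\sigma$, and applied to $\rho'$ on $V'$ we get $\F\otimes V'=\bigoplus_{\tau\in\Sigma^{\rho',+}}\E^{\rho',+}_\tau$. The operator $\pi_\la(K)\in\L(\F\otimes V,\F\otimes V')$ can thus be written as a block matrix $\bigl(\pi_{\la}^{\tau\sigma}(K)\bigr)$ with $\pi_\la^{\tau\sigma}(K)=P^{\rho',+}_\tau\,\pi_\la(K)\,P^{\rho,+}_\sigma:\E^{\rho,+}_\sigma\to\E^{\rho',+}_\tau$.

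Next I would show each off-type block vanishes: $\pi_\la^{\tau\sigma}(K)=0$ whenever $\tau\not\cong\sigma$. By \eqref{2.7}, $\pi_\la(K)$ intertwines $U\otimes\rho$ with $U\otimes\rho'$; since the projections $P^{\rho,+}_\sigma$ and $P^{\rho',+}_\tau$ commute with the respective $U(n)$-actions (being projections onto isotypic components), each block $\pi_\la^{\tau\sigma}(K)$ is a $U(n)$-equivariant map from the $\sigma$-isotypic space $\E^{\rho,+}_\sigma$ to the $\tau$-isotypic space $\E^{\rho',+}_\tau$. By Schur's lemma a nonzero equivariant map can only exist between isotypic components of the same isomorphism class, so $\pi_\la^{\tau\sigma}(K)=0$ for $\tau\not\cong\sigma$. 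Consequently only the diagonal blocks $\pi_{\la,\sigma}(K):=\pi_\la^{\sigma\sigma}(K):\E^{\rho,+}_\sigma\to\E^{\rho',+}_\sigma$ survive, and these are nonzero only when $\sigma$ appears in both decompositions, i.e. for $\sigma\in\Sigma^{\rho,+}\cap\Sigma^{\rho',+}=\Sigma^{\rho,\rho',+}$. This gives $\pi_\la(K)=\bigoplus_{\sigma\in\Sigma^{\rho,\rho',+}}\pi_{\la,\sigma}(K)$, as claimed. Finally I would remark that the decomposition is meaningful: $\pi_\la(K)$ is a well-defined bounded (indeed Hilbert–Schmidt, by \eqref{2.5} applied with $K\in L^1$ suitably interpreted, or simply bounded since $K\in L^1$) operator, and by Lemma \ref{s2.1} each $\E^{\rho,\pm}_\sigma$ is finite dimensional, so each block $\pi_{\la,\sigma}(K)$ is a finite matrix.

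The only genuine subtlety, rather than a real obstacle, is the precise sense in which "the $\sigma$-isotypic component of $U\otimes\rho$ inside $\F\otimes V$" is identified: one must check that the decomposition \eqref{2.4} really is the isotypic decomposition for the $U(n)$-action $U\otimes\rho$ (so that the $P^{\rho,+}_\sigma$ are central projections for that action), which is exactly how $\E^{\rho,+}_\sigma$ was defined above \eqref{2.4}. Once this bookkeeping is in place, the argument is a one-line application of Schur orthogonality and nothing more is needed.
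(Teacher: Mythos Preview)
Your proposal is correct and is exactly the argument the paper has in mind: the paper merely states that the lemma is ``an immediate consequence'' of the intertwining relations \eqref{2.7}, and what you have written is the standard Schur--orthogonality unpacking of that remark. There are no gaps; your extra comments on boundedness and finite-dimensionality of the blocks are accurate but not needed for the statement as formulated.
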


By a variant of Schwartz's Kernel Theorem, the convolution operators
$D$ with kernels $K\in\S'(H_n)\otimes \L(V,V')$ are characterized as
the continuous operators from $\S(H_n)\otimes V$ to $\S'(H_n)\otimes
V'$ that commute with left translations on $H_n$. 
Lemma \ref{s2.2}
applies to operators of this kind, provided that the Fourier transform
$\pi_\la(K)$ is well defined for $\la\ne0$. This is surely the case if 
$K$ has compact support,  and in particular for a left-invariant differential operator
$Df=f*(D\del_0)$. 
 We then have 
$$
\pi_\la(Df)=\pi_\la(D\del_0)\pi_\la (f)=\pi_\la(D)\pi_\la (f)\ .
$$

We apply these remarks to the differentials and Laplacians introduced
in Section \ref{difforms}.  

With $\rho_k$ denoting the representation of $U(n)$ on $\Lambda^k$
induced from its action on $H_n$ by automorphisms, and, as before let
$\tilde\rho_k=\al\otimes \rho_k$ be the tensor product acting on
$L^2\Lambda^k$. 
Then $d,d^*,\Delta_k$ are equivariant operators. 
The same applies to $\de,\bar\de,d_H$ etc. on the appropriate
$L^2$-subbundles.

Notice that $\Box,\Boxbar$ and $\Delta_H$ have the
special property of acting scalarly on $(p,q)$-forms, by
\eqref{1.10}. Since the sublaplacian $L$ has the property that
$\pi_\la(L)$ acts as a scalar multiple of the identity (namely, as
$|\la|(2m+n)\id$) on $\P_m\subset\F$, the same is true for the
image of $\Box,\Boxbar,\Delta_H$ under $\pi_\la$.

\bigskip

\setcounter{equation}{0}
\section{Cores, domains and self-adjoint extensions}\label{cores}

For $0<\del<R$ and $N\in\N$, denote by $\S_{\del,R,N}(H_n)$ the
space of functions $f$ satisfying the following
properties: 

\smallskip
\bee
\item[(i)] $f\in\S(H_n)$;
\item[(ii)] $\pi_\la(f)=0$ for $|\la|\le\del$ and $|\la|\ge R$;
\item[(iii)] for $\del<|\la|<R$,
$P_j\pi_\la(f)=0$ for $j>N$.
\ee

\smallskip

We set $\S_0=\bigcup_{\del,R,N}\S_{\del,R,N}$. 

\begin{lemma}\label{S_0} 
$\S_0$ is invariant under left translations, and dense in $L^2$. 
\end{lemma}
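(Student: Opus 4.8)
The plan is to establish the two assertions separately. For left-invariance, recall that for $h\in H_n$ the left translation $L_h f(x)=f(h^{-1}x)$ satisfies $\pi_\la(L_h f)=\pi_\la(f)\,\pi_\la(h)^{-1}$ (from the definition \eqref{2.3}, since $L_h f = f * (\delta_{h^{-1}})$ up to the convolution convention in force, or by a direct change of variables in the integral defining $\pi_\la$). First I would check that $L_h$ preserves $\S(H_n)$, which is standard. Then, since $\pi_\la(h)$ is a bounded invertible operator on $\F$ for each $\la\ne0$, condition (ii) is clearly preserved: $\pi_\la(f)=0$ iff $\pi_\la(L_h f)=0$. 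For condition (iii), the subtlety is that $\pi_\la(h)^{-1}$ need not preserve the finite-dimensional subspace $\bigoplus_{j\le N}\P_j$. The key observation is that $\pi_\la(h)$ does not raise degrees by more than a fixed amount controlled by $h$: from the explicit formula \eqref{pi1}–\eqref{la<0}, $\pi_\la(z,t)F(w)=e^{i\la t}e^{-\frac12\la\langle w,z\rangle-\frac14\la|z|^2}F(w+\la^{1/2}z)$ for $\la>0$ (and the analogous expression for $\la<0$); this is multiplication by a fixed entire function of $w$ times a translation in the argument. Translation $F(w)\mapsto F(w+a)$ maps $\P_j$ into $\bigoplus_{i\le j}\P_i$, and multiplication by the Gaussian-type factor $e^{-\frac12\la\langle w,z\rangle-\frac14\la|z|^2}$ is multiplication by a convergent power series in $w$, hence maps $\bigoplus_{i\le j}\P_i$ into $\F$ but spreads degrees upward. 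So a priori this argument only shows $P_j\pi_\la(L_hf)$ can be nonzero for all $j$, which is \emph{not} what we want.

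The right fix is to argue on the other side. Write instead $\pi_\la(L_h f)=\pi_\la(f)\pi_\la(h)^{-1}$ and note that condition (iii) concerns the \emph{range} of $\pi_\la(f)$, i.e.\ $P_j\pi_\la(f)=0$ for $j>N$ means $\range\pi_\la(f)\subseteq\bigoplus_{j\le N}\P_j$. Since $\range\pi_\la(L_hf)=\range\big(\pi_\la(f)\pi_\la(h)^{-1}\big)\subseteq\range\pi_\la(f)$, condition (iii) is automatically preserved — indeed with the \emph{same} $N$. Similarly, the support condition on $\la$ is preserved because $\pi_\la(h)^{-1}$ is invertible, so $\pi_\la(L_hf)=0$ exactly when $\pi_\la(f)=0$. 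Hence $L_h$ maps $\S_{\del,R,N}$ into itself, and $\S_0$ is left-invariant. (One should double-check the convolution convention: with $\pi_\la(f*g)=\pi_\la(g)\pi_\la(f)$ as stated after \eqref{2.3}, left translation by $h$ is convolution on the appropriate side so that $\pi_\la$ picks up the factor $\pi_\la(h)^{-1}$ on the side that does not enlarge the range; I would verify this sign bookkeeping carefully, as it is the one genuinely delicate point.)

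For density in $L^2$, I would use the Plancherel formula and a truncation argument. Given $f\in L^2(H_n)$ and $\eps>0$, the Plancherel identity $\|f\|_2^2=c_n\int_{\R}\sum_{j,j'}\|P_j\pi_\la(f)P_{j'}\|_{HS}^2|\la|^n\,d\la$ shows that the operator-valued Fourier transform of $f$ is concentrated, up to arbitrarily small $L^2$-error, on a set of the form $\{\del<|\la|<R\}$ with the ranges and co-ranges of $\pi_\la(f)$ essentially supported on $\bigoplus_{j\le N}\P_j$ for $N,R$ large and $\del$ small. Concretely, define $g$ by $\pi_\la(g)=\mathbf 1_{\{\del<|\la|<R\}}\,P_{\le N}\,\pi_\la(f)\,P_{\le N}$ where $P_{\le N}=\sum_{j\le N}P_j$; by Plancherel, $\|f-g\|_2\to0$ as $\del\to0$, $R\to\infty$, $N\to\infty$, and the field $\la\mapsto\pi_\la(g)$ satisfies (ii) and (iii) by construction. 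It remains to see that $g$ (or a further approximant) lies in $\S(H_n)$. For this I would not insist that $g$ itself be Schwartz; instead, having fixed $\del,R,N$ so that $\|f-g\|_2<\eps/2$, I would approximate $g$ in $L^2$ by Schwartz functions $\phi$ and then \emph{project}: set $\tilde\phi$ with $\pi_\la(\tilde\phi)=\chi(\la)P_{\le N}\pi_\la(\phi)P_{\le N}$ where $\chi$ is a smooth cutoff supported in $\{\del/2<|\la|<2R\}$ equal to $1$ on $\{\del<|\la|<R\}$. The operations "multiply $\pi_\la$ by a smooth compactly supported function of $\la$" and "compose with the finite-rank smooth projections $P_{\le N}$" correspond to convolving $\phi$ with Schwartz kernels (the Fourier multiplier in $\la$ is convolution in the central variable with a Schwartz function of $t$, and $P_{\le N}$ is a smooth $U(n)$-finite spectral projection for the harmonic-oscillator/sublaplacian $L$, hence convolution with a Schwartz kernel on $H_n$), so $\tilde\phi\in\S(H_n)$; and since these operations are $L^2$-bounded and act as the identity on $\pi_\la(g)$, we get $\|g-\tilde\phi\|_2=\|\,\cdot\,(\pi_\la(g)-\pi_\la(\phi))\,\cdot\,\|\le\|g-\phi\|_2$, which can be made $<\eps/2$. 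Then $\tilde\phi\in\S_{\del/2,2R,N}\subseteq\S_0$ and $\|f-\tilde\phi\|_2<\eps$. The main obstacle, and the step I would spend the most care on, is precisely this last point: exhibiting the spectral projection $P_{\le N}$ and the central-variable cutoff as convolutions by Schwartz kernels so that the approximant stays in $\S(H_n)$ while still satisfying (ii)–(iii) exactly; the rest is routine Plancherel bookkeeping.
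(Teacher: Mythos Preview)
Your proposal is correct and follows essentially the paper's approach: left-invariance because $\pi_\la(L_hf)=\pi_\la(f)\pi_\la(h)^{-1}$ has the same range as $\pi_\la(f)$, and density via spectral truncation realized as convolution with a Schwartz kernel. One caveat worth tightening: $P_{\le N}$ by itself is \emph{not} convolution with a Schwartz kernel on $H_n$ (the cutoff in $\la$ is essential for decay), so treat $\chi(\la)P_{\le N}$ as a single convolution operator---the paper computes its kernel explicitly via Laguerre functions---and note that the paper streamlines your argument by starting directly with $f\in\S$ and using only the one-sided truncation $u_{\del,R}(\la)P_{\le N}\pi_\la(f)$, which already meets condition~(iii) and avoids both your intermediate Schwartz approximant $\phi$ and the superfluous right-hand $P_{\le N}$.
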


\begin{proof}
The first statement follows from the identity $\pi_\la(L_{(z,t)}f)=\pi_\la(f)\pi_\la(z,t)\inv$, where $L_{(z,t)}f$ is the  left translate of $f\in \S_0$ by $(z,t)^{-1}.$ 
Take now $f\in\S$. For $\del,R>0$, fix a $C^\infty$-function $u_{\del,R}(\la)$ on $\R$, with values in $[0,1]$, supported where $\del\le|\la|\le R$ and equal to 1 where $2\del\le|\la|\le R/2$. 

Given  $\eps>0$, by Plancherel's formula it is possible to find $\del,R>0$ and $N\in\N$ such that the $L^2$- function $g$ such that $\pi_\la(g)=u_{\del,R}(\la)\sum_{j\le N}P_j\pi_\la(f)$ approximates $f$ in $L^2$ by less than $\eps$.

We claim that $g$ is in $\S$, hence in $\S_0$, and this will conclude the proof, by the density of $\S$ in~$L^2$.

By definition, $g=f*h$, where $h$ is the function with $\pi_\la(h)=u_{\del,R}(\la)\sum_{j\le N}P_j$. By explicit computation of the matrix entries of the representations of $H_n$ \cite{Thanga}, the Fourier transform $h(z,\hat \la)$ of $h$ in the $t$-variable equals
$$
h(z,\hat \la)=u_{\del,R}(\la)\sum_{j\le N}\psi_j(|\la|^\half z)\ ,
$$
where the $\psi_j$ are Schwartz functions on $\C^n$. Hence $h\in\S(H_n)$ and so is $g$.
\end{proof}

We regard $\S_0$ as the inductive limit of the spaces $\S_{\del,R,N}$, each with the topology induced from~$\S$.

Obviously, $\S_0V=\S_0\otimes V$ is contained in $\S V$ and dense in
$L^2V$. Assume, as in Section \ref{bargmann}, that $V$ is a finite
dimensional Hilbert space on which $U(n)$ acts unitarily by the
representation~$\rho$.  
Taking into account the action of $U(n)$, one can then  introduce a
different chain of subspaces filling up $\S_0V$. Given $0<\del<R$,
$N\in\N$ and finite subsets
$X^\pm$ of $\Sigma^\pm$, define
$\S_{\del,R,X^\pm}V$ as the space of functions $f$ such that

\smallskip
\bee
\item[(i')] 
$f\in\S(H_n)\otimes V$;
\item[(ii')] $\pi_\la(f)=0$ for $|\la|\le\del$ and $|\la|\ge R$;
\item[(iii')] for $\del<|\la|<R$, $P^{\sgn\la}_\sigma\pi_\la(f)=0$ for $\sigma\not\in X^{\sgn\la}$;
\ee
\smallskip

It follows from Lemma \ref{s2.1} that finite unions of the
$\S_{\del,R,X^\pm}V$
 exhaust finite unions of the
$\S_{\del,R,N}V$ and viceversa. 
\medskip

 In order to develop the $L^2$-analysis of differentials
and Laplacians, we establish some general facts about densely defined operators from $L^2V$ to $L^2V'$, wih $(V,\rho)$, $(V',\rho')$  finite-dimensional representation spaces  of~$U(n)$. Precisely, we consider operators whose initial domain is $\S_0V$ and which map $\S_0V$ into $\S_0V'$, continuously with respect to the Schwartz topologies.   Most of the operators to be considered in this paper will belong to this class.

\begin{lemma}\label{s3.1}
\quad
\begin{enumerate}
\item[\rm(i)] Let $(V,\rho)$, $(V',\rho')$ be finite-dimensional representation spaces  of~$U(n)$, and let
$$
B:\S_0V\longrightarrow \S_0V'\ ,
$$ 
be a left-invariant linear operator, $U(n)$-equivariant and continuous with respect to the $\S_0$-topologies. Then there exists a family of  linear
operators $B_{\la,\sigma}:\E^{\rho,\sgn\la}_\sigma\longrightarrow \E^{\rho',\sgn\la}_\sigma$, depending smoothly on $\la\ne0$, such that
\begin{equation}\label{pi(B)}
\pi_\la(Bf)=\bigoplus_{\sigma\in\Sigma^{\rho,\rho',\sgn\la}} B_{\la,\sigma}
P^{\sgn\la}_\sigma\pi_{\la}(f)\ ,
\end{equation}
\item[\rm(ii)]Conversely, given any family of linear
  operators $B_{\la,\sigma}:\E^{\rho,\sgn\la}_\sigma\longrightarrow \E^{\rho',\sgn\la}_\sigma$, depending smoothly on $\la\ne0$, there is a unique left-invariant operator $B:\S_0V\longrightarrow \S_0V'$, $U(n)$-equivariant and continuous with respect to the $\S_0$-topologies, such that \eqref{pi(B)} holds for every $f\in\S_0V$. 
We  set 
$$
\pi_{\la,\sigma}(B)=B_{\la,\sigma}\ ,\qquad  B_\la=\sum_{\sigma\in\Sigma^{\rho,\rho',\sgn\la}}B_{\la,\sigma}P^{\sgn\la}_\sigma\ ,\qquad\pi_{\la}(B)=B_\la\ .
$$
\item[\rm(iii)] The closure of $B$ as an operator from $L^2V$ to $L^2V'$ has
domain  $\dom(B)$ consisting of those $f\in L^2V$ such that
\begin{equation}\label{domB}
\int_{-\infty}^{+\infty}\sum_\sigma\|B_{\la,\sigma}P^{\sgn\la}_\sigma\pi_\la(f)\|^2_{HS}\,|\la|^n\,d\la<\infty\ .
\end{equation}
\item[\rm(iv)] If $(V,\rho)=(V',\rho')$ and $B$ is symmetric (equivalently,  $B_{\la,\sigma}$ is symmetric for every $\la$,~$\sigma$), then $B$ is essentially
self-adjoint. 
\item[\rm(v)] If $B$ is symmetric and $m$ is a Borel function on the real line such that $m(B_{\la,\sigma})$ is well defined for every $\la$, $\sigma$, the domain of $m(B)$ is the space
of those $f\in L^2 V$ such that  
$$
\int_{-\infty}^{+\infty}\sum_\sigma
\big\|m(B_{\la,\sigma})
P^{\sgn\la}_{\sigma}\pi_\la(f)\big\|^2_{HS}\,|\la|^n\,d\la<\infty\ 
. 
$$

Moreover, the space $\S_0V\cap \dom\big(m(B)\big)$ is a core for $m(B)$ and the identity
$$
\lan
m(B)f,g\ran=c_n\int_{-\infty}^{+\infty}\sum_\sigma \tr\big(m(B_{\la,\sigma})
P^{\sgn\la}_{\sigma}\pi_\la(f)\pi_\la(g)^*\big)\,|\la|^n\,d\la 
$$
holds for $f\in\dom\big(m(B)\big)$ and $g\in L^2V$.
\end{enumerate}
\end{lemma}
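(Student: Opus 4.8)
The strategy is to unify all five parts by viewing everything through the $U(n)$-equivariant Plancherel decomposition \eqref{2.5} and Lemma \ref{s2.2}. First I would establish (i): since $B$ is left-invariant and continuous $\S_0 V\to\S_0 V'$, a variant of the Schwartz kernel theorem (as recalled in the excerpt) presents $B$ as convolution by a kernel $K\in\S'(H_n)\otimes\L(V,V')$, and the $U(n)$-equivariance gives \eqref{2.7}, i.e.\ $\pi_\la(K)$ intertwines $U\otimes\rho$ and $U\otimes\rho'$ for $\la>0$ (and the barred versions for $\la<0$). By Lemma \ref{s2.2} this forces $\pi_\la(K)=\bigoplus_\sigma\pi_{\la,\sigma}(K)$ with $\pi_{\la,\sigma}(K):\E^{\rho,\sgn\la}_\sigma\to\E^{\rho',\sgn\la}_\sigma$; setting $B_{\la,\sigma}=\pi_{\la,\sigma}(K)$ and using $\pi_\la(Bf)=\pi_\la(K)\pi_\la(f)$ together with $P^{\sgn\la}_\sigma\pi_\la(f)$ being the component of $\pi_\la(f)$ landing in $\E^{\rho,\sgn\la}_\sigma$ yields \eqref{pi(B)}. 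Smooth dependence on $\la$ follows because, restricting to a fixed $\S_{\del,R,X^\pm}V$, only finitely many $\sigma$ and finitely many $\P_j$ are involved, and on such finite-dimensional pieces $\pi_\la$ depends smoothly (indeed polynomially, via \eqref{pi(Z)}) on $\la\ne0$; testing \eqref{pi(B)} against a basis of $\S_{\del,R,X^\pm}V$ extracts each $B_{\la,\sigma}$ smoothly.

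For (ii), given the family $\{B_{\la,\sigma}\}$, I would define $B$ by declaring \eqref{pi(B)} and checking it maps $\S_0 V$ to $\S_0 V'$: if $f\in\S_{\del,R,X^\pm}V$ then $\pi_\la(Bf)$ is supported in $\del\le|\la|\le R$ and concentrated on the same finite set of $U(n)$-types, and smoothness in $\la$ plus the Schwartz decay already present in $\pi_\la(f)$ give, via the explicit matrix-coefficient description used in the proof of Lemma \ref{S_0}, that $Bf\in\S(H_n)\otimes V'$; hence $Bf\in\S_{\del,R,X^\pm}V'$. Left-invariance and $U(n)$-equivariance are immediate from the form \eqref{pi(B)} and \eqref{2.7}. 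Uniqueness holds because \eqref{pi(B)} determines $\pi_\la(Bf)$ for all $f$, and $\pi_\la$ is injective on $L^2$ by Plancherel. For (iii), the graph-closure description is a direct application of Plancherel \eqref{2.5}: $f_n\to f$ and $Bf_n\to h$ in $L^2$ translate, via \eqref{2.5}, into $L^2$-convergence of the Plancherel integrands; one checks the limit satisfies \eqref{pi(B)} in the integrated sense, so $h=\bar Bf$ precisely when the integral \eqref{domB} is finite, and a truncation argument (restricting $\la$ to $\del\le|\la|\le R$ and $\sigma$ to a finite set, which lands back in $\S_0 V$) shows every such $f$ is in the domain of the closure.

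For (iv), symmetry of $B$ on $\S_0 V$ is equivalent by \eqref{pi(B)} and the trace form of the inner product (the identity in (v), or directly Plancherel) to symmetry of each $B_{\la,\sigma}$ on the finite-dimensional $\E^{\rho,\sgn\la}_\sigma$. Then $\bar B$ is realized as the direct integral $\int^\oplus \bigl(\bigoplus_\sigma B_{\la,\sigma}\bigr)\,|\la|^n d\la$ of bounded self-adjoint operators on finite-dimensional spaces, hence self-adjoint; equivalently one verifies $\range(\bar B\pm i)$ is dense using that each $B_{\la,\sigma}\pm i$ is invertible. For (v), the spectral calculus commutes with the direct-integral/orthogonal decomposition, so $m(\bar B)$ acts as $m(B_{\la,\sigma})$ on each summand, giving the domain description; that $\S_0 V\cap\dom(m(B))$ is a core follows by the same truncation in $(\la,\sigma)$ as in (iii), which produces approximants inside $\S_0 V$ converging in the graph norm of $m(\bar B)$; and the final trace identity is just Plancherel \eqref{2.5} applied to $\lan m(\bar B)f,g\ran$ after writing $\|\cdot\|_{HS}^2$-type pairings as traces. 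I expect the main obstacle to be the regularity bookkeeping in (i)--(ii): showing that smooth dependence of the $B_{\la,\sigma}$ on $\la$, combined with the $U(n)$-type and $\P_j$-truncations defining $\S_0$, really yields a Schwartz function (not merely a tempered distribution) after applying $B$ — this is where the explicit matrix-coefficient formulas for $\pi_\la$, as in the proof of Lemma \ref{S_0}, must be invoked carefully, and where one must be sure the finitely-many-types condition is genuinely preserved by $B$.
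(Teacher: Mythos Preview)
Your approach to (iii)--(v) is essentially the same as the paper's, and those parts are fine. The gap is in part (i).

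You write that the Schwartz kernel theorem gives $B$ as convolution by some $K\in\S'(H_n)\otimes\L(V,V')$, and then you invoke Lemma~\ref{s2.2} to decompose $\pi_\la(K)$. But Lemma~\ref{s2.2} is stated for $K\in L^1$, and the remark following it says explicitly that the extension to more general kernels requires $\pi_\la(K)$ to be well defined for $\la\ne0$, which is guaranteed only for compactly supported $K$ (e.g.\ differential operators). For a general tempered distribution there is no meaning assigned to $\pi_\la(K)$, so you cannot simply set $B_{\la,\sigma}=\pi_{\la,\sigma}(K)$. This is not a matter of ``regularity bookkeeping'' as you flag at the end; it is that the object you want to write down has not been defined.

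The paper circumvents this by never forming $\pi_\la(K)$. Instead, it fixes a compact $\la$-interval, chooses specific test functions $g_\ell\in\S_0$ with $\pi_\la(g_\ell)=E_{\ell,\ell}$ there, applies $B$ to $g_\ell\otimes e_i$ (this lands in $\S_0V'$ by hypothesis, so its group Fourier transform \emph{is} well defined and smooth in $\la$), reads off matrix coefficients $c^{\ell,i}_{h,k,j}(\la)$ from $\pi_\la\big(B(g_\ell\otimes e_i)\big)$, and then assembles $B_\la$ from these coefficients via the factorization $f=\sum_{i,\ell}(f_i*g_\ell*g_\ell)\otimes e_i$ and left-invariance. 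Only after $B_\la$ is constructed this way does one check $U(n)$-equivariance and obtain the $\sigma$-block decomposition. Your plan would work if you replace the appeal to $\pi_\la(K)$ by this test-function extraction of $B_\la$.
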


\proof To prove (i), let $\{\Phi_\ell\}_{\ell\in\N}$ be an enumeration of the orthonormal basis of monomials in $\F$.
Define linear  operators $E_{\ell,\ell'}:\F\to\F$ by setting
$E_{\ell,\ell'}F=\lan F,\Phi_{\ell'}\ran_\F\Phi_\ell$.
Let also $\{e_i\}$ and $\{e'_j\}$  be 
 (finite) bases of $V$ and $V'$ respectively.

Given two compact intervals 
$[a,b]$ and $[a',b']$ such that $[a,b]\subset [a',b']^0,$
  with $a'>0$ (for intervals contained in $\R^-$ the proof is similar) and $\ell\in\N$, there exists $g_\ell\in\S_0$ such that $\pi_\la(g_\ell)=E_{\ell,\ell}$ for $\la\in [a,b]$ and $\pi_\la(g_\ell)=0$ for $\la\not\in[a',b']$ (cf. the proof of Lemma \ref{S_0} and \cite{Thanga}).

Then $B(g_\ell\otimes e_i)\in\S_0V'$ and
\begin{equation}\label{c's}
\pi_\la\big(B(g_\ell\otimes e_i)\big)=\sum_j\Big(\sum_{h,k}c^{\ell,i}_{h,k,j}(\la)E_{h,k}\Big)\otimes e'_j\ ,
\end{equation}
where the sum 
in $h$  ranges over a fixed finite set of indices independent of $\la$. The coefficients $c^{\ell,i}_{\ell,\ell',j}$ are smooth in $\la$ and, since $B$ is left-invariant,  supported in $[a',b']$.

Take now  $f=\sum_i f_i\otimes e_i\in\S_0V$, with $\pi_\la(f)=0$ for $\la\not\in[a,b]$. Then
$$
f=\sum_{i,\ell}(f_i*g_\ell*g_\ell)\otimes e_i\ ,
$$
where the sum is finite. Hence, by the continuity assumption on $B$,
$$
Bf=\sum_{i,\ell}(f_i*g_\ell)*B(g_\ell\otimes e_i)\ .
$$

Introducing the notation 
$$
\hat f_i(\la,\ell,\ell')=\lan \pi_\la(f_i)\Phi_{\ell'},\Phi_\ell\ran_\F\ ,
$$
we have
$$
\pi_\la(f_i*g_\ell)=E_{\ell,\ell}\,\pi_\la(f_i)=\sum_{\ell'}\hat f_i(\la,\ell,\ell')E_{\ell,\ell'}\ .
$$

 The composition $\pi_\la\big(B(g_\ell\otimes e_i)\big)\pi_\la(f_i*g_\ell)$ is well defined, because the second factor has the one-dimensional range  $\C\Phi_\ell$. Therefore the index $k$ in \eqref{c's} can only assume the value $\ell$, and
$$
\begin{aligned}
\pi_\la(Bf)&=\sum_j\Big(\sum_{i,\ell}\sum_hc^{\ell,i}_{h,\ell,j}(\la)E_{h,\ell}\sum_{\ell'}\hat f_i(\la,\ell,\ell')E_{\ell,\ell'}\Big)\otimes e'_j\\ 
&=\sum_j\bigg(\sum_i\sum_{h,\ell'}\Big(\sum_\ell c^{\ell,i}_{h,\ell,j}(\la)\hat f_i(\la,\ell,\ell')\Big)E_{h,\ell'}\bigg)\otimes e'_j\ .
\end{aligned}
$$

The infinite matrix $C_{i,j}(\la)=\big(c^{\ell,i}_{h,\ell,j}(\la)\big)_{h,\ell}$ has only a finite number of nonzero 
entries,
hence it defines  a linear operator $B_{i,j}^\la$ from the linear span of the $\Phi_\ell$ (i.e. the space of polynomials inside $\F$) into itself, by setting
$$
B_{i,j}^\la\Phi_\ell=\sum_h c^{\ell,i}_{h,\ell,j}(\la)\Phi_h.
$$
Notice that $B_{i,j}^\la E_{\ell,\ell'}=\sum_h c^{\ell,i}_{h,\ell,j}(\la)E_{h,\ell'}.$
 Then, setting $E'_{j,i}=\lan \cdot,e_i\ran_Ve'_j \in\L(V,V')$, one easily verifies that 
\begin{equation}\label{B_la}
B_\la =\sum_{i,j}B^\la_{i,j}\otimes E'_{j,i}\ ,
\end{equation}
maps $V$-valued polynomials into $V'$-valued polynomials and
\begin{equation}\label{Bf}
\pi_\la(Bf)=B_\la\pi_\la(f)\ ,
\end{equation}
for every $f\in\S_0V$ with  $\pi_\la(f)=0$ for $\la\not\in[a,b]$.

It is now easy to prove that for $\la\in [a,b],$ $B_\la$ is uniquely defined by the identity \eqref{Bf},  which shows that it does not depend on the choice of the functions $g_\ell$, and that if we repeat the same argument starting with a larger interval $[a^\#,b^\#]\supset[a,b]$ contained in $\R^+$, the new operators $B^\#_\la$ coincide with $B_\la$ for $\la\in[a,b]$. Covering the positive half-line by compact intervals of this type
 and repeating the same argument on the negative half-line, we find a unique map $\la\longmapsto B_\la$ defined for $\la\ne0$ and for which \eqref{Bf} holds for every $f\in\S_0V$. 

Since $B$ is $U(n)$-equivariant, a repetition of the proof of Lemma \ref{s2.2} shows that $B_\la$ maps $\E^{\rho,\sgn\la}_\sigma$ into $\E^{\rho',\sgn\la}_\sigma$ for every $\sigma\in\Sigma^{\rho,\rho',\sgn\la}$. It is obvious from the smoothness of the coefficients $c^{\ell,i}_{h,\ell,j}$ that the restricted operators $B_{\la,\sigma}$ depend smoothly on $\la$.

The proof of (ii) is quite obvious.

To prove (iii), 
denote by $\tilde B$ be the operator on $\dom (B)$, defined in \eqref{domB}, such that
$\pi_\la(\tilde Bf)=B_\la\pi_\la(f)$. It is easy to verify that
$\tilde B$ is closed and that $\S_0 V$ is dense in $\dom (B)$ in the
graph norm of $\tilde B$. Since $\tilde B$ coincides with $B$ on $\S_0
V\subset\S V$, $\tilde B$ is the closure of  $B$.

To prove (iv), assume that $B$ is symmetric. Then each operator $B_{\la,\sigma}$
is self-adjoint, hence so is
$\pi_\la(B)$. 
If $B'$ is the adjoint of $B$, taking $g$  in the domain of $B'$ and
$f\in\S_0 V$, we have 
$$
\aligned
\lan B'g,f\ran 
&=\lan g,Bf\ran\\
&=c_n\int_{-\infty}^{+\infty}\sum_\sigma\tr
\big(\pi_\la(f)^*\pi_\la(B)P^{\sgn\la}_{\sigma}\pi_\la(g)\big)\,|\la|^n\,d\la\\ 
&=c_n\int_{-\infty}^{+\infty}\sum_\sigma\tr
\big(\pi_\la(f)^*B_{\la,\sigma} P^{\sgn\la}_{\sigma}\pi_\la(g)\big)\,|\la|^n\,d\la\
. 
\endaligned
$$

By the arbitrariness of $\pi_\la(f)$ subject to conditions (i')-(iii'),
we conclude that  
$$
\int_{-\infty}^{+\infty}\sum_\sigma\|B_{\la,\sigma} P^{\sgn\la}_{\sigma}\pi_\la(g)\|^2_{HS}\,
|\la|^n\,d\la<\infty\
, 
$$
i.e. $g\in\dom (B)$, and that $\pi_\la(B'g)=\pi_\la(B)\pi_\la(g)$, 
i.e. $B'g=\tilde Bg$.

Finally, (v) is proved in a similar way.
\endproof

Consistently with the identity $\pi_\la\big(m(B)\big)=m\big(\pi_\la(B)\big)$, we write $\pi_{\la,\sigma}\big(m(B)\big)$ for $m\big(\pi_{\la,\sigma}(B)\big)$.
\vskip.3cm

\begin{remark}\label{--}
{\rm
As a typical instance of operations that will be done in the sequel,
consider an expression like $d\Delta_k\inv$. As soon as we find out
that the finite-dimensional operators $\pi_{\la,\sigma}(\Delta_k)$ are
invertible and depend smoothly on $\la$ (see the next Section), an
operator $\Psi$ satisfying the identity $\Psi\Delta_k=d$
 is automatically defined on $\S_0\Lambda^k$ with
values in $\S_0\Lambda^{k+1}$ by imposing that  
$$
\pi_\la(\Psi \om)=\sum_\sigma\pi_{\la,\sigma}(d)
\pi_{\la,\sigma}(\Delta_k)\inv P^\la_\sigma\pi_\la(\om)\ .
$$

Its closure $\bar\Psi$ is defined on the space
consists of the $\om\in L^2\Lambda^k$ such that
$$
\int_{-\infty}^{+\infty}\sum_\sigma
\|\pi_{\la,\sigma}(d)\pi_{\la,\sigma}(\Delta_k)^{-1}
P^\la_\sigma\pi_\la(\omega)\|^2_{HS}
\,|\la|^n\,d\la<\infty\
. 
$$

Notice that formal identities, like
\begin{itemize}
\item[(i)] $d\Delta_k\inv=(d\Delta_k^{-\half})\Delta_k^{-\half}$;
\smallskip
\item[(ii)]$\pi_\la(d\Delta_k\inv)=\pi_\la(d)\pi_\la(\Delta_k\inv)$;
\smallskip
\item[(iii)]$\pi_{\la,\sigma}(d\Delta_k\inv)=\pi_{\la,\sigma}(d)\pi_{\la,\sigma}
(\Delta_k\inv)$;
\end{itemize}
are fully justified on $\S_0\Lambda^k$. 
}
\end{remark}

In many instances we will make use of homogeneity properties of operators $B$ as those considered in Lemma \ref{s3.1}. As before, we assume that
$(V,\rho), (V',\rho')$ are  finite 
 dimensional  representations of $U(n)$.

 We assume that the multiplicative group $\R_+$ acts on $V$ by means
 of the linear representation $\gamma:\R_+\to \L(V)$ and on  $V'$ by
 means of the linear representation $\gamma':\R_+\to \L(V')$ 
 such that the operators 
 $\gamma (r)$ and $\gamma' (r)$ are self-adjoint,
and in such a
 way that each of these actions commutes with the corresponding action
 of $U(n)$ (on $V$ given by $\rho$ and on $V'$ given by $\rho'$). 
 
We
 also denote by $\delta_r$ the dilating automorphism of $H_n$ defined by  
 $$
 \del_r(z,t):=(r^{1/2}z,rt),\qquad r>0\ ,
 $$
 and let $\R_+$ act  on functions on $H_n$ by the representation 
 $$
 \beta(r)f:=f\circ \del_{r^{-1}}\ .
 $$
 
 Then $B$ is said to be {\it homogeneous of degree} $a$ if 
 \begin{equation}\label{homog}
B\circ(\beta\otimes\ga)(r)=r^{-a}\, (\beta\otimes\ga')(r)\circ B    
 \qquad  \mbox{on } \S_0 V, \ \mbox{for every  } r>0\ .               
\end{equation}

We shall repeatedly use the following lemma, which   applies in particular to operators such as $\de,\bar\de,d_H$ etc.

\begin{lemma}\label{density-lemma}
 Let $(V,\rho), (V',\rho')$ be finite 
 dimensional unitary representations of $U(n)$ and let
 $\beta,\gamma,\gamma'$ be as above.  
 
  If $B$ is a $U(n)$- equivariant, left-invariant  
operator as in Lemma \ref{s3.1} (i),  homogeneous in the sense of
\eqref{homog} for some $a\in\R$,  then  
$$
\overline {\range B}\cap \S_0V'=B(\S_0 V).
$$
\end{lemma}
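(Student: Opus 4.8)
The plan is to work on the Fourier transform side, using the spectral description of $B$ from Lemma \ref{s3.1}. The inclusion $B(\S_0V)\subseteq \overline{\range B}\cap\S_0V'$ is trivial (the range of $B$ on $\S_0V$ lands in $\S_0V'$ by hypothesis, and is certainly contained in the $L^2$-closure of the range of the closed extension), so the real content is the reverse inclusion: given $\om\in\S_0V'$ which lies in $\overline{\range B}$, one must produce $f\in\S_0V$ with $Bf=\om$. First I would fix $0<\delta<R$ and $N$ (or a finite set $X^\pm$ of $U(n)$-types) so that $\om\in\S_{\delta,R,X^\pm}V'$, so that $\pi_\la(\om)$ is supported in $\delta\le|\la|\le R$ and, for each such $\la$, $P^{\sgn\la}_\sigma\pi_\la(\om)=0$ for $\sigma\notin X^{\sgn\la}$, and all of this data is uniformly bounded.

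Next I would use the membership $\om\in\overline{\range B}$ to control the behaviour of $\om$ with respect to the spectral decomposition of $B_\la=\bigoplus_\sigma B_{\la,\sigma}P^{\sgn\la}_\sigma$. Since $\overline{\range B}=(\ker B^*)^\perp$ and, by the Plancherel formula \eqref{2.5}, orthogonality is fibered over $\la$ and $\sigma$, the condition $\om\perp\ker B^*$ forces $P^{\sgn\la}_\sigma\pi_\la(\om)\in\range B_{\la,\sigma}$ for a.e.\ $\la$ and every $\sigma\in\Sigma^{\rho,\rho',\sgn\la}$ (here one uses that $B_{\la,\sigma}$ is a linear operator between finite-dimensional spaces, so its range is closed and equals $(\ker B_{\la,\sigma}^*)^\perp$). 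Define $g_\la=\bigoplus_\sigma B_{\la,\sigma}^+P^{\sgn\la}_\sigma\pi_\la(\om)$, where $B_{\la,\sigma}^+$ is the Moore--Penrose pseudoinverse; then $B_\la g_\la=\pi_\la(\om)$ on the support $\delta\le|\la|\le R$. The point of restricting to $\S_0$ is precisely that this support is bounded away from $0$ and $\infty$: on any compact $\la$-interval in $(0,\infty)$ or $(-\infty,0)$, the smooth family $B_{\la,\sigma}$ has $B_{\la,\sigma}^+$ depending smoothly on $\la$ as long as the rank is locally constant, and the rank can jump only on a closed set; I would handle this either by noting rank is lower-semicontinuous and upper-semicontinuous on the relevant locus because $\om$ stays in the range (so one works on the open set where rank is locally constant and argues the jump set contributes nothing, or, cleaner, invokes homogeneity — see below — to reduce to a single value of $|\la|$). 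In any case $g_\la$ is, off a measure-zero set, a bounded measurable family supported in $\delta\le|\la|\le R$ with values in finitely many $U(n)$-types, hence by Lemma \ref{s3.1}(ii) and the characterization of $\S_0V$ via conditions (i')--(iii') it is of the form $\pi_\la(f)$ for some $f\in\S_0V$ — here one must still check the Schwartz regularity in $\la$ and in the $z$-variables, which is where the smoothness/homogeneity remarks are used to upgrade measurable to smooth.

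This last smoothness upgrade is the main obstacle, and this is exactly what the homogeneity hypothesis \eqref{homog} is for. Using $\beta\otimes\gamma'$-dilation invariance of $\om$ is not available (a fixed $\om$ need not be homogeneous), but the operators $\delta_r$ act on the Fourier side by rescaling $\la\mapsto r\la$ together with the fixed linear maps $\gamma(r),\gamma'(r)$, and \eqref{homog} says $B_\la$ intertwines these in a way that scales by $r^{-a}$. Consequently the pseudoinverses satisfy $B_{r\la}^+=r^{a}\,(\text{conjugate of }B_\la^+\text{ by the }\gamma\text{'s})$, so the whole family $B_\la^+$ for $\la>0$ is determined by its value at $\la=1$ by an explicit smooth (indeed analytic in $\log r$) formula; in particular $\la\mapsto B_\la^+\pi_\la(\om)$ inherits the Schwartz regularity of $\la\mapsto\pi_\la(\om)$ on $\delta\le|\la|\le R$, because on that set $\la\mapsto B_{\la,\sigma}$ has constant rank (the rank, being $\R_+$-homogeneous, is constant on all of $\la>0$). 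Likewise the negative half-line is handled by the same argument with $\bar U$ in place of $U$. Granting this, $g_\la=\pi_\la(f)$ with $f\in\S_{\delta,R,X^\pm}V\subseteq\S_0V$, and $Bf=\om$ since $\pi_\la(Bf)=B_\la\pi_\la(f)=B_\la g_\la=\pi_\la(\om)$ for all $\la\ne0$ and the Fourier transform is injective. This proves $\overline{\range B}\cap\S_0V'\subseteq B(\S_0V)$, and combined with the trivial inclusion gives the asserted equality.
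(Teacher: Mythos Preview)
Your proposal is correct and follows essentially the same approach as the paper: work on the Fourier side, use homogeneity to transport the fibered operators $B_{\la,\sigma}$ to $\la=\pm1$ (so that rank is automatically constant in $\la$ on each half-line), invert there via the Moore--Penrose pseudoinverse (the paper constructs this explicitly as $\tilde Q^\pm_\sigma$), and transport back to define a preimage $f\in\S_0V$ whose smoothness in $\la$ is inherited from that of $\pi_\la(\om)$ and the explicit scaling formula. The paper's write-up is more compact because it goes directly to the homogeneity reduction \eqref{3.4'} rather than first discussing the potential rank-jump issue and then dissolving it, but the underlying argument is the same.
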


  \proof 
We just have to verify that
 $$
\overline {\range B}\cap \S_0V'\subseteq B(\S_0 V)\ ,
$$
the other implication being contained in the assumptions.

The homogeneity of $B$ implies that
\begin{equation}\label{3.4}
\pi_\la(B)=|\la|^a \,(I\otimes \ga'(|\la|^{-1}))\, \pi_{\sgn\la}(B)\,(I\otimes \ga(|\la|))\ .
\end{equation}

Assume in fact that $f\in \S_0 V$. For $\la\ne0$, $\pi_\la(f)\in \L(\F,\F\otimes V)$ and, by \eqref{la>0} and \eqref{la<0}, 
$$
(I\otimes \ga(|\la|))\, \pi_\la(f)=(I\otimes
\ga(|\la|))\,\pi_{\sgn\la}\big(\beta(\la)f\big)=\pi_{\sgn\la}\big((\beta\otimes
\ga)(|\la|)f\big). 
$$

Similarly,
$$
(I\otimes \ga'(|\la|))\, \pi_\la(Bf)=\pi_{\sgn\la}((\beta\otimes \ga')(|\la|)(Bf)),
$$
and thus, by the homogeneity \eqref{homog} of $B,$
$$
(I\otimes \ga'(|\la|))\, \pi_\la(Bf)=|\la|^a\, \pi_{\sgn\la}(B\,((\beta\otimes \ga)(|\la|)f)\,)
=|\la|^a \,\pi_{\sgn\la}(B)(I\otimes \ga(|\la|))\, \pi_\la(f).
$$

This yields \eqref{3.4}.

Since $\ga$ and $\ga'$ commute with  $\rho$ and $\rho'$, $I\otimes \ga$ respects the decomposition 
$$
\F\otimes V=\bigoplus_{\sigma\in\Sigma^{\rho,\pm}}
\E^\pm_\sigma\ , 
$$
in \eqref{2.4}, we  have
\begin{equation}\label{3.4'}
B_{\la,\sigma}=|\la|^a \,\big(I\otimes \ga'(|\la|^{-1})\big)\, B_{\sgn\la,\sigma}\,\big(I\otimes \ga(|\la|)\big)\ ,
\end{equation}
where 
$B_{\la,\sigma}=\pi_{\la,\sigma}(B):\E^{\rho,\sgn\la}_\sigma\longrightarrow \E^{\rho',\sgn\la}_\sigma$ 
is the operator defined in \eqref{pi(B)}.

 We restrict now our attention to  $\la=\pm1$ and write, for simplicity, $B^\pm_\sigma$ instead of $B_{\pm1,\sigma}$.
  
  Since domain and codomain are finite dimensional, we have an inverse $Q^\pm_\sigma:\range {B^\pm_\sigma}\longrightarrow (\ker B^\pm_\sigma)^\perp$ of ${B^\pm_\sigma}_{|_{(\ker B^\pm_\sigma)^\perp}}$. Denote by $\tilde Q^\pm_\sigma$ the extension of $Q^\pm_\sigma$ to  $\E^{\rho,\pm}_\sigma$  equal to 0 on $(\range B^\pm_\sigma)^\perp.$  If $f\in\S_0V',$  define  the function $Jf$ by requiring that 
$$
P^{\sgn \la}_\sigma\pi_\la(Jf)=|\la|^{-a}(I\otimes \ga(|\la|^{-1}))\tilde Q^\pm_\sigma (I\otimes \ga'(|\la|))P^{\sgn \la}_\sigma\pi_\la(f).
$$

If  $f\in \overline {\range B}\cap \S_0V'$, then the range of $P_\sigma^{\sgn\la}\pi_\la(f)$ is contained in the range of $\pi_{\la,\sigma}(B)$.

Choose $0<\del<R$ such that (i) in the definition of $\S_0V$ holds for $f.$ Since  for $\del\le |\la|\le R$ the functions  $\ga'(|\la|)$ and
$\ga(|\la|^{-1})$ are smooth in $\la,$ it is easy to see that $Jf\in\S_0V.$
Moreover, applying \eqref{3.4'} to $g:=Jf,$ we see that  
$
\pi_{ \la}(B(Jf))=\pi_{\la}(f),
$
hence $f=B(Jf)\in  B(\S_0V).$
\qed

\begin{prop}\label{subspaces}
 Let $(V,\rho), (V',\rho')$, $\beta,\gamma,\gamma'$ be as above.  Then the following hold.
\begin{enumerate}
\item[\rm(i)] If $B$ is a $U(n)$- equivariant, left-invariant 
linear
operator,  homogeneous in the sense of
\eqref{homog}, and bounded from $L^2V$ to $L^2V'$, then $B$ satisfies the assumptions of Lemma \ref{s3.1} (i).
\item[\rm(ii)] Assume that $H\subset L^2V$ is a closed subspace, which is
invariant under left-translation by elements of $H_n$, under the
action of $U(n)$   and invariant under the dilations
$(\beta\otimes\ga)(r),\ r>0.$ Then 
$$
\S_0V=(\S_0V\cap H)\oplus (\S_0V\cap H^\perp)\ ,
$$ 
where  
$\S_0V\cap H$ is dense in $H$ and  $\S_0 V\cap H^\perp$ is dense in 
$H^\perp.$  
\end{enumerate}
\end{prop}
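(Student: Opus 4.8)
The plan is to use the Fourier-analytic description of $\S_0V$ developed in Section \ref{cores}, namely the filtration by the spaces $\S_{\del,R,X^\pm}V$, together with the observation that left-translation invariance, $U(n)$-invariance and dilation invariance of a closed subspace $H$ translate into pointwise (in $\la$) linear-algebraic conditions on the ranges of $\pi_\la(f)$. For part (i), the point is that a bounded, left-invariant, $U(n)$-equivariant operator $B$ already has a well-defined Fourier transform $\pi_\la(B)$ (it is a convolution operator with a tempered kernel), and one must only check that $B$ maps $\S_0V$ continuously into $\S_0V'$. I would argue this directly from the homogeneity relation \eqref{3.4}: boundedness forces $\pi_{\pm1}(B)$ to be a bounded operator on $\F\otimes V$, and homogeneity then shows $\pi_\la(B)$ is, for each fixed compact $\la$-interval away from $0$, a bounded operator that preserves the finite-dimensional spaces $\bigoplus_{j\le N}\P_j\otimes V$ (because $\gamma,\gamma'$ and $\rho,\rho'$ preserve the homogeneity degree), hence preserves conditions (ii') and (iii') defining $\S_0$; the Schwartz-continuity follows as in the proof of Lemma \ref{S_0} via explicit matrix coefficients. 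So $B$ satisfies the hypotheses of Lemma \ref{s3.1}(i).

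For part (ii), the strategy is: first decompose $L^2V$ itself. Since $H$ is invariant under left translations, its orthogonal projection $P_H$ commutes with left translations, hence is a (matrix-valued) convolution operator; since $H$ is $U(n)$-invariant and dilation invariant, $P_H$ is $U(n)$-equivariant and homogeneous of degree $0$. By part (i), $P_H$ satisfies the hypotheses of Lemma \ref{s3.1}, so by Lemma \ref{s2.2} (or rather its refinement in Lemma \ref{s3.1}) we get, for each $\la\ne0$ and each $\sigma\in\Sigma^{\rho,\sgn\la}$, an orthogonal projection $P_{H,\la,\sigma}$ on $\E^{\rho,\sgn\la}_\sigma$ with $\pi_\la(P_Hf)=\bigoplus_\sigma P_{H,\la,\sigma}P^{\sgn\la}_\sigma\pi_\la(f)$. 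Homogeneity of degree $0$ together with \eqref{3.4'} shows $P_{H,\la,\sigma}$ is conjugate (via the self-adjoint $\gamma(|\la|)$, which commutes with everything in sight) to $P_{H,\sgn\la,\sigma}$, so in particular its rank is locally constant in $\la$ on each half-line. Now given $f\in\S_0V$, choose $\del,R,X^\pm$ with $f\in\S_{\del,R,X^\pm}V$. The functions $f_1,f_2$ defined by $\pi_\la(f_i)=P_{H,\la}\pi_\la(f)$ and $(\mathrm{Id}-P_{H,\la})\pi_\la(f)$ respectively still satisfy (ii') and (iii') for the same $\del,R,X^\pm$ (the projection preserves the $U(n)$-types and the support in $\la$), and they are Schwartz by the same matrix-coefficient argument used in Lemma \ref{S_0} — here one needs that $P_{H,\la}$ depends smoothly on $\la$ on the relevant compact set, which is where the smoothness in Lemma \ref{s3.1}(i) is used. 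Hence $f_1=P_Hf\in\S_0V\cap H$ and $f_2\in\S_0V\cap H^\perp$, giving the splitting $\S_0V=(\S_0V\cap H)\oplus(\S_0V\cap H^\perp)$. Density of $\S_0V\cap H$ in $H$ follows because $\S_0V$ is dense in $L^2V$ (Lemma \ref{S_0}) and $P_H$ is continuous: for $h\in H$ take $g_n\in\S_0V$ with $g_n\to h$, then $P_Hg_n\in\S_0V\cap H$ and $P_Hg_n\to P_Hh=h$; symmetrically for $H^\perp$.

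The main obstacle is the smoothness/Schwartz-class bookkeeping: one must be sure that applying the projection $\pi_\la(P_H)$ to a function in $\S_{\del,R,X^\pm}V$ again lands in $\S$, i.e. that $P_Hf$ has Schwartz decay in the $H_n$-variables. This is exactly the kind of statement proved for the specific cutoff operators in Lemma \ref{S_0}, and here it rests on the smooth dependence of $P_{H,\la,\sigma}$ on $\la$ over $[\del,R]\cup[-R,-\del]$ (a compact set bounded away from $0$) together with the fact that, on that set, $P_{H,\la}$ acts on a fixed finite-dimensional space $\bigoplus_{\sigma\in X^{\sgn\la}}\E^{\sgn\la}_\sigma$; combining this with the explicit form of the matrix coefficients $h(z,\hat\la)$ of band-limited kernels (as in the proof of Lemma \ref{S_0}, citing \cite{Thanga}) yields the Schwartz property. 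Once this is in hand the rest is routine functional analysis.
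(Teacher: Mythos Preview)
Your proposal is correct and takes essentially the same approach as the paper: for (ii), apply (i) to the orthogonal projection $P_H$ onto $H$ (bounded, left-invariant, $U(n)$-equivariant, homogeneous of degree $0$), conclude $P_H(\S_0V)\subset\S_0V$, and get density from density of $\S_0V$ in $L^2V$; for (i), use boundedness to produce $\pi_\la(B)$, $U(n)$-equivariance for the block structure on the $\E^\pm_\sigma$, and homogeneity for smooth dependence on $\la$, then verify $B(\S_0V)\subset\S_0V'$ via the matrix-coefficient description of $\S_0$. The only difference is that the paper is more explicit about the initial construction of $B_\la$: rather than invoking an abstract $\pi_\la(B)$, it reads off the matrix coefficients $c^{\ell,i}_{h,k,j}(\la)$ from $B(g_\ell\otimes e_i)\in L^2V'$ (so a priori only $L^2$ in $\la$) and then uses homogeneity to upgrade to a smooth family --- also note that your parenthetical ``because $\gamma,\gamma'$ and $\rho,\rho'$ preserve the homogeneity degree'' misattributes the reason for preservation of the finite blocks, which is really $U(n)$-equivariance via Lemma~\ref{s2.1}.
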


\proof
As in the proof of Lemma \ref{s3.1}, let $\{\Phi_\ell\}_{\ell\in\N}$ be an enumeration of the orthonormal basis of monomials in $\F$ and set $E_{\ell,\ell'}F=\lan F,\Phi_{\ell'}\ran_\F\Phi_\ell$.
Let also $\{e_i\}$ and $\{e'_j\}$ be  (finite) bases of $V$ and $V'$ respectively.

We fix an interval $I=[a,b]$ with $0<a<b$ and, for every $\ell\in\N$, a function $g_\ell\in\S_0$ such that $\pi_\la(g_\ell)=E_{\ell,\ell}$ for $\la\in I$.
Then $B(g_\ell\otimes e_i)\in L^2V'$ and
\begin{equation}\label{new-c's}
\pi_\la\big(B(g_\ell\otimes e_i)\big)=\sum_j\Big(\sum_{h,k}c^{\ell,i}_{h,k,j}(\la)E_{h,k}\Big)\otimes e'_j\ ,
\end{equation}
with $c^{\ell,i}_{h,k,j}\in L^2(I)$ for every choice of the indices. Then almost every point  $\la\in I$ is a Lebesgue point for all $c^{\ell,i}_{h,k,j}$ and for $\sum_{h,k}|c^{\ell,i}_{h,k,j}(\la)|^2$. 

For every $f=\sum_if_i\otimes e_i\in\S_0V$ and for a.e. $\la\in I$,
$$
\pi_\la(f)=\sum_{i,\ell}\pi_\la\big((f_i*g_\ell)*(g_\ell\otimes e_i)\big)\ ,
$$
where the sum is finite (say over $\ell\le N$). The invariance of $B$ under translations by elements $(0,t)$ of the center of $H_n$ implies that $B$ preserves the $\la$-support of the group Fourier transform. Therefore, we also have
$$
\pi_\la(Bf)=\sum_{i,\ell}\pi_\la\big(B((f_i*g_\ell)*(g_\ell\otimes e_i))\big)\ ,
$$
for a.e. $\la\in I$. On the other hand, $B\big((f_i*g_\ell)*(g_\ell\otimes e_i)\big)=(f_i*g_\ell)*B(g_\ell\otimes e_i)$. Hence, for a.e. $\la\in I$ (say, $\la\in\Lambda$),
$$
\pi_\la(Bf)=\sum_{i,\ell}\pi_\la\big(B(g_\ell\otimes e_i)\big)\pi_\la(f_i*g_\ell)\ .
$$

The same computations in the proof of Lemma \ref{s3.1} produce an infinite matrix $C_{i,j}(\la)=\big(c^{\ell,i}_{h,\ell,j}(\la)\big)_{h,\ell}$ with at most $N$ nonzero entries on each row, defined for $\la\in\Lambda$. Defining $B_\la$ by \eqref{B_la}, we have that
\begin{equation}\label{identity}
\pi_\la(Bf)=B_\la\pi_\la(f)\ ,
\end{equation}
 for $\la\in\Lambda$.
 
 Now, the homogeneity of $B$ easily implies that, for $\la,\la'\in\Lambda$, 
 $$
 B_\la=(\la/\la')^a \,\big(I\otimes \ga'(\la'/\la)\big)\, B_{\la'}\,\big(I\otimes \ga(\la/\la')\big)\ .
 $$

This identity allows to extend $B_\la$ as a smooth function  of $\la$ to every $\la>0$. 

Obviously, the same construction can be made for $\la<0$.
Then, for every $f\in\S_0V$, the identity \eqref{identity}
holds for every $\la\ne0$, which shows that $B(\S_0V)\subset\S_0V'$. Then Lemma \ref{s2.2} and the following remarks imply that we are in the hypotheses of Lemma \ref{s3.1} (ii).

\medskip
In order to prove (ii), let us denote by $P$ the orthogonal projection
from $L^2V$ onto  
$H.$ Since $H$ is invariant under left-translations, $U(n)$-invariant
and dilation invariant, $P$ is a left-invariant operator which is
$U(n)$-equivariant and homogeneous of degree 0. Moreover, by the Schwartz kernel
theorem, it  is given by the convolution $Pf=f*K$ with a tempered
distribution kernel $K$ taking values in $\L(V,V).$  
We may therefore apply (i) to $B:=P$ and conclude by means of Lemma \ref{density-lemma} that
$P(\S_0V)\subset\S_0V$, and similarly, $(I-P)(\S_0V)\subset\S_0V$. 
\qed

\bigskip

\setcounter{equation}{0}
\section{First properties of $\Delta_k$; exact and closed
  forms}\label{firstproperties}

The domain $\dom(\Delta_0)$, defined according to Lemma \ref{s3.1}, is
the ``left-invariant Sobolev space'' $H^2$ consisting of those $f\in
L^2$ such that $Xf,XYf\in L^2$ for every $X,Y\in\h_n$. This follows
from the $L^2$ boundedness of the operators $X(1+\Delta_0)^{-\half}$,
$XY(1+\Delta_0)\inv$ \cite{MPR}. We also recall that the operators
$XY\Delta_0\inv$, $X\Delta_0^{-\half}$ are bounded on $L^2$ for every
$X,Y\in\h_n$. \medskip

For $k\ge1$, we have the analogous description of $\dom(\Delta_k)$. 

\begin{lemma}\label{s4.1}
For every $k$, $\dom(\Delta_k)=H^2\Lambda^k$.
\end{lemma}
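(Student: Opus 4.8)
The plan is to reduce the statement for general $k$ to the already-known case $k=0$, exploiting the matrix expression \eqref{1.12} for $\Delta_k$ together with the fact that $\Delta_0=L-T^2$ and the known $L^2$-boundedness of the ``second-order Riesz operators'' $XY\Delta_0^{-1}$ and $X\Delta_0^{-1/2}$ for $X,Y\in\h_n$. Throughout, Lemma \ref{s3.1} provides the description of $\dom(\Delta_k)$ as the set of $\omega\in L^2\Lambda^k$ for which $\pi_\la(\Delta_k)\pi_\la(\omega)$ is square-integrable against $|\la|^n\,d\la$, and similarly $H^2\Lambda^k$ is characterized by $X\omega, XY\omega\in L^2$ for all $X,Y\in\h_n$, i.e. by square-integrability of $\pi_\la(X)\pi_\la(\omega)$ and $\pi_\la(X)\pi_\la(Y)\pi_\la(\omega)$. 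So both domains are spectrally described, and it suffices to compare the two families of operator-valued symbols.

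First I would prove the inclusion $\dom(\Delta_k)\supseteq H^2\Lambda^k$. Writing $\omega=\omega_1+\theta\wedge\omega_2$ and using \eqref{1.12}, every entry of the matrix representing $\Delta_k$ is a sum of terms of the form (scalar second-order left-invariant operator) $+$ (zero-order bundle map); explicitly $\Delta_H-T^2$ is, on each $(p,q)$-component, $L+i(q-p)T-T^2$ by \eqref{1.10}, while the off-diagonal blocks $i\bar\de-i\de$ and $i\de^*-i\bar\de^*$ are first order, and $e(d\theta)i(d\theta)$, $i(d\theta)e(d\theta)$ are zero order. Hence $\Delta_k\omega$ is a finite combination, with constant-coefficient bundle maps, of the components of $\omega$, of $X\omega$ and of $XY\omega$ for $X,Y$ among $Z_j,\bar Z_j,T$. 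Therefore if $\omega\in H^2\Lambda^k$ then $\Delta_k\omega\in L^2$, and since $\Delta_k$ is the $L^2$-closure of its restriction to $\S_0\Lambda^k$ (Lemma \ref{s3.1}(iii)) and $H^2\Lambda^k$ is the graph-closure of $\S_0\Lambda^k$ under $\sum_{X,Y}\|XY\cdot\|_2$, a routine approximation argument gives $\omega\in\dom(\Delta_k)$.

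For the reverse inclusion $\dom(\Delta_k)\subseteq H^2\Lambda^k$, the point is to bound the second-order derivatives $XY\omega$ by $\Delta_k\omega$ and $\omega$ in $L^2$. Equivalently, on $\S_0\Lambda^k$ one wants the operators $XY\Delta_k^{-1}$ (and $X\Delta_k^{-1/2}$) to be $L^2$-bounded; by Lemma \ref{s3.1} this is the assertion that, on each $\E^{\rho_k,\pm}_\sigma$ and uniformly in $\la,\sigma$, the finite-dimensional matrices $\pi_{\la,\sigma}(XY)\,\pi_{\la,\sigma}(\Delta_k)^{-1}$ have bounded norm. The natural route is a \emph{comparison with $\Delta_0\otimes I = (L-T^2)\otimes I$ acting componentwise on $L^2\Lambda^k$}: I would show that $\Delta_k - (L-T^2)\otimes I$ is, modulo lower-order terms, controlled by $\Delta_0\otimes I$, so that $\Delta_k(\Delta_0\otimes I + I)^{-1}$ and its inverse are both bounded; then $XY\Delta_k^{-1} = \big(XY(\Delta_0\otimes I+I)^{-1}\big)\big((\Delta_0\otimes I+I)\Delta_k^{-1}\big)$ is a product of bounded operators, the first factor bounded by the scalar $k=0$ result applied componentwise, the second by the comparison. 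Making ``controlled by'' precise is where one uses that the diagonal blocks of \eqref{1.12} are $L-T^2$ plus the bounded operators $i(q-p)T(\Delta_0)^{-1/2}\cdot\Delta_0^{1/2}$-type and $e(d\theta)i(d\theta)$, and that the off-diagonal blocks $i(\bar\de-\de)$, $i(\de^*-\bar\de^*)$ are first-order, hence of the form (bounded)$\circ\,\Delta_0^{1/2}$; a Schur/triangular-estimate or a direct positivity argument (e.g. $\Delta_k \ge c\,(\Delta_0\otimes I) - C$ in the form sense, which follows from $\Delta_k\ge 0$ together with the explicit lower-order perturbation) then yields the two-sided comparison.

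I expect the main obstacle to be exactly this last ellipticity/comparison estimate: showing rigorously that $\dom(\Delta_k)$ is no larger than $H^2\Lambda^k$, i.e. that $\Delta_k$ is ``elliptic in the $H^2$-sense'' despite not acting componentwise. The cleanest packaging is probably to verify, at the level of the symbols $\pi_{\la,\sigma}$, the operator inequality $\|\pi_{\la,\sigma}(XY)v\| \le C\big(\|\pi_{\la,\sigma}(\Delta_k)v\| + \|v\|\big)$ uniformly in $\la,\sigma,v$, which reduces — via \eqref{1.10}, \eqref{1.7}, \eqref{1.7tris} and the explicit forms \eqref{pi(Z)} of $\pi_\la(Z_\ell)$ — to the corresponding uniform bound for the Hermite-type operator $L-T^2$ that underlies the $k=0$ case, plus a bounded perturbation coming from the finitely many zero- and first-order off-diagonal terms; invoking that $\pi_{\la,\sigma}(\Delta_k)$ is invertible with smooth dependence on $\la$ (as will be established in the next section, and as already anticipated in Remark \ref{--}) closes the argument. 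Once both inclusions are in hand, Lemma \ref{s4.1} follows.
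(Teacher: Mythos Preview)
Your first inclusion $H^2\Lambda^k\subset\dom(\Delta_k)$ is fine and matches the paper. The reverse inclusion, however, is not closed by the tools you propose.

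You correctly identify the key structural fact: writing $\Delta_k=(\Delta_0\otimes I)+P$, the perturbation $P$ has entries that are at most first order in the left-invariant vector fields (the off-diagonal blocks $i(\bar\de-\de)$, $i(\de^*-\bar\de^*)$ are first order; on the diagonal, $\Delta_H-L=i(q-p)T$ is first order and $e(d\theta)i(d\theta)$, $i(d\theta)e(d\theta)$ are zero order). But the two devices you offer to turn this into $\dom(\Delta_k)\subset H^2\Lambda^k$ both fall short. The form inequality $\Delta_k\ge c\,(\Delta_0\otimes I)-C$ only controls $\|\Delta_0^{1/2}\omega\|$ by $\|\Delta_k^{1/2}\omega\|$, i.e.\ it gives an $H^1$-inclusion, not $H^2$; it does \emph{not} imply that $(\Delta_0+I)\Delta_k^{-1}$ is bounded. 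And the symbol-level route---invoking invertibility of $\pi_{\la,\sigma}(\Delta_k)$ from Corollary~\ref{s4.3}---gives no \emph{uniform} quantitative comparison with $\pi_{\la,\sigma}(\Delta_0)$; you would still need exactly the operator-norm estimate $\|(\Delta_0+I)v\|\le C(\|\Delta_k v\|+\|v\|)$, which is what is at issue.

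The paper closes this gap in one line via the Kato--Rellich theorem. From $\|P\omega\|_2\le C(\|\omega\|_2+\|\Delta_0^{1/2}\omega\|_2)$ (which is precisely your ``(bounded)$\circ\Delta_0^{1/2}$'' observation) one gets, by Cauchy--Schwarz and Young,
\[
\|P\omega\|_2\le C\eps\|\Delta_0\omega\|_2+C(1+\eps^{-1})\|\omega\|_2
\]
for every $\eps>0$, so $P$ is $\Delta_0$-bounded with relative bound $0<1$. Kato--Rellich then says $\Delta_0+P$ is self-adjoint on $\dom(\Delta_0)=H^2\Lambda^k$; since $\Delta_k$ and $\Delta_0+P$ agree on the common core $\S_0\Lambda^k$, their closures coincide and $\dom(\Delta_k)=H^2\Lambda^k$. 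This replaces your two-sided comparison argument entirely and avoids any forward reference.
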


\proof It is evident from \eqref{1.12} that $H^2\Lambda^k\subset
\dom(\Delta_k)$. 

Since $\Delta_0=L-T^2$, identifying $\Delta_k$ with the matrix \eqref{1.12} 
 we have
 
$$
\aligned 
\Delta_k
&=\bpm \Delta_0&0\\ 0&\Delta_0\epm 
+\bpm \Delta_H-L+e(d\theta)i(d\theta)&i\bar\de-i\de\\ \\ 
i\de^*-i\bar\de^*&\Delta_H-L+i(d\theta)e(d\theta)\epm \\
&=\Delta_0+P\ .
\endaligned
$$
where $P$ is symmetric on $H^2\Lambda^k$.
By \eqref{1.10}, each entry in $P$ involves at most first-order
derivatives in the left-invariant vector fields. Therefore, for
$\om\in H^2\Lambda^k$, 
$$
\aligned
\|P\om\|_2&\le C\big(\|\om\|_2+\|\Delta_0^\half\om\|_2\big)\\
&\le C\big(\|\om\|_2+\|\Delta_0\om\|_2^\half\|\om\|_2^\half\big)\\
&\le C(1+\eps\inv)\|\om\|_2+C\eps\|\Delta_0\om\|_2\ ,
\endaligned
$$
for every $\eps>0$. By the Kato-Rellich theorem \cite{Kato},
$\Delta_0+P$ is self-adjoint on $\dom(\Delta_0)=H^2\Lambda^k$. 
\endproof

The following statement is an immediate consequence.

\begin{prop}\label{s4.2}
$\Delta_k$ is injective on its domain.
\end{prop}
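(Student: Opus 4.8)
The plan is to deduce injectivity of $\Delta_k$ on $\dom(\Delta_k)=H^2\Lambda^k$ directly from Lemma \ref{s3.1} together with the explicit expression \eqref{1.12} for $\Delta$. By Lemma \ref{s3.1}(iii), an element $\om\in\dom(\Delta_k)$ with $\Delta_k\om=0$ must satisfy $\pi_{\la,\sigma}(\Delta_k)P^{\sgn\la}_\sigma\pi_\la(\om)=0$ for a.e. $\la\ne0$ and all $\sigma$; so it suffices to show that the finite-dimensional operator $\pi_\la(\Delta_k)$ is injective on $\F^\infty\otimes\Lambda^k$ for every $\la\ne0$. Equivalently, since $\Delta_k$ is symmetric and $\pi_\la(\Delta_k)$ is self-adjoint, it suffices to show that $\pi_\la(\Delta_k)$ has no nonzero kernel, i.e. is strictly positive.

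First I would use the fact that $\Delta_k=dd^*+d^*d$ is a nonnegative operator, so each $\pi_\la(\Delta_k)=\pi_\la(d)\pi_\la(d)^*+\pi_\la(d)^*\pi_\la(d)\ge0$, and a vector $F$ in its kernel must satisfy $\pi_\la(d)F=0$ and $\pi_\la(d^*)F=0$ simultaneously. Writing $F=(F_1,F_2)$ according to the decomposition $\om=\om_1+\theta\wedge\om_2$ and using the matrix form \eqref{1.11} of $d$ and $d^*$, the conditions $\pi_\la(d)F=0$, $\pi_\la(d^*)F=0$ become a system involving $\pi_\la(d_H)$, $\pi_\la(d_H^*)$, $\pi_\la(T)=i\la$, $\pi_\la(e(d\theta))$ and $\pi_\la(i(d\theta))$. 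The key point is that $\pi_\la(T)=i\la$ is invertible: from the $(2,1)$ and $(1,2)$ entries one reads off $i\la F_1 = (\text{stuff involving } F_2)$ type relations, which should force $F=0$. More precisely, I expect to extract from $\pi_\la(d)F=0$ and $\pi_\la(d^*)F=0$ that $\pi_\la(\Delta_0)F = (\pi_\la(L)-\pi_\la(T)^2)F$ is controlled, and since $\pi_\la(L)\ge|\la|n>0$ and $-\pi_\la(T)^2=\la^2>0$ on $\F$, the operator $\pi_\la(\Delta_0)=\pi_\la(L)+\la^2$ is strictly positive; combined with $\pi_\la(\Delta_k)\ge \pi_\la(\Delta_0)\otimes I$ up to the lower-order perturbation $P$, or better, directly via the harmonic-form characterization, this yields $F=0$.

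A cleaner route, which I would prefer to carry out, is: a harmonic $L^2$-form $\om$ (i.e. $\Delta_k\om=0$) satisfies $d\om=0$ and $d^*\om=0$ in the $L^2$ sense, hence $\pi_\la(d)\pi_\la(\om)=0$ and $\pi_\la(d^*)\pi_\la(\om)=0$ for a.e.\ $\la$. Apply $\pi_\la(d^*)$ to the first and $\pi_\la(d)$ to the second and add: nothing new. Instead, note that the central vector field satisfies $T\om=0$ as well, because on the core $\Delta_k$ commutes with $T$ and $T^2$ appears with a definite sign; rigorously, from the matrix \eqref{1.12}, $0=\lan\Delta_k\om,\om\ran\ge \lan(\Delta_H-T^2)\om,\om\ran$ once one checks the remaining terms $e(d\theta)i(d\theta)$, $i(d\theta)e(d\theta)$ and the off-diagonal $i\bar\de-i\de$, $i\de^*-i\bar\de^*$ assemble into a nonnegative operator — indeed they do, being built from $d,d^*$. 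Then $\lan -T^2\om,\om\ran = \|T\om\|_2^2 \le 0$ forces $T\om=0$, whence $\pi_\la(\om)=0$ for all $\la\ne0$ since $\pi_\la(T)=i\la\ne0$; by Plancherel, $\om=0$.

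The main obstacle is making the inequality $\lan\Delta_k\om,\om\ran\ge\|T\om\|_2^2$ rigorous, i.e.\ checking that $\Delta_H+e(d\theta)i(d\theta)$ on the first component, $\Delta_H+i(d\theta)e(d\theta)$ on the second, together with the off-diagonal block $\bigl(\begin{smallmatrix}0 & i\bar\de-i\de\\ i\de^*-i\bar\de^* & 0\end{smallmatrix}\bigr)$, form a nonnegative operator on $H^2\Lambda^k$. This is not entirely obvious term-by-term because $e(d\theta)i(d\theta)+i(d\theta)e(d\theta)$ is nonnegative but $\Delta_H$ alone need not dominate the off-diagonal cross terms pointwise; however, it follows abstractly because this whole block is exactly $\Delta_k + T^2 = dd^*+d^*d - \pi\text{-picture of }(-T^2)$, wait — more directly, $\Delta_k+T^2\otimes I = dd^*+d^*d + T^2\otimes I$? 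No: one should simply argue $\|d\om\|_2^2+\|d^*\om\|_2^2 = \lan\Delta_k\om,\om\ran = 0$, then observe that on the core $d^*d + dd^* - (\text{something})$... The genuinely safe argument is the representation-theoretic one: $\Delta_k\om=0$ gives $\pi_{\la,\sigma}(\Delta_k)P^{\sgn\la}_\sigma\pi_\la(\om)=0$; since $\Delta_k = \Delta_0 + P$ with $\Delta_0 = L - T^2$ and $\pi_\la(\Delta_0) = \pi_\la(L)+\la^2 \ge (n|\la|+\la^2)I > 0$, and since $\pi_\la(P)$ is self-adjoint, this alone is not enough — one needs $\pi_\la(\Delta_k)>0$. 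But $\pi_\la(\Delta_k) = \pi_\la(d)^*\pi_\la(d) + \pi_\la(d)\pi_\la(d)^* \ge 0$ with kernel $= \ker\pi_\la(d)\cap\ker\pi_\la(d)^*$; and on this intersection one computes, using \eqref{1.11}, that $\pi_\la(T) = i\la$ annihilates the form, which is impossible unless the form is $0$. I would present this last version, with the bookkeeping of the $2\times2$ matrices \eqref{1.11} as the only real computation.
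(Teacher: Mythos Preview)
Your ``cleaner route'' is exactly the paper's argument, and you gave up on it one step too early. Writing $\om=\om_1+\theta\wedge\om_2$ and expanding $\lan\Delta_k\om,\om\ran$ from \eqref{1.12}, the only nontrivial piece is the cross term $2\RE\big\lan[i(d\theta),d_H]\om_1,\om_2\big\ran$. Rewrite it as $2\RE\lan d_H\om_1,e(d\theta)\om_2\ran-2\RE\lan i(d\theta)\om_1,d_H^*\om_2\ran$ and apply $2\RE\lan a,b\ran\ge-\|a\|^2-\|b\|^2$ to each piece; the four negative terms are absorbed by $\|d_H\om_1\|^2$, $\|e(d\theta)\om_2\|^2$, $\|i(d\theta)\om_1\|^2$, $\|d_H^*\om_2\|^2$, all of which are already present on the diagonal (recall $\lan\Delta_H\om_j,\om_j\ran=\|d_H\om_j\|^2+\|d_H^*\om_j\|^2$). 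What remains is
\[
\lan\Delta_k\om,\om\ran\ge\|d_H^*\om_1\|_2^2+\|d_H\om_2\|_2^2+\|T\om_1\|_2^2+\|T\om_2\|_2^2\,,
\]
so $\Delta_k\om=0$ forces $T\om=0$, hence $\pi_\la(\om)=0$ for a.e.\ $\la$, hence $\om=0$. Your worry that $\Delta_H$ alone need not dominate the off-diagonal terms is correct, but the $e(d\theta)i(d\theta)$ and $i(d\theta)e(d\theta)$ diagonal entries supply exactly what is missing.

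Your final Fourier-side route is also viable, but the phrase ``$\pi_\la(T)$ annihilates the form'' is not what the matrix bookkeeping actually gives. From the second row of $\pi_\la(d)F=0$ and the first row of $\pi_\la(d^*)F=0$ one reads $i\la F_1=\pi_\la(d_H)F_2$ and $i\la F_2=\pi_\la(d_H^*)F_1$; composing yields $-\la^2\|F_1\|^2=\|\pi_\la(d_H^*)F_1\|^2\ge0$, which forces $F_1=0$ when $\la\ne0$, and similarly $F_2=0$. This is the same positivity mechanism as the paper's estimate, just transported to the Fock side; either version is fine, but you should carry the computation through rather than leave it as ``the only real computation.''
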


\proof  Let $\om=\om_1+\theta\wedge\om_2\in\dom(\Delta_k)$, with
$\om_1,\om_2$ horizontal. Then 
$$
\aligned
\lan\Delta_k\om,\om\ran&=\lan\Delta_H\om_1,\om_1\ran
+\| T\om_1\|_2^2+\| i(d\theta)\om_1\|_2^2+\big\lan
\big[d_H^*,e(d\theta)\big]\om_2,\om_1\big\ran \\
&\qquad +\big\lan\big[i(d\theta),d_H\big]\om_1,\om_2\big\ran
+\lan\Delta_H\om_2,\om_2\ran+\| T\om_2\|_2^2+\| e(d\theta)\om_2\|_2^2\
. 
\endaligned
$$

Notice that
$$
\lan\Delta_H\om_1,\om_1\ran=\|d_H\om_1\|_2^2+\|d^*_H\om_1\|_2^2\ ,
$$
and the same holds for $\om_2$. Moreover,
$$
\aligned
\big\lan \big[d_H^*,e(d\theta)\big]
&\om_2,\om_1\big\ran+\big\lan\big[i(d\theta),d_H\big]\om_1,\om_2\big\ran\\
&=2\RE\big\lan\big[i(d\theta),d_H\big]\om_1,\om_2\big\ran\\
&=2\RE\big\lan d_H\om_1,e(d\theta)\om_2\big\ran-2\RE\big\lan
i(d\theta)\om_1,d_H^*\om_2\big\ran\\ 
&\ge -\|d_H\om_1\|_2^2-\|e(d\theta)
\om_2\|_2^2-\|i(d\theta)\om_1\|_2^2-\|d_H^*\om_2\|_2^2\ .
\endaligned
$$

It follows that
\begin{equation}\label{4.1}
\lan\Delta_k\om,\om\ran\ge
\|d^*_H\om_1\|_2^2+\|d_H\om_2\|_2^2+\|T\om_1\|_2^2+\|T\om_2\|_2^2\ . 
\end{equation}

Therefore, if $\Delta_k\om=0$, then $T\om=0$. Since
$\pi_\la(T)=i\la\,\id$, this implies that $\pi_\la(\om)=0$ for almost
every $\la$, and finally that $\om=0$. 
\endproof

\begin{cor}\label{s4.3}
 For every $\la>0$ and $\sigma\in\Sigma^\pm$,
 $d\pi_{\pm\la,\sigma}(\Delta_k)$ is invertible and for every pair of
elements 
 $u,v\in \E_\sigma^\pm$, 
 $\lan\pi_{\pm\la,\sigma}(\Delta_k)u,v\ran$ is a polynomial in
 $\la$. For every $\al>0$, $\Delta_k^{-\al}$ maps $\S_0\Lambda^k$ into
 itself. 
\end{cor}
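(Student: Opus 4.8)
The plan is to extract everything from the quadratic estimate \eqref{4.1} together with the polynomial dependence already noted for $\Delta_k$ in Remark following Lemma \ref{s3.1} and the structure of the operators $\pi_{\la,\sigma}$. First I would observe that by Lemma \ref{s4.1} and Lemma \ref{s3.1}(i), $\Delta_k$ is of the form $B$ considered there, so that $\pi_\la(\Delta_k)=\bigoplus_\sigma\pi_{\la,\sigma}(\Delta_k)$ with each $\pi_{\la,\sigma}(\Delta_k):\E_\sigma^{\sgn\la}\to\E_\sigma^{\sgn\la}$ a self-adjoint operator depending smoothly on $\la$; moreover, since $\Delta_k$ is a differential operator whose coefficients are polynomial in $Z_j,\bar Z_j,T$ (see \eqref{1.12}, \eqref{1.10}), the matrix entries $\lan\pi_{\pm\la,\sigma}(\Delta_k)u,v\ran$ in any fixed orthonormal basis of the finite-dimensional space $\E_\sigma^\pm$ are polynomials in $\la$. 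This is really just bookkeeping: one writes $\Delta_k$ in terms of $L$, $T$, $e(d\theta)$, $i(d\theta)$, $\de$, $\bar\de$ and their adjoints, and applies $\pi_\la$ using $\pi_\la(T)=i\la$, \eqref{pi(Z)}, and the fact that $\pi_\la(L)$ acts as $|\la|(2m+n)$ on $\P_m$; on the fixed finite-dimensional subspace $\E_\sigma^\pm\subset\F^\infty\otimes\Lambda^k$ (Lemma \ref{s2.1}) only finitely many $m$'s occur, so everything is polynomial in $\la$ (for $\la>0$; for $\la<0$ one uses \eqref{la<0} similarly).

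Next I would prove invertibility. Fix $\la\ne0$ and $\sigma$, and let $u\in\E_\sigma^{\sgn\la}$ be in the kernel of $\pi_{\la,\sigma}(\Delta_k)$. The point is that $\pi_{\la,\sigma}(\Delta_k)$ is the compression of $\pi_\la(\Delta_k)$ to an invariant subspace, and by Proposition \ref{s4.2} (via \eqref{4.1}) the operator $\Delta_k$ satisfies $\lan\Delta_k\om,\om\ran\ge\|T\om\|_2^2$ on its domain, which on the Fourier side says $\pi_\la(\Delta_k)\ge\la^2\,\id$ as a positive operator on $\F\otimes\Lambda^k$ for each $\la\ne0$ — this follows by testing \eqref{4.1} against $\om\in\S_0\Lambda^k$ supported in a narrow $\la$-band and using the Plancherel identity in Lemma \ref{s3.1}(v), or simply by noting that \eqref{4.1} translates entry-by-entry into the operator inequality $\pi_\la(\Delta_k)-\la^2\,\id\ge0$. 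Hence $\pi_{\la,\sigma}(\Delta_k)\ge\la^2\,\id>0$ on the finite-dimensional space $\E_\sigma^{\sgn\la}$, so it is invertible with $\|\pi_{\la,\sigma}(\Delta_k)\inv\|\le\la^{-2}$.

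Finally, for the mapping property of $\Delta_k^{-\al}$: given $\om\in\S_{\del,R,X^\pm}\Lambda^k$, we have $\pi_\la(\om)=0$ for $|\la|\notin[\del,R]$, and $\Delta_k^{-\al}$ acts by $\pi_\la(\Delta_k^{-\al}\om)=\bigoplus_\sigma\pi_{\la,\sigma}(\Delta_k)^{-\al}P_\sigma^{\sgn\la}\pi_\la(\om)$ by Lemma \ref{s3.1}(v) with $m(\lambda)=\lambda^{-\al}$ (legitimate since $\pi_{\la,\sigma}(\Delta_k)$ is strictly positive). Because $\pi_{\la,\sigma}(\Delta_k)$ depends smoothly on $\la$ and is uniformly bounded below by $\del^2$ on the compact set $\del\le|\la|\le R$, the operator-valued function $\la\mapsto\pi_{\la,\sigma}(\Delta_k)^{-\al}$ is smooth there; so by Lemma \ref{s3.1}(ii) the map $\om\mapsto\Delta_k^{-\al}\om$ sends $\S_0\Lambda^k$ into $\S_0\Lambda^k$, and it respects the $\la$-support and (since each $\pi_{\la,\sigma}(\Delta_k)^{-\al}$ preserves $\E_\sigma^{\sgn\la}$) the $U(n)$-type conditions (iii'), so it indeed maps each $\S_{\del,R,X^\pm}\Lambda^k$ into itself. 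The main obstacle, if any, is the first step — verifying carefully that all the relevant matrix entries are genuinely polynomial (and not merely smooth) in $\la$, which requires tracking the $\sqrt{|\la|}$ factors in \eqref{pi(Z)}: these combine in pairs inside $\Delta_k$ (which is second order and has the parity structure making $Z_j\bar Z_j$, $\bar Z_jZ_j$ and $T$ appear) so that only integer powers of $\la$ survive; once this is checked the rest is routine.
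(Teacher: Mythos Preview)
Your invertibility argument is correct and is exactly the paper's argument: from \eqref{4.1} one gets $\lan\Delta_k\om,\om\ran\ge\|T\om\|_2^2$, hence $\pi_{\la,\sigma}(\Delta_k)\ge\la^2\,\id$ on each $\E_\sigma^{\sgn\la}$, giving invertibility with the explicit bound. Your treatment of the mapping property $\Delta_k^{-\al}:\S_0\Lambda^k\to\S_0\Lambda^k$ is also correct and simply spells out what the paper dismisses as ``obvious.''

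There is, however, a genuine gap in your polynomial argument. You claim that the $\sqrt{|\la|}$ factors from \eqref{pi(Z)} ``combine in pairs inside $\Delta_k$ (which is second order \dots) so that only integer powers of $\la$ survive.'' This is not correct: looking at the matrix \eqref{1.12}, the off-diagonal entries $i\bar\de-i\de$ and $i\de^*-i\bar\de^*$ are \emph{first-order} in the horizontal vector fields. For $\la>0$ one has $\pi_\la(\de)=\la^{1/2}\pi_1(\de)$, so these off-diagonal blocks contribute $\la^{1/2}$ times a fixed operator on $\E_\sigma^+$. If $u=(u_1,u_2)$ and $v=(v_1,v_2)$ are decomposed along $\Lambda^k_H\oplus(\theta\wedge\Lambda^{k-1}_H)$, the cross terms $\lan\pi_\la(i\bar\de-i\de)u_2,v_1\ran$ carry a genuine $\la^{1/2}$ factor with no mechanism to cancel it. So the matrix entries are polynomial in $\la^{1/2}$, not in $\la$.

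That said, the paper's own proof does not address this point either (it just says ``the rest is obvious''), and the polynomial claim is never used in the sequel: every application of Corollary~\ref{s4.3} (Remark~\ref{--}, the Riesz transforms, the core arguments) needs only invertibility and smooth dependence on $\la\ne0$, both of which your argument does establish. So the defect is in the statement rather than in anything load-bearing; but you should not claim to have verified polynomial dependence in $\la$, because your parity reasoning does not apply to the off-diagonal blocks of \eqref{1.12}.
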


\proof By \eqref{4.1}, $\|\Delta_k^\half\om\|_2\ge \|T\om\|_2$ for
every $\om\in\S_0\Lambda^k$. This implies that  
$$
\|\pi_{\la,\sigma}(\Delta_k)^\half \xi\|\ge|\la|\|\xi\|, \quad \xi\in \E^{\sgn\la}_\sigma,
$$ 
for every $\la,\sigma$ with $\la\ne0$. The rest is obvious.
\endproof

We call {\it Riesz transforms} the operators
$$
R_k=d\Delta_k^{-\half}:\S_0\Lambda^k\longrightarrow \S_0\Lambda^{k+1}\ ,
$$
and their adjoints
$$
R_k^*=\Delta_k^{-\half}d^*:\S_0\Lambda^{k+1}\longrightarrow \S_0\Lambda^k\ .
$$

\begin{lemma}\label{s4.4}
The following
identities hold (with the convention that $R_{-1}=R_{2n+1}=0$):
\begin{eqnarray}
& R_k=\Delta_{k+1}^{-\half}d\ ,
\quad R_k^*=d^*\Delta_{k+1}^{-\half}\ , \label{4.2}\\
& R_{k+1}R_k=R_k^*R_{k+1}^*=0\ ,\label{4.3}\\
& R_k^*R_k+R_{k-1}R_{k-1}^*=\id\ . \label{4.4}
\end{eqnarray}
In particular, $R_kR_{k-1}=0.$

Moreover, if $1\le k\le 2n$, $R_{k-1}R_{k-1}^*$, $R_k^*R_k$ are
orthogonal projections on complementary orthogonal subspaces of
$L^2\Lambda^k$ and $R_k$ and $R_k^*$ are partial isometries.  
\end{lemma}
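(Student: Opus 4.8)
The plan is to work entirely on the core $\S_0\Lambda^\bullet$ and use the representation-theoretic description from Lemma \ref{s3.1}, reducing each identity to a statement about the finite-dimensional operators $\pi_{\la,\sigma}(\Delta_k)$, $\pi_{\la,\sigma}(d)$, $\pi_{\la,\sigma}(d^*)$. First I would establish \eqref{4.2}: since $d$ is equivariant and left-invariant, $\pi_\la(d\om)=\pi_\la(d)\pi_\la(\om)$, and similarly $d\Delta_k=\Delta_{k+1}d$ on $\S_0\Lambda^k$ because $d$ commutes with the Hodge Laplacian. Passing to the Bargmann side, $\pi_{\la,\sigma}(d)\pi_{\la,\sigma}(\Delta_k)=\pi_{\la,\sigma}(\Delta_{k+1})\pi_{\la,\sigma}(d)$ as operators between the relevant $\E_\sigma$'s; since these are invertible by Corollary \ref{s4.3}, I may multiply by $\pi_{\la,\sigma}(\Delta_{k+1})^{-1/2}$ on the left and $\pi_{\la,\sigma}(\Delta_k)^{-1/2}$ on the right (both are well-defined functions of self-adjoint positive operators and commute with everything built from $\Delta$) to get $\pi_{\la,\sigma}(\Delta_{k+1})^{-1/2}\pi_{\la,\sigma}(d)=\pi_{\la,\sigma}(d)\pi_{\la,\sigma}(\Delta_k)^{-1/2}$, i.e. $\Delta_{k+1}^{-1/2}d=d\Delta_k^{-1/2}=R_k$ on the core. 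Taking adjoints on the core gives $R_k^*=\Delta_k^{-1/2}d^*=d^*\Delta_{k+1}^{-1/2}$.

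Next, \eqref{4.3} follows from $d^2=0$: $R_{k+1}R_k=\Delta_{k+2}^{-1/2}d\cdot d\Delta_k^{-1/2}=\Delta_{k+2}^{-1/2}d^2\Delta_k^{-1/2}=0$ on $\S_0\Lambda^k$, where I use \eqref{4.2} to move the Laplacian powers to the outside; dualizing gives $R_k^*R_{k+1}^*=0$, and the special case $R_kR_{k-1}=0$ is \eqref{4.3} with index shift (or its adjoint). For \eqref{4.4} I would compute, on the core, $R_k^*R_k+R_{k-1}R_{k-1}^*=\Delta_k^{-1/2}d^*d\Delta_k^{-1/2}+\Delta_k^{-1/2}dd^*\Delta_k^{-1/2}=\Delta_k^{-1/2}(d^*d+dd^*)\Delta_k^{-1/2}=\Delta_k^{-1/2}\Delta_k\Delta_k^{-1/2}=\id$, again using \eqref{4.2} to commute the half-powers of $\Delta$ past $d$ and $d^*$; all steps are legitimate on $\S_0\Lambda^k$ by Remark \ref{--} and Corollary \ref{s4.3}, since everything reduces to identities among commuting functions of the invertible operators $\pi_{\la,\sigma}(\Delta_k)$.

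Finally, for the last paragraph: on $\S_0\Lambda^k$, \eqref{4.3} gives $(R_{k-1}R_{k-1}^*)(R_k^*R_k)=R_{k-1}(R_{k-1}^*R_k^*)R_k=0$, and both operators are self-adjoint and idempotent, since e.g. $(R_k^*R_k)^2=R_k^*(R_kR_k^*)R_k$ and using \eqref{4.4} with index $k+1$ shifted — more cleanly, $R_k^*R_k$ is idempotent because $R_k^*R_kR_k^*R_k=R_k^*(\id-R_{k+1}^*R_{k+1}^{**}\cdots)$; here I would instead argue directly that $R_k^*R_k=\id-R_{k-1}R_{k-1}^*$ by \eqref{4.4}, so it suffices to check that $R_{k-1}R_{k-1}^*$ is an orthogonal projection, and $(R_{k-1}R_{k-1}^*)^2=R_{k-1}(R_{k-1}^*R_{k-1})R_{k-1}^*=R_{k-1}(\id-R_{k-2}R_{k-2}^*)R_{k-1}^*=R_{k-1}R_{k-1}^*$ by \eqref{4.3}. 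Since $R_k^*R_k$ and $R_{k-1}R_{k-1}^*$ are complementary self-adjoint projections, $R_k$ is a partial isometry (its restriction to $(\ker R_k)^\perp=\range R_k^*R_k$ is an isometry onto $\range R_kR_k^*$), and likewise $R_k^*$; then one extends all identities from $\S_0\Lambda^\bullet$ to $L^2\Lambda^\bullet$ by density and boundedness of the $R_k$ (the Riesz transforms are bounded on $L^2$ because $\|R_k\om\|_2^2=\langle R_k^*R_k\om,\om\rangle\le\|\om\|_2^2$, which also follows from \eqref{4.4}). The main obstacle I anticipate is purely bookkeeping: making sure that every manipulation of the fractional powers $\Delta_k^{-1/2}$ and of compositions like $d\Delta_k^{-1/2}$ is carried out on the core where Remark \ref{--} guarantees it is meaningful, and only then closing up to $L^2$; there is no serious analytic difficulty since all the hard work (invertibility, smooth dependence on $\la$) is already done in Corollary \ref{s4.3}.
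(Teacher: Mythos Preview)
Your proposal is correct and follows essentially the same route as the paper: reduce \eqref{4.2} to finite-dimensional linear algebra via $\pi_{\la,\sigma}$ and the intertwining relation $d\Delta_k=\Delta_{k+1}d$, get \eqref{4.3} from $d^2=0$, and compute \eqref{4.4} directly on the core as $\Delta_k^{-1/2}(d^*d+dd^*)\Delta_k^{-1/2}=\id$. The only cosmetic difference is in the final paragraph: the paper dispatches idempotency in one line by observing that two positive operators with sum $\id$ and product zero are automatically idempotent (from $A=A(A+B)=A^2+AB=A^2$), whereas you appeal to \eqref{4.4} at level $k-1$ together with $R_{k-1}R_{k-2}=0$; both arguments are fine and amount to the same thing.
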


\proof From the identity $d\Delta_k=\Delta_{k+1}d$ on test functions
we derive that  
$$
\pi_{\la,\sigma}(d)\pi_{\la,\sigma}(\Delta_k)
=\pi_{\la,\sigma}(\Delta_{k+1})\pi_{\la,\sigma}(d)
$$ 
for all $\la,\sigma$. Hence 
$$
\pi_{\la,\sigma}(d)\pi_{\la,\sigma}(\Delta_k)^{-\half}
=\pi_{\la,\sigma}(\Delta_{k+1})^{-\half}\pi_{\la,\sigma}(d)
$$
by finite-dimensional linear algebra. In turn, this gives the first
identity of \eqref{4.2} on $\S_0\Lambda^k$. The second identity is
proved in the same way. 

Then \eqref{4.3} follows from \eqref{4.2} and the identity $d^2=0$.

On $\S_0\Lambda^k$, by applying again $\pi_{\la,\sigma}$ to each term,
$$
\aligned
R_k^*R_k+R_{k-1}R_{k-1}^*
&=\Delta_k^{-\half}d^*d\Delta_k^{-\half}
+\Delta_k^{-\half}dd^*\Delta_k^{-\half}\\
&=\Delta_k^{-\half}\Delta_k\Delta_k^{-\half}\\
&=\id\ ,
\endaligned
$$
which gives  \eqref{4.4}. 

Since the two summands on the left-hand side of \eqref{4.4} are positive
operators, they are $L^2$-contractions. Since their sum is the
identity and their product is zero by \eqref{4.3}, they are
idempotent. This proves that they are orthogonal projections. It
follows that $R_k$ and $R_{k-1}^*$ are partial isometries.  
\endproof

The following statement says  in particular  that the cohomology
groups of the De Rham complex are trivial.  

\begin{prop}\label{s4.5}
Let $1\le k\le 2n$. The following subspaces of $L^2\Lambda^k$ are the same:
\bee
\item[\rm(i)] the range of $R_{k-1}R_{k-1}^*$;
\item[\rm(ii)] the range of $R_{k-1}$;
\item[\rm(iii)] $\ker R_k$;
\item[\rm(iv)] $\ker d$;
\item[\rm(v)]$\overline{d(\S_0\Lambda^{k-1})}$;
\item[\rm(vi)]$\overline{d(\D\Lambda^{k-1})}$;
\item[\rm(vii)]$\{\om\in L^2\Lambda^k: \om=du\mbox{  in the sense of
    distributions  for some } u\in \D'\Lambda^{k-1}\}$. 
\ee
We call this space $(L^2\Lambda^k)_{d\ex}$ or $(L^2\Lambda^k)_{d\cl}$.
Similarly, the following spaces 
\bee
\item[\rm(i')] the range of $R_k^*R_k$;
\item[\rm(ii')] the range of $R_k^*$;
\item[\rm(iii')] $\ker R_{k-1}^*$;
\item[\rm(iv')] $\ker d^*$;
\item[\rm(v')] $\overline{d^*(\S_0\Lambda^{k+1})}$;
\item[\rm(vi')] $\overline{d^*(\D\Lambda^{k+1})}$;
\item[\rm(vii')]$\{\om\in L^2\Lambda^k: \om=d^*v\mbox{  in the sense of
    distributions  for some } v\in \D'\Lambda^{k+1}\}$ 
\ee
are the same; we call them $(L^2\Lambda^k)_{d^*\ex}$ or $(L^2\Lambda^k)_{d^*\cl}$.
\end{prop}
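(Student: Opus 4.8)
The plan is to prove the chain of equalities for the ``exact/closed'' list (i)--(vii); the primed list follows by the same argument with $d$ replaced by $d^*$ and $R_{k-1}$ by $R_{k-1}^*$ throughout, using that $R_k^*=\Delta_{k+1}^{-\half}d^*$ and the analogues of \eqref{4.3}--\eqref{4.4}. For the unprimed list I would argue in two stages: first establish the easy cycle (i)$=$(ii)$=$(iii)$=$(iv)$=$(v)$=$(vi) using Lemma \ref{s4.4} and Lemma \ref{density-lemma}, and then deal separately with the genuinely new statement (vii), that an $L^2$-form which is a distributional $d$ of some $u\in\D'\Lambda^{k-1}$ is already in this common space.

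For the easy cycle: (i)$=$(ii) is immediate since $R_{k-1}R_{k-1}^*$ is the orthogonal projection onto $\overline{\range R_{k-1}}$ by Lemma \ref{s4.4}. The inclusion $\overline{\range R_{k-1}}\subseteq\ker R_k$ follows from $R_kR_{k-1}=0$ (the last assertion of Lemma \ref{s4.4}), and conversely $\ker R_k\subseteq\range R_k^*R_k\,^{\perp}=\range R_{k-1}R_{k-1}^*$ by \eqref{4.4}, giving (ii)$=$(iii). Since $\Delta_k^{-\half}$ is injective with dense range (Corollary \ref{s4.3}), $R_k=\Delta_{k+1}^{-\half}d$ and $R_k^*=d^*\Delta_{k+1}^{-\half}$ have the same kernels as $d$ and $d^*$ respectively on the relevant domains, which yields (iii)$=$(iv) and (iii$'$)$=$(iv$'$); here one must be a little careful about domains, using that $\S_0\Lambda^k$ is a core and that $d$ is closed. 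For (ii)$=$(v): $d$ is $U(n)$-equivariant, left-invariant and homogeneous of degree $1$ in the sense of \eqref{homog} (with the grading $\gamma$ on $\Lambda^{k-1}$ induced by the weights of the $\zeta_j,\bar\zeta_j,\theta$ under $\delta_r$), so Lemma \ref{density-lemma} gives $\overline{\range d}\cap\S_0\Lambda^k=d(\S_0\Lambda^{k-1})$; combined with $\range R_{k-1}=\range(\Delta_k^{-\half}d)$ and density of $\S_0\Lambda^k$ this gives $\overline{d(\S_0\Lambda^{k-1})}=\overline{\range d}=\overline{\range R_{k-1}}$. Finally (v)$=$(vi) because $\S_0\Lambda^{k-1}\subseteq\D\Lambda^{k-1}$ is dense in $L^2\Lambda^{k-1}$ and $d$ is continuous from $L^2\Lambda^{k-1}$ (graph norm) so the two closures of the images agree — more precisely $\overline{d(\S_0)}\subseteq\overline{d(\D)}\subseteq\ker d$ and the outer terms already coincide.

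The main obstacle is (iv)$\subseteq$(vii)$\subseteq$(i), i.e.\ showing that ``closed in $L^2$'' $\Rightarrow$ ``distributionally exact'' $\Rightarrow$ ``in $\overline{\range R_{k-1}}$''. The second of these is the heart of it: given $\om\in L^2\Lambda^k$ with $\om=du$ for some $u\in\D'\Lambda^{k-1}$, I would first note $d\om=0$ in $\D'$, hence $\om\in\ker d\cap L^2=$ (iv); so it suffices to show (vii)$\subseteq$(iv) (trivial) and (iv)$\subseteq$(ii), the latter already done. Thus the real content is only that (iv) is \emph{contained} in (vii), which is immediate by taking $u\in\D'$ with $du=\om$ — but such $u$ need not exist a priori, so the honest statement is (vii)$\subseteq$(iv)$=$(ii)$=\cdots$, and one must separately exhibit, for $\om$ in the common space, a distributional primitive: take $\om\in\range R_{k-1}$, write $\om=R_{k-1}\psi=\Delta_k^{-\half}d\psi=d(\Delta_{k-1}^{-\half}\psi)$ using \eqref{4.2} and that $\Delta^{-\half}$ commutes appropriately, with $\Delta_{k-1}^{-\half}\psi\in L^2\subseteq\D'$. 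This shows (ii)$\subseteq$(vii), closing the loop (vii)$\subseteq$(iv)$=$(ii)$\subseteq$(vii). The point requiring care is the commutation $\Delta_k^{-\half}d=d\Delta_{k-1}^{-\half}$ at the level of the closed operators, not merely on $\S_0$; this is handled exactly as in the proof of Lemma \ref{s4.4}, passing through $\pi_{\la,\sigma}$ and finite-dimensional linear algebra, and noting that $\Delta_{k-1}^{-\half}\psi$ lies in $\dom(d)$ whenever $\psi\in\range\Delta_{k-1}^{\half}$ which is dense, then taking limits.
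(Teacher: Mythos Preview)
Your treatment of (i)--(vi) is essentially along the right lines, though two points deserve correction. First, $\S_0\Lambda^{k-1}\not\subseteq\D\Lambda^{k-1}$: elements of $\S_0$ are Schwartz functions with restricted \emph{spectral} support, not compact \emph{spatial} support. Your sandwich for (v)$=$(vi) can be rescued, but not by that inclusion; one argues instead as the paper does, by duality: if $\sigma\in L^2\Lambda^k$ is orthogonal to $d(\D\Lambda^{k-1})$ then $d^*\sigma=0$ distributionally, hence $\sigma\in\ker d^*=\range R_k^*$, and then $\sigma\perp\range R_{k-1}$. Second, for (ii)$=$(v) the paper's route is more direct than invoking Lemma~\ref{density-lemma}: simply note $d=R_{k-1}\Delta_{k-1}^{1/2}$ on $\S_0\Lambda^{k-1}$ and that $\Delta_{k-1}^{1/2}$ bijects $\S_0\Lambda^{k-1}$ onto itself (Corollary~\ref{s4.3}), so $d(\S_0\Lambda^{k-1})=R_{k-1}(\S_0\Lambda^{k-1})$.

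The real gap is in your handling of (vii), specifically the inclusion (ii)$\subseteq$(vii). You write $\om=R_{k-1}\psi=d(\Delta_{k-1}^{-1/2}\psi)$ and claim $\Delta_{k-1}^{-1/2}\psi\in L^2\subseteq\D'$. But $\Delta_{k-1}^{-1/2}$ is \emph{unbounded} on $L^2$: the spectrum of $\Delta_{k-1}$ reaches down to $0$, and for generic $\psi\in L^2$ the element $\Delta_{k-1}^{-1/2}\psi$ is not in $L^2$, nor is there an obvious way to interpret it as a distribution (pairing with $\phi\in\D$ would require $\Delta_{k-1}^{-1/2}\phi\in L^2$, which also fails in general). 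Your density patch does not help: approximating $\psi$ by $\psi_n\in\S_0$ gives $u_n=\Delta_{k-1}^{-1/2}\psi_n\in\S_0$ with $du_n\to\om$ in $L^2$, but the $u_n$ themselves need not converge in any topology, so no limiting primitive $u$ is produced.

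This is exactly why the paper explicitly postpones (vii) to Section~\ref{applications} (Corollary~\ref{exact2}). The correct construction of a primitive is
\[
v=L^{-1/2}\big(L^{1/2}\Delta_{k-1}^{-1/2}\big)R_{k-1}^*\om,
\]
and the two nontrivial inputs are: (a) $L^{1/2}\Delta_{k-1}^{-1/2}$ is $L^2$-bounded, which relies on the full decomposition of $L^2\Lambda^{k-1}$ and the scalar forms of $\Delta_{k-1}$ developed in Sections~\ref{decomposition}--\ref{LrLambda}; and (b) $L^{-1/2}$ maps $L^2$ into $L^r$ with $1/2-1/r=1/(2n+2)$ by Sobolev embedding on $H_n$. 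Only then does one get $v\in L^r\subset\D'$ with $dv=\om$. There is no shortcut here using $\Delta_{k-1}^{-1/2}$ alone.
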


\proof Since $R_{k-1}$ is a partial isometry, its range is closed, and
$$
\range R_{k-1}=(\ker R_{k-1}^*)^\perp=(\ker
R_{k-1}R_{k-1}^*)^\perp=\range R_{k-1}R_{k-1}^*\ . 
$$

This proves the identity of the spaces in (i) and (ii). In the same
way one proves the same for (i') and (ii'). From Lemma \ref{s4.4} we
then obtain the orthogonal decomposition 
$$
L^2\Lambda^k=\range R_{k-1}\oplus \range R_k^*\ .
$$

But $\range R_k^*=(\ker R_k)^\perp$, so that $\range R_{k-1}=\ker
R_k$, i.e. (ii)=(iii).  

By Plancherel's formula and Lemma \ref{s3.1}, $\om\in\ker d$ if and
only if $\pi_{\la,\sigma}(d)\pi_{\la,\sigma}(\om)=0$ for a.e. $\la$
and every $\sigma$. By Corollary \ref{s4.3} and \eqref{4.2}, this is
equivalent to saying that
$\pi_{\la,\sigma}(R_k)\pi_{\la,\sigma}(\om)=0$ for a.e. $\la$ and
every $\sigma$, i.e. that $R_k\om=0$. So (iii)=(iv).  
By Corollary \ref{s4.3}, $d(\S_0\Lambda^{k-1})=
R_{k-1}(\S_0\Lambda^{k-1})$ and this implies that (v)=(ii). 
\smallskip

We thus have shown that the spaces (i) - (v) are the same, and the equality of the spaces  (i')- (v') are proved in the same way.
\smallskip

In order to prove that the spaces (i) - (v) agree also with the space (vi), we first observe that
$\overline{d(\D\Lambda^{k-1})}\subset \ker d,$
since $d^2=0$ on $\D\Lambda^{k-1}.$ We thus have
$\overline{d(\D\Lambda^{k-1})}\subset R_{k-1}(L^2\Lambda^{k-1}).$ To
prove that these spaces are indeed the same, it will suffice to prove
that $\sigma\perp R_{k-1}(L^2\Lambda^{k-1})$ whenever $\sigma\in
L^2\Lambda^k$ satisfies $\sigma\perp d(\D\Lambda^{k-1}).$ But, the
latter condition means that $d^*\sigma=0$ in the sense of
distributions. So, by Lemma \ref{s3.1}, $\sigma\in \dom d^*,$ and
since (iv')=(ii'), we see that $\sigma=R_k^*\xi$ for some $\xi\in
L^2\Lambda^{k+1}.$ This implies that for every $R_{k-1}\mu\in
R_{k-1}(L^2\Lambda^{k-1})$ 
$$
\lan\sigma,R_{k-1}\mu\ran= \lan R_k^*\xi,R_{k-1}\mu\ran=\lan \xi, R_kR_{k-1}\mu\ran=0.
$$
We have thus seen that the spaces (i) - (vi) all agree, and in a
similar way one proves that the spaces (i') - (vi') are all the same. 

\smallskip 
The proof  that these spaces also do agree with the space (vii)
respectively (vii') will require deeper $L^p$-methods, and will
therefore be postponed to   Section \ref{applications} (see
Corollary \ref{exact2}). 

\endproof

\vskip.2cm

Working out the same program for $\de,\bar\de$, their adjoints and the
box-operators one encounters some differences. One simplification
comes from the fact that $\Box$ and $\Boxbar$ act as scalar operators
on horizontal forms of a given bi-degree.

On the other hand, a complication comes from the fact they have a
non-trivial null space in $L^2$ for certain values of $p$ or $q$.  
It is well known since \cite{FS} that $L+i\al T$ is injective on $L^2$
if and only if $\al\ne\pm(n+2j)$, $j\in\N$, and that it is
hypoelliptic under the same restriction. It follows from \eqref{1.10}
that $\Box$ (resp. $\Boxbar$) is injective, and hypoelliptic, on
$(p,q)$-forms provided that $p\ne0,n$ (resp. $q\ne0,n$). 

For $p=0$, $\Box_0=\de^*\de$ and $\ker \Box=\ker\de$, while, for
$p=n$, $\Box=\de\de^*$ and $\ker\Box=\ker\de^*$. Similarly,  $\ker
\Boxbar=\ker\bar\de$ for $q=0$,  and $\ker\Boxbar=\ker\bar\de^*$ for
$q=n$. 

For these values of $p$ (resp. $q$), we shall denote by $\Box'$
(resp. $\Boxbar'$) the unprimed operator with domain and range
restricted to the orthogonal complement of the corresponding null
space. Notice that the core $\S_0\Lambda^{p,q}$ splits according to
the decompositions $\ker\de\oplus(\ker \de)^\perp$,
$\ker\bar\de\oplus(\ker \bar\de)^\perp$. 
The negative powers ${\Box'}^{-\al}$(resp. ${\Boxbar '}^{-\al}$)  are
then well defined on $\S_0\Lambda^{p,q}\cap (\ker\de)^\perp$
(resp. $\S_0\Lambda^{p,q}\cap (\ker\bar\de)^\perp$). 

By \eqref{1.10}
$$
\Box_0=\Boxbar_n=\half (L+inT)\ ,\qquad \Box_n=\Boxbar_0=\half (L-inT)\ .
$$

We denote by $\cC$ (resp. $\bar \cC$) the orthogonal projection from
scalar $L^2$ onto $\ker (L+inT)$ (resp. $\ker (L-inT)$). The same symbols will be used to denote the extension to forms by componentwise application.

\smallskip
Thus, $\cC$ is the orthogonal projection onto $\ker \de$ when acting on $(0,q)$-forms as well as onto $\ker \de^*$ when acting on $(n,q)$-forms, and $\bar \cC$ is the orthogonal projection onto $\ker \bar\de$ when acting on $(p,0)$-forms as well as onto $\ker \bar \de^*$ when acting on $(p,n)$-forms.

Regard $\de$ as a closed operator from $L^2\Lambda^{p,q}$ to
$L^2\Lambda^{p+1,q}$. The {\it holomorphic Riesz transforms} are
defined on $\S_0\Lambda^{p,q}$ 
(with values in $\S_0\Lambda^{p+1,q}$) by 
\begin{equation}\label{4.5}
\Ri_p=\left\{  \begin{array}{cc}
 \de\Box_p^{-\half}=\Box_{p+1}^{-\half}\de\hfill & \text{for }1\le
 p\le n-2\ ,\hfill\\ 
\de{\Box'_0}^{-\half}(I-\cC)=\Box_1^{-\half}\de \hfill& \text{for } p=0\
,\hfill\\ 
\de\Box_{n-1}^{-\half}={\Box'}_n^{-\half}\de \hfill& \text{for }p=n-1\
.\hfill 
\end{array}\right.
\end{equation}

We observe that, is all cases,
\begin{equation}\label{factorde1}
\Ri_p\Box_p^\half=\Box_{p+1}^\half\Ri_p=\de\ .
\end{equation}

The adjoint operators $\Ri_p^*$ from $\S_0\Lambda^{p+1,q}$ to  $\S_0\Lambda^{p,q}$ are
 \begin{equation}\label{4.6}
\Ri_p^*=\left\{  \begin{array}{cc}
\Box_p^{-\half}\de^*=\de^*\Box_{p+1}^{-\half}\hfill & \text{for }1\le
p\le n-2\ ,\hfill\\ 
{\Box'_0}^{-\half}\de^*=\de^*\Box_1^{-\half} \hfill& \text{for } p=0\
,\hfill\\ 
\Box_{n-1}^{-\half}\de^*=\de^*{\Box'}_n^{-\half}(I-\bar \cC) \hfill&
\text{for }p=n-1\ .\hfill 
\end{array}\right.
\end{equation}

The analogues of \eqref{4.3} and  \eqref{4.4} are
\begin{equation}\label{4.7-9}
\aligned
&\Ri_{p+1}\Ri_p=\Ri_p^*\Ri_{p+1}^*=0\ ,\\
&\Ri_p^*\Ri_p+\Ri_{p-1}\Ri_{p-1}^*=\id\ ,\qquad (1\le p\le n-1)\\
&\Ri_0^*\Ri_0=I-\cC\ , \quad \Ri_{n-1}\Ri_{n-1}^*=I-\bar \cC\ .
\endaligned
\end{equation}

Proposition \ref{s4.5} has the following analogue.

\begin{prop}\label{s4.6}
For $0\le p\le n-1$, the following subspaces of $L^2\Lambda^{p,q}$ are
the same:  
\bee
\item[(i)] $\ker \Ri_p$;
\item[(ii)] $\ker \de$;
\ee
We call this space $(L^2\Lambda^{p,q})_{\de\cl}$.

For $1\le p\le n$, the following subspaces of $L^2\Lambda^{p,q}$ are
the same:
\bee
\item[(iii)] the range of $\Ri_{p-1}\Ri_{p-1}^*$;
\item[(iv)] the range of $\Ri_{p-1}$;
\item[(v)] $\overline{\de(\S_0\Lambda^{p-1,q})}$.
\ee
We call this subspace $(L^2\Lambda^{p,q})_{\de\ex}$.

For $1\le p\le n-1$,
$(L^2\Lambda^{p,q})_{\de\cl}=(L^2\Lambda^{p,q})_{\de\ex}$.\smallskip

Similarly, for $1\le p\le n$, the following subspaces of
$L^2\Lambda^{p,q}$ are the same:    
\bee
\item[(i')] $\ker \Ri_{p-1}^*$;
\item[(ii')] $\ker \de^*$;
\ee
and we call this subspace $(L^2\Lambda^{p,q})_{\de^*\cl}$. 

For $0\le p\le n-1$, the following subspaces of
$L^2\Lambda^{p,q}$ are the same:
\bee   
\item[(iii')] the range of $\Ri_p^*\Ri_p$;
\item[(iv')] the range of $\Ri_p^*$;
\item[(v')] $\overline{\de^*(\S_0\Lambda^{p+1,q})}$;
\ee
and we 
call this subspace $(L^2\Lambda^{p,q})_{\de^*\ex}$.

Finally, for $1\le p\le n-1$,
$(L^2\Lambda^{p,q})_{\de^*\cl}=(L^2\Lambda^{p,q})_{\de^*\ex}$.
\end{prop}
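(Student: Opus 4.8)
The plan is to transcribe the proof of Proposition \ref{s4.5}, letting the holomorphic Riesz transforms $\Ri_p$, $\Ri_p^*$ play the roles of $R_k$, $R_k^*$ and using the identities \eqref{factorde1} and \eqref{4.7-9} in place of \eqref{4.2}--\eqref{4.4}. The only genuinely new ingredient is the handling of the endpoint values $p=0$ and $p=n$, where $\Box_p$ fails to be injective and one must work with $\Box'_0$, ${\Box'}_n$ and the projections $\cC$, $\bar\cC$.

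For the equalities (i)=(ii) and (i')=(ii') I would argue as in the proof of Proposition \ref{s4.5}: by Plancherel and Lemma \ref{s3.1}, a form $\om$ lies in $\ker\de$ iff $\pi_{\la,\sigma}(\de)\,P^{\sgn\la}_\sigma\pi_\la(\om)=0$ for a.e.\ $\la$ and every $\sigma$, and similarly for $\Ri_p$. When $1\le p\le n-1$ the factorization \eqref{factorde1} gives $\pi_{\la,\sigma}(\de)=\pi_{\la,\sigma}(\Ri_p)\,\pi_{\la,\sigma}(\Box_p)^{\half}$; since $\Box_p$ is injective and hypoelliptic for these $p$, the finite-dimensional operator $\pi_{\la,\sigma}(\Box_p)$ is invertible (as in Corollary \ref{s4.3}), so $\pi_{\la,\sigma}(\de)$ and $\pi_{\la,\sigma}(\Ri_p)$ have the same null space and $\ker\de=\ker\Ri_p$. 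For $p=0$ one reads $\ker\Ri_0=\range\cC=\ker\de$ off the identity $\Ri_0^*\Ri_0=I-\cC$ from \eqref{4.7-9}, and the adjoint statement at $p=n$ off $\Ri_{n-1}\Ri_{n-1}^*=I-\bar\cC$.

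For the \emph{exact} equalities the decisive observation, exactly as in Proposition \ref{s4.5}, is that each $\Ri_{p-1}$ with $1\le p\le n$ is a partial isometry with closed range: by \eqref{4.7-9} the positive operators $\Ri_p^*\Ri_p$ and $\Ri_{p-1}\Ri_{p-1}^*$ — at the endpoints, $I-\cC$ and $I-\bar\cC$ — are $L^2$-contractions summing to the identity with zero product (using $\Ri_p\Ri_{p-1}=0$), hence complementary orthogonal projections. Thus $\range\Ri_{p-1}=\range\Ri_{p-1}\Ri_{p-1}^*=(\ker\Ri_{p-1}^*)^{\perp}$, which gives (iii)=(iv). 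Since the box-operators, their fractional powers, and the projections $\cC$, $\bar\cC$ all map the relevant cores into themselves (Corollary \ref{s4.3}, Lemma \ref{density-lemma}, Proposition \ref{subspaces}), one has $\de=\Ri_{p-1}\Box_{p-1}^{\half}$ and $\Ri_{p-1}=\de\,\Box_{p-1}^{-\half}$ on $\S_0\Lambda^{p-1,q}$, whence $\de(\S_0\Lambda^{p-1,q})=\Ri_{p-1}(\S_0\Lambda^{p-1,q})$; taking closures, with $\Ri_{p-1}$ bounded of closed range and $\S_0$ dense, yields $\overline{\de(\S_0\Lambda^{p-1,q})}=\range\Ri_{p-1}$, i.e.\ (v)=(iv). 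Finally, for $1\le p\le n-1$, \eqref{4.7-9} gives the orthogonal decomposition $L^2\Lambda^{p,q}=\range\Ri_{p-1}\oplus\range\Ri_p^*$ with $\range\Ri_p^*=(\ker\Ri_p)^{\perp}$, hence $\range\Ri_{p-1}=\ker\Ri_p$, that is $(L^2\Lambda^{p,q})_{\de\ex}=(L^2\Lambda^{p,q})_{\de\cl}$. The statements for $\de^*$ follow verbatim, with $\Ri_p^*$ replacing $\Ri_{p-1}$, $\ker\de^*$ replacing $\ker\de$, and $\overline{\de^*(\S_0\Lambda^{p+1,q})}$ replacing $\overline{\de(\S_0\Lambda^{p-1,q})}$.

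The step I expect to demand the most care is the bookkeeping at $p=0$ and $p=n$. There \eqref{4.7-9} no longer reduces to $\Ri_p^*\Ri_p+\Ri_{p-1}\Ri_{p-1}^*=\id$, so one must track precisely which null space is projected away ($\ker\de$ on $(0,q)$-forms, $\ker\de^*$ on $(n,q)$-forms), invoke the commutation of $\Box$ with $\de$ and $\de^*$ to see that these null spaces are preserved by the relevant fractional powers, and check that $\Box'_0$, ${\Box'}_n$ and their fractional powers map the cores $\S_0\Lambda^{p,q}\cap(\ker\de)^{\perp}$ and $\S_0\Lambda^{p,q}\cap(\ker\de^*)^{\perp}$ into themselves — which is exactly what makes \eqref{factorde1} and the identity $\de(\S_0\Lambda^{p-1,q})=\Ri_{p-1}(\S_0\Lambda^{p-1,q})$ remain valid there. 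Away from the endpoints the argument is a formal copy of the proof of Proposition \ref{s4.5}.
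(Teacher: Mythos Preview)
Your proposal is correct and matches the paper's approach: the paper gives no separate proof of this proposition, merely stating that it is the analogue of Proposition~\ref{s4.5}, and your argument is precisely the expected transcription of that proof with the identities \eqref{factorde1} and \eqref{4.7-9} substituted for \eqref{4.2}--\eqref{4.4}. One small remark: your appeal to Corollary~\ref{s4.3} for the invertibility of $\pi_{\la,\sigma}(\Box_p)$ is not literally a citation of that result (which concerns $\Delta_k$) but rather of the analogous fact, noted just before \eqref{4.5}, that $\Box_p$ acts as a nonzero scalar on each $\P_m$ under $\pi_\la$ when $1\le p\le n-1$.
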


We also set
$(L^2\Lambda^k_H)_{\de\ex}=\sum_{p+q=k}(L^2\Lambda^{p,q})_{\de\ex}$
etc. 

The {\it antiholomorphic Riesz transforms} $\overline\Ri_q$ 
and their adjoints $\overline\Ri_q^*$ 
are defined by 
conjugating all terms in 
\eqref{4.5} and \eqref{4.6} respectively,
and replacing $p$
by $q$. 
The analogue of formula \eqref{factordebar1} also
holds true for all $q$
\begin{equation}\label{factorde1}
\overline\Ri_p\Boxbar_q^\half=\Boxbar_{q+1}^\half\overline\Ri_q=\bar\de\ .
\end{equation}

The rest goes in perfect analogy with the holomorphic case. 

\begin{defn}\label{C-Cbar}
{\rm
On $(p,q)$-forms, we also define the operators
\begin{equation}\label{C-on-pq-forms}
\aligned
& C_p = I- \Ri_p^* \Ri_p\ , \qquad
\overline C_q = I- \overline\Ri_q^* \overline\Ri_q\ ,\qquad \text{for}\quad 0\le p,q\le
n-1\ ,\\
& C_n=I=\overline C_n.
\endaligned
\end{equation}
Notice that, by \eqref{4.7-9},  $C_p=\Ri_{p-1}\Ri_{p-1}^*$ for $1\le p\le n-1,$  and similarly 
$\overline C_q=\overline\Ri_{q-1}\overline \Ri_{q-1}^*$ for $1\le q\le n-1.$ 
}
\end{defn}

The following statements are obvious in view of Proposition \ref{s4.6}.
\begin{lemma}\label{s4.8}
$C_p$ is  the orthogonal projection of
$L^2 \Lambda^{p,q}$ onto the kernel of $\de,$    and $\overline C_p$ is  the orthogonal projection of $L^2 \Lambda^{p,q}$ onto the kernel of $\overline\de,$

Moreover,  if $\om\in\S_0 \Lambda^{p,q}$, with $1\le p\le n-1$, then 
$$
C_p \om=0\qquad\text{if and only if}\qquad \om\in\overline{\de^*(\S_0\Lambda^{p+1,q})}  \qquad\text{if and only if}\qquad \de^* \om=0
,
$$
whereas for $p=0$, 
$$
C_0 \,\om=0\qquad\text{if and only if}\qquad \om\in\overline{\de^*(\S_0\Lambda^{1,q})},
$$
and for $p=n$,
$$
C_n \,\om=0\qquad\text{if and only if}\qquad \om=0.
$$
 Analogous statements hold for the operators $\overline C_q,$ if we
 replace $p$ by $q$ and conjugate all terms.  
 In particular,  $C_0=\cC, \overline C_0=\overline \cC,$ and
 $\de^*\om=0$ whenever $C_p\om=0,$ and $\overline\de^*\om=0$ whenever
 $\overline C_q\om=0.$ 
\end{lemma}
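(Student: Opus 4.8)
The plan is to unwind the definitions, reducing everything to the holomorphic Riesz-transform machinery already set up in Proposition \ref{s4.6} and the identities \eqref{4.7-9}. First I would handle the projection statements. For $1\le p\le n-1$ one has $C_p=I-\Ri_p^*\Ri_p=\Ri_{p-1}\Ri_{p-1}^*$ by the middle identity in \eqref{4.7-9}; since $\Ri_{p-1}$ is a partial isometry (this follows exactly as in Lemma \ref{s4.4}, or directly from \eqref{4.7-9}), $\Ri_{p-1}\Ri_{p-1}^*$ is the orthogonal projection onto $\range\Ri_{p-1}=(L^2\Lambda^{p,q})_{\de\ex}$, which by Proposition \ref{s4.6} equals $(L^2\Lambda^{p,q})_{\de\cl}=\ker\de$. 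For the endpoint $p=0$ we instead use $\Ri_0^*\Ri_0=I-\cC$ from the last line of \eqref{4.7-9}, so $C_0=I-\Ri_0^*\Ri_0=\cC$, and $\cC$ was defined to be the orthogonal projection onto $\ker(L+inT)=\ker\de$ on $(0,q)$-forms, consistent with the general formula. For $p=n$, $C_n=I$ and $\ker\de$ on $(n,q)$-forms is all of $L^2\Lambda^{n,q}$ since $\de=0$ there (there is no $\zeta^J$ of degree $n+1$); so $C_n$ is again ``the projection onto $\ker\de$,'' trivially. This proves the first assertion for all $p$, and the statement for $\overline C_q$ follows by applying complex conjugation, which interchanges $\de\leftrightarrow\bar\de$, $\Ri\leftrightarrow\overline\Ri$, $\Box\leftrightarrow\Boxbar$, $\cC\leftrightarrow\overline\cC$.

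Next the ``if and only if'' chain. Fix $\om\in\S_0\Lambda^{p,q}$ with $1\le p\le n-1$. From the first part, $C_p\om=0$ iff $\om\perp\ker\de=(L^2\Lambda^{p,q})_{\de\ex}$; by Proposition \ref{s4.6} the orthogonal complement of $(L^2\Lambda^{p,q})_{\de\ex}=\range\Ri_{p-1}$ is $\ker\Ri_{p-1}^*=\ker\de^*=(L^2\Lambda^{p,q})_{\de^*\cl}$, which for $1\le p\le n-1$ coincides with $(L^2\Lambda^{p,q})_{\de^*\ex}=\overline{\de^*(\S_0\Lambda^{p+1,q})}$. So $C_p\om=0$ iff $\de^*\om=0$ iff $\om\in\overline{\de^*(\S_0\Lambda^{p+1,q})}$, using that $\S_0\Lambda^{p,q}\subset\dom\de^*$ so membership in the $L^2$-closure of the range is equivalent to lying in the kernel of the adjoint. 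For $p=0$ one has $C_0\om=\cC\om$, and $\cC\om=0$ iff $\om\perp\ker\de$, iff $\om\in(\ker\de)^\perp=(L^2\Lambda^{0,q})_{\de^*\cl}$; by Proposition \ref{s4.6} the latter equals $(L^2\Lambda^{0,q})_{\de^*\ex}=\overline{\de^*(\S_0\Lambda^{1,q})}$ — here one cannot reduce further to the pointwise condition $\de^*\om=0$ because $\de^*$ need not be injective on $\S_0\Lambda^{1,q}$ modulo this complement; that is exactly why the $p=0$ line of the lemma omits the ``$\de^*\om=0$'' clause. For $p=n$, $C_n=I$ so $C_n\om=0$ iff $\om=0$. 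Finally, the last sentence: $C_0=\cC$ and $\overline C_0=\overline\cC$ are immediate from the above; and $\de^*\om=0$ whenever $C_p\om=0$ holds for $1\le p\le n-1$ by the equivalence just proved, while for $p=0$ it holds because $\overline{\de^*(\S_0\Lambda^{1,q})}\subset\ker\de^*$ (as $\de^*$ is closed and $\de^*\circ\de^*=0$... rather, because $\overline{\range\de^*}=(\ker\de^*{}^*)^\perp$ lies in $\ker\de^*$ by the usual orthogonality $\range\de^*\perp\ker\de$ is not what is needed — the correct reason is simply that ${\de^*}^2=0$, so $\de^*$ annihilates $\de^*(\S_0\Lambda^{1,q})$, hence its $L^2$-closure since $\de^*$ is closed), and for $p=n$ trivially as $\de^*=0$ there. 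The analogous $\overline C_q$ statements follow by conjugation.

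The step I expect to require the most care is keeping straight the endpoint cases $p=0$ and $p=n$, where the clean three-term identities of \eqref{4.7-9} degenerate: at $p=0$ the Riesz transform $\Ri_0$ is only a co-isometry onto $(\ker\de)^\perp$ (so $\Ri_0^*\Ri_0=I-\cC$ rather than $I$), and at $p=n$ there is no $\Ri_n$ at all and $\de$ itself vanishes. One must check that the uniform statement ``$C_p$ is the orthogonal projection onto $\ker\de$'' nevertheless holds at both ends, and that the refined ``iff $\de^*\om=0$'' clause correctly fails to extend to $p=0$ — which is consistent with how the lemma is phrased. Everything else is a bookkeeping exercise in transcribing Proposition \ref{s4.6} and \eqref{4.7-9}, together with the observation that elements of $\S_0\Lambda^{p,q}$ lie in the domains of $\de^*$, $\overline\de^*$ and of all the relevant negative powers by Corollary \ref{s4.3} and the construction of $\S_0$.
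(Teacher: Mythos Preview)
Your proof is correct and follows exactly the approach the paper intends: the paper simply states that the lemma is ``obvious in view of Proposition \ref{s4.6},'' and your argument is precisely the unpacking of that reference together with the identities \eqref{4.7-9}. One small slip: in the final paragraph you write ``for $p=n$ trivially as $\de^*=0$ there,'' but $\de^*$ on $(n,q)$-forms maps to $(n-1,q)$-forms and is certainly not zero; the correct (and equally trivial) reason is that $C_n\om=0$ forces $\om=0$, hence $\de^*\om=0$.
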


Given a horizontal $k$-form $\omega=\sum_{p+q=k}\omega_{pq}$ we
finally set
\begin{equation}\label{C-on-forms}
C\omega= \sum_{p+q=k}C_p \omega_{pq}\, ,
\qquad\text{and}\quad
\overline C\omega= \sum_{p+q=k}\overline C_q \omega_{pq}\, .
\end{equation}

\bigskip

\setcounter{equation}{0}
\section{A decomposition of $L^2\Lambda_H^k$ 
related to the $\de$ and $\bar\de$ complexes}\label{decomposition}

In 
this section,
 we shall  work under the {\it assumption that $0\le k\le n,$}
as this turns out to be more convenient in view of the Lefschetz
decomposition described in Prop. 2.1 of  \cite{MPR}. 
 The case where $k>n$ can be reduced to the case $k\le n$ by means of
 Hodge duality, as will be shown later  in  Section
 \ref{Hodge}. 
\medskip

Our starting point in the spectral analysis of $\Delta_k$ 
is the 
decomposition obtained in Proposition  \ref{s4.5}

\begin{equation}\label{maindecom}
L^2 \Lambda^k = 
(L^2\Lambda^k)_{d\ex}
\oplus 
(L^2\Lambda^k)_{d^*\cl}\ .
\end{equation}

Since $d\Delta_{k-1}=\Delta_k d$ for all $k\ge1$, using the results
from \cite{MPR} for $\Delta_1$, we can lift the decomposition
of $L^2 \Lambda^1$ into $\Delta_1$-invariant subspaces and the related
spectral properties to $(L^2\Lambda^2)_{d\ex}$.
Therefore, inductively we analyse the 
$(L^2\Lambda^k)_{d\ex}$-component in the decomposition of 
$L^2 \Lambda^k$
by means of the preceeding step.

Thus, we are led to study the 
$(L^2\Lambda^k)_{d^*\cl}$-component in the decomposition of 
$L^2 \Lambda^k$.

By \eqref{1.11} we can characterize the $d^*$-closed forms.
Notice that, if
$\om\in\S_0\Lambda^k$, $\om=\om_1+\theta\wedge\om_2$ with
$\om_1,\om_2$ horizontal, then
$$
\om\in \bigl(\S_0\Lambda^k\bigr)_{d^*\cl}
\qquad\text{if and only if}\qquad \om_2 =T\inv d_H^* \om_1 \
. 
$$
In fact, if $\om_2 =T\inv d_H^* \om_1$,
then the  second equation $i(d\theta)\om_1-d_H^*\om_2=0$ arising
from in
\eqref{1.11} follows from the first one.

Hence, if we set
\begin{equation}
\label{Phi}
\Phi(\om) = \om+\theta\wedge T\inv d_H^* \om
\end{equation}
we obtain an isomorphism
$$
\Phi:\, \S_0\Lambda_H^k \longrightarrow \bigl(\S_0\Lambda^k\bigr)_{d^*\cl}\ .
$$
Notice that,
because of the invariance of $\theta$ and the equivariance of $d_H^*$,
$\Phi$ commutes with the action of $U(n)$.  
\medskip

Clearly, $\Delta_k$ maps a subspace $V$ of $(\S_0\Lambda^k)_{d^*\cl}$
into itself if and only if the (non-differential, see \eqref{Dk}
below) operator 
\begin{equation}\label{DefDk}
D_k:=\Phi\inv \Delta_k \Phi\,,
\end{equation}
maps $W=\Phi\inv V\subset \S_0\Lambda^k_H$ into itself.

For this reason, we begin by decomposing $\S_0\Lambda^k_H$ into
orthogonal subspaces which are invariant under $D_k$ and on which
$D_k$ takes a simple form. 

\subsection{The subspaces}

\ \medskip

The decomposition is based on the following lemma.

\begin{lemma}
Every $\om\in\S_0\Lambda^k_H$ decomposes as
\begin{equation}\label{threeterms}
\om=\om'+\de\xi+\bar\de\eta\ ,
\end{equation}
where $\xi,\eta\in\S_0\Lambda^{k-1}_H$, and
$\om'\in\S_0\Lambda^k$ satisfies the condition
\begin{equation}\label{om'}
\de^*\om'=\bar\de^*\om'=0\ .
\end{equation}

The term $\om'$ is uniquely determined, and we can assume, in addition, that
\begin{equation}\label{xi-eta}
C_{p-1}\xi=\bar C_{q-1}\eta=0\ .
\end{equation}
\end{lemma}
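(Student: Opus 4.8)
The decomposition \eqref{threeterms} is a Hodge-type splitting internal to the horizontal forms, and the natural way to produce it is via the scalar box-operators, exploiting that $\Box$ and $\Boxbar$ act componentwise on forms of fixed bidegree. The plan is to work bidegree by bidegree: fix $(p,q)$ with $p+q=k\le n$ and let $\om=\om_{pq}\in\S_0\Lambda^{p,q}$. First I would recall from Proposition \ref{s4.6} (and Lemma \ref{s4.8}) the orthogonal splittings $\S_0\Lambda^{p,q}=\ker\de^*\oplus\overline{\de(\S_0\Lambda^{p-1,q})}$ when $1\le p\le n-1$ (with the appropriate modifications $C_0=\cC$, $C_n=I$ at the endpoints), and the analogous one for $\de^*$. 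Projecting $\om$ onto $\ker\de^*$ along $\overline{\de(\S_0\Lambda^{p-1,q})}$, and using $\Ri_{p-1}^*\Ri_{p-1}=I-C_p$, we can write $\om = C_p\om + \de\xi_0$ with $\xi_0 = \Box_{p-1}^{-1/2}\Ri_{p-1}^*\om \in \S_0\Lambda^{p-1,q}$ (or the primed variant at $p=0$), all operations legitimate on the core $\S_0$ by Corollary \ref{s4.3} and the definitions \eqref{4.5}--\eqref{4.6}. At this point $C_p\om$ is $\de^*$-closed but need not be $\bar\de^*$-closed, so the argument is not yet finished; the key point to check is that the second projection does not destroy the first.

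Next I would apply the same idea to the remaining $\de^*$-closed piece, projecting onto $\ker\bar\de^*$ along $\overline{\bar\de(\S_0\Lambda^{p,q-1})}$. This writes $C_p\om = \om' + \bar\de\eta_0$ with $\om'=\overline C_q C_p\om$ satisfying $\bar\de^*\om'=0$, and $\eta_0=\Boxbar_{q-1}^{-1/2}\overline\Ri_{q-1}^*C_p\om$. The crucial compatibility is that $\de^*\om'=0$ still holds: this follows because $C_p$ and $\overline C_q$ commute (both are functions of $L$ and $T$ composed with Riesz-type operators acting on disjoint index sets, and more precisely because $\overline C_q$ is built from $\bar\de,\bar\de^*$ which commute with $\de,\de^*$ up to signs as in \eqref{1.6bis}), so $\de^*\overline C_q C_p\om = \pm\overline C_q\de^* C_p\om = 0$ by Lemma \ref{s4.8}. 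Collecting, $\om = \om' + \bar\de\eta_0 + \de\xi_0$ with $\de^*\om'=\bar\de^*\om'=0$; summing over $p+q=k$ gives \eqref{threeterms} with $\xi=\sum\xi_0$, $\eta=\sum\eta_0$.

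For uniqueness of $\om'$: if $\om'_1-\om'_2 = \de\xi+\bar\de\eta$ with both sides annihilated by $\de^*$ and $\bar\de^*$, pair with itself — $\langle \de\xi+\bar\de\eta, \om'_1-\om'_2\rangle = 0$ because $\om'_1-\om'_2\in\ker\de^*\cap\ker\bar\de^*$, while the same quantity equals $\|\om'_1-\om'_2\|^2$; hence $\om'_1=\om'_2$. (One should be slightly careful at endpoint bidegrees where $\ker\Box$ is nontrivial, using the primed operators and Lemma \ref{s4.8} to keep everything orthogonal.) Finally, the normalization \eqref{xi-eta}: since $\de\xi$ is unchanged if we replace $\xi$ by its projection onto $\ker\de^* = (\ker\de)^\perp$ plus... — more precisely, $\de$ kills $\ker\de$, so modifying $\xi$ by an element of $\overline{\de(\S_0\Lambda^{p-2,q})}\subset\ker\de$ changes nothing, and by Lemma \ref{s4.8} the condition $C_{p-1}\xi=0$ is exactly the statement that $\xi\in\overline{\de^*(\S_0\Lambda^{p,q})}$, i.e. $\de^*\xi=0$, which we can impose by replacing $\xi$ with $(I-C_{p-1})\xi$; the actual choice $\xi_0=\Box_{p-1}^{-1/2}\Ri_{p-1}^*\om$ already lies in $\range\Ri_{p-1}^*\subset\ker\de^*$ by \eqref{4.7-9}, so \eqref{xi-eta} holds automatically. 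The analogous remark gives $\bar C_{q-1}\eta=0$.

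\emph{Main obstacle.} The substantive point is the commutation/compatibility step in the second paragraph — ensuring that projecting onto $\ker\bar\de^*$ preserves $\de^*$-closedness — together with the bookkeeping at the endpoint bidegrees $p\in\{0,n\}$ or $q\in\{0,n\}$, where $\Box$ or $\Boxbar$ fails to be injective and one must systematically use the primed operators $\Box'$, $\Boxbar'$ and the projections $\cC,\bar\cC$ of Lemma \ref{s4.8}. Everything else is a formal manipulation on the core $\S_0$, justified once and for all by Corollary \ref{s4.3} and Proposition \ref{subspaces}(ii).
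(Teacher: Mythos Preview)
There are two problems, one cosmetic and one fatal.

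First, a labeling slip: by Lemma \ref{s4.8} (and Definition \ref{C-Cbar}) the operator $C_p$ is the orthogonal projection of $L^2\Lambda^{p,q}$ onto $\ker\de$, which for $1\le p\le n-1$ equals $\overline{\range\de}$; consequently the projection onto $\ker\de^*$ is $I-C_p$, not $C_p$. Your formula $\de\xi_0=\Ri_{p-1}\Ri_{p-1}^*\om=C_p\om$ already shows that $\om=C_p\om+\de\xi_0$ cannot hold; what you mean throughout is $\om=(I-C_p)\om+\de\xi_0$ and $\om'=(I-\bar C_q)(I-C_p)\om$.

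Second, and this is the real gap: even after correcting the labels, your compatibility claim is false. You assert that $C_p$ and $\bar C_q$ commute because $\de,\de^*$ anti-commute with $\bar\de,\bar\de^*$ ``up to signs as in \eqref{1.6bis}''. But \eqref{1.6bis} only gives $\de^*\bar\de=-\bar\de\de^*$ and $\de\bar\de^*=-\bar\de^*\de$; the crucial pair $\de^*,\bar\de^*$ does \emph{not} anti-commute: by \eqref{1.6} one has $\de^*\bar\de^*+\bar\de^*\de^*=T\,i(d\theta)$. Tracing this through $\bar C_q=\bar\de\,\Boxbar_{q-1}^{-1}\bar\de^*$ and using the shifts in \eqref{commutations}, one finds for $\sigma\in\ker\de^*$ that
\[
\de^*(I-\bar C_q)\sigma=-\de^*\bar C_q\sigma
=T\,\bar\de\,(\Boxbar_{q-1}+iT)^{-1}\,i(d\theta)\sigma,
\]
which is nonzero whenever $i(d\theta)\sigma\ne0$. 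So the second projection destroys $\de^*$-closedness in general, and the iterated-projection scheme does not land in $\ker\de^*\cap\ker\bar\de^*$. (Geometrically: the two subspaces $\ker\de^*$ and $\ker\bar\de^*$ are not ``aligned'', so projecting successively does not give the projection onto their intersection.)

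The paper sidesteps this entirely by a single orthogonal splitting: it treats $(\de\ \ \bar\de):\S_0\Lambda^{p-1,q}\oplus\S_0\Lambda^{p,q-1}\to\S_0\Lambda^{p,q}$ as one operator, whose adjoint has kernel exactly $\ker\de^*\cap\ker\bar\de^*$, yielding
\[
L^2\Lambda^{p,q}=(\ker\de^*\cap\ker\bar\de^*)\oplus\overline{\range\de+\range\bar\de}
\]
in one stroke; Lemma \ref{density-lemma} then brings this down to $\S_0$. The normalization \eqref{xi-eta} is imposed afterwards by replacing $\xi,\eta$ with $(I-C_{p-1})\xi,(I-\bar C_{q-1})\eta$. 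No compatibility check is needed.
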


Notice that, even with the extra assumption \eqref{xi-eta}, $\xi$ and
$\eta$ are not uniquely determined. 

\proof Assume that $\om$ is a $(p,q)$-form.
If $p=0,$ we obviously have the decomposition
$\om=\om'+\bar\de\eta$, with $\bar\de^*\om'=0,$ and $\de^*\om'=0 $ holds tivially, since 
$\om'$ is a $(0,q)$- form. A  similar argument applies if $q=0$.  

We therefore assume that $p,q\ge1$.
Consider the homogeneous $U(n)$-equivariant differential operator
$$
(\de\ \bar\de): \ \begin{pmatrix}\xi\\ \eta\end{pmatrix}\mapsto \de\xi+\bar\de\eta\ ,
$$ 
acting from
$L^2(\Lambda^{p-1,q})\oplus L^2(\Lambda^{p,q-1})$ to $L^2\Lambda^{p,q}$ and
its adjoint 
$
\begin{pmatrix}\de^*\\ \bar\de^*\end{pmatrix}$.
 
In $L^2\Lambda^{p,q}$, we
have 
$$
\range (\de\ \bar\de)=\range\de+\range\bar\de\ ,\qquad \ker
\begin{pmatrix}\de^*\\ \bar\de^*\end{pmatrix}=\ker\de^*\cap\ker\bar\de^*\ , 
$$
so that
$$
L^2\Lambda^{p,q}=(\ker\de^*\cap\ker\bar\de^*)\oplus
\overline{(\range\de+\range\bar\de)}\ . 
$$

Moreover,
$$
(\de\ \bar\de)(\S_0\Lambda^{p-1,q}\oplus \S_0\Lambda^{p,q-1})
=\de\S_0\Lambda^{p-1,q}+\bar\de\S_0\Lambda^{p,q-1}\ ,
$$
so that, by Lemma \ref{density-lemma},
$$
\S_0\Lambda^{p,q}=(\ker\de^*\cap\ker\bar\de^*\cap\S_0\Lambda^{p,q})
\oplus(\de\S_0\Lambda^{p-1,q}+\bar\de\S_0\Lambda^{p,q-1})\ .
$$

This gives the decomposition \eqref{threeterms}. By orthogonality, the
two terms $\om'$ and $\de\xi+\bar\de\eta$ are uniquely
determined. Since $C_{p-1}$ and $\bar C_{q-1}$ preserve $\S_0$-forms,
we can replace $\xi$ by 
 $(I-C_{p-1})\xi$ and $\eta$ by $(I-\bar
C_{q-1})\eta$, without changing the equality. 
\endproof

Observe that the decomposition \eqref{threeterms}, without the extra
assumptions on $\xi$ and $\eta$, can be iterated, so to obtain in a next step that 
$$
\begin{aligned}
\om
&=\om'+\de(\xi'+\de\al_1+\bar\de\beta_1)
+\bar\de(\eta'+\de\al_2+\bar\de\beta_2)\\
&=\om'+\de\xi'+\bar\de\eta'+\de\bar\de\beta_1+\bar\de\de\beta_2\ ,
\end{aligned}
$$
where now each of the primed symbols represents a form satisfying
\eqref{om'}. If $\om$ is a horizontal $k$-form, the iteration stops
after $k$ steps, leaving no ``remainder terms''. 

We are so led to introduce, for each $m\le k$ the spaces of forms
\begin{equation}\label{iterated}
\om=\underbrace{\cdots \de\bar\de\de}_{\text{\rm
    $m$-terms}}\xi+\underbrace{\cdots \bar\de\de\bar\de}_{\text{\rm
    $m$-terms}}\eta\ , 
\end{equation}
with $\xi,\eta\in\S_0\Lambda^{k-m}_H$ and
$\de^*\xi=\bar\de^*\xi=\de^*\eta=\bar\de^*\eta=0$. 

It is convenient to observe that in a sequence of at least three
alternating $\de$'s and $\bar\de$'s, we can replace 
a product  $\de\bar\de$  or   $\bar \de\de$ by
$d_H^2=-T\inv e(d\theta)$. Since $T\inv$ preserves $\S_0$-forms, the
form $\om$ in \eqref{iterated} can thus be written as 
$$
\om=\begin{cases} 
e(d\theta)^\ell(\de\xi+\bar\de\eta)&\text{ if }m=2\ell+1\ ,\\
e(d\theta)^\ell(\bar\de\de\xi+\de\bar\de\eta)&\text{ if }m=2\ell+2\ .
\end{cases}
$$

\begin{defn}
{\rm 
We set
$$
\aligned
& W_0^{p,q} 
= \bigl\{ \om\in\S_0\Lambda^{p,q}:\de^*\om=\bar\de^*\om=0\bigr\}\ , \\
& W_1^{p,q} = \bigl\{ \om=\de\xi+\bar\de\eta\,: \xi,\eta\in W_0^{p,q} \bigr\}\ ,\\
&
W_2^{p,q} = 
\bigl\{ \om=\bar\de\de\xi+\de\bar\de\eta\, :\, 
\xi,\eta\in W_0^{p,q} \bigr\}\,.
\endaligned
$$

For $\ell\in\N$ and $j=1,2$, we set
$$
W^{p,q}_{j,\ell}=e(d\theta)^\ell W^{p,q}_j\,.
$$

We also set
$$
W_0^k=\sum_{p+q=k} W_0^{p,q}= \bigl\{ \om\in\S_0\Lambda^k_H:\de^*\om=\bar\de^*\om=0\bigr\}\ ,
$$
and, for $j=1,2$ and $\ell\in\N,$
$$
W_j^k=\sum_{p+q+j=k} W_j^{p,q}
$$
 and 
$$
W^k_{j,\ell}=\sum_{p+q+j+2\ell=k} W_{j,\ell}^{p,q}=e(d\theta)^\ell W^{k-2\ell}_j\,,
$$
whenever $k\ge j+2\ell. $ 

The symbols $\mathcal W_j^k,\mathcal W_j^{p,q}$, etc. denote the $L^2$-closures 
of the corresponding spaces $W_j^k,\ W_j^{p,q}$, etc..
}
\end{defn}

We wish to characterize which spaces among the $W_0^{p,q}$ and $W_{j,\ell}^{p,q}$ are
non-trivial. 

\begin{prop}\label{non-triviality-lemma}
Let $0\le k\le n$ and $p+q=k$.  Then $W^{p,q}_0$ is trivial if and
only if $k=n$ and $1\le p,q\le n-1$.
\end{prop}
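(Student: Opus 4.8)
The statement to prove is that, for $0\le k\le n$ and $p+q=k$, the space
$$
W^{p,q}_0=\{\om\in\S_0\Lambda^{p,q}:\de^*\om=\bar\de^*\om=0\}
$$
is trivial precisely when $k=n$ and $1\le p,q\le n-1$. I would attack this by passing to the Bargmann picture: by Lemma \ref{s3.1}, a form $\om\in\S_0\Lambda^{p,q}$ is nonzero iff $\pi_{\la,\sigma}(\om)\ne0$ for some $\la,\sigma$, so I need to decide, for each $(p,q)$ and each sign of $\la$, whether there is a nonzero element of $\E^{\rho_{p,q},\sgn\la}_\sigma$ annihilated by both $\pi_\la(\de^*)$ and $\pi_\la(\bar\de^*)$. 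Since $W^{p,q}_0=\ker\de^*\cap\ker\bar\de^*$ inside the $(p,q)$-core, and $\de^*,\bar\de^*$ commute with the sublaplacian-type operators, the question is really about a fixed $\pi_\la$, say $\la>0$, where by \eqref{pi(Z)} the $Z_\ell$ act as $\sqrt{2\la}\,\de_{w_\ell}$ and the $\bar Z_\ell$ as $-\sqrt{\la/2}\,w_\ell$. Plugging these into the explicit formulas \eqref{de^*-om} for $\de^*$ and $\bar\de^*$, I get two algebraic-differential operators on $\F\otimes\Lambda^{p,q}$, and $W^{p,q}_0$ corresponds to their common kernel intersected with the relevant $U(n)$-types. (The $\la<0$ case is symmetric under $z\mapsto\bar z$, i.e. it swaps the roles of $p$ and $q$, and I should track that the final answer is symmetric in $p,q$, which it is.)

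**Main steps.** First I would treat the boundary cases $p=0$ and $q=0$: if $p=0$ then $\de^*\om=0$ automatically (there are no holomorphic indices to contract), so $W^{0,q}_0=\ker\bar\de^*\cap\S_0\Lambda^{0,q}$, and one exhibits nonzero elements — e.g. using that $\bar\de^*$ on $(0,q)$-forms is a concrete operator with large kernel (in the Bargmann model with $\la>0$, $\bar\de^*$ involves only the multiplication operators $-\sqrt{\la/2}\,w_\ell$ acting with the combinatorial coefficients $\eps$, and the constant polynomial $F\equiv1$ tensored with a suitable $(0,q)$-covector lies in the kernel). Symmetrically for $q=0$. So $W^{p,q}_0$ is nontrivial whenever $p=0$ or $q=0$, regardless of $k$. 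Second, for $1\le p,q$ I would compute $\langle(\Box+\Boxbar)\om,\om\rangle=\|\de\om\|^2+\|\de^*\om\|^2+\|\bar\de\om\|^2+\|\bar\de^*\om\|^2$ is not quite what I want; rather I would use the scalar expressions \eqref{1.10}: on $(p,q)$-forms $\Box=\tfrac12L+i(\tfrac n2-p)T$ and $\Boxbar=\tfrac12L-i(\tfrac n2-q)T$. For $\om\in W^{p,q}_0$ we have $\Box\om=\de^*\de\om$ and $\Boxbar\om=\bar\de^*\bar\de\om$, both $\ge0$, hence $\langle\Box\om,\om\rangle\ge0$ and $\langle\Boxbar\om,\om\rangle\ge0$; applying $\pi_\la$ and using that $\pi_\la(L)$ acts as $|\la|(2m+n)$ on $\P_m$, this forces, on the $\P_m$-component, $\tfrac12|\la|(2m+n)\pm(\tfrac n2-p)\la\ge0$ and the companion inequality with $q$. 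These two inequalities, together with the fact that on a $(p,q)$-form with $p,q\ge1$ and $p+q=n$ the relevant $U(n)$-types force the minimal $m$ to be $0$ (because $W^{p,q}_0\subset\ker\de^*\cap\ker\bar\de^*$ and the lowest-weight vectors in those types are the constants), would give the contradiction $\tfrac n2+(\tfrac n2-p)\le0$ i.e. $p\le0$, contradicting $p\ge1$ — and symmetrically $q\le0$. Hence $W^{p,q}_0=0$ when $k=n$, $1\le p,q\le n-1$. Third, for $k<n$ with $1\le p,q$, I need the opposite conclusion: $W^{p,q}_0\ne0$. Here I would exhibit an explicit element. The natural candidate is built from the symplectic form: $(d\theta)^{\min(p,q)}$ wedged appropriately is primitive only in top degree, so instead I would use the fact (Lefschetz theory, as in \cite{MPR}) that $\ker i(d\theta)\cap\Lambda^{p,q}\ne0$ when $p+q\le n$, pick a constant-coefficient primitive $(p,q)$-covector $v_0$, and check that $1\otimes v_0$ lies in $\ker\pi_1(\de^*)\cap\ker\pi_1(\bar\de^*)$: indeed $\de^*$ and $\bar\de^*$ each lower the polynomial degree by applying a $Z_\ell$ or $\bar Z_\ell$, and on the constant function $F\equiv1$ in the $\la>0$ model $\pi_1(\bar Z_\ell)=-\sqrt{1/2}\,w_\ell$ raises degree while $\pi_1(\de^*)$ involves $\bar Z_\ell$ so kills $1$... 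I need to be careful about which annihilates $1$; after sorting signs, the correct statement is that a constant function tensored with a primitive covector is annihilated by whichever of $\de^*,\bar\de^*$ acts by differentiation and by the other after one more computation using primitivity. I would then invoke that such a form can be put into $\S_0$ by the standard cutoff in $\la$ (as in the proof of Lemma \ref{S_0}).

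**Expected obstacle.** The delicate point is the third step — actually producing, in a clean closed form, a nonzero element of $W^{p,q}_0$ for every $(p,q)$ with $1\le p,q$ and $p+q<n$, and more importantly verifying it lies in \emph{both} kernels simultaneously. The two conditions $\de^*\om=0$ and $\bar\de^*\om=0$ pull in opposite directions in the Bargmann model (one is essentially a differentiation operator, the other a multiplication operator, depending on $\sgn\la$), so a single monomial times a covector generically fails one of them; one must use the primitivity condition $i(d\theta)v_0=0$ to kill the leftover term. Getting the combinatorics of the $\eps^{I}_{\ell,J}$ coefficients in \eqref{de^*-om} to cooperate with the contraction $i(d\theta)=\sum\zeta_j\wedge\bar\zeta_j$'s adjoint is the real computational heart, and I expect it is exactly here that one needs the hypothesis $p+q\le n$ (so that a primitive $(p,q)$-covector exists) — which is why the dichotomy in the statement sits precisely at $k=n$. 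The rest (the $p=0$, $q=0$ cases and the Box/Boxbar positivity argument ruling out $k=n$, $1\le p,q\le n-1$) is routine once the Bargmann formulas are in hand.
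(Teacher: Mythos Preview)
Your proposal has genuine gaps in both directions.

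\textbf{Non-triviality for $1\le p,q$, $p+q<n$.} The element $1\otimes v_0$ with $v_0$ a primitive $(p,q)$-covector does not work. In the $\la>0$ Bargmann model, $\pi_1(\bar\de^*)$ acts through the $Z_\ell=\sqrt2\,\de_{w_\ell}$ and does annihilate the constant~$1$; but $\pi_1(\de^*)$ acts through the $\bar Z_\ell=-\tfrac1{\sqrt2}\,w_\ell$, i.e.\ by multiplication, so $\pi_1(\de^*)(1\otimes v_0)=\tfrac1{\sqrt2}\sum_{\ell\in I} \pm w_\ell\,\zeta^{I\setminus\{\ell\}}\wedge\bar\zeta^{I'}\neq0$ whenever $p\ge1$. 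Primitivity of $v_0$ (i.e.\ $i(d\theta)v_0=0$) controls a mixed contraction $\sum_\ell \zeta_\ell\lrcorner\bar\zeta_\ell\lrcorner$, not the one-sided contraction in $\de^*$, so it does not produce the needed cancellation. The paper avoids this by working in $\P_1$ rather than $\P_0$: it takes
\[
\beta=\Big(\sum_{j=1}^{p+1}(-1)^j w_j\,\zeta_1\wedge\cdots\wedge\widehat{\zeta_j}\wedge\cdots\wedge\zeta_{p+1}\Big)\wedge\bar\zeta^{I'},\qquad I'=(p+2,\dots,p+q+1),
\]
so that the multiplication-type terms in $\pi_1(\de^*)\beta$ cancel in pairs by antisymmetry, while $\pi_1(\bar\de^*)\beta=0$ because the indices in $I'$ are disjoint from those carried by the $w_j$. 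This alternating-in-$\P_1$ construction is the missing idea.

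\textbf{Triviality for $p+q=n$, $1\le p,q\le n-1$.} Your positivity argument gives no contradiction. For $\om\in W_0^{p,q}$ one has $\lan\Box\om,\om\ran=\|\de\om\|^2\ge0$ and $\lan\Boxbar\om,\om\ran=\|\bar\de\om\|^2\ge0$; but on $\P_m$ with $\la>0$, $\pi_\la(\Box)=\la(m+p)\ge0$ and $\pi_\la(\Boxbar)=\la(m+n-q)\ge0$ hold automatically for all $m\ge0$, $0\le p,q\le n$ (and symmetrically for $\la<0$). Nothing forces $m=0$, and even if it did, no inequality of the form $n/2+(n/2-p)\le0$ emerges. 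The paper's argument is entirely different: since $1\le p,q\le n-1$, both $\Box$ and $\Boxbar$ are \emph{injective} on $(p,q)$-forms; using this one writes $\om=\de^*\nu$ and then $\om=\bar\de^*\de^*\mu$, and the condition $\bar\de^*\om=0$ forces $i(d\theta)(\de^*\mu)=0$. But $\de^*\mu$ is an $(n-s,s+1)$-form with total degree $n+1$, and the Lefschetz decomposition gives $\ker i(d\theta)\cap\Lambda^{n-s,s+1}=\{0\}$, hence $\om=0$.
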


\proof
We show first that $W^{p,q}_0$ is non-trivial for $p+q\le n-1$. In order to do so, it is sufficient to prove that, under this assumption, there is a non-zero $\beta\in \P_1\otimes \Lambda^{p,q}$, with $\P_1=\span\{w_1,\dots,w_n\}$ as in \eqref{2.2}, such that
$$
\pi_1(\de^*)\beta=\pi_1(\bar\de^*)\beta=0\ .
$$

From this it will easily follow from \eqref{de^*-om} and \eqref{pi(Z)} that $\pi_\la(\de^*)\beta=\pi_\la(\bar\de^*)\beta=0$ for every $\la>0$. 

Let $\om\in\Lambda^{p,q}$ be such that $\pi_\la(\om)=\chi(\la)P_\beta$, where $\chi$ is a smooth cut-off function with compact support in $(0,+\infty)$ and $P_\beta$ is the orthogonal projection of $\F\otimes\Lambda^{p,q}$ onto $\C\beta$. Then $\om\in\S_0\Lambda^{p,q}$ and $\de^*\om=\bar\de^*\om=0$.

Take 
$$
\beta = \biggl( \sum_{j=1}^{p+1} (-1)^j w_j\, \zeta\wedge\cdots\wedge
\widehat{\zeta_j}\wedge \cdots\wedge\zeta_{p+1} \bigg)
\wedge\bar\zeta^{I'}\ ,
$$
where  $I'=(p+2,\dots,p+q+1)$. Then, writing $I_{\widehat j} =
(1,\dots,\widehat j,\dots,p+1)$ we have
$$
\aligned
\pi_1 (\de^*)(\beta)
& = \frac1{\sqrt2} \sum_{\ell,J}\sum_{j=1}^{p+1} (-1)^j \varepsilon^{I_{\widehat j} }_{\ell J}
w_\ell w_j \, \zeta^J \wedge\bar\zeta^{I'}\\
& =  \frac1{\sqrt2}  \sum_{j=1}^{p+1} (-1)^j \Big[ \sum_{\ell=1}^{j-1}
(-1)^{\ell-1} w_\ell w_j \, \zeta_1\wedge\cdots\wedge
\widehat{\zeta_\ell}\wedge \cdots\wedge
\widehat{\zeta_j}\wedge \cdots\wedge \zeta_{p+1} 
\\
&\qquad\qquad\qquad\qquad + \sum_{\ell=j+1}^{p+1}
(-1)^\ell w_\ell w_j \, \zeta_1\wedge\cdots\wedge
\widehat{\zeta_j}\wedge \cdots\wedge
\widehat{\zeta_\ell}\wedge \cdots\wedge \zeta_{p+1}
\Big] \wedge\bar\zeta^{I'} \\
& = 0\ .
\endaligned
$$ 

Next, since $\pi_1(Z_\ell)=\sqrt2\de_{w_\ell}$
we have
$$
\pi_1 (\bar\de^*)(\beta)
= (-1)^{p+1}\sqrt2 \sum_{\ell,\, J'} \varepsilon^{I'}_{\ell J'} \big(
\de_{w_\ell}w_j \big) \zeta^{I_{\widehat j}}\wedge \overline
\zeta^{J'} = 0\ ,
$$
since $j\not\in I'$, so that $\de_{w_\ell}w_j=0$.

This shows that $W^{p.q}_0\neq \{0\}$ when $p+q\le n-1$.

Next, consider $W^{n,0}_0$.  Take $\beta = \zeta_1\wedge\cdots\wedge
\zeta_n\in\P_0\otimes\Lambda^{n,0}$.  Clearly $\pi_\lambda (\bar\de^*) \beta=0$ for every $\la\ne0$, while $\pi_\lambda (\bar\de^*) \beta=0$ for $\la<0$ by  \eqref{de^*-om} and \eqref{pi(Z)}.
 As before, this  implies
that $W^{n,0}_0\neq\{0\}$.
\medskip

Finally, consider $W^{n-s,s}_0$, with $1\le s\le n-1$
and let $\omega\in W^{n-s,s}_0$.  Since $\Box$ is
injective on this space and $\de^*\omega=0$ we have
$$
\omega = \de^* \big( \de \Box\inv \big)\omega=: \de^* \nu\ .
$$

Similarly,  since  $\bar\de^*\omega=0$,
$$
\aligned
\omega 
& = \bar\de^* \bar\de \Boxbar\inv \omega 
= \bar\de^* \bar\de\Boxbar\inv \de^* \nu \\
& = \bar\de^* \bar\de \de^* (\Boxbar-iT) \inv \nu 
=  \bar\de^* \de^* \big( -\bar\de (\Boxbar-iT) \inv \nu\big) \ ,
\endaligned
$$
i.e. 
\begin{equation}\label{def-mu}
\omega = \bar\de^* \de^* \mu\ ,
\end{equation}
for some $\mu$.  But $\bar\de^*\omega=0$ if and only if
$$
0=  \de^* \bar\de^* (\de^* \mu) = 
 ( \bar\de^* \de^*+ \de^* \bar\de^*) (\de^* \mu) = 
Ti(d\theta)(\de^* \mu) \ .
$$

It follows that, if $\mu$ is as in \eqref{def-mu}, then $\omega
= \bar\de^* \de^* \mu \in W^{s,n-s}_0$ if and only if 
\begin{equation}\label{mu-ker-idtheta}
i(d\theta)(\de^* \mu) =0\ ,
\end{equation}
i.e. $\de^* \mu \in \ker i(d\theta)$. 

Therefore, 
$\de^*\mu\in \S_0 \Big(\ker_{\Lambda^{s,n-s+1}} i(d\theta) \Big)$.
Since $\max\{ 0, s+(n-s+1) -n\} =\max\{0,1\}=1>0$,
according
to Prop. 2.1 in in \cite{MPR}, we have 
$\ker_{\Lambda^{s,n-s+1}} i(d\theta)=
\{0\}$, that is, $\de^*\mu=0$; hence $\omega=0$.
\qed

\begin{prop}\label{non-tr}
Assume that $j=1,2$. Then the  space 
$W_{j,\ell}^{p,q}$ 
is non-trivial  if and only if
$\ell+j+p+q\le n$.
In this case, 
$e(d\theta)^\ell$ is bijective from  $W_{j}^{p,q}$ onto $W_{j,\ell}^{p,q}.$   
\end{prop}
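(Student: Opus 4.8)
Write $k=p+q+j$, so that $W_j^{p,q}\subseteq\S_0\Lambda_H^k$ and, by the very definition, $W_{j,\ell}^{p,q}=e(d\theta)^\ell W_j^{p,q}\subseteq\S_0\Lambda_H^{k+2\ell}$. Since $e(d\theta)^\ell$ is automatically onto $W_{j,\ell}^{p,q}$, the asserted bijectivity amounts to injectivity of $e(d\theta)^\ell$ on $W_j^{p,q}$, and non-triviality of $W_{j,\ell}^{p,q}$ amounts to: $W_j^{p,q}\neq\{0\}$ and $e(d\theta)^\ell$ does not annihilate $W_j^{p,q}$. So the plan is to prove (A) $W_j^{p,q}\neq\{0\}$ whenever $p+q+j\le n$, and (B) $e(d\theta)^\ell$ is injective on $W_j^{p,q}$ when $\ell+k\le n$ and vanishes identically on $W_j^{p,q}$ when $\ell+k>n$.

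The mechanism behind (B) is the $\mathfrak{sl}_2$-action supplied by $e(d\theta)$, $i(d\theta)$ and the commutation relation \eqref{1.7tris}, together with the Lefschetz decomposition of horizontal forms (Prop.~2.1 of \cite{MPR}). The standard $\mathfrak{sl}_2$ computation shows that on primitive $m$-forms, $\ker i(d\theta)\cap\Lambda_H^m$, the power $e(d\theta)^s$ is injective for $s\le n-m$ and identically zero for $s>n-m$; since $\Lambda_H^m=\bigoplus_{r\ge0}e(d\theta)^r\bigl(\ker i(d\theta)\cap\Lambda_H^{m-2r}\bigr)$ is a direct sum carried by $e(d\theta)^s$ into the corresponding decomposition of $\Lambda_H^{m+2s}$, it follows that $e(d\theta)^s$ is injective on all of $\Lambda_H^m$ exactly when $s\le n-m$. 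Taking $m=k$ gives the ``if'' half of (B): for $\ell\le n-k$ the operator $e(d\theta)^\ell$ is injective already on the ambient $\Lambda_H^k\supseteq W_j^{p,q}$, so it restricts to a bijection of $W_j^{p,q}$ onto $W_{j,\ell}^{p,q}$.

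For the ``only if'' half of (B) I would use that $W_j^{p,q}$ consists of \emph{primitive} $k$-forms, i.e.\ $W_j^{p,q}\subseteq\ker i(d\theta)$; granted this, $e(d\theta)^\ell$ kills $W_j^{p,q}$ whenever $\ell>n-k$, which combined with the previous paragraph gives (B). For $j=1$ the primitivity is immediate: by \eqref{1.6} every $\xi\in W_0^{p,q}$ satisfies $T\,i(d\theta)\xi=(\de^*\bar\de^*+\bar\de^*\de^*)\xi=0$, hence $i(d\theta)\xi=0$ since $T$ is injective on $\S_0$; then \eqref{1.7} yields $i(d\theta)(\de\xi+\bar\de\eta)=(\de\,i(d\theta)-i\bar\de^*)\xi+(\bar\de\,i(d\theta)+i\de^*)\eta=0$ for $\xi,\eta\in W_0^{p,q}$, so $W_1^{p,q}\subseteq\ker i(d\theta)$. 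For $j=2$ one must use the finer analysis of $W_2^{p,q}$ carried out before the Definition: the naive generators are \emph{not} primitive --- indeed $i(d\theta)\,\bar\de\de\xi=i\Box_p\xi$ by \eqref{1.7}, \eqref{1.6} and \eqref{1.10} --- and one has to split off, through the identity $W_{j+2}^{p,q}=e(d\theta)W_j^{p,q}$, the part of $\{\bar\de\de\xi+\de\bar\de\eta\}$ not lying in $\ker i(d\theta)$, so as to leave $W_2^{p,q}$ genuinely primitive of degree $k=p+q+2$. This reduction is the one delicate point; everything else is the bookkeeping above.

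It remains to establish (A). Since $p+q\le n-j<n$, Proposition \ref{non-triviality-lemma} gives $W_0^{p,q}\neq\{0\}$; pick $0\neq\xi\in W_0^{p,q}$. For $j=1$, $\de\xi$ and $\bar\de\xi$ cannot both vanish: if they did, then $\de^*\xi=\bar\de^*\xi=\de\xi=\bar\de\xi=0$ would force $\Box_p\xi=\Boxbar_q\xi=0$, while by \eqref{1.10} at least one of $\Box_p,\Boxbar_q$ is injective unless $p\in\{0,n\}$ and $q\in\{0,n\}$ --- which in the present range leaves only $(p,q)=(0,0)$, $k=1$, a case disposed of by hand since $\de f\neq0$ for a non-constant $f\in\S_0=W_0^{0,0}$ --- forcing $\xi=0$; hence $W_1^{p,q}\neq\{0\}$. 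For $j=2$ one produces a nonzero primitive element of $W_2^{p,q}$: e.g.\ the primitive component of $\bar\de\de\xi$ is a primitive $(p+q+2)$-form, nonzero for a suitable $\xi$, or one simply exhibits an explicit Bargmann-side representative as in the proof of Proposition \ref{non-triviality-lemma}. Combining (A) and (B) yields the Proposition.
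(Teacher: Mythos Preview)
Your handling of injectivity and of $j=1$ is correct and close to the paper's. The gap is in the ``only if'' direction for $j=2$: you write as if $W_2^{p,q}$ can be arranged to lie in $\ker i(d\theta)$, but it cannot. The space $W_2^{p,q}$ as defined contains $e(d\theta)W_0^{p,q}$ (via $\de\bar\de+\bar\de\de=-Te(d\theta)$), and the computation $i(d\theta)(\bar\de\de\xi+\de\bar\de\eta)=i(\Box\eta-\Boxbar\xi)\in W_0^{p,q}$ shows that this non-primitive summand is genuinely present. The identity $W_{j+2}^{p,q}=e(d\theta)W_j^{p,q}$ you invoke is stated only for $j\ge1$; it tells you $W_4^{p,q}=e(d\theta)W_2^{p,q}$, not that $W_2^{p,q}$ itself reduces to primitive forms. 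So the step ``$e(d\theta)^\ell=0$ on primitive $k$-forms for $\ell>n-k$, hence on $W_2^{p,q}$'' fails, and with it your ``only if'' for $j=2$.

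The paper's remedy is to establish the orthogonal splitting $W_2^{p,q}=K_2^{p,q}\oplus e(d\theta)W_0^{p,q}$ with $K_2^{p,q}=W_2^{p,q}\cap\ker i(d\theta)$, show it persists (orthogonally) under $e(d\theta)^\ell$, and then run the primitive-form argument on each summand separately --- the first being primitive of degree $p+q+2$, the second $e(d\theta)$ applied to primitives of degree $p+q$. This is precisely the ``finer analysis'' you allude to but do not carry out. By contrast, your version of the injectivity half --- observing via the Lefschetz decomposition that $e(d\theta)^\ell$ is injective on \emph{all} of $\Lambda_H^k$ once $\ell\le n-k$ --- is cleaner than the paper's inductive argument on $\ker i(d\theta)$, since it requires no special knowledge of $W_j^{p,q}$; and your argument for (A) supplies detail (the $\Box_p$/$\Boxbar_q$ injectivity dichotomy) that the paper simply omits.
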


\proof
 We first prove the ``only if'' part.
Observe that $W_0^{p,q}$ and $W_1^{p,q}$ are in the kernel of
$i(d\theta),$ which is immediate from \eqref{1.6} and \eqref{1.7}. 

In order to prove the statement for $j=1,$ we set
$(\tilde p,\tilde q)=(p+1,q)$ or $(p,q+1).$ By Prop.  2.1 in
\cite{MPR} we know that 
$e(d\theta)^\ell \bigl(\ker i(d\theta)_{|_{L^2 \Lambda^{\tilde p,\tilde q}}}\bigr)$
is non-trivial if and only if
$\max(0,\tilde p+\tilde q+2\ell-n)\le\ell$, that is, 
$\ell\le n-\tilde p-\tilde q$.  Since $W_1^{p,q}\subseteq
L^2 \Lambda^{p+1,q}+ L^2 \Lambda^{p,q+1}$ it follows that
$W_{1,\ell}^{p,q}$  can be non-trivial only when 
$\ell\le n-p-q-1.$ 

To prove that  $e(d\theta)^\ell$ is injective on $W_{1}^{p,q}$ under this condition, we show by induction on $\ell$ that $e(d\theta)^\ell$ is injective on 
$\ker i(d\theta)_{|_{\Lambda^{\tilde p,\tilde q}}}$
when 
$\ell \le n-\tilde p-\tilde q=n-p-q-1$. The case $\ell=0$ is trivial. And, by \eqref{1.7tris} we see that  for $\ell\ge 1$ when acting on $(\tilde p,\tilde q)$-forms 
\begin{equation}\label{iem}
\aligned 
\ [i(d\theta),\, e(d\theta)^\ell]
& = \sum_{\nu=0}^{\ell-1}
e(d\theta)^\nu
[i(d\theta),\, e(d\theta)]e(d\theta)^{\ell-1-\nu} \\
& =
\sum_{\nu=0}^{\ell-1} (n-\tilde p-\tilde q-2\ell+2+2\nu) e(d\theta)^{\ell-1}
\\
& = \ell(n-\tilde p-\tilde q-\ell+1)e(d\theta)^{\ell-1}\\
& = \ell(n- p- q-\ell)e(d\theta)^{\ell-1}
\ ,
\endaligned
\end{equation}
which allows to prove injectivity of  $e(d\theta)^\ell$ on $\ker i(d\theta)_{|_{\Lambda^{\tilde p,\tilde q}}}$ from injectivity of  $e(d\theta)^{\ell-1}$ under the assumption on $\ell$.

\smallskip
We now turn to the case $j=2,$ which requires a more refined discussion. Let us set
$$
K_2^{p,q}=W_2^{p,q}\cap\ker i(d\theta)_{|\, L^2\Lambda^{p+q+2}_H} .
$$
We claim that $W_2^{p,q}$   decomposes  as an orthogonal sum
\begin{equation}\label{K2}
 W_2^{p,q} =  K_2^{p,q} \oplus e(d\theta) W^{p,q}_0.
\end{equation}
It is obvious by \eqref{1.6} that $e(d\theta) W^{p,q}_0\subset W_2^{p,q},$  and clearly  the two subspaces on the right-hand side are orthogonal. 

Assume that  $\om=\de\bar\de\xi+\bar\de\de\eta\in W_2^{p,q},$ with $\xi,\eta\in W_0^{p,q}.$ Then 
$$i(d\theta)\om=i(\Box\eta-\Boxbar\xi)\in W^{p,q}_0.
$$ 
Indeed, by \eqref{1.6bis} and
\eqref{commutations} we have 
$$
\aligned
i(d\theta)\bigl(\de\bar\de\xi+\bar\de\de\eta\bigr)
& = T\inv {d_H^*}^2 \bigl(\de\bar\de\xi+\bar\de\de\eta\bigr) \\
& = T\inv \bigl( -\de^*\de\Boxbar\xi +\bar\de^*\Box\bar\de\xi
+\de^*\Boxbar\de\eta -\bar\de^*\bar\de\Box\eta\bigr) \\
& =  T\inv \bigl( -\Box\Boxbar\xi +(\Box-iT)\Boxbar\xi
+(\Boxbar+iT)\Box\eta -\Boxbar\Box\eta\bigr) \\
& =  i(\Box\eta-\Boxbar\xi)\ .
\endaligned
$$  
We have seen that  $i(d\theta) W_2^{p,q}\subset W^{p,q}_0,$ 
and therefore $\om\in  W_2^{p,q}\cap (e(d\theta) W^{p,q}_0)^\perp$ if
and only if $\om\in K_2^{p,q}.$ This proves \eqref{K2}. Let us set
$K_{2,\ell}^{p,q}=e(d\theta)^\ell K_2^{p,q}.$ Then  
\begin{equation}\label{K22}
W_{2,\ell}^{p,q} =  K_{2,\ell}^{p,q} \oplus e(d\theta)^{\ell+1} W^{p,q}_0,
\end{equation}
and this decomposition is again orthogonal. 

Indeed, if $\omega\in\ker(i(d\theta))$ and $\sigma$ is a form
orthogonal to $\omega$, then for every $\ell\ge 1$
$$
\lan e(d\theta)^\ell\omega,\, e(d\theta)^\ell\sigma\ran  =0.
$$
For, by \eqref{iem}Ê we have 
$$
i(d\theta)e(d\theta)^\ell\om=c_\ell e(d\theta)^{\ell-1}\om\, , 
$$
which implies, by induction on $\ell,$ that
$$
\lan e(d\theta)^\ell\omega,\, e(d\theta)^\ell\sigma\ran  =
\lan i(d\theta)e(d\theta)^\ell\omega,e(d\theta)^{\ell-1} \sigma\ran =0\ .
$$

In order to prove the statement in the lemma for $j=2,$ using the
orthogonal decomposition in \eqref{K22} we may now argue as before by
means of Prop.  2.1 in \cite{MPR} in order  to see that
$W_{2,\ell}^{p,q}$ can be non-trivial only if $\ell\le n-p-q-2.$
Moreover, to verify that $e(d\theta)^\ell$ is  injective on
$W_{2}^{p,q}$ under this condition, it suffices to check injectivity
on each of the subspaces on the right-hand side of \eqref{K2}. But
this can be done by the same reasoning that we used for the case
$j=1.$ \medskip

For the ``if'' part, assume again that $p+q+j+\ell\le n$ and $ j=1,2$.  Then,
$p+q\le n-1$, so that
$W^{p,q}_0\neq\{0\}$ by Proposition \ref{non-triviality-lemma}.  Then 
$W^{p,q}_j\neq\{0\}$, and by the first part $W^{p,q}_{\ell,j}\neq\{0\}$.
\qed

\begin{lemma}\label{de*}
 For $\xi\in W^{p,q}_0$,
\begin{equation}\label{[de*,ede]}
\begin{aligned}
\de^*e(d\theta)^\ell \de\xi
&=e(d\theta)^\ell\Box \xi+i\ell e(d\theta)^{\ell-1}\bar\de\de\xi\\ 
\de^*e(d\theta)^\ell\bar\de\xi
&=0\\
\bar\de^*e(d\theta)^\ell \bar\de\xi
&=e(d\theta)^\ell\barBox \xi-i\ell e(d\theta)^{\ell-1}\de\bar\de\xi\\ 
\bar\de^*e(d\theta)^\ell\de\xi
&=0\ ,
\end{aligned}
\end{equation}
and
\begin{equation}\label{[de*,edebarde]}
\begin{aligned}
\de^*e(d\theta)^\ell \bar\de\de\xi
&=-e(d\theta)^\ell\bar\de\Box \xi\\ 
\de^*e(d\theta)^\ell \de\bar\de\xi
&=e(d\theta)^\ell\bar\de\big(\Box \xi-i(\ell+1)T\big)\xi\\ 
\bar\de^*e(d\theta)^\ell \de\bar\de\xi
&=-e(d\theta)^\ell\de\barBox \xi\\ 
\bar\de^*e(d\theta)^\ell \bar\de\de\xi
&=e(d\theta)^\ell\de\big(\barBox \xi+i(\ell+1)T\big)\xi\ .
\end{aligned}
\end{equation}
\end{lemma}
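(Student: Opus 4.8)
The plan is to reduce each of the eight identities to its $\ell=0$ version by commuting $\de^*$, resp.\ $\bar\de^*$, past the factor $e(d\theta)^\ell$. Since $e(d\theta)$ commutes with both $\de$ and $\bar\de$ by \eqref{1.7bis}, iterating the relations $[\de^*,e(d\theta)]=i\bar\de$ and $[\bar\de^*,e(d\theta)]=-i\de$ of \eqref{1.7} gives
\begin{equation*}
\bigl[\de^*,e(d\theta)^\ell\bigr]=i\ell\,e(d\theta)^{\ell-1}\bar\de\ ,\qquad
\bigl[\bar\de^*,e(d\theta)^\ell\bigr]=-i\ell\,e(d\theta)^{\ell-1}\de\ .
\end{equation*}
So, writing for instance $\de^*e(d\theta)^\ell\de\xi=e(d\theta)^\ell\de^*\de\xi+i\ell\,e(d\theta)^{\ell-1}\bar\de\de\xi$, it will remain to evaluate the $\ell=0$ expressions $\de^*\de\xi$, $\de^*\bar\de\xi$, $\de^*\bar\de\de\xi$, $\de^*\de\bar\de\xi$ and their barred analogues on a form $\xi\in W_0^{p,q}$, i.e.\ one with $\de^*\xi=\bar\de^*\xi=0$.

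For the four identities \eqref{[de*,ede]} this is immediate. Using $\de^*\bar\de=-\bar\de\de^*$ and $\bar\de^*\de=-\de\bar\de^*$ from \eqref{1.6bis} together with $\de^*\xi=\bar\de^*\xi=0$ and $\de^2=\bar\de^2=0$, the terms $\de^*\bar\de\xi$, $\bar\de^*\de\xi$, $\bar\de\bar\de\xi$, $\de\de\xi$ all vanish, so the second and fourth identities reduce to $0=0$; for the first and third, the definitions \eqref{1.8} give $\de^*\de\xi=\Box\xi$ and $\bar\de^*\bar\de\xi=\barBox\xi$, and multiplying by $e(d\theta)^\ell$ produces exactly the stated formulas.

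The four identities \eqref{[de*,edebarde]} follow by the same scheme, two of them needing one extra step. For $\de^*e(d\theta)^\ell\bar\de\de\xi$ and $\bar\de^*e(d\theta)^\ell\de\bar\de\xi$ the commutator terms carry a factor $\bar\de^2$, resp.\ $\de^2$, hence vanish, while the diagonal terms give $\de^*\bar\de\de\xi=-\bar\de\de^*\de\xi=-\bar\de\Box\xi$ and $\bar\de^*\de\bar\de\xi=-\de\bar\de^*\bar\de\xi=-\de\barBox\xi$, which are the claimed formulas after multiplication by $e(d\theta)^\ell$. For $\de^*e(d\theta)^\ell\de\bar\de\xi$ the diagonal term is $\de^*\de\bar\de\xi=\Box\bar\de\xi=\bar\de(\Box-iT)\xi$ by \eqref{commutations}; for the commutator term I would write $\bar\de\de=-Te(d\theta)-\de\bar\de$ (from \eqref{1.6}), so that $\bar\de\de\bar\de\xi=-Te(d\theta)\bar\de\xi$ and $i\ell\,e(d\theta)^{\ell-1}\bar\de\de\bar\de\xi=-i\ell\,T\,e(d\theta)^\ell\bar\de\xi$; since $T$ and $e(d\theta)$ commute with $\bar\de$, the two contributions add up to $e(d\theta)^\ell\bar\de\bigl(\Box-i(\ell+1)T\bigr)\xi$. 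The last identity, $\bar\de^*e(d\theta)^\ell\bar\de\de\xi$, is the mirror computation: the diagonal term is $\bar\de^*\bar\de\de\xi=\barBox\de\xi=\de(\barBox+iT)\xi$, and $\de\bar\de=-Te(d\theta)-\bar\de\de$ turns the commutator term into $-i\ell\,e(d\theta)^{\ell-1}\de\bar\de\de\xi=i\ell\,T\,e(d\theta)^\ell\de\xi$, so the sum is $e(d\theta)^\ell\de\bigl(\barBox+i(\ell+1)T\bigr)\xi$.

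No genuine difficulty arises beyond bookkeeping of signs and of which operators commute. The only mildly delicate point will be the observation that, in the two ``$\ell+1$'' cases, the quadratic relation \eqref{1.6} is precisely what rewrites the wrong-sided triple product $\bar\de\de\bar\de\xi$ (resp.\ $\de\bar\de\de\xi$) as a multiple of $e(d\theta)\bar\de\xi$ (resp.\ $e(d\theta)\de\xi$), which then merges with the $e(d\theta)^\ell$ prefactor and the diagonal term to produce the shift $\Box-iT\mapsto\Box-i(\ell+1)T$ (resp.\ $\barBox+iT\mapsto\barBox+i(\ell+1)T$).
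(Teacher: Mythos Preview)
Your proof is correct and follows essentially the same approach as the paper: derive the commutator identities $[\de^*,e(d\theta)^\ell]=i\ell\,e(d\theta)^{\ell-1}\bar\de$ and $[\bar\de^*,e(d\theta)^\ell]=-i\ell\,e(d\theta)^{\ell-1}\de$ by iteration, then reduce to the $\ell=0$ case using \eqref{1.6}, \eqref{1.6bis}, and \eqref{commutations}. The paper's proof only writes out the first identity in detail and declares the rest similar, whereas you spell out all eight; the extra step you identify for the ``$\ell+1$'' cases (using $\de\bar\de+\bar\de\de=-Te(d\theta)$ to absorb the commutator term) is exactly what the paper has in mind when it cites \eqref{1.6}.
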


\proof 
Since $[\de^*,e(d\theta)]=i\bar\de$ commutes with $e(d\theta)$
(compare \eqref{1.7}, \eqref{1.7bis}) and 
similarly for $\bar\de^*$, we obtain by induction that 
\begin{equation}\label{commute-e}
[\de^*,e(d\theta)^\ell]=i\ell\bar\de e(d\theta)^{\ell-1}\
,\qquad[\bar\de^*,e(d\theta)^\ell]=-i\ell\de e(d\theta)^{\ell-1}\
. 
\end{equation}

We verify the first identity in \eqref{[de*,ede]}, the others being similar and following by invoking also \eqref{1.6} and \eqref{commutations}:
$$
\begin{aligned}
\de^*e(d\theta)^\ell \de\xi &= e(d\theta)^\ell\de^*\de\xi+i\ell\bar\de e(d\theta)^{\ell-1}\de\xi\\
&=e(d\theta)^\ell\Box\xi+i\ell e(d\theta)^{\ell-1}\bar\de\de\xi\ . 
\end{aligned} 
$$
\endproof

This immediately gives the following inclusions.

\begin{cor}\label{de*W}
For $\ell\ge1$,
$$
\de^*W^{p,q}_{1,\ell}\subset W^{p,q}_{2,\ell-1}\ ,\qquad \de^* W^{p,q}_{2,\ell}\subset W^{p,q}_{1,\ell}\ ,
$$
and similarly for $\bar\de^*$.
\end{cor}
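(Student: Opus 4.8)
The plan is to reduce the statement to the explicit formulas of Lemma~\ref{de*}, after recording two stability properties of the core $W^{p,q}_0$. First I would check that $W^{p,q}_0$ is invariant under each of the scalar operators $\Box$, $\Boxbar$, $T$ and $T\inv$. Indeed $T$ is central; $\Box$ commutes with $\de^*$ and $\Boxbar$ with $\bar\de^*$, since $\Box=\de\de^*+\de^*\de$, $\Boxbar=\bar\de\bar\de^*+\bar\de^*\bar\de$ and $(\de^*)^2=(\bar\de^*)^2=0$; and \eqref{commutations} gives $\de^*\Boxbar=\Boxbar\de^*+iT\de^*$ and $\bar\de^*\Box=\Box\bar\de^*-iT\bar\de^*$, so that $\de^*$ and $\bar\de^*$ annihilate $\Box\xi$ and $\Boxbar\xi$ whenever they annihilate $\xi$; finally all four operators are scalar in $L$ and $T$, hence preserve $\S_0\Lambda^{p,q}$. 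Second I would note that $e(d\theta)W^{p,q}_0\subseteq W^{p,q}_2$: by \eqref{1.6},
$$
e(d\theta)\xi=-T\inv(\de\bar\de+\bar\de\de)\xi=\bar\de\de(-T\inv\xi)+\de\bar\de(-T\inv\xi)\ ,
$$
and $-T\inv\xi\in W^{p,q}_0$ by the first point; in particular $e(d\theta)\Box\xi$ and $e(d\theta)\Boxbar\xi$ lie in $W^{p,q}_2$ for every $\xi\in W^{p,q}_0$.

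Granting these, the corollary is a short computation. For $\om=e(d\theta)^\ell(\de\xi+\bar\de\eta)\in W^{p,q}_{1,\ell}$ with $\xi,\eta\in W^{p,q}_0$, the first two lines of \eqref{[de*,ede]} give
$$
\de^*\om=e(d\theta)^{\ell-1}\bigl(e(d\theta)\Box\xi\bigr)+i\ell\,e(d\theta)^{\ell-1}\bar\de\de\xi\ ,
$$
and since $e(d\theta)\Box\xi\in W^{p,q}_2$ and $\bar\de\de\xi\in W^{p,q}_2$ (the latter by the definition of $W^{p,q}_2$), while $W^{p,q}_{2,\ell-1}=e(d\theta)^{\ell-1}W^{p,q}_2$ is a linear subspace, we get $\de^*\om\in W^{p,q}_{2,\ell-1}$; the same argument with the last two lines of \eqref{[de*,ede]} and $\Box$ replaced by $\Boxbar$ gives $\bar\de^*\om\in W^{p,q}_{2,\ell-1}$. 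For the second inclusion, take $\om=e(d\theta)^\ell(\bar\de\de\xi+\de\bar\de\eta)\in W^{p,q}_{2,\ell}$; \eqref{[de*,edebarde]} yields
$$
\de^*\om=e(d\theta)^\ell\bar\de\bigl(-\Box\xi+(\Box-i(\ell+1)T)\eta\bigr)\ ,
$$
where the form in parentheses lies in $W^{p,q}_0$, so $\bar\de$ of it lies in $W^{p,q}_1$ and hence $\de^*\om\in e(d\theta)^\ell W^{p,q}_1=W^{p,q}_{1,\ell}$; the $\bar\de^*$ version follows symmetrically from the other two lines of \eqref{[de*,edebarde]} with $\Boxbar$ in place of $\Box$.

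I do not expect any genuine obstacle here: the only points that need a little care are the two stability properties of $W^{p,q}_0$ just described — where the commutation relations \eqref{commutations} and the identity \eqref{1.6} come in — and the bookkeeping of which box-operator, $\Box$ or $\Boxbar$, appears in each of the four identities of Lemma~\ref{de*}.
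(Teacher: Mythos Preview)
Your argument is correct and is exactly the unpacking of the paper's one-line justification ``This immediately gives the following inclusions'' after Lemma~\ref{de*}. The two stability facts you single out---invariance of $W^{p,q}_0$ under $\Box,\Boxbar,T,T\inv$ and the inclusion $e(d\theta)W^{p,q}_0\subset W^{p,q}_2$---are precisely what is implicit in the paper's claim of immediacy, and your derivation from \eqref{[de*,ede]} and \eqref{[de*,edebarde]} is clean.
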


\begin{prop} \label{horizdecomp}
$L^2\Lambda^k_H$ decomposes as the orthogonal sum
\begin{eqnarray*}
L^2\Lambda^k_H&=&\osum_{p+q=k}\W^{p,q}_0\oplus
\osum_{\substack{j,\ell,p,q\\  j=1,2\\ p+q+j+2\ell=k}}\W^{p,q}_{j,\ell}\\
&=& \W^k_0\oplus \osum_{1+2\ell\le k}\W^k_{1,\ell}\oplus \osum_{2+2\ell\le k}\W^k_{2,\ell}.
\end{eqnarray*}

We recall that $\W^{p,q}_0$ is non-trivial for $p+q\le n-1$, and if
$p+q=n$ for $pq=0$.
\end{prop}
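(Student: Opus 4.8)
The plan is to show that the listed $\S_0$-spaces are mutually orthogonal and span $\S_0\Lambda^k_H$, and then pass to $L^2$-closures. For the spanning I would iterate the decomposition \eqref{threeterms}, as sketched in the Introduction: starting from $\om\in\S_0\Lambda^k_H$, apply \eqref{threeterms} with the remainder term taken in $W_0$ (imposing the full condition \eqref{om'}), then apply it again to $\xi$ and $\eta$, and so on; by $\de^2=\bar\de^2=0$ this yields, after at most $k$ steps, a finite sum of terms $\underbrace{\cdots\de\bar\de\de}_{m}\zeta$ and $\underbrace{\cdots\bar\de\de\bar\de}_{m}\zeta$ with $\zeta\in W_0^{k-m}$, $0\le m\le k$. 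As observed just before the definition of the $W_j^{p,q}$, any string of length $m\ge3$ collapses, via $\de\bar\de+\bar\de\de=-Te(d\theta)$ (\eqref{1.6}) and the fact that $T\inv$ preserves $\S_0$, to $e(d\theta)$ times a string of length $m-2$; iterating, each term becomes $e(d\theta)^\ell\de\zeta$ or $e(d\theta)^\ell\bar\de\zeta$ if $m=2\ell+1$, and $e(d\theta)^\ell\bar\de\de\zeta$ or $e(d\theta)^\ell\de\bar\de\zeta$ if $m=2\ell+2$, with $\zeta\in W_0^{k-m}$. Splitting $\zeta$ into bidegree components places each term in one of $W_0^{p,q}$ ($p+q=k$), $W_{1,\ell}^{p,q}$, $W_{2,\ell}^{p,q}$ ($p+q+j+2\ell=k$), so the algebraic sum of these spaces is all of $\S_0\Lambda^k_H$.

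For orthogonality, since forms of different bidegree are orthogonal, I would fix a bidegree $(a,b)$, $a+b=k$, and invoke the $L^2$ Lefschetz decomposition of $\S_0\Lambda^{a,b}$ (Prop.~2.1 in \cite{MPR}): it is the orthogonal sum over $s\ge0$ of the subspaces $e(d\theta)^s\bigl(\ker i(d\theta)\cap\S_0\Lambda^{a-s,b-s}\bigr)$, and on $\ker i(d\theta)$ the map $e(d\theta)^s$ is injective, in the relevant range, with $\lan e(d\theta)^s\al,e(d\theta)^s\al'\ran$ a positive multiple of $\lan\al,\al'\ran$, as in the computation already made in the proof of Proposition \ref{non-tr} (using \eqref{1.7}, \eqref{1.7tris}). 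Recalling that $W_0^{p,q}$ and $W_1^{p,q}=\de W_0^{p,q}\oplus\bar\de W_0^{p,q}$ lie in $\ker i(d\theta)$ while $W_2^{p,q}=K_2^{p,q}\oplus e(d\theta)W_0^{p,q}$ by \eqref{K2} with $K_2^{p,q}\subset\ker i(d\theta)$, I would check by a bidegree- and $e(d\theta)$-degree count that each elementary piece --- $W_0^{a,b}$, $e(d\theta)^\ell\de W_0^{p,q}$, $e(d\theta)^\ell\bar\de W_0^{p,q}$, $K_{2,\ell}^{p,q}$, $e(d\theta)^{\ell+1}W_0^{p,q}$ --- making up the $W_{j,\ell}^{p,q}$ sits at a single Lefschetz level, and that at level $s$ the bidegree-$(a,b)$ contribution is exactly $e(d\theta)^s$ applied to the four ``seeds'' $W_0^{c,d}$, $\de W_0^{c-1,d}$, $\bar\de W_0^{c,d-1}$, $K_2^{c-1,d-1}$, where $(c,d)=(a-s,b-s)$. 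By the conformality of $e(d\theta)^s$ on primitive forms and the orthogonality of distinct Lefschetz levels, it then suffices to prove that these four seeds are pairwise orthogonal and span $\ker i(d\theta)\cap\S_0\Lambda^{c,d}$. Pairwise orthogonality of the first three is immediate from $\lan\de\xi,\mu\ran=\lan\xi,\de^*\mu\ran$, $\lan\bar\de\eta,\mu\ran=\lan\eta,\bar\de^*\mu\ran$, $\de^*\bar\de=-\bar\de\de^*$ (\eqref{1.6bis}) and the vanishing of $\de^*,\bar\de^*$ on $W_0$; orthogonality of $K_2^{c-1,d-1}$ to the others follows by writing its elements as $\bar\de\de\xi+\de\bar\de\eta$ with $\xi,\eta\in W_0^{c-1,d-1}$ and moving $\de^*,\bar\de^*$ across by means of \eqref{1.6bis}, Lemma \ref{de*} (for $\ell=0$) and \eqref{commutations}. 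For the spanning of the seeds, I would run the iteration of \eqref{threeterms} inside $\ker i(d\theta)\cap\S_0\Lambda^{c,d}$, collect terms by Lefschetz level (the $W_0$- and $W_1$-terms and the $K_2^{c-1,d-1}$-part of the $W_2$-terms at level $0$, the $e(d\theta)W_0$-parts and all collapsed strings at level $\ge1$) and note that the level-$\ge1$ part must vanish since $\om$ is primitive.

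Finally I would pass to $L^2$-closures: having an orthogonal algebraic direct sum decomposition of $\S_0\Lambda^k_H$, and since each $W$-space is dense in its closure (as in Proposition \ref{subspaces}) and finitely many closed mutually orthogonal subspaces with dense sum exhaust the ambient space, this gives the stated orthogonal decomposition of $L^2\Lambda^k_H$; the non-triviality assertions are restated from Propositions \ref{non-triviality-lemma} and \ref{non-tr}. The hard part is the bookkeeping in the orthogonality step: because $W_2^{p,q}$ straddles two consecutive Lefschetz levels via \eqref{K2}, the elementary pieces do not line up with the levels in an obvious way, and one must verify carefully that the level-$s$, bidegree-$(a,b)$ contribution of the whole family $\{W_{j,\ell}^{p,q}\}$ is precisely $e(d\theta)^s$ of the four seeds for bidegree $(a-s,b-s)$ --- with no double counting and nothing missed --- and then transport the seed-level orthogonality through the maps $e(d\theta)^s$ and across bidegrees. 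The individual orthogonality identities and the closure arguments are routine by comparison.
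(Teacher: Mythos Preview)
Your spanning argument is the same as the paper's: iterate \eqref{threeterms} and collapse long $\de\bar\de$-strings via \eqref{1.6}. Your orthogonality argument is correct but takes a different route. You organise everything by Lefschetz level: split $W_{2,\ell}^{p,q}$ via \eqref{K2}, identify for each bidegree and each level the four primitive ``seeds'' $W_0^{c,d},\ \de W_0^{c-1,d},\ \bar\de W_0^{c,d-1},\ K_2^{c-1,d-1}$, check that $e(d\theta)^s$ is conformal on primitive forms (Lemma \ref{ip2}), and then verify seed orthogonality by hand. This works, and the seed-spanning clause you add is unnecessary for the proposition but harmless.

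The paper instead bypasses the Lefschetz bookkeeping entirely. It relabels $W^{p,q}_{j,\ell}$ as $W^{p,q}_m$ with $m=j+2\ell$, writes a generic inner product as $\langle\underbrace{\de\bar\de\cdots}_{m}\sigma,\underbrace{\de\bar\de\cdots}_{m'}\sigma'\rangle$, and moves one $\de^*$ (or $\bar\de^*$) across. The key point is Corollary \ref{de*W}: $\de^*$ and $\bar\de^*$ send $W^{p,q}_m$ into $W^{p,q}_{m-1}$ (same $(p,q)$, index lowered by one). So the scalar product becomes one between elements of $W^{p,q}_{m-1}$ and $W^{p',q'}_{m'-1}$, and a straight induction on $m'$ finishes. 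What the paper's approach buys is brevity and the avoidance of the $K_2$/$e(d\theta)W_0$ splitting; what your approach buys is an explicit picture of how the $W$-spaces sit inside the Lefschetz filtration, which is conceptually informative but costs the extra bookkeeping you flagged as the hard part.
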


\proof 
We have already shown that $\S_0\Lambda^k_H$ is contained in
the sum of the subspaces on the right-hand side. It is then sufficient
to show that any two $\S_0$-forms belonging to two different subspaces
are orthogonal.  

It is quite obvious that $W^{p,q}_0$ is orthogonal to $W^{p',q'}_0$ if $(p,q)\ne(p',q')$. 

The fact that $W^k_0$ is orthogonal to $W^k_{j,\ell}$ for  $j=1,2$ is a
consequence of the fact that $W^k_0\subset \ker \de^*\cap\ker\bar\de^*$,
whereas $W^k_{j,\ell}\subset\range\de+\range\bar\de$.

To prove the remaining orthogonality relations, we shall  proceed
inductively. For this purpose, it will be convenient to represent the 
elements of $W^k_{j,\ell}$ in the form \eqref{iterated} with
$m=j+2\ell,$ and rename, for the purpose of this proof, 
$W^{p,q}_{j,\ell}$ as $W^{p,q}_m$ if $m=j+2\ell.$    Given $m\ge m'\ge1$,
there are three kinds of scalar products to consider, 
$$
\lan \underbrace{\de\bar\de\cdots}_{\text{\rm
    $m$-terms}}\sigma,\underbrace{\de\bar\de\cdots }_{\text{\rm
    $m'$-terms}}\sigma'\ran\ ,\qquad 
\lan \underbrace{\bar\de\de\cdots}_{\text{\rm
    $m$-terms}}\sigma,\underbrace{\de\bar\de\cdots }_{\text{\rm
    $m'$-terms}}\sigma'\ran\ ,\qquad 
\lan \underbrace{\bar\de\de\cdots}_{\text{\rm
    $m$-terms}}\sigma,\underbrace{\bar\de\de\cdots }_{\text{\rm
    $m'$-terms}}\sigma'\ran\ , 
$$
with $\sigma\in W^{p,q}_0$ and $\sigma'\in W^{p',q'}_0$. In the first case we have
$$
\lan
\underbrace{\de\bar\de\cdots}_{m}\sigma,\underbrace{\de\bar\de\cdots
}_{m'}\sigma'\ran=\lan\underbrace{\bar\de\cdots}_{m-1}\sigma,
\de^*\underbrace{\de\bar\de\cdots}_{m'}\sigma'\ran\ . 
$$

By Corollary \ref{de*W}, this is the scalar product of an element of
$W^{p,q}_{m-1}$ with an element of $W^{p',q'}_{m'-1}$. 

By induction on $m'$, this shows that $W^{p,q}_m\perp W^{p',q'}_{m'}$
unless $p=p'$, $q=q'$, $m=m'$. 

\endproof

We discuss now to what extent the pairs $(\xi,\eta)\in W_0^{p,q}\times
W_0^{p,q}$ provide a parametrization of the spaces $W_{j,\ell}^{p,q}$
for $j=1,2$. 

\begin{lemma}\label{modi}
Given $ \xi\in W_0^{p,q}$, there exists a unique $ \xi'\in W_0^{p,q}$
such that $\de\xi=\de\xi'$ and  $C_p\xi'=0$.  
An analogous statement holds for $\bar\de$ in place of $\de$.
\end{lemma}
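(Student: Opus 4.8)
The plan is to take $\xi':=(I-C_p)\xi$ and check that it has the three required properties, uniqueness then being a one-line orthogonality argument. By Lemma \ref{s4.8}, $C_p$ is the orthogonal projection of $L^2\Lambda^{p,q}$ onto $\ker\de$; since $\ker\de$ is left-invariant, $U(n)$-invariant and dilation-invariant, Proposition \ref{subspaces}(ii) shows $C_p$ maps $\S_0\Lambda^{p,q}$ into itself, so $\xi'\in\S_0\Lambda^{p,q}$. As $C_p\xi\in\ker\de$ we get $\de\xi'=\de\xi$, and since $C_p$ is idempotent, $C_p\xi'=(C_p-C_p^2)\xi=0$. For uniqueness: if $\xi_1',\xi_2'$ both satisfy the conclusion, then $\xi_1'-\xi_2'\in\ker\de=\range C_p$, hence $\xi_1'-\xi_2'=C_p(\xi_1'-\xi_2')=0$.

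The only real point is that $\xi'$ again lies in $W_0^{p,q}$, i.e.\ that $\de^*\xi'=\bar\de^*\xi'=0$. When $1\le p\le n-1$ this is immediate: by Lemma \ref{s4.8}, $\de^*\xi=0$ forces $C_p\xi=0$, so $\xi'=\xi$; and when $p=n$, $C_n=I$, so $\xi'=0$. In the remaining case $p=0$, $\de^*$ kills $(0,q)$-forms, so $\de^*\xi'=0$ holds trivially, $C_0=\cC$, and, since $\bar\de^*\xi=0$, everything reduces to the operator identity $\bar\de^*\cC=0$ on $\S_0\Lambda^{0,q}$. I would establish this by passing to the Bargmann representations: $\pi_\la(\cC)$ is the orthogonal projection of $\F\otimes\Lambda^{0,q}$ onto $\ker\pi_\la(L+inT)$, and since $\pi_\la(L)$ acts on $\P_m$ as $|\la|(2m+n)$ while $\pi_\la(T)=i\la$, one has $\pi_\la(L+inT)=2m\la$ on $\P_m$ for $\la>0$ and $=|\la|(2m+2n)>0$ for $\la<0$; thus $\pi_\la(\cC)$ is the projection onto $\P_0\otimes\Lambda^{0,q}$ when $\la>0$ and is $0$ when $\la<0$. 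On the other hand, by \eqref{de^*-om} the operator $\pi_\la(\bar\de^*)$ is built out of the $\pi_\la(Z_\ell)$, which for $\la>0$ equal $\sqrt{2\la}\,\de_{w_\ell}$ and hence kill constants. Therefore $\pi_\la(\bar\de^*)\pi_\la(\cC)=0$ for every $\la\neq0$, giving $\bar\de^*\cC=0$ on $\S_0$ and so $\bar\de^*\xi'=\bar\de^*\xi-\bar\de^*\cC\xi=0$; thus $\xi'\in W_0^{0,q}$.

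The statement for $\bar\de$ in place of $\de$ follows by the identical argument after interchanging $\de\leftrightarrow\bar\de$, $p\leftrightarrow q$, $C_p\leftrightarrow\overline C_q$, $\cC\leftrightarrow\bar\cC$, and $Z_\ell\leftrightarrow\bar Z_\ell$; the only case needing the Fourier computation is $q=0$, where $\pi_\la(\bar\cC)$ is the projection onto $\P_0\otimes\Lambda^{p,0}$ for $\la<0$ (and $0$ for $\la>0$) while $\pi_\la(\bar Z_\ell)=\sqrt{2|\la|}\,\de_{w_\ell}$ again kills constants. I expect the sole obstacle to be the $p=0$ case — that is, verifying that the correction term $\cC\xi$ remains annihilated by $\bar\de^*$; everything else is bookkeeping with the projection $C_p$.
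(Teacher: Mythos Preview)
Your proof is correct and follows the same overall strategy as the paper: set $\xi'=(I-C_p)\xi$, dispose of the cases $1\le p\le n-1$ and $p=n$ immediately, and focus on $p=0$. The only substantive difference is how you handle that crucial case. The paper uses the commutation relation $\bar\de^*\Box=(\Box-iT)\bar\de^*$ from \eqref{commutations} to show that $\bar\de^*$ carries the decomposition $\ker\Box\oplus\range\Box$ into $\ker(\Box-iT)\oplus\range(\Box-iT)$, whence $\bar\de^*\cC\xi=P\bar\de^*\xi$ with $P$ the projection onto $\ker(\Box-iT)$; the hypothesis $\bar\de^*\xi=0$ then finishes. You instead compute directly in the Bargmann picture that $\pi_\la(\cC)$ projects onto $\P_0$ (for $\la>0$) while $\pi_\la(\bar\de^*)$ is built from derivations $\partial_{w_\ell}$, so $\bar\de^*\cC=0$ identically on $\S_0\Lambda^{0,q}$ --- a slightly stronger conclusion that does not even use $\bar\de^*\xi=0$. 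Both arguments are short and self-contained; yours is more explicit, the paper's more structural and portable to other commutation situations.
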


\proof
The case $p=n$ is trivial - here $\xi'=0.$ If $1\le p\le n-1,$ then by
Lemma  Ê\ref{s4.8} we have $\xi'=\xi.$  

There only remains the case  $p=0,$  where $C_0=\cC$ is the orthogonal
projection onto the kernel of $\Box$ (which in this case  agrees with
$\ker\de$). This is a self-adjoint operator, so that, by Lemma
\ref{density-lemma}, $\S_0\Lambda^{0,q}=(\ker \Box \cap
\S_0\Lambda^{0,q})\oplus (\range \Box \cap \S_0\Lambda^{0,q}).$  The
commutation relation $\bar\de^*\Box=(\Box-iT)\bar \de^*$ from
\eqref{commutations} then implies that the two subspaces in this
decomposition are mapped under $\bar\de^*$ to $\ker (\Box-iT) \cap
\S_0\Lambda^{0,q}$ and $\range (\Box-iT) \cap \S_0\Lambda^{0,q},$
respectively. This shows that  
$$
\bar\de^*\cC\xi=P\bar\de^*\xi=0\,,
$$ 
where $P$ denotes the orthogonal projection onto the kernel of
$\Box-iT.$ Then  $\xi'=(I-\cC)\xi$ has the desired properties.  
\endproof

Set
\begin{equation}\label{Xpq-Ypq}
X^{p,q}=\{\xi\in W_0^{p,q}:C_p\xi=0\}\ ,\quad Y^{p,q}
=\{\eta\in W_0^{p,q}:\bar C_q\eta=0\}\ ,\quad Z^{p,q}=X^{p,q}\times Y^{p,q}\ .
\end{equation}

In combination with Proposition \ref{non-tr} the previous lemma  implies
that the spaces $Z^{p,q}$ provide parametrisations for the spaces
$W^{p,q}_{j,\ell}:$ 

\begin{cor}\label{cor5.7}
Assume that $j=1,2$ and $p+q+j+\ell\le n.$ Then the maps
$$
\begin{aligned}
e(d\theta)^\ell (\de\ \ \bar\de)&:Z^{p,q}\longrightarrow
W^{p,q}_{1,\ell}, \quad (\xi,\eta)\mapsto \de\xi+ \bar\de \eta,\\ 
e(d\theta)^\ell (\bar\de\de\ \ \de\bar\de)&:Z^{p,q}\longrightarrow
W^{p,q}_{2,\ell}, \quad (\xi,\eta)\mapsto \bar\de\de\xi+
\de\bar\de\eta 
\end{aligned}
$$
are bijections. Notice that this applies in particular to the spaces
$W^{p,q}_{j,\ell}$ appearing in the orthogonal decomposition of
$L^2\Lambda^k_H$ in Proposition \ref{horizdecomp} under the assumption
$k\le n.$ 
\end{cor}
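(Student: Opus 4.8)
The plan is to peel off the operators one at a time. By Proposition~\ref{non-tr}, $e(d\theta)^\ell$ is a bijection from $W_j^{p,q}$ onto $W_{j,\ell}^{p,q}$ whenever $p+q+j+\ell\le n$, so it suffices to treat the case $\ell=0$: that
$$
(\de\ \bar\de):Z^{p,q}\longrightarrow W_1^{p,q}\ ,\qquad (\bar\de\de\ \de\bar\de):Z^{p,q}\longrightarrow W_2^{p,q}
$$
are bijections when $p+q+j\le n$. Surjectivity is immediate from the definitions of $W_1^{p,q}$, $W_2^{p,q}$ together with Lemma~\ref{modi}: given $\xi,\eta\in W_0^{p,q}$, replace $\xi$ by $\xi'\in X^{p,q}$ with $\de\xi'=\de\xi$ and $\eta$ by $\eta'\in Y^{p,q}$ with $\bar\de\eta'=\bar\de\eta$; this changes neither $\de\xi+\bar\de\eta$ nor, after applying $\bar\de$ resp.\ $\de$ to these identities, $\bar\de\de\xi+\de\bar\de\eta$.

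Injectivity for $j=1$ is easy. If $\de\xi+\bar\de\eta=0$ with $(\xi,\eta)\in Z^{p,q}$, then $\de\xi\in\S_0\Lambda^{p+1,q}$ and $\bar\de\eta\in\S_0\Lambda^{p,q+1}$ lie in different summands of $\Lambda^{p+q+1}_H$, so $\de\xi=\bar\de\eta=0$. Thus $\xi\in X^{p,q}\cap\ker\de$ and $\eta\in Y^{p,q}\cap\ker\bar\de$, and both spaces are $\{0\}$ by Lemma~\ref{s4.8}, since $C_p$ (resp.\ $\overline C_q$) is the orthogonal projection onto $\ker\de$ (resp.\ $\ker\bar\de$).

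The substantive case is $j=2$, where $\bar\de\de\xi$ and $\de\bar\de\eta$ have the same bidegree $(p+1,q+1)$ and cannot be separated, so the orthogonality conditions built into $Z^{p,q}$ must really be used. Suppose $\bar\de\de\xi+\de\bar\de\eta=0$ with $(\xi,\eta)\in Z^{p,q}$; set $\zeta=\eta-\xi\in W_0^{p,q}$ and $m=n-p-q$, so $m\ge2$ under our hypothesis. Using $\de\bar\de+\bar\de\de=d_H^2=-Te(d\theta)$ I would first rewrite the relation as $\de\bar\de\zeta=Te(d\theta)\xi$ and, equivalently, $\bar\de\de\zeta=-Te(d\theta)\eta$. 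Applying $i(d\theta)$ to the second, together with \eqref{1.7tris}, the inclusion $W_0^{p,q}\subset\ker i(d\theta)$, and the computation $i(d\theta)\bar\de\de\zeta=i\Box\zeta$ from the proof of Proposition~\ref{non-tr}, gives the scalar identity $\Box\zeta=imT\eta$. Then applying $\de^*$ to $\de\bar\de\zeta=Te(d\theta)\xi$ and simplifying by means of \eqref{1.6bis}, \eqref{1.7}, \eqref{commutations} and $\de^*\xi=\de^*\zeta=0$, one obtains $\bar\de(\Box\zeta-iT\eta)=0$; substituting $\Box\zeta=imT\eta$ turns this into $i(m-1)\bar\de(T\eta)=0$, and since $m\ge2$, $T$ is invertible on $\S_0$ and commutes with $\bar\de$, we get $\bar\de\eta=0$, hence $\eta\in Y^{p,q}\cap\ker\bar\de=\{0\}$. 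Finally $\zeta=-\xi$ and $\Box\zeta=imT\eta=0$ force $\Box\xi=\de^*\de\xi=0$, so $0=\lan\de^*\de\xi,\xi\ran=\|\de\xi\|_2^2$ and $\xi\in X^{p,q}\cap\ker\de=\{0\}$.

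The hard part is exactly this $j=2$ injectivity: because the two building blocks no longer separate by bidegree, one is forced to extract \emph{scalar} consequences of the vector identity — which is what applying $i(d\theta)$ and $\de^*$ accomplishes — and then to play the invertibility of $T$ on $\S_0$ against the kernel characterisations of $\de$, $\bar\de$, $\Box$ from Section~\ref{firstproperties}. The hypothesis $p+q+j+\ell\le n$ enters precisely to ensure $m\ge2$ (so $m-1\ne0$) and to keep $p,q\le n-1$, where Lemma~\ref{s4.8} identifies $C_p$, $\overline C_q$ with the projections onto $\ker\de$, $\ker\bar\de$.
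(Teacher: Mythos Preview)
Your proof is correct and in the spirit of the paper's one-line justification (``follows from Proposition~\ref{non-tr} and Lemma~\ref{modi}''): you use Proposition~\ref{non-tr} to reduce to $\ell=0$, Lemma~\ref{modi} for surjectivity, and the uniqueness built into Lemma~\ref{modi}/Lemma~\ref{s4.8} for injectivity. The paper gives no details for the $j=2$ injectivity; your argument---extracting the scalar relations $\Box\zeta=imT\eta$ and $\bar\de(\Box\zeta-iT\eta)=0$ via $i(d\theta)$ and $\de^*$, and then invoking $m\ge2$---is exactly the kind of computation that is implicitly being left to the reader, and it is carried out correctly.
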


\begin{remark}\label{projpar}
{\rm Recall that, by Lemma \ref{s4.8}, 
$$
X^{p,q}=\begin{cases}W_0^{p,q} &\text{ if }1\le
  p\le n-1\,,\\  
 \{0\}&\text{ if }p=n\,,\\
 \bigl\{\xi\in
 \S_0\Lambda^{0,q}:\mathcal C\xi= 0, \bar \de^*\xi=0\bigr\}&\text{ if }p=0\,.
 \end{cases}
 $$
 By the proof of Lemma \ref{modi}, the latter space is indeed nothing
 but $(I-\mathcal C)W_0^{0,q}.$
 
Analogous statements hold true for $Y^{p,q}$. 
Finally, notice that  the spaces $Z^{p,q}$ are non-trivial if $p+q\le
n-1$.} \medskip
 
\end{remark}

\subsection{The action of $\Delta_k$}

\ \medskip

Let $\Phi$ be the bijection \eqref{Phi} from $\S_0\Lambda^k_H$ onto
$(\S_0\Lambda^k)_{d^*\text{-cl}}$, and let $D_k=\Phi\inv \Delta_k
\Phi$ be the operator in \eqref{DefDk}. 

For
$\om\in\S_0\Lambda^k_H$, by  
 \eqref{1.12} 
 we have \smallskip 
\begin{equation}\label{Dk}
\aligned
D_k \om 
& = \bigl( \Delta_H -T^2 +e(d\theta)i(d\theta) \bigr)\om
+\bigl(T\inv [d_H^*, e(d\theta)] d_H^*\bigr)\om \smallskip \\
& = \bigl( \Delta_H -T^2  +T\inv d_H^* e(d\theta) d_H^*\bigr)\om \
.  \smallskip 
\endaligned
\end{equation}

The following identities are easily derived from \eqref{1.6},
\eqref{1.7} and \eqref{1.7bis}: 
\begin{equation}\label{box-e}
\begin{aligned}
&\Box e(d\theta)= e(d\theta)(\Box -iT)\, ,\\
&\Boxbar
e(d\theta) = e(d\theta)(\Boxbar +iT)\, ,\\
&[\Delta_H,\, e(d\theta)]=0\, .
\end{aligned}
\end{equation}

It follows from \eqref{1.10} that, when acting on $k$-forms,
\begin{equation}\label{box-bar}
\Box-\Boxbar=i(n-k)T\ .
\end{equation}

\begin{lemma}\label{D-e}
The following identities hold
\begin{itemize}
\item[(i)]$
D_k e(d\theta)= e(d\theta)(D_{k-2} +n-k+1)$;\smallskip
\item[(ii)]$
D_k e(d\theta)^\ell= e(d\theta)^\ell\big(D_{k-2\ell}
+\ell(n-k+\ell)\big)$, \ for $\ell\ge1$.
\end{itemize}
\end{lemma}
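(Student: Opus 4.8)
The plan is to prove part~(i) by a direct commutator computation, starting from the non‑differential expression $D_k=\Delta_H-T^2+T\inv d_H^* e(d\theta)d_H^*$ of \eqref{Dk}, and then to deduce part~(ii) from part~(i) by an elementary induction on $\ell$.

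For~(i), fix a horizontal form $\om\in\S_0\Lambda^{k-2}_H$ and compute $D_k\bigl(e(d\theta)\om\bigr)$. Since $T$ is central and $[\Delta_H,e(d\theta)]=0$ (see \eqref{box-e}), the contribution of $\Delta_H-T^2$ is simply $e(d\theta)(\Delta_H-T^2)\om$, and all the work goes into the term $T\inv d_H^* e(d\theta)d_H^* e(d\theta)\om$. Put $A:=[d_H^*,e(d\theta)]=i\bar\de-i\de$ (cf.\ \eqref{1.7} and \eqref{1.12}). Applying $d_H^* e(d\theta)=e(d\theta)d_H^*+A$ twice, and using ${d_H^*}^2=Ti(d\theta)$ (from \eqref{1.6}), the commutations $[e(d\theta),\de]=[e(d\theta),\bar\de]=0$ of \eqref{1.7bis} (whence $[A,e(d\theta)]=0$), and $A^2=-(\bar\de-\de)^2=\de\bar\de+\bar\de\de=d_H^2=-Te(d\theta)$, one arrives at the operator identity
$$
d_H^* e(d\theta)d_H^* e(d\theta)=T\,e(d\theta)\,i(d\theta)\,e(d\theta)+e(d\theta)\,A\,d_H^*-T\,e(d\theta)\ .
$$
Then I would push $e(d\theta)$ to the far left in each summand: in the first one via $i(d\theta)e(d\theta)\om=e(d\theta)i(d\theta)\om+(n-k+2)\om$, which is \eqref{1.7tris} applied to the $(k-2)$-form $\om$; and in the second one by rewriting $A\,d_H^*=d_H^* e(d\theta)d_H^*-e(d\theta)\,T\,i(d\theta)$, so that the operator $T\inv d_H^* e(d\theta)d_H^*$ appears acting on $(k-2)$-forms. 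The decisive point --- what keeps the induction non‑circular --- is that at this stage one does \emph{not} commute $e(d\theta)$ past $D_{k-2}$, but substitutes the defining relation $T\inv d_H^* e(d\theta)d_H^*=D_{k-2}-\Delta_H+T^2$ on $\S_0\Lambda^{k-2}_H$, which is merely \eqref{Dk} read for $k-2$. After this substitution the two copies of $e(d\theta)^2 i(d\theta)\om$ cancel, and collecting the scalars $(n-k+2)-1=n-k+1$ gives $D_k e(d\theta)\om=e(d\theta)(D_{k-2}+n-k+1)\om$, i.e.\ (i).

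For~(ii) I would induct on $\ell$, with $\ell=1$ being exactly~(i). For the step, write $e(d\theta)^{\ell+1}=e(d\theta)\,e(d\theta)^\ell$, apply~(i) to get $D_k e(d\theta)=e(d\theta)(D_{k-2}+n-k+1)$, then apply the inductive hypothesis to $D_{k-2}e(d\theta)^\ell$; the stated constant comes out of the arithmetic identity $\ell\bigl(n-(k-2)+\ell\bigr)+(n-k+1)=(\ell+1)\bigl(n-k+\ell+1\bigr)$. Equivalently, one reads the assertion as an identity of operators $\S_0\Lambda^{k-2\ell}_H\to\S_0\Lambda^k_H$ and iterates~(i).

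I expect the only real obstacle to lie in the bookkeeping inside the third term of~(i): correctly tracking the degree‑dependent constant in $[i(d\theta),e(d\theta)]$, checking the sign in $A^2=-Te(d\theta)$, and, above all, arranging the manipulations so that $D_{k-2}$ enters only through its definition \eqref{Dk} rather than through a commutation with $e(d\theta)$ not yet at our disposal --- so that the whole thing is a genuine computation and not a circular recursion.
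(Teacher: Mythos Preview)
Your proof is correct and follows essentially the same approach as the paper's: a direct commutator computation for (i) starting from \eqref{Dk}, followed by induction for (ii). The only minor difference is in how the constant $n-k+1$ is extracted: the paper expands $(e(d\theta)d_H^*+i(\bar\de-\de))^2$ and uses $d_H^*(\bar\de-\de)+(\bar\de-\de)d_H^*=\Boxbar-\Box$ together with \eqref{box-bar}, whereas you route the computation through $T\,e(d\theta)\,i(d\theta)\,e(d\theta)$ and invoke \eqref{1.7tris}; these are equivalent bookkeeping choices rather than different ideas.
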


\proof
By \eqref{1.7}, \eqref{1.7bis} and  \eqref{Dk}, \eqref{box-e}, when applied to a
horizontal $(k-2)$-form, 
\begin{equation}\label{[D-k,edttheta]}
\aligned
D_k e(d\theta)
& = e(d\theta)(\Delta_H-T^2) +T\inv \bigl(d_H^* e(d\theta)\bigr)^2 \\
& = e(d\theta)(\Delta_H-T^2) +T\inv \bigl( e(d\theta)d_H^*
+i(\bar\de-\de)\bigr)^2 \\ 
& = e(d\theta)D_{k-2}  +iT\inv e(d\theta)  \bigl( d_H^*(\bar\de-\de)
+(\bar\de-\de)d_H^*\bigr) -T\inv (\bar\de-\de)^2 \\ 
& = e(d\theta)D_{k-2}  +iT\inv e(d\theta)(\Boxbar-\Box) -e(d\theta) \\ 
& = e(d\theta)(D_{k-2} +n-k+1)\, .
\endaligned
\end{equation}

Identity (ii) now follows by induction.
\endproof

\begin{prop}\label{invariance}
The subspaces $W_0^{p,q},\, W_{1,\ell}^{p,q},\, W_{2,\ell}^{p,q}$
are
invariant under the action of $D_k$.
\end{prop}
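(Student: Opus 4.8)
The plan is to establish invariance of each family of subspaces separately, exploiting the conjugation formula of Lemma \ref{D-e} together with the commutation relations \eqref{box-e} and \eqref{box-bar}, and the fact (already recorded in \eqref{Dk}) that $D_k$ acts on a horizontal $k$-form $\om$ by
\[
D_k\om=\big(\Delta_H-T^2+T\inv d_H^*e(d\theta)d_H^*\big)\om\ .
\]
First I would dispose of $W_0^{p,q}$. If $\om\in W_0^{p,q}$, then $\de^*\om=\bar\de^*\om=0$, so $d_H^*\om=0$ and the last term in $D_k\om$ vanishes; hence $D_k\om=(\Delta_H-T^2)\om$, which by \eqref{1.10} is a scalar operator and in particular preserves the bidegree $(p,q)$. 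It remains to check that $\de^*$ and $\bar\de^*$ still annihilate $D_k\om$: this is immediate from the commutation relations \eqref{commutations}, since $\Box$ and $\Boxbar$ commute with $\de^*$ and $\bar\de^*$ up to the central factor $T$, which is harmless. So $D_k$ maps $W_0^{p,q}$ into itself, and in fact acts there as the scalar operator $\Delta_H-T^2=L+i(q-p)T-T^2$.

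Next I would handle $W_{1,\ell}^{p,q}$ and $W_{2,\ell}^{p,q}$. By Lemma \ref{D-e}(ii), $D_k\circ e(d\theta)^\ell=e(d\theta)^\ell\circ\big(D_{k-2\ell}+\ell(n-k+\ell)\big)$, so $e(d\theta)^\ell$ intertwines the two actions; therefore it suffices to prove invariance of $W_1^{p,q}$ and $W_2^{p,q}$ (the cases $\ell=0$), and then apply $e(d\theta)^\ell$. For $W_1^{p,q}$: take $\om=\de\xi+\bar\de\eta$ with $\xi,\eta\in W_0^{p,q}$. Again $d_H^*$ applied to $\de\xi$ gives $\de^*\de\xi+\bar\de^*\de\xi=\Box\xi$ (using $\bar\de^*\de\xi=-\de\bar\de^*\xi=0$ by \eqref{1.6bis} and $\xi\in W_0^{p,q}$), and similarly $d_H^*\bar\de\eta=\Boxbar\eta$; so $d_H^*\om=\Box\xi+\Boxbar\eta$, a horizontal $(p,q)$-form. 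Then $T\inv d_H^*e(d\theta)d_H^*\om=T\inv d_H^*e(d\theta)(\Box\xi+\Boxbar\eta)$, and expanding $d_H^*e(d\theta)=e(d\theta)d_H^*+[d_H^*,e(d\theta)]=e(d\theta)d_H^*+i(\bar\de-\de)$ (from \eqref{1.7}) produces, since $d_H^*$ kills $\Box\xi$ and $\Boxbar\eta$ (they lie in $\ker\de^*\cap\ker\bar\de^*$ by \eqref{commutations} and $\xi,\eta\in W_0^{p,q}$), a form of the shape $T\inv e(d\theta)(\cdots)$ plus $iT\inv(\bar\de-\de)(\Box\xi+\Boxbar\eta)$; the first piece is discarded because after adding $(\Delta_H-T^2)\om$ I must still land in $\range\de+\range\bar\de$ at bidegree $(p,q)$ — here I would instead organize the computation so that every term is manifestly of the form $\de\xi'+\bar\de\eta'$ with $\xi',\eta'$ expressible through $\xi,\eta$ via the scalar operators $\Box,\Boxbar,T$, using the commutations \eqref{commutations} to move $\de^*,\bar\de^*$ past $\de,\bar\de$. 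One then reads off that $\de^*\xi'=\bar\de^*\xi'=0$ because $\Box,\Boxbar,T$ commute with $\de^*,\bar\de^*$ modulo $T$, so $\xi',\eta'\in W_0^{p,q}$ and $D_k\om\in W_1^{p,q}$.

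For $W_2^{p,q}$ I would proceed identically: write $\om=\bar\de\de\xi+\de\bar\de\eta$ with $\xi,\eta\in W_0^{p,q}$, compute $d_H^*\om$ using \eqref{1.6bis} and \eqref{commutations} (it comes out proportional to $\bar\de\Box\xi$, $\de\Boxbar\eta$ type terms, hence in $\range\de+\range\bar\de$ at bidegree $(p+1,q)\oplus(p,q+1)$), push it through $T\inv d_H^*e(d\theta)d_H^*$ using \eqref{1.7} and \eqref{box-e}, and collect. Alternatively, and more cleanly, I would use the decomposition \eqref{K2}, $W_2^{p,q}=K_2^{p,q}\oplus e(d\theta)W_0^{p,q}$: on $e(d\theta)W_0^{p,q}$ invariance follows from Lemma \ref{D-e}(i) plus the already-proved invariance of $W_0^{p,q}$, and on $K_2^{p,q}=W_2^{p,q}\cap\ker i(d\theta)$ the term $e(d\theta)i(d\theta)$ in \eqref{1.12} drops out, simplifying $D_k$ to $\Delta_H-T^2+(\text{lower terms that stay in }W_2^{p,q})$. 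The main obstacle I anticipate is the bookkeeping in the $W_1$ and $W_2$ cases: one must repeatedly commute $\de^*,\bar\de^*,e(d\theta)$ past $\de,\bar\de$ using \eqref{1.6}--\eqref{1.7tris} and \eqref{commutations}, and verify that every resulting coefficient form still satisfies $\de^*=\bar\de^*=0$ — that is, still lies in $W_0^{p,q}$. Because $\Box$, $\Boxbar$, $T$ and $e(d\theta)$ all interact with $\de^*,\bar\de^*$ only through the central operator $T$ and scalar shifts (by \eqref{commutations} and \eqref{1.7}), this verification goes through, but it is the one step where care is needed; everything else is a direct consequence of Lemma \ref{D-e} and the structure of $D_k$ in \eqref{Dk}.
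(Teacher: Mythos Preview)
Your plan is correct and follows essentially the same route as the paper: reduce to $\ell=0$ via Lemma \ref{D-e}, handle $W_0^{p,q}$ by noting $d_H^*\om=0$ so $D_k=(\Delta_H-T^2)$ there, and for $W_1^{p,q}$ compute $d_H^*\om=\Box\xi+\Boxbar\eta$ and push through the commutator $[d_H^*,e(d\theta)]=i(\bar\de-\de)$ to obtain $D_k\om=\de\xi'+\bar\de\eta'$ with $\xi',\eta'\in W_0^{p,q}$. Two small points: (a) your phrase ``the first piece is discarded'' is muddled --- the $e(d\theta)d_H^*$ term simply vanishes because $\Box\xi,\Boxbar\eta\in W_0^{p,q}$, exactly as you yourself note one sentence earlier; (b) for $W_2^{p,q}$ your alternative via the splitting \eqref{K2} is less efficient than the direct computation: the paper computes $d_H^*e(d\theta)d_H^*\bar\de\de\xi=(n-k+1)T\bar\de\de\xi$ directly (and the conjugate identity), yielding the clean scalar form $D_k\om=(\Delta_H-T^2+n-k+1)\om$ on all of $W_2^{p,q}$, which is both the invariance statement and the formula \eqref{DW2} needed later.
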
 

\proof

If $\om\in W_0^{p,q}$, then $d_H^*\om=0$ and therefore
\begin{equation}\label{d0}
D_k\om=(\Delta_H -T^2)\om=(\Delta_0+i(q-p)T)\om\,,
\end{equation}
by \eqref{1.10}, where $\Delta_0$ denotes the scalar operator $L-T^2.$
 The last expression shows that $D_k\om$ is a
$(p,q)$-form, and the previous one that $d_H^*D_k\om=0$, by
\eqref{commutations}. 

By Lemma \ref{D-e}, when $j=1,2$, it suffices to take $\ell=0$.

\medskip

Take now $\om\in W_1^k$, $\omega=\de\xi+\bar\de\eta$, with
$\de^*\xi=\bar \de^*\xi=\de^*\eta= \bar \de^*\eta=0.$
We have
\begin{equation}\label{Dk-W1}
\aligned
D_k \om 
& = \bigl( \Delta_H -T^2  +T\inv d_H^* e(d\theta) d_H^*\bigr)
(\de\xi+\bar\de\eta) \\
& = \de (\Delta_H-T^2+iT)\xi +\bar\de (\Delta_H-T^2-iT)\eta 
+T\inv d_H^* e(d\theta) d_H^* (\de\xi+\bar\de\eta) \\
& = \de (\Delta_H-T^2+iT)\xi +\bar\de (\Delta_H-T^2-iT)\eta \\
& \qquad\qquad\qquad
+T\inv d_H^* e(d\theta) \Box\xi +T\inv d_H^* e(d\theta) \Boxbar\eta \\
& = \de (\Delta_H-T^2+iT)\xi +\bar\de (\Delta_H-T^2-iT)\eta 
+T\inv (i\bar\de-i\de) (\Box\xi +\Boxbar\eta) \\
& = \de \bigl( (\Delta_H-T^2+iT -iT\inv\Box)\xi
-iT\inv\Boxbar\eta\bigr)\\
& \qquad\qquad\qquad 
+\bar\de \bigl( (\Delta_H-T^2-iT+iT\inv\Boxbar)\eta +iT\inv\Box\xi \bigr) \
.  \smallskip 
\endaligned
\end{equation}

Therefore, $D_k(\de\xi+\bar\de\eta)=\de\xi'+\bar\de\eta'$, where
$$
\aligned
\xi' &= (\Delta_H-T^2+iT-iT\inv\Box)\xi -iT\inv\Boxbar\eta \\
\eta' & = (\Delta_H-T^2-iT+iT\inv\Boxbar)\eta +iT\inv\Box\xi \ ,
\endaligned
$$
that is,
\begin{equation}\label{formula*}
\aligned
D_k \begin{pmatrix} \de & \bar\de \end{pmatrix}
& = \begin{pmatrix} \de & \bar\de \end{pmatrix}
\begin{pmatrix} \Delta_H -T^2 +iT -iT\inv\Box & -iT\inv\Boxbar \\
iT\inv\Box & \Delta_H-T^2-iT+iT\inv\Boxbar\end{pmatrix} \smallskip
\\
& = \begin{pmatrix} \de & \bar\de \end{pmatrix}\bigg[
 (\Delta_H -T^2)I -iT\inv
\begin{pmatrix} \Box-T^2  &\Boxbar \\
-\Box & -\Boxbar+T^2\end{pmatrix} \bigg]
\ .
\endaligned
\end{equation}

Using the commutation relations \eqref{commutations}
  we see that
$$
\de^*\xi'=\bar \de^*\xi'=\de^*\eta'= \bar \de^*\eta'=0.
$$
Therefore, also $W_1^k$ is $D_k$-invariant. Moreover, if $\xi$ and
$\eta$ are $(p,q)$-forms, so are $\xi'$ and $\eta'$, hence each
$W^{p,q}_1$ is $D_k$-invariant. 
\medskip

Finally, take $\om\in W_2^k$, $\omega=\bar\de\de\xi+\de\bar\de\eta$, with
$\de^*\xi=\bar \de^*\xi=\de^*\eta= \bar \de^*\eta=0$. We first compute
$$
\aligned
d_H^*e(d\theta)d_H^*\bar\de\de\xi
&=d_H^*e(d\theta)(-\bar\de\Box\xi+\barBox\de\xi)\\
&=e(d\theta)d_H^*(-\bar\de\Box\xi+\barBox\de\xi)
+i(\bar\de-\de)(-\bar\de\Box\xi+\barBox\de\xi)\\
&=e(d\theta)(-\barBox\Box\xi+\de^*\barBox\de\xi)
+i(\de\bar\de\Box\xi+\bar\de\barBox\de\xi)\\
&=e(d\theta)\big(-\barBox\Box\xi+(\barBox+iT)\Box\xi\big)
+i\big(\de\bar\de\Box\xi+\bar\de\de(\barBox+iT)\xi\big)\\
&=iT e(d\theta)\Box\xi+i\de\bar\de\Box\xi
+i\bar\de\de(\barBox+iT)\xi\\
&=i\bar\de\de(-\Box+\barBox+iT)\xi \\
&=(n-k+1)T\bar\de\de\xi\ ,
\endaligned
$$
by \eqref{box-bar}, since $\xi$ is a $(k-2)$-form. Similarly,
$$
d_H^*e(d\theta)d_H^*\de\bar\de\eta=(n-k+1)T\de\bar\de\eta\ .
$$

Therefore, 
\begin{equation}\label{DW2}
D_k\om=(\Delta_H-T^2+n-k+1)\om\ .
\end{equation}

As before, \eqref{commutations} implies that $D_k\om\in W_2^k$, and each
subspace $W_2^{p,q}$ is mapped into itself. 

\endproof

\subsection{Lifting by $\Phi$}

\ \medskip

Denote by $V^{p,q}_0$, $V^{p,q}_{1,\ell}$, etc., the subspaces $\Phi
(W^{p,q}_0)$, $\Phi (W^{p,q}_{1,\ell})$, etc., of
$(L^2\Lambda^k)_{d^*\cl}$. We want to show that their closures $\V^{p,q}_0$, $\V^{p,q}_{1,\ell}$, etc. give an
orthogonal decomposition of $(L^2\Lambda^k)_{d^*\cl}$. 

In a way, this is not {\it a priori} obvious, because $\Phi$ is not an
orthogonal map. The fact that it preserves the orthogonality of the
subspaces we are working with is quite peculiar. On the other hand,
the reader may have noticed already an instance of this peculiarity in
the fact that a non-symmetric operator such as $D_k$ admits a rather
fine decomposition into invariant subspaces which are orthogonal. 

\begin{prop}\label{decom3}
For $0\le k\le n$ we have the orthogonal decompositions
\begin{equation}\label{decom-Vk}
\bigl( L^2\Lambda^k\bigr)_{d^*\cl} 
=  
\osum_{\substack{p+q=k<n \\ p+q=n,\, pq=0 }}\V^{p,q}_0
\oplus\osum_{\substack{j,\ell,p,q\\  j=1,2\\ p+q+j+2\ell=k}}\V^{p,q}_{j,\ell} \ ,
\end{equation}
where each of the subspaces 
$\V^{p,q}_0, \mathcal V_{j,\ell}^{p,q}$  
 is non-trivial and
$\Delta_k$-invariant.
\end{prop}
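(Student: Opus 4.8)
The plan is to carry the orthogonal decomposition of $\S_0\Lambda^k_H$ from Proposition~\ref{horizdecomp} over to $(\S_0\Lambda^k)_{d^*\cl}$ through the isomorphism $\Phi$ of \eqref{Phi}, to check that $\Phi$ does not destroy orthogonality, and then to pass to $L^2$-closures; non-triviality and $\Delta_k$-invariance will then follow with little extra work. Write $W^{(A)}$ for a typical member of the family $\{W_0^{p,q}\}\cup\{W^{p,q}_{j,\ell}:j=1,2\}$ of degree-$k$ subspaces occurring in Proposition~\ref{horizdecomp}, and set $V^{(A)}=\Phi(W^{(A)})\subseteq(\S_0\Lambda^k)_{d^*\cl}$ and $\V^{(A)}=\overline{V^{(A)}}$. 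Since $\Phi$ is a bijection of $\S_0\Lambda^k_H$ onto $(\S_0\Lambda^k)_{d^*\cl}$ and, by the proof of Proposition~\ref{horizdecomp}, $\S_0\Lambda^k_H$ is the orthogonal algebraic direct sum of the $W^{(A)}$, we get at once the algebraic splitting $(\S_0\Lambda^k)_{d^*\cl}=\bigoplus_{(A)}V^{(A)}$, which becomes \eqref{decom-Vk} once we know it is orthogonal and dense.

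The crux is orthogonality, and it is the only genuinely non-routine point: $\Phi$ is not a unitary map, yet it preserves orthogonality of these particular subspaces. For horizontal $\om,\om'$ of degree $k$, using that the fibre splitting $\Lambda^k=\Lambda^k_H\oplus(\theta\wedge\Lambda^{k-1}_H)$ is orthogonal and that $\psi\mapsto\theta\wedge\psi$ is an isometry, one gets (integrating over $H_n$)
$$
\lan\Phi\om,\Phi\om'\ran=\lan\om,\om'\ran+\lan T\inv d_H^*\om,\,T\inv d_H^*\om'\ran\ .
$$
For $\om\in W^{(A)}$, $\om'\in W^{(B)}$ with $(A)\ne(B)$ the first term vanishes by Proposition~\ref{horizdecomp}. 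For the second I would show that $d_H^*=\de^*+\bar\de^*$ carries each degree-$k$ space $W^{(A)}$ into a \emph{single} member $W^{(A')}$ of the degree-$(k-1)$ family:
$$
d_H^*W_0^{p,q}=0,\quad d_H^*W_1^{p,q}\subseteq W_0^{p,q},\quad d_H^*W^{p,q}_{1,\ell}\subseteq W^{p,q}_{2,\ell-1}\ (\ell\ge1),\quad d_H^*W^{p,q}_{2,\ell}\subseteq W^{p,q}_{1,\ell}\ ,
$$
the first two from \eqref{1.6bis} and \eqref{commutations} (so $d_H^*\de\xi=\Box\xi$, $d_H^*\bar\de\eta=\Boxbar\eta$ again lie in $W_0^{p,q}$ for $\xi,\eta\in W_0^{p,q}$), the last two from the commutator formulas \eqref{[de*,ede]}, \eqref{[de*,edebarde]} of Lemma~\ref{de*} together with the inclusion $e(d\theta)W_0^{p,q}\subseteq W_2^{p,q}$ contained in \eqref{K2} (which is what puts the term $e(d\theta)^\ell\Box\xi$ of $\de^*e(d\theta)^\ell\de\xi$ inside $W^{p,q}_{2,\ell-1}$). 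Inspecting this list, the assignment $(A)\mapsto(A')$ is injective on types with non-zero image, so $(A)\ne(B)$ forces $(A')\ne(B')$ unless one image vanishes; and since $T\inv$ is a central scalar operator commuting with $\de,\bar\de,\de^*,\bar\de^*$ and $e(d\theta)$, it preserves each $W^{(A')}$. Hence $T\inv d_H^*\om$ and $T\inv d_H^*\om'$ lie in distinct members (or one is zero) of the degree-$(k-1)$ family and are orthogonal by Proposition~\ref{horizdecomp} applied in degree $k-1$. Thus $\lan\Phi\om,\Phi\om'\ran=0$, so $\V^{(A)}\perp\V^{(B)}$ whenever $(A)\ne(B)$.

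For density I would note that each $V^{(A)}\subseteq(\S_0\Lambda^k)_{d^*\cl}\subseteq\ker d^*=(L^2\Lambda^k)_{d^*\cl}$, a closed subspace, so $\bigoplus_{(A)}\V^{(A)}\subseteq(L^2\Lambda^k)_{d^*\cl}$; for the reverse inclusion it suffices that $(\S_0\Lambda^k)_{d^*\cl}=\Phi(\S_0\Lambda^k_H)$ be dense in $(L^2\Lambda^k)_{d^*\cl}$, which follows by applying Proposition~\ref{subspaces}(ii) to the closed, left-invariant, $U(n)$-invariant and dilation-invariant subspace $\ker d^*$ (note $\S_0\Lambda^k\cap\ker d^*=(\S_0\Lambda^k)_{d^*\cl}$). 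This yields \eqref{decom-Vk} as an orthogonal decomposition. Non-triviality of each summand is immediate from injectivity of $\Phi$: for $0\le k\le n$ every $W^{p,q}_{j,\ell}$ with $p+q+j+2\ell=k$ satisfies $\ell+j+p+q=k-\ell\le n$ and so is non-trivial by Proposition~\ref{non-tr}, while $W_0^{p,q}$ with $p+q=k$ is non-trivial in exactly the range displayed in \eqref{decom-Vk} by Proposition~\ref{non-triviality-lemma}.

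Finally, $\Delta_k\Phi=\Phi D_k$ on $\S_0\Lambda^k_H$ together with the $D_k$-invariance of the $W^{(A)}$ (Proposition~\ref{invariance}) gives $\Delta_kV^{(A)}\subseteq V^{(A)}$; to upgrade this to $\Delta_k(\dom\Delta_k\cap\V^{(A)})\subseteq\V^{(A)}$, I would observe that the orthogonal projection $P^{(A)}$ onto $\V^{(A)}$ is left-invariant, $U(n)$-equivariant and homogeneous of degree $0$ (properties $\V^{(A)}$ inherits from $W^{(A)}$ and $\Phi$), hence maps $\S_0\Lambda^k$ into itself by Proposition~\ref{subspaces}(i); on the core $\S_0\Lambda^k$ it agrees with the projection onto $V^{(A)}$ along the remaining $V^{(B)}$ and along $\S_0\Lambda^k\cap(L^2\Lambda^k)_{d\ex}$, all $\Delta_k$-invariant, so $P^{(A)}$ commutes with $\Delta_k$ there and $\V^{(A)}$ accordingly reduces the self-adjoint operator $\Delta_k$. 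The main obstacle, as noted, is the orthogonality step: it rests on the displayed identity for $\lan\Phi\om,\Phi\om'\ran$ and on the ``triangular'' behaviour of $d_H^*$ with respect to the decomposition of Proposition~\ref{horizdecomp}.
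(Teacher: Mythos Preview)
Your proof is correct and follows essentially the same route as the paper: carry the decomposition of $\S_0\Lambda^k_H$ through $\Phi$, then prove orthogonality by observing that $\lan\Phi\om,\Phi\om'\ran=\lan\om,\om'\ran+\lan T^{-1}d_H^*\om,T^{-1}d_H^*\om'\ran$ and that $d_H^*$ sends distinct blocks $W^{(A)}$ into distinct (hence orthogonal) blocks of degree $k-1$, exactly as the paper does via Corollary~\ref{de*W} and Proposition~\ref{horizdecomp}. You are somewhat more explicit than the paper about density (invoking Proposition~\ref{subspaces}(ii)) and about upgrading $\Delta_k$-invariance from the core to the closures, but these are elaborations of the same argument rather than a different strategy.
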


\proof Since $\Phi$ is a bijection from $\S_0\Lambda^k_H$ onto
$(\S_0\Lambda^k)_{d^*\cl}$, it follows from Proposition
\ref{horizdecomp} that  
$$
\bigl( \S_0\Lambda^k\bigr)_{d^*\cl} 
=  
\osum_{\substack{p+q=k<n \\ p+q=n,\, pq=0 }}V^{p,q}_0
\oplus\osum_{\substack{j,\ell,p,q\\j=1,2\\  p+q+j+2\ell=k}}V^{p,q}_{j,\ell} \ .
$$

Hence it remains to show that this decomposition is orthogonal. By
\eqref{Phi}, this amounts to proving that 
$$
d_H^*(W^{p,q}_{j,\ell})\perp d_H^*(W^{p',q'}_{j',\ell'})\mbox{
  whenever }  W^{p,q}_{j,\ell}\ne  W^{p',q'}_{j',\ell'}. 
$$
This, in turn,  is an immediate consequence of Corollary \ref{de*W}
and Proposition \ref{horizdecomp}. Notice that $d_H^*W^{p,q}_0=\{0\}.$

\endproof

\setcounter{equation}{0}
\section{Intertwining operators and different scalar forms for
  $\Delta_k$}\label{intws}   

Following the decomposition of $L^2\Lambda^k$ described
in the previous section, we continue assuming $0\le k\le n$.

In this section we describe 
the  form
that $\Delta_k$ attains on each of the subspaces
of the decomposition \eqref{decom-Vk}
of $(L^2 \Lambda^k)_{d^*\cl}$.  In particular, we will show that,
up to conjugation with invertible operators, 
$\Delta_k$ acts on 
$V_0^{p,q}$ and on each $V_{2,\ell}^{p,q}$ as a scalar operator.  For
$V_{1,\ell}^{p,q}$ instead, a further splitting will be necessary in
order to reduce $\Delta_k$ to a scalar form in a similar way.

In the process, we will also describe the
intertwining operators that reduce $\Delta_k$ to such scalar forms. 
\medskip

\subsection{The case of $V_0^{p,q}$}\quad
\medskip

The simplest case is the one of $V_0^{p,q}$, because $\Phi$ acts on this space as the
identity map and we already know by \eqref{d0} 
that $D_k\big|_{ W^{p,q}_0} =\Delta_0 +i(q-p)T$.  Hence, in this
case $\Delta_k$ is itself a scalar operator and we
simply 
have the following
\begin{prop}
Let $p+q\le n-1$ or, $pq=0$ if $p+q=n$.
Then, on 
$ V_{0}^{p,q}$,
\begin{equation}\label{j=0}
\Delta_k
= 
\Delta_0 +i(q-p)T    \ .
\end{equation}\medskip
\end{prop}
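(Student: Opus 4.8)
The plan is simply to unwind the definitions, since everything needed has already been established. First I would observe that on $W_0^{p,q}$ the map $\Phi$ of \eqref{Phi} acts as the identity: if $\om\in W_0^{p,q}$, then $\de^*\om=\bar\de^*\om=0$, hence $d_H^*\om=\de^*\om+\bar\de^*\om=0$, so $\Phi(\om)=\om+\theta\wedge T\inv d_H^*\om=\om$. Consequently $V_0^{p,q}=\Phi(W_0^{p,q})=W_0^{p,q}$ as subspaces of $\S_0\Lambda^k$.

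Next, recall that $D_k=\Phi\inv\Delta_k\Phi$ by \eqref{DefDk}, and that $W_0^{p,q}$ is $D_k$-invariant by Proposition \ref{invariance}. Since $\Phi$ restricts to the identity on $W_0^{p,q}=V_0^{p,q}$, on this space we obtain $\Delta_k=\Phi D_k\Phi\inv=D_k$. Now \eqref{d0} gives $D_k\om=(\Delta_H-T^2)\om$ for $\om\in W_0^{p,q}$, and by \eqref{1.10} the operator $\Delta_H$ coincides on $(p,q)$-forms with the scalar operator $L+i(q-p)T$, while $\Delta_0=L-T^2$; hence $\Delta_H-T^2=\Delta_0+i(q-p)T$ on $V_0^{p,q}$, which is exactly \eqref{j=0} at the level of the core.

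Finally, to pass to the $L^2$-closure $\V_0^{p,q}$: the operator $\Delta_0+i(q-p)T$ is essentially self-adjoint with core $\S_0$ by Lemma \ref{s3.1}(iv), since its Bargmann-side pieces $\pi_{\la,\sigma}(\Delta_0+i(q-p)T)$ are self-adjoint; moreover $\V_0^{p,q}$ is a closed $\Delta_k$-invariant subspace by Proposition \ref{decom3}. As the two (essentially) self-adjoint operators agree on the common core $V_0^{p,q}=W_0^{p,q}$, they agree on $\V_0^{p,q}$. There is essentially no obstacle here: the only point requiring a moment's thought is the vanishing $d_H^*=0$ on $W_0^{p,q}$, which trivializes $\Phi$ and collapses the otherwise more delicate conjugation by $\Phi$ to an identity.
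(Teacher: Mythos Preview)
Your argument is correct and follows exactly the paper's approach: observe that $\Phi$ is the identity on $W_0^{p,q}$ (because $d_H^*\om=0$ there), so $V_0^{p,q}=W_0^{p,q}$ and $\Delta_k=D_k$ on this space, then invoke \eqref{d0} and \eqref{1.10}. Your additional paragraph on passing to the $L^2$-closure $\V_0^{p,q}$ goes slightly beyond what the stated proposition (formulated on $V_0^{p,q}$, the core-level space) requires, but is correct and consistent with the paper's later framework.
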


 \medskip

When we pass to $j=1,2$ we want to express $\Delta_k$ in terms of the
parameters $(\xi,\eta)$ in the definition of $W_{j,\ell}^{p,q}$ which
we can choose from the parameter spaces  
$Z^{p,q} = X^{p,q}\times Y^{p,q}.$

\subsection{The case of $V_{2,\ell}^{p,q}$}\quad
\medskip

 According to Corollary \ref{cor5.7}, we can write
\begin{equation}\label{w2par-ell}
W_{2,\ell}^{p,q} = 
\bigl\{ \om=e(d\theta)^\ell(\bar\de\de\xi+\de\bar\de\eta)\, :\, 
(\xi,\eta)\in Z^{p,q} \bigr\} .
\end{equation}
Recall from the discussion in Section \ref{firstproperties} and the
definitions of $X^{p,q}$ and $Y^{p,q}$ (see \eqref{Xpq-Ypq}) that $\Box$ is injective when
restricted to $X^{p,q}$ and  
$\Boxbar$ is injective when restricted to $Y^{p,q}$.

\begin{prop}\label{p6.1}
Let $A_{2,\ell} =\Phi e(d\theta)^\ell \bpm \de\bar\de &
\bar\de\de \epm: Z^{p,q}\rightarrow  V^{p,q}_{2,\ell}$. 
Then, $A_{2,\ell}$ is injective on $Z^{p,q}$.

The operator $\Delta_k$ restricted to  the subspace
$ V_{2,\ell}^{p,q}$ is given by the following expression:
\begin{equation}\label{j=2}
{\Delta_k }_{\big|_{V_{2,\ell}^{p,q}}}
= A_{2,\ell} \Bigl( 
\Delta_0 +i(q-p)T +(\ell+1)(n-k+\ell+1)  
\Bigr) A_{2,\ell}\inv
\, .
\end{equation}
\end{prop}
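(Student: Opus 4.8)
The plan is to first establish injectivity of $A_{2,\ell}$ and then compute the conjugated operator $A_{2,\ell}\inv\Delta_k A_{2,\ell}$ on $Z^{p,q}$, using the scalar-operator description of $D_k$ on $W_2^{p,q}$ together with the shift formula from Lemma \ref{D-e}(ii). For injectivity: $\Phi$ is a bijection onto $(\S_0\Lambda^k)_{d^*\cl}$, so $A_{2,\ell}$ is injective iff $e(d\theta)^\ell\bpm\de\bar\de & \bar\de\de\epm$ is injective on $Z^{p,q}$. By Corollary \ref{cor5.7} the map $(\xi,\eta)\mapsto \bar\de\de\xi+\de\bar\de\eta$ is already a bijection from $Z^{p,q}$ onto $W_2^{p,q}$, and by Proposition \ref{non-tr} $e(d\theta)^\ell$ is bijective from $W_2^{p,q}$ onto $W_{2,\ell}^{p,q}$ under the hypothesis $p+q+2+\ell\le n$ (which holds since these are exactly the spaces occurring in the decomposition for $k\le n$); note the mild notational point that $A_{2,\ell}$ as written sends $(\xi,\eta)$ to $\Phi e(d\theta)^\ell(\de\bar\de\xi+\bar\de\de\eta)$, which is the same space $V^{p,q}_{2,\ell}$ after relabeling the two components, so injectivity follows identically.

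For the operator identity, I would argue as follows. On $W_{2,\ell}^{p,q}$ we have from \eqref{DW2} and Lemma \ref{D-e}(ii) that, for $\om=e(d\theta)^\ell(\bar\de\de\xi+\de\bar\de\eta)$ with $\xi,\eta\in W_0^{p,q}$,
$$
D_k\om = e(d\theta)^\ell\bigl(D_{k-2\ell}+\ell(n-k+\ell)\bigr)(\bar\de\de\xi+\de\bar\de\eta),
$$
and by \eqref{DW2} applied in degree $k-2\ell$, $D_{k-2\ell}(\bar\de\de\xi+\de\bar\de\eta) = (\Delta_H-T^2+n-(k-2\ell)+1)(\bar\de\de\xi+\de\bar\de\eta)$. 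On a horizontal $(p,q)$-form $\Delta_H-T^2 = \Delta_0+i(q-p)T$ by \eqref{1.10}, and $\bar\de\de\xi+\de\bar\de\eta$ is again of bidegree $(p,q)$ (it lies in $W_2^{p,q}$, so the bidegree is $(p,q)$ by definition of $W_2^{p,q}$). Combining the two scalar shifts gives the eigenvalue
$$
\Delta_0+i(q-p)T + (n-k+2\ell+1) + \ell(n-k+\ell) = \Delta_0+i(q-p)T+(\ell+1)(n-k+\ell+1),
$$
after expanding $(\ell+1)(n-k+\ell+1) = (n-k+\ell+1)+\ell(n-k+\ell+1) = (n-k+\ell+1)+\ell(n-k+\ell)+\ell$; one checks this matches since $(n-k+2\ell+1)+\ell(n-k+\ell) = (n-k+\ell+1)+\ell+\ell(n-k+\ell)$. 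Thus $D_k$ acts on $W_{2,\ell}^{p,q}$ as the scalar operator $\Delta_0+i(q-p)T+(\ell+1)(n-k+\ell+1)$, which commutes with $e(d\theta)^\ell\bpm\de\bar\de&\bar\de\de\epm$ in the formal sense on the core.

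Finally I would transfer this to $\Delta_k$. Since $\Delta_k = \Phi D_k\Phi\inv$ on $(\S_0\Lambda^k)_{d^*\cl}$ and $A_{2,\ell} = \Phi\circ\bigl(e(d\theta)^\ell\bpm\de\bar\de&\bar\de\de\epm\bigr)$, we get
$$
\Delta_k A_{2,\ell} = \Phi D_k\, e(d\theta)^\ell\bpm\de\bar\de&\bar\de\de\epm = \Phi\, e(d\theta)^\ell\bpm\de\bar\de&\bar\de\de\epm\bigl(\Delta_0+i(q-p)T+(\ell+1)(n-k+\ell+1)\bigr) = A_{2,\ell}\bigl(\Delta_0+i(q-p)T+(\ell+1)(n-k+\ell+1)\bigr)
$$
on $Z^{p,q}$, where the middle equality uses that the scalar operator $\Delta_0+i(q-p)T+\text{const}$ commutes past $e(d\theta)^\ell\bpm\de\bar\de&\bar\de\de\epm$ when the latter is applied to pairs in $W_0^{p,q}\times W_0^{p,q}$ — this is exactly the content of $D_{k-2\ell}$ being scalar on $W_2^{p,q}$ together with the fact that $\Delta_0, T$ commute with $\de,\bar\de,e(d\theta)$ up to the bidegree shifts already bookkept in \eqref{DW2}. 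Since $A_{2,\ell}$ is injective, we conclude \eqref{j=2}. The only genuinely delicate point is keeping the bidegree and degree bookkeeping consistent through the two nested shifts (the $e(d\theta)^\ell$ shift from Lemma \ref{D-e} and the degree shift inside \eqref{DW2} from passing between degree $k$ and degree $k-2\ell$), and verifying the arithmetic identity for the constant; everything else is a direct assembly of results already proved.
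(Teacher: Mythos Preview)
Your proof is essentially the same as the paper's: invoke Corollary~\ref{cor5.7} (plus the bijectivity of $\Phi$ and of $e(d\theta)^\ell$ from Proposition~\ref{non-tr}) for injectivity, then combine Lemma~\ref{D-e}(ii) with \eqref{DW2} to get the scalar constant, and finally use $\Delta_k=\Phi D_k\Phi^{-1}$ to transfer back. The arithmetic on the constant is correct.

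One small slip to fix: you write that $\bar\de\de\xi+\de\bar\de\eta$ ``is again of bidegree $(p,q)$ (it lies in $W_2^{p,q}$, so the bidegree is $(p,q)$ by definition of $W_2^{p,q}$)''. This is false --- an element of $W_2^{p,q}$ has bidegree $(p+1,q+1)$; the superscript on $W_2^{p,q}$ records the bidegree of the generators $\xi,\eta$, not of the form itself. Fortunately this does not damage your argument, since by \eqref{1.10} $\Delta_H$ on a $(p',q')$-form depends only on $q'-p'$, and $(q+1)-(p+1)=q-p$, so $\Delta_H-T^2=\Delta_0+i(q-p)T$ still holds. Just correct the justification. (You also correctly flag the harmless ordering discrepancy between $\bpm\de\bar\de&\bar\de\de\epm$ in the statement and $\bpm\bar\de\de&\de\bar\de\epm$ in Corollary~\ref{cor5.7}; the paper itself silently uses the latter later on.)
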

\proof
By Corollary \ref{cor5.7} it follows at once that $A_{2,\ell}$ is injective on $Z^{p,q}$.

When $k=p+q+2+2\ell$,  from Lemma \ref{D-e} we have 
$$
 D_k e(d\theta)^{\ell}
=e(d\theta)^{\ell} \Bigl(D_{k-2\ell} +\ell(n-k+\ell)  \Bigr).
$$
Moreover, by \eqref{DW2} we know that $D_{k-2\ell},$ when acting on
$W^{p,q}_2,$ is given by $\Delta_0+i(q-p)T+n-(k-2\ell)+1,$ so that on
$W^{p,q}_2$ 
\begin{equation}\label{action}
D_k e(d\theta)^\ell =e(d\theta)^{\ell}\Bigl( \Delta_0 +i(q-p)T +(\ell+1)(n-k+\ell+1)  
\Bigr).
\end{equation}

 By the definitions of $\Phi$ and $ V_{2,\ell}^{p,q}$, and the commutation relations \eqref{commutations}, this
 proves \eqref{j=2}.
\qed

\subsection{The case of $V_{1,\ell}^{p,q}$}\quad
\medskip

We now turn to the case $j=1$.  In this case the situation is quite
more involved, as we already observed in the case of $1$-forms, see
\cite{MPR}.  Let us begin by recalling that according to  Corollary \ref{cor5.7}, we can write
\begin{equation}\label{w2par-ell}
W_{1,\ell}^{p,q} = 
\bigl\{ \om=e(d\theta)^\ell(\de\xi+\bar\de\eta)\, :\, 
(\xi,\eta)\in Z^{p,q} \bigr\} .
\end{equation}

Consider the subspace $V^{p,q}_{1,\ell}=\Phi (W_{1,\ell}^{p,q})$.

Our next goal will be to formally diagonalize the matrix $
\begin{pmatrix} \Box-T^2  &\Boxbar \\
-\Box & -\Boxbar+T^2\end{pmatrix}$ appearing in
formula \eqref{formula*}. This matrix operator is acting on column
vectors $\begin{pmatrix} 
     \xi\cr
     \eta \cr
\end{pmatrix}$ corresponding to pairs 
 $(\xi,\eta)\in X^{p,q}\times Y^{p,q}=Z^{p,q},$  where $p+q+1+2\ell=k.$ We put

\begin{equation}\label{def-of-s}
s:=p+q=k-2\ell-1\ .
\end{equation}
Notice that $0\le s\le n-1.$

We define 
the operator matrix $Q$ acting on $\begin{pmatrix}
     \xi\cr
     \eta \cr
\end{pmatrix}$ by
\begin{equation}\label{Qmatrix}
Q=\bpm -Q^+_-&-Q^-_+\\ Q^+_+&Q^-_-\epm\ ,
\end{equation}
where, for $\eps,\delta=\pm$, 
the expression of $Q^\eps_\del$ is
$$
Q^\eps_\del=\Gamma+\eps m-\del iT\ ,
$$
where
\begin{equation}\label{m-Gamma}
\aligned
m & =\frac{n-s}2\ ,\\
\Gamma & =\sqrt{\Delta_H-T^2+m^2}\ .
\endaligned
\end{equation}

Observe here that the operator $\Delta_H-T^2+m^2$ satisfies the
estimate $\Delta_H-T^2+m^2\ge m^2\ge 1/4,$  so that it has a unique
positive square root.

The following identities are easily verified:
\begin{equation}\label{Q-iden}
\aligned 
Q^+_+ Q^-_-
& = 2\Box\\
Q^+_- Q^-_+
& = 2\Boxbar \\
Q^+_+ Q^-_+
& = 2\big[ \Box -T^2 -iT(m+\Gamma)\big] 
= 2\big[ \Boxbar -T^2 +iT(m-\Gamma)\big] \\
Q^-_- Q^+_-
& = 2\big[ \Box -T^2 -iT(m-\Gamma)\big] 
= 2\big[ \Boxbar -T^2 +iT(m+\Gamma)\big] \ ,
\endaligned
\end{equation}
since 
\begin{equation}\label{halfdelta}
\Box -imT = \Boxbar +imT=\half\Delta_H.
\end{equation}

\begin{lemma}\label{newpar}

If  $p+q\le n-1,$  then the following properties hold true: 
 \begin{itemize}
\item[(i)] The operator matrix 
$ Q: \S_0 \Lambda^{p,q}\times\S_0 \Lambda^{p,q}\rightarrow \S_0 \Lambda^{p,q}\times\S_0 \Lambda^{p,q}$ is invertible, with inverse 
$$
 Q\inv 
= \frac{1}{4iT\Gamma} 
\bpm - Q^-_- & - Q^-_+ \\  Q^+_+ &  Q^+_- \epm
\ .
$$
Moreover, $ Q$ maps the subspace $W_0^{p,q}\times W_0^{p,q}$ bijectively onto itself.

\item[(ii)] If $p=0,$ then $ Q^-_-\cC=\cC  Q^-_-=0,$ and if  $q=0,$ then $ Q^-_+\bar\cC=\bar\cC  Q^-_+=0.$
 \end{itemize}
\end{lemma}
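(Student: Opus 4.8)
The plan is to verify (i) by a direct computation that the proposed $Q\inv$ is a two-sided inverse, then read off the invariance of $W_0^{p,q}\times W_0^{p,q}$ from the commutation relations, and finally treat (ii) as a small separate calculation using the structure of $\cC$ and $\bar\cC$.

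\medskip

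\textbf{Step 1: $Q$ is invertible with the stated inverse.} Since all entries of $Q$ and of the proposed inverse are scalar operators in $\Box,\Boxbar,T,\Gamma$ — hence commute with each other — the claim reduces to verifying the $2\times2$ matrix identity
$$
\frac{1}{4iT\Gamma}\bpm -Q^-_-&-Q^-_+\\ Q^+_+&Q^+_-\epm
\bpm -Q^+_-&-Q^-_+\\ Q^+_+&Q^-_-\epm = I,
$$
and the analogous identity with the factors in the opposite order. Multiplying out the top-left entry gives $Q^-_-Q^+_- - Q^-_+Q^+_+ $; using the identities \eqref{Q-iden} this equals $2[\Box-T^2-iT(m-\Gamma)] - 2[\Boxbar-T^2+iT(m-\Gamma)] = 2(\Box-\Boxbar) - 4iT(m-\Gamma)$. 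By \eqref{box-bar} one has $\Box-\Boxbar = i(n-s)T = 2imT$ when acting on $(p,q)$-forms (here $s=p+q$), so the expression becomes $4imT - 4imT + 4iT\Gamma = 4iT\Gamma$, which after division by $4iT\Gamma$ is the identity. The off-diagonal entries vanish: the top-right entry is $Q^-_-Q^-_+ - Q^-_+Q^-_- = 0$ by commutativity, and the bottom-left is similarly $-Q^+_+Q^+_- + Q^+_-Q^+_+ = 0$; the bottom-right entry is handled by the same computation as the top-left. The product in the opposite order is identical by commutativity. Note $\Gamma$ is invertible since $\Delta_H-T^2+m^2\ge m^2\ge 1/4 > 0$, and $T\inv$ is a bounded-below operator on $\S_0$ (it preserves $\S_0$), so $Q\inv$ makes sense as an operator $\S_0\Lambda^{p,q}\times\S_0\Lambda^{p,q}\to\S_0\Lambda^{p,q}\times\S_0\Lambda^{p,q}$.

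\medskip

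\textbf{Step 2: $Q$ preserves $W_0^{p,q}\times W_0^{p,q}$.} The operators $\Box$, $\Boxbar$, $T$ all commute with $\de^*$ and $\bar\de^*$ up to the relations \eqref{commutations} (specifically, $\de^*$ and $\bar\de^*$ intertwine $\Box_p$ and $\Box_{p}$ on $(p,q)$-forms, since those are scalar operators $\frac12 L + i(\frac n2-p)T$ and applying $\de^*$ or $\bar\de^*$ does not change the bidegree within a fixed $W_0^{p,q}$); hence $\Gamma=\sqrt{\Delta_H-T^2+m^2}$, a function of scalar operators, likewise preserves the condition $\de^*\om=\bar\de^*\om=0$. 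Therefore each scalar entry $Q^\eps_\del$ maps $W_0^{p,q}$ into itself, so $Q$ maps $W_0^{p,q}\times W_0^{p,q}$ into itself; the same applies to $Q\inv$, giving bijectivity.

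\medskip

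\textbf{Step 3: assertion (ii).} Suppose $p=0$. Then $\cC$ is the orthogonal projection onto $\ker(L+inT)=\ker\Box_0$ on $(0,q)$-forms, i.e. onto $\ker(\frac12\Delta_H)$ after noting $\Box_0=\half\Delta_H$ on $(0,q)$-forms by \eqref{1.10} (with $p=0$). Since $s=q$ here and $m=\frac{n-q}2$, on the range of $\cC$ we have $\Delta_H=0$, hence $\Gamma = \sqrt{-T^2+m^2}$; but also on $\ker\Box_0$ the relation $\Box_0 = \half\Delta_H = 0$ combined with $\Box_0 = \Boxbar_q + i(\frac n2-q)\cdot(-1)\cdot$... — more directly, $Q^-_- = \Gamma - m + iT$, and one computes $Q^+_+ Q^-_- = 2\Box$ from \eqref{Q-iden}, so on $\ker\Box_0=\ker\cC^\perp{}^c$ the product $Q^+_+Q^-_-$ vanishes; since $Q^+_+ = \Gamma+m-iT$ is injective there (its "square" $Q^+_+Q^-_+ = 2[\Box-T^2-iT(m+\Gamma)]$ is bounded below, as $\Gamma\ge m$ and the spectral parameter of $T$ is real), we get $Q^-_-\cC=0$ on the range of $\cC$, i.e. $Q^-_-\cC = 0$; taking adjoints and using self-adjointness of the scalar operators gives $\cC Q^-_- = 0$ as well. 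The case $q=0$ is entirely symmetric, with $\bar\cC$ projecting onto $\ker\Boxbar_0 = \ker(\frac12\Delta_H)$ on $(p,0)$-forms and $Q^-_+ = \Gamma - m - iT$, using $Q^+_- Q^-_+ = 2\Boxbar$.

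\medskip

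\textbf{Expected main obstacle.} The bookkeeping in Step 1 — keeping straight which of the four identities in \eqref{Q-iden} to invoke for each matrix entry, and correctly using \eqref{box-bar} with $s=p+q=k-2\ell-1$ rather than the ambient degree $k$ — is where an error is most likely to creep in; in particular one must remember that in $W^{p,q}_{1,\ell}$ the relevant forms are $(p,q)$-forms (that is what $Q$ acts on, before applying $e(d\theta)^\ell$ and $\de$ or $\bar\de$), so the bidegree governing \eqref{box-bar} is $s$, which is exactly why $m=\frac{n-s}2$ appears. The rest is routine scalar functional calculus.
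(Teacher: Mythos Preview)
Your Steps 1 and 2 are correct and essentially what the paper does (the paper abbreviates Step 1 by simply observing $\det Q=-4iT\Gamma$, but your entry-by-entry verification is equivalent).

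Step 3 contains a genuine error. The claim ``$\Box_0=\half\Delta_H$ on $(0,q)$-forms'' is false: from \eqref{1.10}, $\Box_0=\half L+i\frac n2T$ while $\half\Delta_H=\half L+i\frac q2T$, and these differ unless $q=n$. The correct relation is \eqref{halfdelta}, namely $\Box-imT=\half\Delta_H$, so on $\ker\Box_0$ one obtains $\Delta_H=-2imT$, \emph{not} $\Delta_H=0$. Your conclusion $\Gamma=\sqrt{-T^2+m^2}$ is therefore wrong, and had you continued along this line you would not have obtained $Q^-_-=0$.

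The paper's argument is exactly this direct computation done correctly: on $\ker\Box$ one has $\Delta_H=-2imT$, hence $\Gamma^2=-2imT-T^2+m^2=(m-iT)^2$; since $\cC$ lives on the spectral ray $\la>0$ where $m-iT$ has positive multiplier $m+\la$, one gets $\Gamma=m-iT$ and thus $Q^-_-=\Gamma-m+iT=0$ immediately. This also gives $\cC Q^-_-=Q^-_-\cC=0$ since $Q^-_-$ is scalar and commutes with $\cC$.

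Your fallback factorization argument via $Q^+_+Q^-_-=2\Box$ is salvageable, but the injectivity of $Q^+_+$ on $\range\cC$ is not established by saying $Q^+_+Q^-_+$ is ``bounded below'': you would need to argue that since these are commuting scalar multiplier operators, a nonvanishing product forces each factor to be nonvanishing on the relevant spectral ray. That is true here (on $\ker\Box$, $Q^+_+$ has multiplier $2(m+\la)>0$), but verifying it amounts to the same direct spectral computation the paper does, so the detour gains nothing.
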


\proof
To prove (i), we compute  formally the determinant of $ Q$ and  find
by \eqref{Q-iden} that 
$\text{det\,}  Q 
= -4iT\Gamma\ . 
$ The formula for $ Q\inv$ is now obvious. Notice also that the
operators $ Q^\eps_\delta$ leave the space $W^{p,q}_0$ invariant.  The
remaining statement in (i) is now clear. 
\medskip

As for (ii), notice that 
if $p=0$ then $\cC$ projects onto the kernel of $\de,$ which coincides
with the kernel of $\Box.$ And, on $\ker\Box,$ by \eqref{halfdelta} we
have $\Delta_H=-2i\,mT\ge 0,$ so that
$\Gamma=\sqrt{-2imT-T^2+m^2}=m-iT$, and hence $ Q^-_-=0$ on $\ker\de.$
This implies $\cC  Q^-_-= Q^-_-\cC=0.$ The remaining identities in
(ii) are proved analogously. 
\medskip
\endproof

We set
\begin{equation}\label{def-Xipq}
\Xi^{p,q}=X^{p,q}\cap Y^{p,q}=\{\xi\in W_0^{p,q}:C_p\xi=\bar C_q\xi=0\}\ ,\qquad 
\widetilde Z^{p,q}=W_0^{p,q}\times \Xi^{p,q}\ .
\end{equation}
\medskip

\begin{lemma}\label{Xi}
$$
\begin{pmatrix}
I-C_p & 0\\
0& I-\bar C_q
\end{pmatrix} Q (\widetilde Z^{p,q})=Z^{p,q}.
$$
\end{lemma}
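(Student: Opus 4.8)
The statement asserts that the operator $\operatorname{diag}(I-C_p,\,I-\bar C_q)$ carries $Q(\widetilde Z^{p,q})$ onto $Z^{p,q}=X^{p,q}\times Y^{p,q}$, where $\widetilde Z^{p,q}=W_0^{p,q}\times\Xi^{p,q}$ and $\Xi^{p,q}=X^{p,q}\cap Y^{p,q}$. My plan is to compute both inclusions directly from the explicit form of $Q$ in \eqref{Qmatrix}, using the commutation behavior of $C_p,\bar C_q$ with the entries $Q^\eps_\del$, together with part (ii) of Lemma \ref{newpar} and the structural description of $X^{p,q},Y^{p,q}$ in Remark \ref{projpar}. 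I would first dispose of the easy range cases: if $p=n$ or $q=n$ the claim degenerates (one factor of $Z^{p,q}$ or the whole space is trivial), so I may assume $0\le p,q\le n-1$, hence $p+q\le n-1$ is not automatic but we are in the regime where $W_0^{p,q}$, $X^{p,q}$, $Y^{p,q}$ are all nontrivial when $p+q\le n-1$; for $p+q=n$ with $pq=0$ we are back in the degenerate case. So the real content is $p+q\le n-1$.

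\textbf{Step 1 (the case $1\le p,q\le n-1$).} Here Remark \ref{projpar} gives $X^{p,q}=Y^{p,q}=W_0^{p,q}$, hence $\Xi^{p,q}=W_0^{p,q}$, $\widetilde Z^{p,q}=W_0^{p,q}\times W_0^{p,q}$, $Z^{p,q}=W_0^{p,q}\times W_0^{p,q}$, and moreover $C_p=\Ri_{p-1}\Ri_{p-1}^*$ vanishes on $W_0^{p,q}$ because $W_0^{p,q}\subset\ker\de^*$ (Lemma \ref{s4.8}), and similarly $\bar C_q=0$ on $W_0^{p,q}$. By Lemma \ref{newpar}(i), $Q$ maps $W_0^{p,q}\times W_0^{p,q}$ bijectively onto itself, so $Q(\widetilde Z^{p,q})=W_0^{p,q}\times W_0^{p,q}$, on which $\operatorname{diag}(I-C_p,I-\bar C_q)$ acts as the identity; the identity in the lemma is then immediate.

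\textbf{Step 2 (the boundary cases $p=0$ or $q=0$, with $p+q\le n-1$).} Treat $p=0$ (the case $q=0$ being symmetric, and if both vanish combine the two arguments). Then $C_0=\cC$ is the orthogonal projection onto $\ker\Box=\ker\de$, while $\bar C_q$ still vanishes on $W_0^{0,q}$ (Lemma \ref{s4.8}), so $I-\bar C_q$ acts as the identity on the second slot. For $(\xi,\eta)\in\widetilde Z^{0,q}=W_0^{0,q}\times\Xi^{0,q}$ write
$$
Q\begin{pmatrix}\xi\\ \eta\end{pmatrix}=\begin{pmatrix}-Q^+_-\xi-Q^-_+\eta\\ Q^+_+\xi+Q^-_-\eta\end{pmatrix},
$$
so after applying $\operatorname{diag}(I-\cC,I)$ the image is
$$
\begin{pmatrix}(I-\cC)(-Q^+_-\xi-Q^-_+\eta)\\ Q^+_+\xi+Q^-_-\eta\end{pmatrix}.
$$
By Lemma \ref{newpar}(ii), $Q^-_-\cC=\cC Q^-_-=0$; I would also use $\Xi^{0,q}=(I-\cC)W_0^{0,q}$ (Remark \ref{projpar}) together with the commutation relations \eqref{commutations} to check that $Q^+_+$ maps $W_0^{0,q}$ into $\ker\de=\operatorname{ran}\cC\oplus\dots$ appropriately — more precisely, that $(I-\cC)Q^+_+\xi$ and the remaining pieces land in $X^{0,q}=(I-\cC)W_0^{0,q}$ for the first slot and in $Y^{0,q}=W_0^{0,q}$ for the second. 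Concretely, the second component $Q^+_+\xi+Q^-_-\eta$ lies in $W_0^{0,q}=Y^{0,q}$ since $Q$ preserves $W_0^{p,q}$-valuedness; for the first component one checks using $Q^-_-\cC=0$ and Lemma \ref{newpar}(i)'s bijectivity that the map $(\xi,\eta)\mapsto\big((I-\cC)(-Q^+_-\xi-Q^-_+\eta),\,Q^+_+\xi+Q^-_-\eta\big)$ is a bijection of $W_0^{0,q}\times(I-\cC)W_0^{0,q}$ onto $(I-\cC)W_0^{0,q}\times W_0^{0,q}=X^{0,q}\times Y^{0,q}$. The bijectivity is best seen by exhibiting the inverse from the formula for $Q^{-1}$ in Lemma \ref{newpar}(i), restricted and corrected by the projections: since $Q^{-1}=\frac1{4iT\Gamma}\left(\begin{smallmatrix}-Q^-_- & -Q^-_+\\ Q^+_+ & Q^+_-\end{smallmatrix}\right)$ and $\cC Q^-_-=0$, the $\cC$-corrections are controlled, and one reads off that on the relevant subspaces the map is invertible.

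\textbf{Main obstacle.} The one delicate point is Step 2: verifying that the projected map is actually \emph{onto} $X^{0,q}\times Y^{0,q}$ and not merely into it. This requires knowing precisely how $Q^+_+,Q^+_-,Q^-_+$ interact with $\cC$ and with the splitting $W_0^{0,q}=\cC W_0^{0,q}\oplus(I-\cC)W_0^{0,q}$; the key inputs are Lemma \ref{newpar}(ii) ($Q^-_-\cC=\cC Q^-_-=0$), the commutation relations \eqref{commutations} (which show $\cC$ intertwines the various $\Box,\Boxbar\pm iT$ appearing inside $\Gamma$ and the $Q^\eps_\del$), and the fact that $\Gamma=m-iT$ on $\ker\Box$ from the proof of Lemma \ref{newpar}(ii). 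Granting these, the surjectivity follows by a finite-dimensional (fiberwise in $\la$) linear-algebra argument exactly paralleling Lemma \ref{newpar}(i). I expect the whole proof to be short once these bookkeeping facts are assembled.
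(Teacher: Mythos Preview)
Your Step 1 is fine and matches the trivial case. For Step 2, however, your plan to verify bijectivity by chasing $Q^{-1}$ and correcting by projections is more laborious than necessary, and your own ``main obstacle'' (surjectivity onto $Z^{p,q}$) is only sketched, not proved. A fiberwise linear-algebra argument could be made to work, but you would still need to pin down exactly how each $Q^\eps_\del$ interacts with $\cC,\bar\cC$ on both the range and the kernel sides, which you have not done.

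The paper avoids this obstacle entirely by a short reduction. Since $Q$ is a bijection of $(W_0^{p,q})^2$ onto itself (Lemma \ref{newpar}(i)) and $Z^{p,q}=\operatorname{diag}(I-C_p,\,I-\bar C_q)\big((W_0^{p,q})^2\big)$ by definition of $X^{p,q},Y^{p,q}$, one has automatically
\[
\operatorname{diag}(I-C_p,\,I-\bar C_q)\,Q\big((W_0^{p,q})^2\big)=Z^{p,q}.
\]
Thus the whole lemma reduces to showing that the \emph{complement} of $\widetilde Z^{p,q}$ inside $(W_0^{p,q})^2$, namely $\{0\}\times(C_p+\bar C_q)W_0^{p,q}$ (using $\Xi^{p,q}=(I-C_p-\bar C_q)W_0^{p,q}$), is annihilated by $\operatorname{diag}(I-C_p,\,I-\bar C_q)\,Q$. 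For $\eta=(C_p+\bar C_q)\eta'$ this image is
\[
\begin{pmatrix}-(I-C_p)Q^-_+\eta\\ (I-\bar C_q)Q^-_-\eta\end{pmatrix},
\]
and since the scalar operators $Q^-_\pm$ commute with $C_p,\bar C_q$, the identities $(I-C_p)(C_p+\bar C_q)=\bar C_q$ and $(I-\bar C_q)(C_p+\bar C_q)=C_p$ together with Lemma \ref{newpar}(ii) ($Q^-_+\bar C_q=0$ when $q=0$, $Q^-_-C_p=0$ when $p=0$; otherwise $C_p,\bar C_q$ vanish on $W_0^{p,q}$) kill both entries. This handles surjectivity and injectivity in one stroke, with no need to invert anything.
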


\begin{proof}
It suffices to show that
$$
\begin{pmatrix}\label{newpar1}
I-C_p & 0\\
0& I-\bar C_q
\end{pmatrix}  Q (\widetilde Z^{p,q})
= \begin{pmatrix}
I-C_p & 0\\
0& I-\bar C_q
\end{pmatrix}  Q (W_0^{p,q}\times W_0^{p,q}),
$$
since $ Q (W_0^{p,q}\times W_0^{p,q})=W_0^{p,q}\times W_0^{p,q}.$

We have $\Xi^{p,q}=(I-C_p-\bar C_q)W_0^{p,q}$,
 which means that it suffices to show that
 $$
\begin{pmatrix}
I-C_p & 0\\
0& I-\bar C_q
\end{pmatrix}  Q 
\begin{pmatrix}0\cr \eta
  \end{pmatrix}
 =\begin{pmatrix}
    -(I-C_p) Q^-_+\eta \cr
    (I-\bar C_q) Q^-_-\eta 
    \end{pmatrix}
$$
is zero for every $\eta=(C_p+\bar C_q)\eta'$. This follows from the identities
$$
(I-C_p)(C_p+\bar C_q)=\bar C_q\ ,\qquad (I-\bar C_q)(C_p+\bar C_q)=C_p\ ,
$$
and from  (ii) of Lemma \ref{newpar}.
\end{proof}

\begin{lemma}\label{G-lemma}

Let 
\begin{equation}\label{G-def}
G= \bpm\Box-T^2&\barBox\\-\Box&-\barBox+T^2 \epm
\end{equation}
be the matrix appearing in formula \eqref{formula*}.
Then,
$-iT\inv G$ admits the diagonalization
$$
-iT\inv G= Q\bpm  m+\Gamma &0\\0& m- \Gamma \epm  Q\inv\ .
$$
\end{lemma}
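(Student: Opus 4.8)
The plan is to verify the claimed diagonalization by a direct (formal) matrix computation on the core, using the algebraic identities \eqref{Q-iden} and the fact that the scalar operators $\Box$, $\Boxbar$, $T$, $\Gamma$, $m$ all commute with one another. First I would observe that since $-iT\inv G$, $Q$, and the diagonal matrix all have entries that are scalar operators (polynomials, resp. square roots, in $L$ and $iT$), and these commute, everything reduces to a computation in the commutative ring generated by $\Box$, $\Boxbar$, $T$, $\Gamma$. Recall from \eqref{m-Gamma} that $\Gamma^2=\Delta_H-T^2+m^2=\Box+\Boxbar-T^2+m^2$, and from \eqref{halfdelta} that $\Box-imT=\Boxbar+imT=\half\Delta_H$; these are the only relations I will need.

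The key step is to compute the product $Q^{-1}\bigl(-iT\inv G\bigr)Q$, or equivalently to check that $\bigl(-iT\inv G\bigr)Q = Q\,\mathrm{diag}(m+\Gamma,\ m-\Gamma)$. Writing $G$ as in \eqref{G-def} and using $Q=\bpm -Q^+_-&-Q^-_+\\ Q^+_+&Q^-_-\epm$, the left-hand side is a $2\times2$ matrix whose four entries are $-iT\inv$ times explicit combinations of $\Box,\Boxbar,T^2$ applied to the $Q^\eps_\del$; the right-hand side has entries like $-Q^+_-(m+\Gamma)$, $-Q^-_+(m-\Gamma)$, $Q^+_+(m+\Gamma)$, $Q^-_-(m-\Gamma)$. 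For instance, the $(2,1)$-entry on the left is $-iT\inv\bigl(-\Box Q^+_- +(-\Boxbar+T^2)Q^+_+\bigr)$, which using $Q^+_\pm=\Gamma+m\mp iT$ and the identities \eqref{Q-iden} (specifically $Q^+_+Q^-_+ = 2[\Box-T^2-iT(m+\Gamma)]$ and $Q^-_-Q^+_-=2[\Box-T^2-iT(m-\Gamma)]$, read the other way) should collapse to $Q^+_+(m+\Gamma)$; the remaining three entries are handled the same way. I would present this as: expand both sides, substitute $Q^\eps_\del=\Gamma+\eps m-\del iT$, and verify the four scalar identities using only $\Gamma^2=\Box+\Boxbar-T^2+m^2$ and $\Box-\Boxbar=2imT$ (which is \eqref{halfdelta} rewritten), together with the invertibility of $Q$ from Lemma \ref{newpar}(i) to conclude.

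The main obstacle is purely bookkeeping: keeping the signs straight across the four matrix entries and making sure the formal manipulations are legitimate. The latter is already settled by the framework of Section \ref{cores} — all these operators act on $\S_0\Lambda^{p,q}$ and are diagonalized simultaneously by $\pi_{\la,\sigma}$, on which they become genuine commuting finite-dimensional operators, so "formal" identities among them are actual identities once $\Gamma$ and $Q^{-1}$ are shown to be well-defined; the positivity estimate $\Delta_H-T^2+m^2\ge m^2\ge 1/4$ noted after \eqref{m-Gamma} guarantees $\Gamma$ exists, and Lemma \ref{newpar}(i) guarantees $Q\inv$ exists. So the real content is the sign-careful expansion, which I would organize by computing each of the four entries of $\bigl(-iT\inv G\bigr)Q$ in turn and matching it against the corresponding entry of $Q\,\mathrm{diag}(m+\Gamma,m-\Gamma)$, invoking \eqref{Q-iden} at each step.
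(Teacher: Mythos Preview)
Your approach is correct and essentially the same as the paper's: both verify that the columns $Q^\pm$ of $Q$ are eigenvectors of $-iT\inv G$ with eigenvalues $m\pm\Gamma$, using the identities \eqref{Q-iden} together with $\Gamma^2=\Delta_H-T^2+m^2$ and $\Box-\Boxbar=2imT$. The paper organizes this slightly differently---it first computes the characteristic polynomial of $G$ to find $\tau_\pm=iT(m\pm\Gamma)$, then factors each $G-\tau_\pm I$ as a rank-one outer product to read off the eigenvectors---but this is the same computation as your entry-by-entry check of $(-iT\inv G)Q=Q\,\mathrm{diag}(m+\Gamma,m-\Gamma)$.
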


\proof
In order to formally  compute the eigenvalues $\lambda_\pm$ of $-iTG$,
observe that the characteristic equation for $G$ is 
$$
\tau^2 -\tau\big[\Box-\Boxbar\big]+T^2\big[\Box+\Boxbar-T^2\big]=0\ ,
$$
which has roots 
$$
\tau_\pm
= imT\pm\sqrt{-T^2\big[\Box+\Boxbar-T^2+m^2\big]}=iT(m\pm\Gamma)\ .
$$
Therefore,
$$
G-\tau_\pm I
= \bpm \Box -T^2 -iT(m\pm\Gamma) & \Boxbar \\
-\Box & -\Boxbar+T^2 -iT(m\pm\Gamma) \epm \ ,
$$
and, by \eqref{Q-iden},
$$
\aligned
G-\tau_+ I
& = \half  \bpm Q^+_+ Q^-_+ & Q^+_-Q^-_+ \\
-Q^+_+Q^-_- & - Q^+_- Q^-_- \epm \\
& = \half\bpm  Q^-_+ \\ -Q^-_- \epm
\bpm Q^+_+ & Q^+_- \epm\ ,
\endaligned
$$
and analogously,
$$
G-\tau_-I
 = \half \bpm  Q^+_- \\ -Q^+_+ \epm
\bpm Q^-_- & Q^-_+ \epm\ .
$$ 

These equations show that eigenvectors of $G$ of
eigenvalues $\tau_\pm$ are given, respectively, by 
\begin{equation}\label{Q_pm}
Q^+ =\bpm  -Q^+_- \\ Q^+_+ \epm\ ,\qquad Q^- =\bpm -Q^-_+ \\ Q^-_- \epm\ , 
\end{equation}
so that
$$
Q= \big( Q^+ | Q^-\big) 
$$
is indeed a matrix which formally diagonalizes $-iT\inv G$ as claimed.
\qed

Recall now from \eqref{w2par-ell} and the definition of
$V^{p,q}_{1,\ell}$ that if  we define the operator 
$A_{1,\ell}:(W^{p,q}_0)^2\rightarrow  L^2\Lambda^k$ as
\begin{equation}\label{i1}
\begin{aligned}
A_{1,\ell}\bpm\xi\\ \eta\epm
&:=\Phi e(d\theta)^\ell
\bpm \de&\bar\de\epm
\bpm\xi\\ \eta\epm\\ 
&=\bpm I\\ T\inv
  d_H^*\epm e(d\theta)^\ell\bpm
  \de&\bar\de\epm\bpm\xi\\ \eta\epm\ ,
\end{aligned}
\end{equation}
then $A_{1,\ell}(Z^{p,q})=V^{p,q}_{1,\ell}.$ 
Observe also that Lemma \ref{Xi} shows that we may
realize $Z^{p,q}$ in this identity as the space
$Q(\widetilde Z^{p,q})$ and use $\widetilde Z^{p,q}$ as a parameter space for $V^{p,q}_{1,\ell}$. This has the advantage of reducing the operator $D_k$ in \eqref{formula*} to diagonal form.

We therefore
define the modified intertwining operator $\mathcal A_{1,\ell}$ by  
\begin{equation}\label{calA1ell}
\mathcal A_{1,\ell}
: = A_{1,\ell} Q_{\big| \widetilde Z^{p,q}}:\widetilde Z^{p,q}\to V_{1,\ell}^{p,q}\ .
\end{equation}

By
\eqref{formula*},  Lemma \ref{D-e}  and  Lemma \ref{G-lemma}, we have  
\begin{equation}\label{calA1}
\Delta_k\mathcal A_{1,\ell}=\mathcal A_{1,\ell}\Big
(\Delta_h-T^2+\ell(n-k+\ell)+\begin{pmatrix} 
m+\Gamma & 0\cr
0 & m-\Gamma
\end{pmatrix}\Big).
\end{equation}
This suggests to further introduce the operators $\cA^{\pm}_{1,\ell}$ acting by 
\begin{equation}\label{A1pm}
\cA^+_{1,\ell}\xi=\mathcal A_{1,\ell}\begin{pmatrix} \xi \\ 0
   \end{pmatrix}=A_{1,\ell}Q^+\xi\ , \quad \cA^-_{1,\ell}\eta=\mathcal A_{1,\ell}\begin{pmatrix} 0 \\ \eta
   \end{pmatrix}=A_{1,\ell}Q^-\eta\ ,
\end{equation}
with $Q^\pm$ as in \eqref{Q_pm}.
The following proposition is then immediate.
\begin{prop}\label{----}

The space $V_{1,\ell}^{p,q}$
 decomposes as the direct sum 
$$
V_{1,\ell}^{p,q}=\A_{1,\ell}^+(W_0^{p,q}) + \A_{1,\ell}^-(\Xi^{p,q})\ .
$$ 

Moreover, the linear mappings 
$$
\A_{1,\ell}^+: W_0^{p,q}\to \A_{1,\ell}^+(W_0^{p,q})  , \quad \A_{1,\ell}^-: \Xi^{p,q}\to \A_{1,\ell}^-(\Xi^{p,q})
$$
are bijective, and the following identities hold, on $W_0^{p,q}$ and $\Xi^{p,q}$ respectively: 
\begin{equation}\label{diagonal}
\begin{aligned}
(\A^+_{1,\ell})\inv\Delta_k    \A^+_{1,\ell}&=
 L-T^2 +i(q-p)T +\ell(n-k+\ell)+ m \\
&\qquad\qquad\qquad\qquad+\sqrt{L-T^2 +i(q-p)T +m^2}\ , \\
(\A^-_{1,\ell})\inv \Delta_k\A^-_{1,\ell} &=  
 L-T^2 +i(q-p)T +\ell(n-k+\ell)+ m \\
&\qquad\qquad\qquad\qquad-\sqrt{L-T^2 +i(q-p)T +m^2} \ .
\end{aligned}
\end{equation}
\end{prop}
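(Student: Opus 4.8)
The plan is to assemble Proposition~\ref{----} from the pieces already in place, the main point being to pass from the matrix identity \eqref{calA1} to the scalar identities \eqref{diagonal} and to verify that the two constituent maps $\A_{1,\ell}^\pm$ are injective on their respective domains. First I would recall that by Lemma~\ref{Xi} we have $Q(\widetilde Z^{p,q})$ mapping onto $Z^{p,q}$ under the projection $\mathrm{diag}(I-C_p,I-\bar C_q)$, and that $A_{1,\ell}$ only ``sees'' the components of $\xi,\eta$ modulo $\ker\Box$ (resp. $\ker\Boxbar$) because of the factorization $A_{1,\ell}=\Phi e(d\theta)^\ell(\de\ \bar\de)$ together with \eqref{factorde1}-type identities and the injectivity statements of Corollary~\ref{cor5.7}. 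Hence $\mathcal A_{1,\ell}=A_{1,\ell}Q$ restricted to $\widetilde Z^{p,q}=W_0^{p,q}\times\Xi^{p,q}$ is still onto $V_{1,\ell}^{p,q}$, and it is injective: if $\mathcal A_{1,\ell}(\xi,\eta)=0$ then, applying $\de^*$ and $\bar\de^*$ (using Lemma~\ref{de*} to compute these on $e(d\theta)^\ell\de(\cdot)$ and $e(d\theta)^\ell\bar\de(\cdot)$) together with the injectivity of $\Box$ on $X^{p,q}$ and of $\Boxbar$ on $Y^{p,q}$, one forces the image under $\mathrm{diag}(I-C_p,I-\bar C_q)Q$ to vanish, and then Lemma~\ref{newpar}(i)–(ii) forces $(\xi,\eta)=0$. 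This gives the direct sum decomposition $V_{1,\ell}^{p,q}=\A_{1,\ell}^+(W_0^{p,q})+\A_{1,\ell}^-(\Xi^{p,q})$ with each summand ``freely parametrized''.

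Next I would derive \eqref{calA1} explicitly. Starting from \eqref{i1} and \eqref{formula*}, the action of $D_k$ on $W_1^{p,q}$ is $(\de\ \bar\de)\big[(\Delta_H-T^2)I-iT\inv G\big]$ with $G$ as in \eqref{G-def}; composing with $e(d\theta)^\ell$ and using Lemma~\ref{D-e}(ii) picks up the scalar shift $\ell(n-k+\ell)$; and applying $\Phi$ (which commutes with everything relevant here, since it only appends a vertical component and the commutation relations \eqref{commutations} control how $d_H^*$ interacts with $\Box,\Boxbar$) turns $D_k$ back into $\Delta_k$. Then Lemma~\ref{G-lemma} diagonalizes $-iT\inv G$ as $Q\,\mathrm{diag}(m+\Gamma,m-\Gamma)\,Q\inv$, which upon conjugating by $Q$ yields exactly \eqref{calA1}. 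Restricting \eqref{calA1} to the first factor $(\xi,0)$ and the second factor $(0,\eta)$ and using the definitions \eqref{A1pm} of $\cA^\pm_{1,\ell}$ gives
\begin{equation*}
\Delta_k\cA^\pm_{1,\ell}=\cA^\pm_{1,\ell}\big(\Delta_H-T^2+\ell(n-k+\ell)+m\pm\Gamma\big)\ ,
\end{equation*}
and now invoking \eqref{1.10} to replace $\Delta_H$ by the scalar operator $L+i(q-p)T$ on $(p,q)$-forms, and recalling $\Gamma=\sqrt{\Delta_H-T^2+m^2}=\sqrt{L-T^2+i(q-p)T+m^2}$ from \eqref{m-Gamma}, produces precisely the two formulas in \eqref{diagonal}.

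One technical point deserving care: $\Gamma$ and the square root in \eqref{diagonal} must be interpreted via the spectral calculus of Lemma~\ref{s3.1}(v), since $\Delta_H-T^2+m^2\ge m^2\ge 1/4>0$ these square roots are well-defined positive operators on the relevant cores, and the manipulations with $Q$, $Q\inv$ in Lemma~\ref{newpar}(i) (which involve $\frac{1}{4iT\Gamma}$) are legitimate on $\S_0$-forms because $T$ is invertible there and $\Gamma$ is bounded below. The entries $Q^\eps_\del=\Gamma+\eps m-\del iT$ preserve $W_0^{p,q}$ by the commutation relations \eqref{commutations} (these show $\de^*,\bar\de^*$ commute appropriately with $\Box,\Boxbar$, hence with $\Gamma$), so $\cA^\pm_{1,\ell}$ genuinely maps into $V_{1,\ell}^{p,q}$.

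The hard part will be the injectivity/bijectivity bookkeeping: one must check that the parameter reduction from $Z^{p,q}$ to $\widetilde Z^{p,q}$ carried out via $Q$ does not collapse any part of $V_{1,\ell}^{p,q}$ and that the ``eigenvalue'' splitting $Q=(Q^+|Q^-)$ is genuinely a direct-sum splitting at the level of images (not just formally at the level of $2\times 2$ symbols) — this is where Lemma~\ref{newpar}(ii), Lemma~\ref{Xi}, and the behavior of $C_p,\bar C_q$ (via Lemma~\ref{s4.8} and Remark~\ref{projpar}) all have to be combined carefully, keeping separate track of the degenerate cases $p=0$, $q=0$, where $X^{p,q}$ or $Y^{p,q}$ is a proper subspace of $W_0^{p,q}$.
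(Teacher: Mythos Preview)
Your proposal is correct and follows essentially the same route as the paper. The paper declares the proposition ``immediate'' from \eqref{calA1} and the definitions \eqref{A1pm}, and you have correctly unpacked what that means: the intertwining identities \eqref{diagonal} drop out of \eqref{calA1} by restricting to the coordinate factors and rewriting $\Delta_H=L+i(q-p)T$ via \eqref{1.10} and $\Gamma$ via \eqref{m-Gamma}; the surjectivity of $\mathcal A_{1,\ell}$ onto $V_{1,\ell}^{p,q}$ comes from Lemma~\ref{Xi} combined with Corollary~\ref{cor5.7}; and the injectivity (hence the directness of the sum and the bijectivity of $\cA_{1,\ell}^\pm$ onto their images) reduces to showing that $\mathrm{diag}(I-C_p,I-\bar C_q)\circ Q$ is injective on $\widetilde Z^{p,q}$, which is exactly the bookkeeping you flag, handled via Lemma~\ref{newpar}(i)--(ii) and Remark~\ref{projpar} (the cases $p=0$ or $q=0$ being where Lemma~\ref{newpar}(ii) is needed).
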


Define
\begin{equation}\label{V1pm-ell}
V_{1,\ell}^{p,q,+} = \A_{1,\ell}^+(W_0^{p,q})\ , \quad V_{1,\ell}^{p,q,-} = \A_{1,\ell}^-(\Xi^{p,q}).
\end{equation}

It should be stressed that up to this point we have not yet shown that
the subspaces 
$V_{1,\ell}^{p,q,+}$ and $V_{1,\ell}^{p,q,-}$ 
 are mutually orthogonal.
 This fact will be a consequence of the
 analysis of the intertwining 
operators of the next section, see Lemma \ref{new-matrix-R}.

\bigskip

\setcounter{equation}{0}
\section{Unitary intertwining operators and projections}\label{intws}

The intertwining operators for $\Delta_k$ that we have defined in the
previous section where non-unitary and unbounded. In order to verify
that the forms to which $\Delta_k,$  
when restricted to the subspaces $V_0^{p,q}, V_{1,\ell}^{p,q,\pm}$ and
$V_{2,\ell}^{p,q},$ had been reduced on the corresponding parameter
spaces by means of the formulas \eqref{j=0}, \eqref{diagonal}  and
\eqref{j=2}  are indeed describing the  spectral theory of $\Delta$ on
these subspaces, we need to replace the previous intertwining
operators by unitary ones. Our next tasks will therefore be the
following ones:  

\medskip
\bee
\item replace these intertwining operators with unitary ones; 
\item determine the orthogonal projections from $L^2 \Lambda^k$ onto  
$\mathcal V^{p,q}_0, \mathcal V_{1,\ell}^{p,q,\pm}$ and  $\mathcal
V_{2,\ell}^{p,q},$ the $L^2$-closures of the invariant subspaces 
$V_0^{p,q}, V_{1,\ell}^{p,q,\pm}$ and $V_{2,\ell}^{p,q}.$
\ee

These two tasks can be accomplished simultaneously by making use of
the polar 
decomposition of the intertwining operators.

We shall repeatedly use the following basic fact from spectral theory
(compare \cite{RS} for the case $H=K$).  
\begin{prop}\label{rs}
Let $H,K$ be Hilbert spaces and $A: \dom A \subset
H\rightarrow K$ be a densely defined, closed 
operator.
Then there exist  a positive self-adjoint operator $|A|:\, \dom
A\subset H \rightarrow H,$  with $\dom |A|=\dom A,$ and a partial
isometry $U:\, H\rightarrow K$ with  
 $\ker U=\ker A$ and $\range U=\overline{\range A},$ so that $A=U|A|.$
 $|A|$ and $U$ are uniquely determined by these properties together
 with the additional condition $\ker |A|=\ker A.$  

Moreover, $|A|=\sqrt{A^*A}$, 
 $U^*U$ is the orthogonal projection from $H$ onto
$(\ker A)^\perp= \overline{\range A^*}$, and $UU^*$ is the
orthogonal projection from $K$ onto $\overline{\range A}=(\ker
A^*)^\perp$.
\end{prop}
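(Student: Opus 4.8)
The plan is to construct $|A|$ from the spectral theorem applied to $A^{*}A$, and then to build $U$ as the natural isometry that sends $|A|x$ to $Ax$. First I would invoke von Neumann's theorem: since $A$ is closed and densely defined, $A^{*}A$ is a non-negative self-adjoint operator on $H$ and $\dom(A^{*}A)$ is a core for $A$. Applying the Borel functional calculus to $A^{*}A$, set $|A|:=(A^{*}A)^{1/2}$, a non-negative self-adjoint operator; $\dom(A^{*}A)$ is a core for $|A|$ as well. For $x\in\dom(A^{*}A)$ one has $\||A|x\|^{2}=\langle A^{*}Ax,x\rangle=\|Ax\|^{2}$, and since $\dom(A^{*}A)$ is a core for both $A$ and $|A|$ while the two graph norms agree on it, it follows that $\dom|A|=\dom A$ and $\||A|x\|=\|Ax\|$ for every $x$ in this common domain. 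In particular $\ker|A|=\ker A$, and $\overline{\range|A|}=(\ker|A|)^{\perp}=(\ker A)^{\perp}$.

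Next I would define $U_{0}$ on $\range|A|$ by $U_{0}(|A|x):=Ax$; this is well defined and isometric by the previous step, hence extends uniquely to an isometry of $\overline{\range|A|}$ onto $\overline{\range A}$, which I then extend to all of $H$ by letting $U$ vanish on $(\range|A|)^{\perp}=\ker A$. Then $U$ is a partial isometry with $\ker U=\ker A$ and $\range U=\overline{\range A}$, and $U|A|=A$ on $\dom A$. The standard facts about partial isometries give that $U^{*}U$ is the orthogonal projection onto the initial space $(\ker A)^{\perp}$ and $UU^{*}$ the orthogonal projection onto the final space $\overline{\range A}$; since $A$ is closed and densely defined, $(\ker A)^{\perp}=\overline{\range A^{*}}$ and $\overline{\range A}=(\ker A^{*})^{\perp}$, which yields the last assertions, while $|A|=\sqrt{A^{*}A}$ holds by construction.

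For uniqueness, suppose $A=VB$ with $B\ge 0$ self-adjoint, $\dom B=\dom A$, $\ker B=\ker A$, and $V$ a partial isometry with $\ker V=\ker A$ and $\range V=\overline{\range A}$. The initial space of $V$ is $(\ker A)^{\perp}=(\ker B)^{\perp}=\overline{\range B}$, so $V$ is isometric on $\range B$; hence for $x,y\in\dom B$ one gets $\langle Bx,By\rangle=\langle VBx,VBy\rangle=\langle Ax,Ay\rangle$. Taking $x\in\dom(A^{*}A)$ this reads $\langle Bx,By\rangle=\langle A^{*}Ax,y\rangle$ for all $y\in\dom B$, whence $Bx\in\dom B$ and $B^{2}x=A^{*}Ax$; thus $A^{*}A\subseteq B^{2}$, and since both operators are self-adjoint, $B^{2}=A^{*}A$, so $B=(A^{*}A)^{1/2}=|A|$ by uniqueness of the non-negative square root. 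Then $V|A|=A=U|A|$ forces $V=U$ on $\range|A|$, hence on $\overline{\range|A|}=(\ker A)^{\perp}$, and both vanish on $\ker A$, so $V=U$.

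The main obstacle is really the first paragraph: the identification $\dom|A|=\dom A$ together with the isometry $\||A|x\|=\|Ax\|$ rests on von Neumann's theorem that $\dom(A^{*}A)$ is a \emph{core} for $A$ (not merely dense in $\dom A$) combined with the spectral-theoretic fact that $\dom(A^{*}A)$ is a core for $(A^{*}A)^{1/2}$; once these are secured the rest is routine partial-isometry bookkeeping. A secondary subtlety, in the uniqueness argument, is handling the product $VB$ with $B$ unbounded, which is why I compute with the inner products $\langle Bx,By\rangle$ rather than manipulating $(VB)^{*}$ directly.
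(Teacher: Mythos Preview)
Your proof is correct and follows the standard route to the polar decomposition (von Neumann's theorem on $A^*A$, functional calculus for $|A|=(A^*A)^{1/2}$, then the isometric extension of $|A|x\mapsto Ax$). The paper, however, does not prove this proposition at all: it is stated as a ``basic fact from spectral theory'' with a reference to Reed--Simon for the case $H=K$, and is used as a black box. So there is no paper proof to compare against; your argument simply supplies what the paper takes for granted.
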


In order to pass from a possibly unbounded intertwining operator to a
unitary one, we also need the following general principle.

\begin{prop}\label{detlef}
Let $H_1,H_2$ be Hilbert spaces and let $\cD_1\subset H_1$, 
$\cD_2\subset H_2$ be dense subspaces.  Assume that for $j=1,2$,
 $S_j:\dom S_j\subset H_j\rightarrow H_j$ is a  self-adjoint operator
 on $H_j$ for which $\cD_j$ is a core such that $S_j(\cD_j)\subset
 \cD_j.$  Moreover, let $A: \dom A\subset  H_1\to H_2$ be a 
closed operator  such that the following properties hold true:

 \begin{itemize}
\item[(i)]  $\cD_1\subset\dom A$  and $A(\cD_1)\subseteq\cD_2;$ 
\item[(ii)] $A$ intertwines $S_1$ and $S_2$ on the core $\cD_1,$ i.e., 
\begin{equation}\label{str-intertwines1}
AS_1\xi=
S_2 A\xi\qquad\text{for all\ }\xi\in\cD_1\, .
\end{equation}
\end{itemize}
Consider the polar decomposition $A=U|A|$
from Proposition\ref{rs}, where  $|A|=\sqrt{A^*A},$  and where $U:H_1\rightarrow H_2$ is a partial isometry, and  assume furthermore that $\cD_1\subset \dom |A|,$ and that 
 \begin{itemize}
\item[(iii)] $|A|(\cD_1)=\cD_1;$  
\item[(iv)] the  commutation  relation
\begin{equation}\label{str-intertwines3}
S_1|A|\xi=|A| S_1\xi \qquad\text{for all\ }\xi\in\cD_1
\end{equation}
holds true on the core $\cD_1.$
 \end{itemize}
Then, also $U$ intertwines $S_1$ and $S_2$ on the core $\cD_1,$ i.e., 
 $U(\cD_1)=A(\cD_1)\subset \cD_2, $ and
\begin{equation}\label{str-intertwines4}
US_1 \xi =S_2U\xi \qquad\text{for all\ }\xi\in\cD_1\, .
\end{equation}

Moreover, we have $\overline{\range A}=\overline{A(\cD_1)}=U(H_1),$ $\ker A=\ker |A|=\ker U,$ 
and $P:=UU^*$ is the orthogonal projection  from $H_2$ onto $\overline{A(\cD_1)}.$
\medskip 

Let us finally denote by $S_2^r={S_2}_{\big|_{\overline{A(\cD_1)}}}$ the restriction of  $S_2$ to $\overline{A(\cD_1)}$, with domain $\dom S_2^r:=\dom S_2\cap \overline{A(\cD_1)}.$ 
If we assume in addition that
 \begin{itemize}
\item[(v)] $\ker |A|=\{0\}\,;$
\item[(vi)]  $(I-iS_1)^{-1}(\cD_1)\subseteq \cD_1\, ;$
\item[(vii)] $P(\cD_2)=A(\cD_1)\, ,$
 \end{itemize}
 then $U$ is injective, and we even have that $U(\dom S_1)=\dom S_2^r,$ and
  $$
 S_2^r=US_1U^{-1} \quad \mbox{on}\quadÊ\dom S_2^r.
 $$
\end{prop}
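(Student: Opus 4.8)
The strategy is to first establish the "easy" part (the intertwining for $U$ and the projection formula) directly from the polar decomposition and the hypotheses (i)--(iv), and then use (v)--(vii) to upgrade this to the full unitary equivalence on domains. First I would observe that $|A|$ is a non-negative self-adjoint operator on $H_1$ with $\cD_1$ a core (this needs a small argument: $|A|$ restricted to $\cD_1$ is essentially self-adjoint because $A$ is closed, $\cD_1 \subset \dom|A| = \dom A$, and $|A|(\cD_1) = \cD_1$ by (iii); alternatively one checks the deficiency-index/range condition). Since $S_1$ commutes with $|A|$ on the core $\cD_1$ by (iv), and both are essentially self-adjoint there, their spectral projections commute; hence $S_1$ commutes with every bounded Borel function of $|A|$. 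Applying this to the functions $f_\varepsilon(t) = t(t+\varepsilon)^{-1}$ and letting $\varepsilon \to 0$ (using (v), so that $|A|$ is injective and $f_\varepsilon \to 1$ strongly on $\overline{\range|A|} = H_1$) shows that $U^*U = I$, i.e. $U$ is an isometry; combined with $U = A|A|^{-1}$ on $\cD_1$ (valid since $|A|(\cD_1)=\cD_1$) and (ii), one gets $US_1\xi = A|A|^{-1}S_1\xi = AS_1|A|^{-1}\xi = S_2A|A|^{-1}\xi = S_2U\xi$ for $\xi \in \cD_1$, which is \eqref{str-intertwines4}; and then $U(\cD_1) = A(\cD_1) \subset \cD_2$, $\ker A = \ker|A| = \ker U$, $\overline{\range A} = U(H_1)$, and $P = UU^*$ is the orthogonal projection onto $\overline{\range A} = \overline{A(\cD_1)}$ — all of this is Proposition \ref{rs} together with $U^*U = I$.

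Next, for the domain identity, the plan is to show $U(\dom S_1) = \dom S_2^r$ and $S_2^r = US_1U^{-1}$ on that domain. One inclusion, $US_1 U^{-1} \subseteq S_2^r$, follows from \eqref{str-intertwines4} once we know $U(\dom S_1) \subseteq \dom S_2^r$: indeed by (vi), $(I - iS_1)^{-1}(\cD_1) \subseteq \cD_1$, so for $\xi \in \dom S_1$ write $\xi = (I-iS_1)^{-1}\zeta$, approximate $\zeta$ by $\zeta_m \in \cD_1$, set $\xi_m = (I-iS_1)^{-1}\zeta_m \in \cD_1$; then $\xi_m \to \xi$ and $S_1\xi_m \to S_1\xi$, and since $U$ is bounded with $US_1\xi_m = S_2U\xi_m$, the operator $S_2$ being closed gives $U\xi \in \dom S_2$ with $S_2 U\xi = US_1\xi$; as $U\xi \in U(H_1) = \overline{A(\cD_1)}$, in fact $U\xi \in \dom S_2^r$. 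So $U$ maps $\dom S_1$ into $\dom S_2^r$ and intertwines $S_1$ with $S_2^r$ there. For the reverse inclusion, I would consider $S_2^r$ on $\overline{A(\cD_1)}$: since $S_2$ commutes with $P$ (this is where (vii) enters — $P(\cD_2) = A(\cD_1)$ and $S_2(\cD_2) \subseteq \cD_2$ give, via the resolvent and the core property of $\cD_2$, that $P$ commutes with $S_2$), $S_2^r$ is self-adjoint on the Hilbert space $\overline{A(\cD_1)}$. Then $V := U^{-1}S_2^r U$ (with $U$ now a unitary from $H_1$ onto $\overline{A(\cD_1)}$) is a self-adjoint operator on $H_1$ that agrees with $S_1$ on $\cD_1$; since $\cD_1$ is a core for the self-adjoint $S_1$, we conclude $V = S_1$, giving $U(\dom S_1) = \dom(S_2^r)$ and $S_2^r = US_1U^{-1}$.

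The main obstacle I expect is the careful verification that $P = UU^*$ commutes with $S_2$ — more precisely, turning the hypothesis (vii) $P(\cD_2) = A(\cD_1)$ (together with $S_2(\cD_2) \subseteq \cD_2$ and $\cD_2$ a core for $S_2$) into the statement that $S_2^r$ is self-adjoint, i.e. that $P$ reduces $S_2$. The subtlety is that $P(\cD_2) = A(\cD_1) \subseteq \cD_2$, so $P$ maps the core into itself and commutes with $S_2$ on $\cD_2$; one must then pass from the core to all of $\dom S_2$, e.g. by checking that $(I-iS_2)^{-1}$ commutes with $P$ using that $(I-iS_2)^{-1}(\cD_2)$ is dense in $\dom S_2$ in the graph norm and $P$ is bounded, or by a direct argument with the spectral theorem. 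The rest — the strong-limit argument for $U^*U = I$, the closedness arguments, the core/uniqueness arguments for self-adjoint operators — is routine functional analysis; the bookkeeping linking (i)--(vii) to exactly the right intermediate statements is where care is needed.
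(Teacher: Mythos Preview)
Your overall strategy is sound and reaches the conclusion, but one claim should be excised: the assertion that ``since $S_1$ commutes with $|A|$ on the core $\cD_1$ and both are essentially self-adjoint there, their spectral projections commute'' is \emph{false} in general --- Nelson's example (discussed in Remark~\ref{nelson}, or see Reed--Simon) gives two essentially self-adjoint operators commuting on a common invariant core whose closures do not strongly commute. Fortunately you never actually use this. Your derivation of $US_1\xi=S_2U\xi$ only needs $U|A|=A$ together with (iii) and (iv), exactly as the paper does: from $U|A|S_1\xi=S_2U|A|\xi$ and $|A|S_1=S_1|A|$ on $\cD_1$ one gets $US_1(|A|\xi)=S_2U(|A|\xi)$, and surjectivity of $|A|$ on $\cD_1$ finishes. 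Likewise $U^*U=I$ under (v) is immediate from Proposition~\ref{rs} ($U^*U$ is the projection onto $(\ker|A|)^\perp$), not from any limit of $f_\eps(|A|)$.

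Where your argument genuinely diverges from the paper is in the reverse inclusion $\dom S_2^r\subseteq U(\dom S_1)$. The paper stays concrete: from (vii) and (ii) it gets $S_2P(\cD_2)\subseteq P(\cD_2)$ and (by self-adjointness and density) $S_2(I-P)(\cD_2)\subseteq(I-P)(H_2)$; then for $x\in\dom S_2\cap\range U$ it picks $x_n\in\cD_2$ with $x_n\to x$, $S_2x_n\to S_2x$, uses the orthogonal splitting to extract convergence of $Px_n$ and $S_2(Px_n)$, writes $Px_n=Uy_n$ with $y_n\in\cD_1$, and invokes closedness of $S_1$ to conclude $y\in\dom S_1$. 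Your route is more abstract: push the commutation $PS_2=S_2P$ from $\cD_2$ to all of $\dom S_2$ (this works by the same closedness argument), conclude that $S_2^r$ is self-adjoint on $P(H_2)$, and then observe that $U^{-1}S_2^rU$ is a self-adjoint extension of the essentially self-adjoint $S_1|_{\cD_1}$, hence equals $S_1$. Both are correct; yours packages the endgame cleanly via uniqueness of self-adjoint extensions, at the cost of first upgrading the reducing property of $P$ from the core to the full domain --- a step the paper's direct sequence argument avoids.
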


\proof
Let us  re-write \eqref{str-intertwines1} as
$$
U|A|S_1\xi=
S_2 U|A|\xi\qquad\text{for all\ }\xi\in\cD_1\, . 
$$
Applying \eqref{str-intertwines3}, we find that 
$$
US_1(|A|\xi)=S_2U(|A|\xi)\qquad\text{for all\ }\xi \in\cD_1\, ,
$$
which  implies \eqref{str-intertwines4} because of (iii). Note that $U(\cD_1)=A(\cD_1)\subset \cD_2$ in view of (iii).

Since $A$ is closed and $\cD_1$ is a core for $A,$ we have $\overline{\range A}=\overline{A(\cD_1)},$ and the remaining statements about $\overline{\range A},$ $\ker A$ and $UU^*$ are obvious by Proposition \ref{rs}.

If we assume in addition that (v) and (vi) hold true, then clearly $U$ is injective. Moreover, \eqref{str-intertwines4} implies that
$$
U(I-iS_1)\xi=(I-iS_2)U\xi \qquad\text{for all\ }\xi\in\cD_1.
$$
Since $U(\cD_1)\subseteq \cD_1,$ by (vi) we then obtain that 
$
U(I-iS_1)^{-1}\xi=(I-iS_2)^{-1}U\xi 
$
for every $\xi\in\cD_1,$ hence 
\begin{equation}\label{resolv}
U(I-iS_1)^{-1}=(I-iS_2)^{-1}U
\end{equation}
on $H_1.$  Noticing that $\dom S_j=\range(I-iS_j)^{-1},$ \eqref{resolv} implies that $U(\dom S_1)\subseteq \dom S_2,$ so that $U(\dom S_1)\subseteq \dom S_2^r,$ and that \eqref{str-intertwines4} holds true even for every $\xi\in \dom S_1:$
\medskip

If $x=(I-iS_1)^{-1}y\in \dom S_1$ (with $y\in H_1$), then $Ux=(I-iS_2)^{-1}Uy\in\dom S_2,$ and 
$$
(I-iS_2)Ux=Uy=U(I-iS_1)x.
$$
It therefore only remains to show that $\dom S_2^r\subseteq U(\dom S_1).$

To this end, we first observe that, because of (vii) and \eqref{str-intertwines1},
$$
S_2P(\cD_2)\subseteq S_2A(\cD_1)\subseteq  A S_1(\cD_1)\subseteq A(\cD_1)=P(\cD_2).
$$
Since $S_2$ is self-adjoint, this implies 
$$
S_2P(\cD_2) \subseteq P(\cD_2) \quad \mbox{and} \quad (S_2(I-P)(\cD_2) \subseteq (I-P)(H_2).
$$

Assume  now that $x\in \dom S_2\cap \range U.$ Then $x=Uy$ for some unique $y\in H_1.$ Choose a sequence $\{x_n\}_n$ in $\cD_2$ such that 
$$
x_n\to x \quad \mbox{and} \quad S_2x_n\to S_2 x.
$$
Since  $S_2x_n=S_2(Px_n)+S_2 ((I-P)x_n)),$ where the components in this decomposition lie in mutually orthogonal spaces, we see that there is some $z=Uw\in P(\cD_2)\subset U(H_2)$ such that
$$
Px_n\to x=Uy \quad \mbox{and} \quad S_2(Px_n)\to z=Uw.
$$
We can write $P x_n$ in a unique way as $P x_n=Uy_n,$ with $y_n\in\cD_1,$ since $U(\cD_1)=A(\cD_1)=P(\cD_2).$ Since $U$ is isometric on $H_1,$ we then must have 
that $y_n\to y.$ Moreover, by \eqref{str-intertwines4},
 $US_1y_n=S_2Uy_n=S_2(Px_n)\to z,$  so that $S_1y_n\to w.$ This shows that $y\in\dom S_1,$ hence $x=Uy\in U(\dom S_1).$

\qed

\begin{remark}\label{nelson}
{\rm 

If we do not require that the crucial commutation relation  in (iv) is satisfied, but that in addition to the conditions (i) to (iii) the natural assumptions 
$\cD_2\subset\dom A^*$ and  $A^*(\cD_2)\subset \cD_1$ hold true, then one can conclude that 
\begin{equation}\label{str-intertwines2}
S_1|A|^2\xi=|A|^2 S_1\xi \qquad\text{for all\ }\xi\in\cD_1\, .
\end{equation}

Indeed, then for  $\xi\in\cD_1$ and $\eta\in\cD_2,$ \eqref{str-intertwines1} and (i) imply that 
$
\lan \xi, S_1A^*\eta\ran=\lan\xi,A^* S_2\eta\ran,
$
hence 
$$
S_1A^*\eta=A^*S_2\eta \qquad\text{for all\ }\eta \in\cD_2\, .
$$
Combining this with \eqref{str-intertwines1}, we obtain $
S_1A^*A\xi=A^*AS_1
$
for every $\xi \in\cD_1\,,$  which verifies \eqref{str-intertwines2}. 
\medskip

One might hope that  \eqref{str-intertwines3} would follow from
\eqref{str-intertwines2}  by means of general spectral theory. 
However, this hope is destroyed by a classical example due to Nelson
(cf. \cite{RS}), which  shows that condition \eqref{str-intertwines2}
will in general not suffice to conclude that the operators $S_1$ and
$|A|^2$ commute, in the sense that their respective spectral
resolutions commute. This, however, would be needed in order to derive
\eqref{str-intertwines3}. 

However, in our applications, $S_1$  will turn out to be a scalar
operator on the Heisenberg group, and $A$ a positive square matrix
whose entries are scalar operators too, so that
\eqref{str-intertwines3}  will easily follow from formula
\eqref{cayley-hamilton} for the square root of such a matrix.

}
\end{remark}

In the sequel, by $P_{H_1}:H\to H_1$ we shall denote  the orthogonal
projection from the Hilbert space $H$ onto its closed subspace $H_1.$ 

In our later applications of Proposition \ref{rs}, the next
observation will often facilitate the computation of the corresponding
operators $A^*A.$ 
\begin{lemma}\label{rmrk}
Let
 $H,K$ be Hilbert spaces and  $H_1\subseteq H$ and $K_1\subseteq K$ be
closed subspaces.  Let 
$A: \dom A\subset H \rightarrow K$
be a densely defined, closed operator, and assume that $\cD\subset
\dom A$ is a core for $A.$  Assume furthermore that $\cD_1:=\cD\cap
H_1$ is dense in $H_1$ and  that $\dom A_1:=\dom A\cap H_1$ is mapped
under $A$ into $K_1,$  
  so that  the operator  $A_1:\dom A_1\subset H_1\to K_1,$ given by
  restricting $A$ to $\dom A_1:=\dom A\cap H_1,$  is densely defined
  and closed. 

 Under these conditions,  also  $A^*$ is densely defined, and $\dom
 A^*\cap K_1\subset \dom A_1^*.$ We shall further assume that
 $\cE\subset K$ is a subspace of  $\dom A^*$ such that $A(\cD)\subset
 \cE$ and $A^*(\cE)\subset \cD$ (so that, in particular,  
 $\cE_1:=\cE\cap K_1$ is contained in $\dom A_1^*$).
Then we have 
$$
A_1^* A_1\xi = P_{H_1} A^* A\xi \qquad\text{for all\ }\xi \in\cD_1\, .
$$
In
particular, if we know that $A^*A$ maps $\cD_1$
 into $H_1$, then $A_1^* A_1 \xi= A^* A\xi$  for every $\xi \in\cD_1.$
\end{lemma}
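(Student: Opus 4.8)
The plan is to compute $\langle A_1^*A_1\xi,\eta\rangle$ for $\xi\in\cD_1$ and a dense set of test vectors $\eta\in H_1$, and to identify it with $\langle P_{H_1}A^*A\xi,\eta\rangle$. First I would note that since $A(\cD)\subset\cE\subset\dom A^*$, the vector $A\xi$ lies in $\dom A^*$ for every $\xi\in\cD$; moreover when $\xi\in\cD_1=\cD\cap H_1$ we have $A\xi=A_1\xi\in K_1$ by hypothesis, so $A\xi\in\dom A^*\cap K_1\subset\dom A_1^*$, and the general relation between adjoints of restrictions (which I would record first as a preliminary observation: if $A_1$ is the restriction of $A$ to $\dom A\cap H_1$ with values in $K_1$, then for $v\in\dom A^*\cap K_1$ one has $v\in\dom A_1^*$ with $A_1^*v=P_{H_1}A^*v$, because $\langle A_1\xi,v\rangle=\langle A\xi,v\rangle=\langle\xi,A^*v\rangle=\langle\xi,P_{H_1}A^*v\rangle$ for all $\xi\in\dom A_1$, using $\xi\in H_1$). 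Thus $\xi\in\dom A_1^*A_1$ and
$$
A_1^*A_1\xi=A_1^*(A\xi)=P_{H_1}A^*(A\xi)=P_{H_1}A^*A\xi,
$$
provided $A\xi\in\dom A^*$, which we have just checked. That is exactly the asserted identity.

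The one point that needs a little care is that $\xi\in\cD_1$ really lies in $\dom(A_1^*A_1)$, i.e. that $A_1\xi\in\dom A_1^*$; this is guaranteed by the assumption $A(\cD)\subset\cE$ together with $A(\cD_1)\subset K_1$, as above. I would also remark that the hypothesis $A^*(\cE)\subset\cD$ (and $\cE_1\subset\dom A_1^*$) is what makes the whole computation stay inside the cores and is used implicitly to know that the expressions $A^*A\xi$ and $A_1^*A_1\xi$ are genuinely defined rather than merely formal; it plays no further role in the displayed identity itself.

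For the final sentence: if in addition $A^*A$ maps $\cD_1$ into $H_1$, then $P_{H_1}A^*A\xi=A^*A\xi$ for $\xi\in\cD_1$, and the previous identity becomes $A_1^*A_1\xi=A^*A\xi$. There is no real obstacle here — the only thing to watch is the bookkeeping of domains and the elementary lemma on adjoints of restrictions — so the proof is short. I would write it in roughly the three steps above: (1) the adjoint-of-a-restriction identity $A_1^*v=P_{H_1}A^*v$ for $v\in\dom A^*\cap K_1$; (2) verify $A\xi\in\dom A^*\cap K_1$ for $\xi\in\cD_1$ using the hypotheses $A(\cD_1)\subset K_1$ and $A(\cD)\subset\cE\subset\dom A^*$; (3) combine to get $A_1^*A_1\xi=P_{H_1}A^*A\xi$, and specialize when $A^*A(\cD_1)\subset H_1$.
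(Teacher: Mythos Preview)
Your proof is correct and follows essentially the same approach as the paper: both reduce the claim to the identity $A_1^*v = P_{H_1}A^*v$ for $v$ in a suitable subset of $K_1$ (you use $\dom A^*\cap K_1$, the paper uses $\cE_1$), proved by the same inner-product computation $\langle A_1\xi,v\rangle=\langle A\xi,v\rangle=\langle\xi,A^*v\rangle=\langle\xi,P_{H_1}A^*v\rangle$, and then apply it to $v=A\xi$.
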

\proof
Since $A(\cD_1)\subset \cE_1,$  it suffices to prove that 
$
A_1^*  = P_{H_1} A^*
$
 on $\cE_1.$
But, if $x\in \cD_1\subset\dom A_1, \xi\in\cE_1\subset\dom A^*_1,$ then 
\begin{eqnarray*}
\lan x,A_1^*\xi\ran=\lan A_1x,\xi\ran=\lan A x,\xi\ran
=\lan x,A^*\xi\ran=\lan x, P_{H_1}A^*\xi\ran.
\end{eqnarray*}
This implies that $A_1^*\xi=P_{H_1}A^*\xi,$ since $\cD_1$ is dense in $H_1.$
\qed

\bigskip

\subsection{A unitary intertwining operator for $V_0^{p,q}$}\quad
\medskip

We recall from the preceding  discussion  that the intertwining
operator on  $\mathcal V_{0}^{p,q}$ is $\Phi$, which reduces to the 
identity on this space. 
Hence, this case is trivial.

\medskip

\subsection{Unitary intertwining operators for $V_{1,\ell}^{p,q,\pm}$}\quad
\medskip

Our next goal is to replace the intertwining operators
$\A_{1,\ell}^{\pm}$ from Proposition \ref{----} by unitary
ones. 
Recall from 
Proposition \ref{----} 
and \eqref{A1pm} that 
$$
\A_{1,\ell}^{\pm}=A_{1,\ell}Q^\pm\ ,\qquad \dom \A_{1,\ell}^+=W_0^{p,q}\ ,\quad \dom \A_{1,\ell}^-=\Xi^{p,q}\ ,
$$
 where, according to \eqref{i1}, 

\begin{multline}\label{i4}
A_{1,\ell}
=\bpm e(d\theta)^\ell\de
  &e(d\theta)^\ell\bar\de\\ 
i\ell T\inv e(d\theta)^{\ell-1}\bar\de\de+T\inv e(d\theta)^\ell\Box
&  -i\ell T\inv e(d\theta)^{\ell-1}\de\bar\de + T\inv
e(d\theta)^\ell\barBox\epm\ .
\end{multline}

According to Proposition \ref{detlef}, we seek to define unitary intertwining operators 
$U_{1,\ell}^{\pm}$ by defining 
\begin{equation}\label{u1pm}
U_{1,\ell}^\pm:=U_{1,\ell}^{p,q,\pm}
:=\A_{1,\ell}^{\pm}\Big((\cA_{1,\ell}^\pm)^*\cA_{1,\ell}^\pm\Big)^{-\half},
\end{equation}
which are expected  to be isometries from the closed subspaces 
$\overline{W_0^{p,q}}=\W^{p,q}$, resp. $\overline{\Xi^{p,q}}$, onto
their ranges $\cV_{1,\ell}^{p,q,\pm}.$ Recall, however, that we have
not  shown yet that the latter spaces are mutually orthogonal; this
will in fact follow easily from the subsequent discussions. 

Now, since 
$$
(\cA_{1,\ell}^\pm)^*\cA_{1,\ell}^\pm={Q^\pm}^*(A_{1,\ell}^*A_{1,\ell})Q^\pm,
$$
we shall begin by computing $A_{1,\ell}^*A_{1,\ell}$. 

Subsequently,
we will compute the product $Q^*A_{1,\ell}^*A_{1,\ell}Q$, showing, in
particular, that it is a diagonal matrix. The diagonal terms will give
the explicit forms of $(\cA_{1,\ell}^\pm)^*\cA_{1,\ell}^\pm$, whereas
the vanishing of the off-diagonal terms will prove the orthogonality
of the spaces $\cV_{1,\ell}^{p,q,\pm}$.  Since these computations are
tedious and unenlightening, we shall only state here the relevant
identities, postponing their proofs to the Appendix. 

Let us  set, for $s+j\le n$,
\begin{equation}\label{csj}
c_{s,j}=\frac{j!(n-s)!}{(n-s-j)!}\, .
\end{equation}
\medskip

\begin{lemma}\label{ip4}
We have that $A_{1,\ell}^*A_{1,\ell} = -c_{s+1,\ell}T^{-2}N$, where
\begin{equation}\label{Nmatrix}
N=
\bpm
\Box(\Box-i\ell T -T^2) & \Box\Boxbar \\
\Box\Boxbar & \Boxbar(\Boxbar+i\ell T- T^2) 
\epm\ .
\end{equation}
\end{lemma}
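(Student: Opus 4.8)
The plan is to verify the asserted identity as an identity of operators on the core $W_0^{p,q}\times W_0^{p,q}$, which is legitimate by Lemma \ref{s3.1} since every operator in sight preserves $\S_0$-forms and both sides are closed. First I would pass through $\Phi$ (see \eqref{Phi}), decomposing elements of $(L^2\Lambda^k)_{d^*\cl}$ into the horizontal part and the coefficient of $\theta\wedge(\cdot)$; then \eqref{i1} exhibits $A_{1,\ell}$ as the block column $\bpm R_1\\ R_2\epm$, where, for $(\xi,\eta)\in W_0^{p,q}\times W_0^{p,q}$,
\[
R_1\bpm\xi\\ \eta\epm=e(d\theta)^\ell(\de\xi+\bar\de\eta)\in\S_0\Lambda^k_H,
\qquad R_2\bpm\xi\\ \eta\epm=T\inv d_H^*\,R_1\bpm\xi\\ \eta\epm\in\S_0\Lambda^{k-1}_H .
\]
Since $\theta$ is a unit covector orthogonal to every horizontal form, $A_{1,\ell}^*A_{1,\ell}=R_1^*R_1+R_2^*R_2$, and since $T^*=-T$ while $T\inv$ commutes with $d_H^*,i(d\theta),e(d\theta),\Box,\Boxbar$, one gets $R_2^*R_2=-T^{-2}R_1^*d_Hd_H^*R_1$. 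As $T^{-2}$ commutes with $R_1^*R_1$, this yields
\[
A_{1,\ell}^*A_{1,\ell}=R_1^*R_1-T^{-2}R_1^*d_Hd_H^*R_1=-T^{-2}R_1^*(d_Hd_H^*-T^2)R_1 ,
\]
so it suffices to establish the single identity $R_1^*(d_Hd_H^*-T^2)R_1=c_{s+1,\ell}\,N$.

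Two ingredients then drive the computation. The first is a Lefschetz identity: iterating the commutator \eqref{1.7tris} exactly as in \eqref{iem}, for every $(\tilde p,\tilde q)$-form $\om$ with $\tilde p+\tilde q=s+1$ and $i(d\theta)\om=0$ one gets $i(d\theta)^\ell e(d\theta)^\ell\om=\big(\prod_{j=1}^\ell j(n-s-j)\big)\om=c_{s+1,\ell}\,\om$ (compare \eqref{csj}), all factors being strictly positive because $s+1+\ell\le n$ is exactly the range in which $W_{1,\ell}^{p,q}\neq\{0\}$, by Proposition \ref{non-tr}. Since $\de\xi$ and $\bar\de\eta$ lie in $\ker i(d\theta)$ whenever $\xi,\eta\in W_0^{p,q}$ (as in the proof of Proposition \ref{non-tr}), and on $W_0^{p,q}$ one has $\de^*\de=\Box$, $\bar\de^*\bar\de=\Boxbar$ and $\de^*\bar\de=\bar\de^*\de=0$ (the last two from \eqref{1.6bis}), this immediately gives $R_1^*R_1=c_{s+1,\ell}\bpm\Box&0\\ 0&\Boxbar\epm$, using also that $e(d\theta)^\ell\de\xi$ and $e(d\theta)^\ell\bar\de\eta$ have distinct bidegrees, hence are orthogonal. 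The second ingredient is the bookkeeping: $\de^*,\bar\de^*$ are pushed past $\de,\bar\de,e(d\theta)$ by \eqref{1.6}, \eqref{1.6bis}, \eqref{1.7}, \eqref{1.7bis}, discarding every summand ending in $\de^*\xi,\bar\de^*\xi,\de^*\eta$ or $\bar\de^*\eta$; the box operators are moved through $\de,\bar\de,e(d\theta)$ by \eqref{commutations}, \eqref{box-e}, with $\Box-\Boxbar=i(n-k)T$ from \eqref{box-bar}; and $T,L,\Box,\Boxbar$ are scalar and commute with $i(d\theta),e(d\theta)$ as scalar operators, since $e(d\theta)$ shifts $p$ and $q$ equally.

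To finish, I would write $d_HR_1\bpm\xi\\ \eta\epm=e(d\theta)^\ell(\bar\de\de\xi+\de\bar\de\eta)$ (using $[d_H,e(d\theta)]=0$ and $d_H(\de\xi+\bar\de\eta)=\bar\de\de\xi+\de\bar\de\eta$) and split $d_Hd_H^*=\Delta_H-d_H^*d_H$. The term $R_1^*\Delta_HR_1$ is immediate, since $\Delta_H$ is scalar on each bidegree and commutes through $e(d\theta)^\ell$, so the Lefschetz identity applies. The hard part is $R_1^*d_H^*d_HR_1=(d_HR_1)^*(d_HR_1)$: here $\bar\de\de\xi$ and $\de\bar\de\eta$ do \emph{not} lie in $\ker i(d\theta)$ — indeed $i(d\theta)\bar\de\de\xi=i\Box\xi$ and $i(d\theta)\de\bar\de\eta=-i\Boxbar\eta$, the identity already used in the proof of Proposition \ref{non-tr} — so commuting $i(d\theta)^\ell$ past $e(d\theta)^\ell$ produces a cascade of lower-order correction terms, and the crux is to check, invoking the rules above and the $W_0^{p,q}$-relations, that these recombine so that a single overall factor $c_{s+1,\ell}$ survives and the operator part collapses to exactly the matrix $N$ of \eqref{Nmatrix}. (Alternatively, the whole bookkeeping can be carried out fiberwise on the finite-dimensional operators $\pi_{\la,\sigma}(A_{1,\ell})$, reducing it to linear algebra.) Assembling $R_1^*\Delta_HR_1-(d_HR_1)^*(d_HR_1)-T^2R_1^*R_1=c_{s+1,\ell}N$ and passing to $L^2$-closures then completes the proof; this last calculation is the tedious one the authors defer to the Appendix.
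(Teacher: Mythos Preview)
Your approach is correct and rests on the same ingredients as the paper's proof (the Lefschetz identity of Lemma~\ref{ip2}/\eqref{ipn1}, the computation of $\de^*\bar\de^*i(d\theta)^\ell e(d\theta)^\ell\bar\de\de$ etc.\ in Lemma~\ref{ip3}, and the commutation rules \eqref{1.6}--\eqref{1.7bis}, \eqref{commutations}), but the organization is different and, in fact, somewhat tidier. The paper first expands the second row of $A_{1,\ell}$ via Lemma~\ref{de*} into the form \eqref{i4} and then computes $B_{1,\ell}^*B_{1,\ell}$ entry by entry; this produces many cross terms and requires the coefficient identities $\ell^2 c_{s+1,\ell-1}=c_{s,\ell}-c_{s+1,\ell}$ and $\ell c_{s+1,\ell-1}(n-s-\ell)=c_{s+1,\ell}$ from Lemma~\ref{ip2} to collapse $c_{s,\ell}$ and $c_{s+1,\ell-1}$ back to $c_{s+1,\ell}$. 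Your route via $A_{1,\ell}^*A_{1,\ell}=-T^{-2}R_1^*(d_Hd_H^*-T^2)R_1$ together with $d_Hd_H^*=\Delta_H-d_H^*d_H$ avoids this: the matrix $(d_HR_1)^*(d_HR_1)$ is \emph{exactly} the $2\times2$ array of the four expressions in Lemma~\ref{ip3} with $j=\ell$, each already carrying the factor $c_{s+1,\ell}$, while $R_1^*\Delta_HR_1$ and $T^2R_1^*R_1$ carry the same factor directly from \eqref{ipn1}. Thus no coefficient recombination is needed at the assembly stage --- the ``cascade'' you worry about is entirely internal to the proof of Lemma~\ref{ip3}, and once that lemma is in hand, your three pieces add up to $c_{s+1,\ell}N$ by a three-line check.
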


\begin{lemma}\label{new-matrix-R}
Let $R=-T^{-2}Q^*NQ$ on $(W_0^{p,q})^2$.  Then
$$
R=\begin{pmatrix}R_{11}&0\\0&R_{22}\end{pmatrix}\ ,
$$
where
$$
\begin{aligned}
R_{11}
& =(\Gamma+m)^2\big(\Delta_H+
2m(2m-\ell)\big)+2(\Gamma+m)\big((2m-\ell)\Delta_H-2mT^2\big)\\  
&\qquad\qquad\qquad\qquad\qquad\qquad\qquad\qquad\qquad\qquad
+\Delta_H(\Delta_H-T^2)+2m\ell T^2\ ,
\end{aligned}
$$
maps $W_0^{p,q}$ bijectively onto itself, and
$$
\begin{aligned}
R_{22}&=(\Gamma-m)^2\big(\Delta_H+2m(2m-\ell)\big)
-2(\Gamma-m)\big((2m-\ell)\Delta_H-2mT^2\big) \\
&\qquad\qquad\qquad\qquad\qquad\qquad\qquad\qquad\qquad\qquad
+\Delta_H(\Delta_H-T^2)+2m\ell T^2\ ,
\end{aligned}
$$
maps $ \Xi^{p,q}$ bijectively onto itself, and is zero on
$C_pW_0^{p,q}\oplus \bar C_qW_0^{p,q}$, the orthogonal complement of
$\Xi^{p,q}$ in $W_0^{p,q}$. 
\end{lemma}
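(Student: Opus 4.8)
The plan is to reduce the statement to a computation with commuting scalar operators, and then to read off the mapping properties from Proposition \ref{----}. The starting observation is that everything in sight lives in a single bidegree $(p,q)$: the operators $\Box,\Boxbar,\Delta_H,T$ and $\Gamma=\sqrt{\Delta_H-T^2+m^2}$ that occur in $Q$ and $N$ are all scalar operators on $(p,q)$-forms, i.e.\ functions of the two commuting self-adjoint operators $L$ and $iT$; in particular they commute with one another and are each self-adjoint (recall $iT$ is self-adjoint and $\Delta_H-T^2+m^2\ge m^2>0$, so $\Gamma$ is a well-defined positive self-adjoint operator). Hence $Q^*$ is simply the formal transpose of $Q$, with the entries $Q^\eps_\del$ unchanged, and by Lemma \ref{ip4} one may rewrite $R=-T^{-2}Q^*NQ=c_{s+1,\ell}^{-1}\,Q^*\bigl(A_{1,\ell}^*A_{1,\ell}\bigr)Q$. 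Writing $Q=\bigl(Q^+\mid Q^-\bigr)$ with the columns $Q^\pm$ of \eqref{Q_pm}, the four entries of $Q^*NQ$ are the operators $(Q^\eps)^*NQ^{\eps'}$, and I would expand each of them by brute force, rewriting the products $Q^\eps_\del Q^{\eps'}_{\del'}$ by means of \eqref{Q-iden} and using $\Box-imT=\Boxbar+imT=\frac12\Delta_H$ from \eqref{halfdelta} together with $\Box-\Boxbar=2imT$ from \eqref{box-bar}.

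For the off-diagonal entry I expect a clean collapse. Expanding $(Q^+)^*NQ^-$, substituting the entries $N_{12}=N_{21}=\Box\Boxbar$ from \eqref{Nmatrix}, the products $Q^+_\pm Q^-_\pm$ from \eqref{Q-iden}, and the relations $\Delta_H-2imT=2\Boxbar$, $\Delta_H+2imT=2\Box$, the expression reduces to $2\Box\Boxbar(\Delta_H-2T^2)-2\Box\Boxbar(\Delta_H-2T^2)=0$. This proves that $R$ is diagonal; moreover, the $(1,2)$ entry of $Q^*\bigl(A_{1,\ell}^*A_{1,\ell}\bigr)Q$ equals $(\cA^+_{1,\ell})^*\cA^-_{1,\ell}$, so its vanishing is precisely the orthogonality $\cV_{1,\ell}^{p,q,+}\perp\cV_{1,\ell}^{p,q,-}$ that was left open after Proposition \ref{----}.

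For the diagonal entries one expands $(Q^\pm)^*NQ^\pm$ as the corresponding $2\times1$-by-$1\times2$ product and substitutes the entries of $N$ from \eqref{Nmatrix}; setting $g=\Gamma\pm m$ and collecting the result in powers of $g$, the three combinations $N_{11}+N_{22}-2\Box\Boxbar$, $N_{11}-N_{22}$ and $N_{11}+N_{22}+2\Box\Boxbar$ simplify — using $\Box\Boxbar=\frac14\Delta_H^2+m^2T^2$ and $\Box-\Boxbar=2imT$ — in such a way that the quartic terms $\Gamma^2\Delta_H^2$ cancel and one is left with exactly $-T^2$ times the stated expressions for $R_{11}$, resp.\ $R_{22}$. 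This bookkeeping, though entirely elementary, is long, and it is the part I would relegate to the Appendix; it is also the main obstacle, in the sense that the whole content of the lemma is that systematic use of \eqref{Q-iden}, \eqref{halfdelta} and \eqref{box-bar} forces every higher-order term to cancel and delivers the two normal forms.

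It remains to establish the mapping properties. On $W_0^{p,q}$ one has $\Phi=\mathrm{id}$ and, by \eqref{d0}, $\Delta_k=\Delta_H-T^2$; hence $\Gamma^2=\Delta_k+m^2$ and $\Delta_H=\Delta_k+T^2$ are functions of $\Delta_k$ and $T$, so by Proposition \ref{invariance} they — and therefore the scalar operator $R_{11}$ — map $W_0^{p,q}$ into itself, and likewise $R_{22}$ maps $\Xi^{p,q}$ and its orthogonal complement in $W_0^{p,q}$ into themselves. Injectivity of $R_{11}$ on $W_0^{p,q}$ (and of $R_{22}$ on $\Xi^{p,q}$) is immediate from $R_{11}=c_{s+1,\ell}^{-1}(\cA^+_{1,\ell})^*\cA^+_{1,\ell}\ge0$ (resp.\ the $\cA^-_{1,\ell}$ analogue) together with the injectivity of $\cA^\pm_{1,\ell}$ proved in Proposition \ref{----}; surjectivity onto $W_0^{p,q}$ (resp.\ $\Xi^{p,q}$) then follows because a non-negative injective scalar operator has nowhere-vanishing symbol on the $U(n)$-types occurring in that space, hence an inverse which is again a scalar operator mapping the space to itself (cf.\ Lemma \ref{s3.1} and Corollary \ref{s4.3}). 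Finally, $C_p$ and $\bar C_q$ vanish on $W_0^{p,q}$ unless $p=0$, resp.\ $q=0$, since $\ker\de\cap\ker\de^*=\ker\Box=\{0\}$ for $1\le p\le n-1$ by the discussion preceding Lemma \ref{s4.8}; in the exceptional cases $\Box=0$ on $\range\cC$ (resp.\ $\Boxbar=0$ on $\range\bar\cC$, as in the proof of Lemma \ref{newpar}), and since a factor $\Box$ (resp.\ $\Boxbar$) occurs in both rows of $NQ^-$ on that subspace, the whole of $NQ^-$ — hence $R_{22}$ — vanishes there.
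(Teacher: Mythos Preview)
Your computational strategy for the entries of $R$ is the paper's: a direct expansion of $Q^*NQ$ using \eqref{Q-iden}, \eqref{halfdelta} and \eqref{box-bar}, with the bookkeeping deferred to the Appendix. Nothing to add there.

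There is one genuine error in the last step. Your claim that ``a factor $\Box$ (resp.\ $\Boxbar$) occurs in both rows of $NQ^-$'' is false: the second row of $NQ^-$ is
\[
-\Box\Boxbar\,Q^-_+ \;+\; \Boxbar(\Boxbar+i\ell T-T^2)\,Q^-_-\,,
\]
and the second term carries $\Boxbar$, not $\Box$. On $\range\cC$ (where $\Box=0$) this term does \emph{not} vanish just because $\Box=0$; what kills it is the additional fact $Q^-_-\cC=0$ from Lemma~\ref{newpar}\,(ii), which you allude to but do not actually invoke. The paper takes a different route to the same conclusion: it computes $R_{11}R_{22}=\det R=T^{-4}(\det Q)^2\det N=16(\Delta_H-T^2+m^2)\bigl(\Delta_H-T^2+\ell(2m-\ell)\bigr)\Box\Boxbar$, so that $R_{22}$ factors as a multiple of $\Box\Boxbar\,R_{11}^{-1}$ and $\ker R_{22}=\ker\Box\oplus\ker\Boxbar$ is read off directly.

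Your argument for bijectivity via $R_{11}=c_{s+1,\ell}^{-1}(\cA^+_{1,\ell})^*\cA^+_{1,\ell}$ and the injectivity in Proposition~\ref{----} is valid but more indirect than the paper's. The paper instead establishes the strict scalar lower bound $R_{11}\ge 2m^3(2m-\ell)>0$, which gives invertibility on $\S_0$ at once via Lemma~\ref{s3.1}, and then uses the commutation $\de^*R_{11}=R'_{11}\de^*$ (with $\Delta_H\mapsto\Delta_H-iT$, from \eqref{commutations}) to carry the inverse back into $W_0^{p,q}$; this avoids the appeal to spectral calculus of $\Delta_k$ on an invariant subspace that your ``functions of $\Delta_k$ and $T$'' argument requires.
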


\begin{proof}
The proof of the formulas for the components of $R$ is postponed to
the Appendix. Given these formulas, we prove here the mapping
properties of $R_{11}$ and $R_{22}$. 

On $W_0^{p,q}$, $R_{11}$ acts as a symmetric scalar operator. Since
$\Delta_H=L+i(q-p)T$, $\Gamma$ and $-T^2$ are positive operators, 
we have
$$
\begin{aligned}
R_{11}&\ge (\Gamma+m)^2\big(\Delta_H+2m(2m-\ell)\big)-4m^2T^2+2m\ell T^2\\
&=(\Gamma+m)^2\big(\Delta_H+2m(2m-\ell)\big)-2m(2m-\ell)T^2\\
&\ge2m^3(2m-\ell)>0\ .
\end{aligned}
$$

It follows that the operators $(R_{11})_{\la,\sigma}$ in \eqref{pi(B)}
also satisfy the same inequality from below, and hence are
invertible. Applying Lemma \ref{s3.1} (ii), we obtain that $R_{11}$
admits an inverse $R_{11}\inv:\S_0\longrightarrow \S_0$. 

We tensor with $\Lambda^{p,q}$ and restrict $R_{11}\inv$ to
$W_0^{p,q}$.  By \eqref{commutations}, the composition
  $\de^*R_{11}$ can be expressed as $R'_{11}\de^*$, with $R'_{11}$
  differing from $R_{11}$ in that $\Delta_H$ is replaced by
  $\Delta_H-iT$ (also in the expression of $\Gamma$), and similarly
  for $\bar\de^*R_{11}$, $\de^*R_{11}\inv$ and $\bar\de^*R_{11}\inv$. 
Therefore, $R_{11}$ maps  $W_0^{p,q}$ bijectively onto itself.

As to $R_{22}$, we first observe that
\begin{equation}\label{detR}
\begin{aligned}
R_{11}R_{22}&=\det R\\
&=T^{-4}(\det Q)^2\det N\\
&=T^{-4}(-4iT\Gamma)^2(-T^2)\big(\Delta_H-T^2+\ell(2m-\ell)\big)\Box\Boxbar\\
&=16 (\Delta_H-T^2+m^2) 
\big(\Delta_H-T^2+\ell(2m-\ell)\big)\Box\Boxbar\ ,
\end{aligned}
\end{equation}
so that 
\begin{equation}\label{R22-R11inv}
R_{22}=16 (\Delta_H-T^2+m^2)
\big(\Delta_H-T^2+\ell(2m-\ell)\big)\Box\Boxbar R_{11}\inv\ . 
\end{equation}

Moreover, by the injectivity of $R_{11}$,
$$
\ker R_{22}=\ker R_{11}R_{22}=\ker\Box\oplus \ker\Boxbar\ .
$$

In order to repeat the same argument used above for $R_{11}$, we start
from the operator $\widetilde
R_{22}=R_{22}+\del_{p,0}\cC+\del_{q,0}\bar\cC$ (with $\del$ denoting
the Kronecker symbol) acting on scalar-valued functions. By
\eqref{R22-R11inv}, $\widetilde R_{22}$ in invertible on $\S_0$ and,
after tensoring and restricting, it is also invertible on
$W_0^{p,q}$. For $\xi\in \Xi^{p,q}$, 
$$
\widetilde R_{22}\inv R_{22}\xi=\widetilde R_{22}\inv\widetilde R_{22}\xi=\xi\ .
$$

The conclusion now follows at once.
\end{proof}

\begin{cor}\label{V1-orth}
We have  that
$$
\cA_{1,\ell}^*\cA_{1,\ell}=c_{s+1,\ell}  R_{|_{\widetilde Z^{p,q}}}\ .
$$

In particular,
$$
(\cA_{1,\ell}^+)^*\cA_{1,\ell}^+=c_{s+1,\ell} R_{11}\ ,\qquad 
(\cA_{1,\ell}^-)^*\cA_{1,\ell}^-=c_{s+1,\ell}{R_{22}}_{ |_{\Xi^{p,q}}}\ ,
$$
and the subspaces $\V_{1,\ell}^{p,q,+}$ and $\V_{1,\ell}^{p,q,-}$ are orthogonal.
\end{cor}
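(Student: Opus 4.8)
The plan is to assemble the corollary purely from the identities already established, the only genuine computational content being the matrix identity $A_{1,\ell}^*A_{1,\ell}=-c_{s+1,\ell}T^{-2}N$ of Lemma \ref{ip4} and the diagonalization $R=-T^{-2}Q^*NQ=\mathrm{diag}(R_{11},R_{22})$ of Lemma \ref{new-matrix-R}, both of which may be invoked directly. First I would recall from \eqref{calA1ell} that $\cA_{1,\ell}=A_{1,\ell}Q$ as an operator on $\widetilde Z^{p,q}$, so that
\[
\cA_{1,\ell}^*\cA_{1,\ell}=Q^*\bigl(A_{1,\ell}^*A_{1,\ell}\bigr)Q
=Q^*\bigl(-c_{s+1,\ell}T^{-2}N\bigr)Q
=c_{s+1,\ell}\bigl(-T^{-2}Q^*NQ\bigr)=c_{s+1,\ell}\,R,
\]
where in the middle step one uses that $T^{-2}$ is a scalar operator commuting with everything in sight (it is the functional-calculus operator $\pi_\la(T)=i\la$), and the last step is the definition of $R$ in Lemma \ref{new-matrix-R}. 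Restricting to $\widetilde Z^{p,q}=W_0^{p,q}\times\Xi^{p,q}$ gives the first displayed formula.

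Next I would read off the block structure. Since $R=\begin{pmatrix}R_{11}&0\\0&R_{22}\end{pmatrix}$ is diagonal in the $W_0^{p,q}\times W_0^{p,q}$ decomposition, and since by \eqref{A1pm} the operators $\cA_{1,\ell}^+$ and $\cA_{1,\ell}^-$ are precisely the restrictions of $\cA_{1,\ell}$ to the first factor $W_0^{p,q}\times\{0\}$ and the second factor $\{0\}\times\Xi^{p,q}$ respectively, we immediately obtain
\[
(\cA_{1,\ell}^+)^*\cA_{1,\ell}^+=c_{s+1,\ell}R_{11}\ ,\qquad
(\cA_{1,\ell}^-)^*\cA_{1,\ell}^-=c_{s+1,\ell}\,{R_{22}}_{|_{\Xi^{p,q}}}\ ,
\]
the restriction to $\Xi^{p,q}$ in the second identity being exactly the domain on which $R_{22}$ is bijective according to Lemma \ref{new-matrix-R} (it vanishes on the orthogonal complement $C_pW_0^{p,q}\oplus\bar C_qW_0^{p,q}$).

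Finally, for the orthogonality of $\cV_{1,\ell}^{p,q,+}$ and $\cV_{1,\ell}^{p,q,-}$ I would argue as follows. For $\xi\in W_0^{p,q}$ and $\eta\in\Xi^{p,q}$, the inner product $\langle\cA_{1,\ell}^+\xi,\cA_{1,\ell}^-\eta\rangle$ equals the $(1,2)$-entry of the operator matrix $\cA_{1,\ell}^*\cA_{1,\ell}$ applied to the pair $(0,\eta)$ and paired against $(\xi,0)$; since $\cA_{1,\ell}^*\cA_{1,\ell}=c_{s+1,\ell}R$ and $R$ is block-diagonal, this off-diagonal contribution is zero. Hence $V_{1,\ell}^{p,q,+}\perp V_{1,\ell}^{p,q,-}$, and passing to $L^2$-closures gives $\cV_{1,\ell}^{p,q,+}\perp\cV_{1,\ell}^{p,q,-}$. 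The main obstacle here is not in the corollary itself but is the deferred verification of Lemmas \ref{ip4} and \ref{new-matrix-R} in the Appendix — in particular checking that the cross terms in $Q^*NQ$ cancel, which relies on the commutation relations \eqref{commutations}, \eqref{box-e} and the algebraic identities \eqref{Q-iden}; granting those, the corollary is a formal consequence.
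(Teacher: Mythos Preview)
Your proof is correct and follows essentially the same approach as the paper: both combine $\cA_{1,\ell}=A_{1,\ell}Q$ with Lemma~\ref{ip4} and the diagonalization in Lemma~\ref{new-matrix-R} to obtain $\cA_{1,\ell}^*\cA_{1,\ell}=c_{s+1,\ell}R$, and then read off the block identities and orthogonality from $R$ being diagonal. The only point the paper makes explicit that you leave implicit is the appeal to Lemma~\ref{rmrk}: since $\cA_{1,\ell}$ is defined as the \emph{restriction} of $A_{1,\ell}Q$ to $\widetilde Z^{p,q}$, one must check that $R$ maps $\widetilde Z^{p,q}$ into itself (immediate from Lemma~\ref{new-matrix-R}, since $R_{11}$ preserves $W_0^{p,q}$ and $R_{22}$ preserves $\Xi^{p,q}$) to conclude that the adjoint of the restricted operator agrees with the restriction of $Q^*A_{1,\ell}^*A_{1,\ell}Q$.
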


\proof
Obviously, $R$ maps the subspace $\widetilde Z^{p,q}$ of $(W_0^{p,q})^2$ into itself, so that the identities follow from Lemma \ref{new-matrix-R} and Lemma \ref{rmrk}.
  The first statements are obvious. And, 
since the matrix $Q^*NQ$ is diagonal, so is 
$\A_{1,\ell}^*\A_{1,\ell}$.  Thus, the map $\A_{1,\ell}$ preserves the
orthogonality of the coordinate subspaces $W_0^{p,q}\times\{0\}$ and 
$\{0\}\times \Xi_0^{p,q}$.
\endproof

Let us finally compute $U_{1,\ell}^\pm$ more explicitly. To this end, notice that if we combine the column-vectors of operators $U_{1,\ell}^\pm$ to form a  square matrix, 
then
\begin{equation}\label{u1lpmp}
U_{1,\ell}:=\Big (U_{1,\ell}^+\quad U_{1,\ell}^-\Big)=\cA_{1,\ell}(\cA_{1,\ell}^*\cA_{1,\ell})^{-\half}.
\end{equation}

  Recall that we have set $s=p+q$ and $m=(n-s)/2$.

\begin{prop}\label{U1ell-prop}
We have that
\begin{equation}\label{U1ell-eq}
U_{1,\ell} = c_{s+1,\ell}^{-\half} \, e(d\theta)^{\ell-1}
\bpm S_{11} & S_{12}\\ S_{21} & S_{22} \epm 
\bpm
\Sigma_{11}& 0\\ 0&\Sigma_{22}
\epm\ ,
\end{equation}
where 
\begin{equation}\label{S-components}
\begin{aligned}
S_{11}&=e(d\theta)(-\de Q^\Piu_-  +\bar\de Q^\Piu_+)\\
S_{12}&=e(d\theta)(-\de Q^\Meno_+  +\bar\de Q^\Meno_-) \\
S_{21}&=\ell(\bar\de\de-\de\bar\de)-ie(d\theta)\big[\Delta_H
+(\Gamma+m)(2m-\ell)\big] \\ 
S_{22}&=
-\ell(\bar\de\de-\de\bar\de)+ie(d\theta)\big[\Delta_H -(\Gamma-m)(2m-\ell)\big]
\end{aligned}
\end{equation}
and $\Sigma_{11},\Sigma_{22}$ are given by 
\begin{equation}\label{Sigma-components}
\begin{aligned}
\Sigma_{11}=R_{11}^{-\half}&=\Big[(\Gamma+m)^2\big(\Delta_H+
2m(2m-\ell)\big)+2(\Gamma+m)\big((2m-\ell)\Delta_H-2mT^2\big)\\  
&\qquad+\Delta_H(\Delta_H-T^2)+2m\ell T^2\Big]^{-\half}\\
\Sigma_{22}=R_{22}^{-\half}&=\Big[(\Gamma-m)^2\big(\Delta_H+2m(2m-\ell)\big)
-2(\Gamma-m)\big((2m-\ell)\Delta_H-2mT^2\big) \\
&\qquad+\Delta_H(\Delta_H-T^2)+2m\ell T^2\Big]^{-\half} \ .
\end{aligned}
\end{equation}
\end{prop}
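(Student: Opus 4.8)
The plan is to read $U_{1,\ell}$ off from its definition via polar decomposition, reducing the statement to two ingredients already in hand: the identification of $\cA_{1,\ell}^*\cA_{1,\ell}$ in Corollary \ref{V1-orth}, and a direct evaluation of the product $\cA_{1,\ell}=A_{1,\ell}Q$. First I would combine the two column operators $U_{1,\ell}^\pm$ of \eqref{u1pm} into the single matrix \eqref{u1lpmp}, writing $U_{1,\ell}=\cA_{1,\ell}\bigl(\cA_{1,\ell}^*\cA_{1,\ell}\bigr)^{-\half}$ on $\widetilde Z^{p,q}=W_0^{p,q}\times\Xi^{p,q}$. By Corollary \ref{V1-orth} and Lemma \ref{new-matrix-R}, $\cA_{1,\ell}^*\cA_{1,\ell}$ is block-diagonal, equal to $c_{s+1,\ell}\bpm R_{11}&0\\ 0&R_{22}\epm$ on $\widetilde Z^{p,q}$ (with $R_{22}$ read on $\Xi^{p,q}$, where it is invertible), so that $\bigl(\cA_{1,\ell}^*\cA_{1,\ell}\bigr)^{-\half}=c_{s+1,\ell}^{-\half}\bpm \Sigma_{11}&0\\ 0&\Sigma_{22}\epm$ with $\Sigma_{11}=R_{11}^{-\half}$, $\Sigma_{22}=R_{22}^{-\half}$ precisely as in \eqref{Sigma-components}. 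Since $c_{s+1,\ell}^{-\half}$ is a scalar, \eqref{U1ell-eq} will follow as soon as I show $\cA_{1,\ell}=e(d\theta)^{\ell-1}\bpm S_{11}&S_{12}\\ S_{21}&S_{22}\epm$ on $(W_0^{p,q})^2$, with the $S_{ij}$ of \eqref{S-components}.

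For the latter I would simply multiply the explicit matrices \eqref{i4} and \eqref{Qmatrix}, using the columns $Q^\pm$ of \eqref{Q_pm}. Since the top row of \eqref{i4} is $\bpm e(d\theta)^\ell\de & e(d\theta)^\ell\bar\de\epm$, the top row of $\cA_{1,\ell}$ is immediately $e(d\theta)^\ell(-\de Q^+_-+\bar\de Q^+_+)$ and $e(d\theta)^\ell(-\de Q^-_++\bar\de Q^-_-)$, i.e. $e(d\theta)^{\ell-1}S_{11}$ and $e(d\theta)^{\ell-1}S_{12}$ after extracting $e(d\theta)^{\ell-1}$. The bottom row is where the work lies: applied to $Q^+=\bpm -Q^+_-\\ Q^+_+\epm$ it gives, after pulling out the common factor $T\inv e(d\theta)^{\ell-1}$,
$$
T\inv e(d\theta)^{\ell-1}\Bigl[-i\ell\bigl(\bar\de\de\,Q^+_-+\de\bar\de\,Q^+_+\bigr)+e(d\theta)\bigl(-\Box Q^+_-+\Boxbar Q^+_+\bigr)\Bigr].
$$
Into this I would substitute $Q^+_\pm=\Gamma+m\mp iT$ and use two identities: $\de\bar\de+\bar\de\de=-Te(d\theta)$ from \eqref{1.6}, and $\Box-\Boxbar=2imT$ on $(p,q)$-forms with $p+q=s$ (since $n-s=2m$), which is \eqref{box-bar}. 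The first reduces $\bar\de\de\,Q^+_-+\de\bar\de\,Q^+_+$ to $-Te(d\theta)(\Gamma+m)+iT(\bar\de\de-\de\bar\de)$, and the second reduces $-\Box Q^+_-+\Boxbar Q^+_+$ to $-2imT(\Gamma+m)-iT\Delta_H$; substituting, the bracket equals $T\bigl[\ell(\bar\de\de-\de\bar\de)-ie(d\theta)\bigl(\Delta_H+(\Gamma+m)(2m-\ell)\bigr)\bigr]=T\,S_{21}$, and the $T\inv$ cancels. The $Q^-$ column is handled identically and produces $T\,S_{22}$, hence $e(d\theta)^{\ell-1}S_{22}$. (For $\ell=0$ the factor $e(d\theta)^{\ell-1}$ is absorbed into the $e(d\theta)$ that occurs in each $S_{ij}$, and the terms $\ell(\bar\de\de-\de\bar\de)$ drop out; \eqref{U1ell-eq} is to be read with this convention.) Assembling the four entries gives the required identity for $\cA_{1,\ell}$, and combining with the first paragraph yields \eqref{U1ell-eq}.

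Finally I would check that everything is legitimate: the operations $T\inv$, $\Gamma=\sqrt{\Delta_H-T^2+m^2}$, and the fractional powers $R_{11}^{-\half},R_{22}^{-\half}$ are applied only to forms in the $\S_0$-subspaces $W_0^{p,q}$ and $\Xi^{p,q}$, where by Lemma \ref{s3.1} and Corollary \ref{s4.3} they are well defined, preserve the core, and satisfy the commutation relations \eqref{commutations} used above, so that the formal manipulations are genuine identities. I expect the bottom-row simplification to be the one real obstacle: it is there that the particular shape $Q^\eps_\del=\Gamma+\eps m-\del iT$ of the entries of $Q$ is essential, since exactly that combination makes the second-order contribution collapse (through \eqref{1.6}) to first order times $T$ and the difference of box operators reduce (through \eqref{box-bar}) to a multiple of $T$, so that the offending $T\inv$ disappears and one lands on the clean expressions \eqref{S-components}.
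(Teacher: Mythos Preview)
Your proposal is correct and follows essentially the same route as the paper's proof: both start from $U_{1,\ell}=\cA_{1,\ell}(\cA_{1,\ell}^*\cA_{1,\ell})^{-\half}$, invoke Corollary \ref{V1-orth} for the diagonal factor, and then compute $A_{1,\ell}Q$ directly, with the key step being the cancellation of $T\inv$ in the second row via $\de\bar\de+\bar\de\de=-Te(d\theta)$ and $\Box-\Boxbar=2imT$. The only cosmetic difference is in how the intermediate algebra is grouped (you split $Q^+_\pm=\Gamma+m\mp iT$ into its ``$\Gamma+m$'' and ``$\pm iT$'' parts at the outset, while the paper carries $Q^+_\pm$ a line longer before substituting), but the identities used and the outcome are identical.
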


\proof
From Corollary \ref{V1-orth}, we have  
$$
(\cA_{1,\ell}^*\cA_{1,\ell})^{-\half}=c_{s+1,\ell}^{-\half}R^{-\half}\ ,
$$
so that
$$
\begin{aligned}
U_{1,\ell}
&=A_{1,\ell}Q(\cA_{1,\ell}^*\cA_{1,\ell})^{-\half}=c_{s+1,\ell}^{-\half} A_{1,\ell}QR^{-\half}\\
&=c_{s+1,\ell}^{-\half} \bpm I\\ T\inv
  d_H^*\epm e(d\theta)^\ell\bpm
  \de&\bar\de\epm 
\bpm -Q^\Piu_-&-Q^\Meno_+\\
  Q^\Piu_+&Q^\Meno_-\epm \bpm {(R^+)^{-\half}} & 0 \\ 0&
 (R^-)^{-\half} \epm
\ . 
  \end{aligned}
  $$
  
  We verify that the factor $T\inv$ in the second row is going to disappear. From Lemma \ref{de*}, we have
  $$
  T\inv d_H^*e(d\theta)^\ell (\de\ \ \bar\de)
=T\inv e(d\theta)^\ell \bpm\Box &\barBox\epm+i\ell T\inv e(d\theta)^{\ell-1} 
\bpm\bar\de\de& -\de\bar\de\epm \ .
$$
Let us define the matrix $S$ by requiring that
$$
e(d\theta)^{\ell-1}S=\bpm I\\ T\inv
  d_H^*\epm e(d\theta)^\ell\bpm
  \de&\bar\de\epm 
\bpm -Q^\Piu_-&-Q^\Meno_+\\
  Q^\Piu_+&Q^\Meno_-\epm\, .
$$
Then
$$
S=\bpm
e(d\theta)\de &e(d\theta)\bar\de\\ 
i\ell T\inv \bar\de\de+T\inv e(d\theta)\Box
& -i\ell T\inv \de\bar\de + T\inv
e(d\theta)\barBox
\epm
\bpm -Q^\Piu_-&-Q^\Meno_+\\
  Q^\Piu_+&Q^\Meno_-\epm \ .
$$

In particular,
$$
\begin{aligned}
S_{11}&=e(d\theta)(-\de Q^\Piu_-  +\bar\de Q^\Piu_+)\\
S_{12}&=e(d\theta)(-\de Q^\Meno_+  +\bar\de Q^\Meno_-)
\ .
\end{aligned}
$$

Moreover,
$$
\begin{aligned}
S_{21}
&= 
\big[i\ell T\inv \bar\de\de+T\inv e(d\theta)\Box\big](-Q^\Piu_-)
+ \big[ -i\ell T\inv \de\bar\de + T\inv
e(d\theta)\barBox\big]  Q^\Piu_+\\
& =
\ell(\bar\de\de-\de\bar\de)-ie(d\theta)\Delta_H
-(\Gamma+m) \big[i\ell T\inv (\bar\de\de+\de\bar\de)+ T\inv e(d\theta)(\Box-\Boxbar)\big]
 \\
& =  
\ell(\bar\de\de-\de\bar\de)-ie(d\theta)\big[\Delta_H +(\Gamma+m)(2m-\ell)\big]
\ .
\end{aligned}
$$

Finally, a similar computation shows that
$$
S_{22}=
-\ell(\bar\de\de-\de\bar\de)+ie(d\theta)\big[\Delta_H -(\Gamma-m)(2m-\ell)\big]
\ ,
$$
as we claimed. 

In order to conclude the proof, it suffice to notice that
$\Sigma_{jj}= R_{jj}^{-\half}$, $j=1,2$, where $R_{jj}$ are given in
  Lemma \ref{new-matrix-R}.
\qed

\bigskip

We wish now to apply Proposition \ref{detlef} to $\A_{1,\ell}^\pm.$
We restrict ourselves to $\A_{1,\ell}^-$, the other case being simpler.  

We  set 
\begin{eqnarray*}
&&\cD_1= \Xi^{p,q}, \quad H_1=\overline{\Xi^{p,q}}, \\
&&\cD_2=\S_0\Lambda^k, \quad H_2=L^2\Lambda^k,\\
&&S_1=D^-, \quad S_2=\Delta_k,
\end{eqnarray*}
where 
\begin{equation}\label{dpm}
D^\pm:=L-T^2 +i(q-p)T +\ell(n-k+\ell)+ m 
\pm\sqrt{L-T^2 +i(q-p)T +m^2},
\end{equation}
and  denote by $A$ the closure of  $\A_{1,\ell}^-$. The
commutation relation \eqref{str-intertwines1} is then satisfied
because of \eqref{diagonal}.  Moreover, clearly $S_2(\cD_2)\subset
\cD_2.$  
\medskip

Notice also that  $A$ maps $\cD_1$ bijectively onto $V_{1,\ell}^{p,q,-}\subseteq\cD_2.$  

\medskip
Next, according to Corollary \ref{V1-orth}, $A^*A$ is a positive
scalar operator, and so is $S_1.$ But then also $|A|=\sqrt{A^*A}=\sqrt{c_{s+1,\ell}} \,R_{22}^\half$ is a
scalar operator, hence commutes with $S_1,$ so that condition (iv) in
Proposition \ref{detlef} is satisfied too.  

Conditions (iii) and (v) of Proposition \ref{detlef} follow from Lemma \ref{new-matrix-R} and condition (vi) is obvious. 

Finally, our explicit formulas for $U=U_{1,\ell}^-$  in Proposition
\ref{U1ell-prop} show that here $U$ maps the space 
 $\Xi^{p.q}$ into
$\S_0\Lambda^k, $  so that $U^*$ maps $\S_0\Lambda^k $ into $\Xi^{p.q},$
and we see that $P(\cD_2)=P(\S_0\Lambda^k)=U(\Xi^{p,q})= 
A\Big(|A|^{-1}(\Xi^{p,q})\Big)=A(\Xi^{p,q})=A(\cD_1).$
This shows that also condition (vii) is satisfied.
\qed

In the same way, we see that all the hypotheses of Proposition
\ref{detlef} are satisfied by $U_{1,\ell}^-,$ and as a consequence we
obtain

\begin{prop}\label{u1l-inj}
$U_{1,\ell}^\pm$ defined by \eqref{u1pm}  maps $W_0^{p,q}$, respectively
$\Xi^{p,q}$, onto $V_{1,\ell}^{p,q,\pm}$ and intertwines $D^\pm$ with
$\Delta_k$ on the core. 

Moreover, 
$U_{1,\ell}^+:\W_0^{p,q}\to L^2\Lambda^k$  and
$U_{1,\ell}^-:\overline{\Xi^{p,q}}\to L^2\Lambda^k$ are linear
isometries onto their ranges $\cV_{1,\ell}^{p,q,+}$ and
$\cV_{1,\ell}^{p,q,-},$  respectively, 
which intertwine $D^+$  resp. $D^-$ with the restriction of $\Delta_k$
to $\cV_{1,\ell}^{p,q,\pm},$  i.e., 
\begin{equation}\label{diagonal2p+}
{\Delta_k }_{\big|_{\cV_{1,\ell}^{p,q,\pm}}} =  U^\pm_{1,\ell}\, D^\pm\,
(U^\pm_{1,\ell})\inv \quad \mbox{on}\quad  \dom {\Delta_k }_{\big|_{\cV_{1,\ell}^{p,q,\pm}}}
  \ .
\end{equation}

Here, $(U^\pm_{1,\ell})\inv$ denotes the inverse of $U^\pm_{1,\ell}$
when viewed as an operator into its range $\cV_{1,\ell}^{p,q,\pm}.$ 

Finally, if we regard of $U^\pm_{1,\ell}$ as an operator mapping into
$L^2\Lambda^k,$ then
$P_{1,\ell}^\pm:=P_{1,\ell}^{p,q,\pm}:=U_{1,\ell}^\pm(U_{1,\ell}^\pm)^*$
is the orthogonal projection from $L^2\Lambda^k$ onto
$\cV_{1,\ell}^{p,q,\pm}.$ 
\end{prop}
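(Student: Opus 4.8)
The plan is to verify that the operator $A=\overline{\cA_{1,\ell}^-}$ (and similarly $A=\overline{\cA_{1,\ell}^+}$) satisfies all the hypotheses (i)--(vii) of Proposition \ref{detlef} with the data $\cD_1=\Xi^{p,q}$, $H_1=\overline{\Xi^{p,q}}$, $\cD_2=\S_0\Lambda^k$, $H_2=L^2\Lambda^k$, $S_1=D^-$, $S_2=\Delta_k$, as already partially done in the text immediately preceding the statement. Once this is checked, Proposition \ref{detlef} immediately gives: $U_{1,\ell}^-=A|A|^{-1}$ is a partial isometry with $\ker U_{1,\ell}^-=\ker A=\{0\}$ (by (v)), so it is an isometry; its range is $\overline{A(\cD_1)}=\overline{V_{1,\ell}^{p,q,-}}=\cV_{1,\ell}^{p,q,-}$; it intertwines $D^-$ and $\Delta_k$ on the core $\Xi^{p,q}$ by \eqref{str-intertwines4}; and the extension to domains, $U_{1,\ell}^-(\dom D^-)=\dom{\Delta_k}_{|_{\cV_{1,\ell}^{p,q,-}}}$ with ${\Delta_k}_{|_{\cV_{1,\ell}^{p,q,-}}}=U_{1,\ell}^-D^-(U_{1,\ell}^-)\inv$, is precisely the conclusion of Proposition \ref{detlef} (the ``$S_2^r=US_1U^{-1}$'' part). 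Finally, $P_{1,\ell}^-=U_{1,\ell}^-(U_{1,\ell}^-)^*$ being the orthogonal projection onto $\cV_{1,\ell}^{p,q,-}$ is also part of the conclusion of Proposition \ref{detlef}. The identity \eqref{u1pm} defining $U_{1,\ell}^\pm$ agrees with $A|A|^{-1}$ because $|A|^2=A^*A=\cA_{1,\ell}^*\cA_{1,\ell}$ restricted appropriately, by Corollary \ref{V1-orth}, and this operator is a positive scalar operator, so $|A|=\sqrt{A^*A}$ matches the square root appearing in \eqref{u1pm}.

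Concretely, I would organize the verification as follows. Condition (i), $\cD_1\subset\dom A$ and $A(\cD_1)\subset\cD_2$, is the statement that $\cA_{1,\ell}^-$ maps $\Xi^{p,q}$ into $\S_0\Lambda^k$, which follows from the explicit formula \eqref{i4} for $A_{1,\ell}$ together with the fact that $\de,\bar\de,e(d\theta),T\inv$ and the scalar operators $Q^\eps_\del$ all preserve $\S_0$-forms; condition (ii) is exactly \eqref{diagonal} from Proposition \ref{----}. For conditions (iii)--(v), by Corollary \ref{V1-orth} we have $A^*A=c_{s+1,\ell}{R_{22}}_{|_{\Xi^{p,q}}}$, which by Lemma \ref{new-matrix-R} maps $\Xi^{p,q}$ bijectively onto itself (giving (iii)) and has trivial kernel on $\Xi^{p,q}$ (giving (v)); since $R_{22}$ is a scalar operator it commutes with the scalar operator $S_1=D^-$, giving (iv). Condition (vi), $(I-iD^-)\inv(\Xi^{p,q})\subseteq\Xi^{p,q}$, holds because $D^-$ is a scalar operator built from $L,T$ (hence its resolvent is a multiplier operator $u(L,i\inv T)$ commuting with $C_p,\bar C_q$, which preserves $\Xi^{p,q}$ since this space is cut out by the scalar conditions $C_p\xi=\bar C_q\xi=0$ together with membership in $W_0^{p,q}$). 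Condition (vii), $P(\cD_2)=A(\cD_1)$, is the delicate point and the one I would flag as the main obstacle: one must show $P_{1,\ell}^-(\S_0\Lambda^k)=\cA_{1,\ell}^-(\Xi^{p,q})$. This is handled exactly as in the paragraph preceding the Proposition: the explicit formula for $U_{1,\ell}^-$ in Proposition \ref{U1ell-prop} shows $U_{1,\ell}^-$ maps $\Xi^{p,q}$ into $\S_0\Lambda^k$, hence $(U_{1,\ell}^-)^*$ maps $\S_0\Lambda^k$ into $\overline{\Xi^{p,q}}$, and in fact into $\Xi^{p,q}$ (again because the components of $U_{1,\ell}^-$ are scalar/Riesz-type operators preserving $\S_0$ and the defining conditions of $\Xi^{p,q}$); therefore $P(\S_0\Lambda^k)=U_{1,\ell}^-(U_{1,\ell}^-)^*(\S_0\Lambda^k)=U_{1,\ell}^-(\Xi^{p,q})=A(|A|\inv(\Xi^{p,q}))=A(\Xi^{p,q})$, using (iii).

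The case of $U_{1,\ell}^+$ is handled identically, with $\cD_1=W_0^{p,q}$, $H_1=\W_0^{p,q}=\overline{W_0^{p,q}}$, $S_1=D^+$, and $A^*A=c_{s+1,\ell}R_{11}$, which by Lemma \ref{new-matrix-R} maps $W_0^{p,q}$ bijectively onto itself with trivial kernel; all the other verifications go through verbatim (indeed this case is, as the text notes, simpler, since no passage to the subspace $\Xi^{p,q}$ is needed). The orthogonality of $\cV_{1,\ell}^{p,q,+}$ and $\cV_{1,\ell}^{p,q,-}$, which was left open in Proposition \ref{----}, now follows from Corollary \ref{V1-orth}: since $\cA_{1,\ell}^*\cA_{1,\ell}$ is block-diagonal on $\widetilde Z^{p,q}=W_0^{p,q}\times\Xi^{p,q}$, the unitary $U_{1,\ell}$ sends the orthogonal coordinate subspaces $W_0^{p,q}\times\{0\}$ and $\{0\}\times\Xi^{p,q}$ to orthogonal subspaces, namely $\cV_{1,\ell}^{p,q,+}$ and $\cV_{1,\ell}^{p,q,-}$ respectively. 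Collecting these facts yields all the assertions of the Proposition; the only genuinely nontrivial input beyond bookkeeping is the stability of $\S_0$ and of the subspaces $W_0^{p,q}$, $\Xi^{p,q}$ under the explicit operators appearing in $U_{1,\ell}^\pm$ and $(U_{1,\ell}^\pm)^*$, needed for condition (vii).
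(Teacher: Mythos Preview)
Your proposal is correct and follows essentially the same approach as the paper: the proof in the paper is exactly the verification of hypotheses (i)--(vii) of Proposition \ref{detlef} for $A=\overline{\cA_{1,\ell}^-}$ with the data $\cD_1=\Xi^{p,q}$, $H_1=\overline{\Xi^{p,q}}$, $\cD_2=\S_0\Lambda^k$, $H_2=L^2\Lambda^k$, $S_1=D^-$, $S_2=\Delta_k$, carried out in the paragraphs immediately preceding the statement. Your treatment of each condition matches the paper's, including the handling of (vii) via the explicit formulas in Proposition \ref{U1ell-prop} and the chain $P(\S_0\Lambda^k)=U(\Xi^{p,q})=A(|A|^{-1}(\Xi^{p,q}))=A(\Xi^{p,q})$.
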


\bigskip

\subsection{A unitary intertwining  operator for  $V^{p,q}_{2,\ell}$}\quad
\medskip

We next wish to replace the intertwining operator $\A_{2,\ell}$ from Proposition \ref{p6.1}  by a unitary one,
denoted by $U_{2,\ell}=U_{2,\ell}^{p,q}$, which,  according to Proposition \ref{detlef}, should be given by 
 $A_{2,\ell}(A_{2,\ell}^*A_{2,\ell})^{-\half}$. In fact, it will be convenient to modify this expression introducing the unitary central factor $\sigma(T)=i\inv T/|T|$.

Recall that the non-unitary intertwining operator $\A_{2,\ell}$ from  $Z^{p,q}$ to $\cV_{2,\ell}^{p,q}$ is

\begin{equation}\label{intert2}
\begin{aligned}
A_{2,\ell} 
& =\Phi e(d\theta)^\ell
\begin{pmatrix} \bar\de\de & \de\bar\de \end{pmatrix}
=\begin{pmatrix}I\\ T\inv
  d_H^*\end{pmatrix}e(d\theta)^\ell\begin{pmatrix}
  \bar\de\de&\de\bar\de\end{pmatrix}\\
& = \begin{pmatrix} e(d\theta)^\ell \bar\de\de & e(d\theta)^\ell \de\bar\de\\
T\inv d_H^* e(d\theta)^\ell\bar\de\de &
T\inv  d_H^*e(d\theta)^\ell\de\bar\de
\end{pmatrix}\ .
\end{aligned} \smallskip
\end{equation}

Since $A_{2,\ell}$ acts on $Z^{p,q},$ the identities in Lemma
\ref{de*}  in combination with \eqref{commutations}  imply that 
$$
d_H^* e(d\theta)^\ell\bar\de\de
 = e(d\theta)^\ell \big[ (\Boxbar+i\ell T)\de -(\Box+iT)\bar\de\big]\ .
$$

Analogously,
$$
d_H^* e(d\theta)^\ell\de\bar\de
= e(d\theta)^\ell \big[ (\Box-i\ell T)\bar\de -(\Boxbar-iT)\de\big]\ .
$$

Therefore,
\begin{equation}\label{iB2}
A_{2,\ell} 
= e(d\theta)^\ell\begin{pmatrix}  \bar\de\de &  \de\bar\de\\
T\inv\big[ (\Boxbar+i\ell T)\de -(\Box+iT)\bar\de\big] &
T\inv \big[ (\Box-i\ell T)\bar\de -(\Boxbar-iT)\de\big]
\end{pmatrix}\ .\medskip
\end{equation}

\begin{lemma}\label{ip5}
\quad
We have 
\begin{enumerate}
\item[\rm(i)] 
$$
A_{2,\ell}^*A_{2,\ell} = -c_{s+1,\ell}T^{-2}E 
=: -c_{s+1,\ell}T^{-2} 
\begin{pmatrix}
E_{11} & E_{12}\\ E_{21} & E_{22}
\end{pmatrix}\ ,
$$
where
\begin{equation}\label{E}
\begin{aligned} 
E_{11}
& = \Box\Boxbar (\Delta_H-T^2) +i(\ell+1) T\Box
\big[\Delta_H-T^2 -i(n-s-\ell-1)T \big]\ ,\\
E_{12} & =  E_{21} =  -\Box\Boxbar(\Delta_H-T^2)\ ,
\\
E_{22}
& =  \Box\Boxbar (\Delta_H-T^2)  -i(\ell+1) T\Boxbar \big[\Delta_H-T^2
+i(n-s-\ell-1)T \big] \ ;
\end{aligned}
\end{equation}

\item[\rm(ii)]
$$
\big( A_{2,\ell}^*A_{2,\ell}\big)^{\half} =
\frac{\sqrt{c_{s+1,\ell}}}{|T|\sqrt{\Delta'}}
\Big[E-T^2\sqrt{c\Box\Boxbar\Delta'\Delta''}\,
I\Big], 
$$
with $E$ as above. 
\end{enumerate}

Moreover, $\big( A_{2,\ell}^*A_{2,\ell}\big)^{\half} $ maps $Z^{p,q}$ bijectively onto itself, and, on $Z^{p,q}$, 
$$
\big( A_{2,\ell}^*A_{2,\ell}\big)^{-\half} =
\frac{1}{\sqrt{c_{s+1,\ell}\, c}|T|\sqrt{\Box\Boxbar\Delta'\Delta''}}\tilde M\ ,
$$
where $\tilde M$ and  $\Delta'$ are given by
$$
\tilde M= \Box\Boxbar(\Delta_H-T^2) \bpm 1 & 1\\ 1 & 1 \epm
+ \bpm
M_{11} & 0\\ 0 & M_{22}
\epm \ ,
$$
with
\begin{equation}\label{Mmatrix}
\begin{aligned}
M_{11}
& =  -i(\ell+1) T\Boxbar\big(
\Delta_H-T^2\big)-T^2\Big( c\Boxbar +
\sqrt{c\Box\Boxbar\Delta''} \Big)\ ,\\ 
M_{22}
& =  i(\ell+1) T\Box\big(
\Delta_H-T^2\big)  -T^2\Big( c\Box +
\sqrt{c\Box\Boxbar\Delta''} \Big)
 \ ,
\end{aligned}
\end{equation}
\begin{equation}\label{Gamma2}
\begin{aligned}
\Delta'& : = \Big(2\Box\Boxbar -(\ell+1)^2T^2\Big)(\Delta_H-T^2) -T^2
\Big( -cT^2 +
  2\sqrt{c\Box\Boxbar\Delta''} \Big) \ ,\smallskip\\
\Delta''&:=\Delta_H-T^2 +c\ ,
\end{aligned}
\end{equation}
and
$$
c :=(\ell+1)(n-s-\ell-1)\ .
$$
\end{lemma}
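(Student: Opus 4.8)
The plan is to follow the pattern of Lemma~\ref{ip4} and Lemma~\ref{new-matrix-R}. Write $A_{2,\ell}=\Phi\circ C$ on $Z^{p,q}$, where $C=e(d\theta)^\ell\bpm\bar\de\de&\de\bar\de\epm$ as in \eqref{intert2}. Since $\Phi(\om)=\om+\theta\wedge T\inv d_H^*\om$ by \eqref{Phi} and wedging with the unit contact form $\theta$ is isometric on horizontal forms, one has $\Phi^*\Phi=I+(T\inv d_H^*)^*(T\inv d_H^*)$ on $\S_0\Lambda_H^k$, so that
\[
A_{2,\ell}^*A_{2,\ell}=C^*C+\big(T\inv d_H^*C\big)^*\big(T\inv d_H^*C\big).
\]
The term $T\inv d_H^*C$ has already been put by \eqref{iB2} into the shape of $T\inv e(d\theta)^\ell$ composed with a row of first-order operators, so proving (i) reduces to computing, on $W_0^{p,q}$, the finitely many products of the form $(\de^*\text{ or }\bar\de^*)\,i(d\theta)^\ell e(d\theta)^\ell\,(\de\text{ or }\bar\de)$ that appear when one multiplies out $C^*C$ and $(T\inv d_H^*C)^*(T\inv d_H^*C)$.

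To evaluate those products I would first push the leftmost $i(d\theta)^\ell=(e(d\theta)^\ell)^*$ across $e(d\theta)^\ell$ by iterating the commutator identity \eqref{iem}, using that $W_0^{p,q}\subset\ker i(d\theta)$ and that $i(d\theta)$ annihilates $\bar\de\de\xi$ and $\de\bar\de\xi$ after at most two applications (the computation in the proof of Proposition~\ref{non-tr}); this is exactly where the combinatorial constant $c_{s+1,\ell}$ of \eqref{csj} is generated. Then I would move the surviving adjoints $\de^*,\bar\de^*$ to the right with \eqref{1.7}, \eqref{1.7bis}, Lemma~\ref{de*} and the commutation relations \eqref{commutations}, using $\de^*\xi=\bar\de^*\xi=\de^*\eta=\bar\de^*\eta=0$, and finally substitute the scalar expressions \eqref{1.10} for $\Box,\Boxbar,\Delta_H$ and simplify with \eqref{box-bar} and \eqref{halfdelta}. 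Assembling the four resulting matrix entries gives $E$ and the prefactor $-c_{s+1,\ell}T^{-2}$ as claimed in \eqref{E}. These are the tedious, unilluminating steps I would consign to the Appendix; the only genuine pitfall is the bookkeeping of signs and of the constants $c_{s+1,\ell}$.

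For (ii) the crucial point is that every entry of $E$ is a \emph{scalar} operator, i.e. a polynomial in the commuting pair $L,T$ via \eqref{1.10}; hence $P:=A_{2,\ell}^*A_{2,\ell}=-c_{s+1,\ell}T^{-2}E$ is a positive operator whose four entries commute. For such a $2\times2$ matrix the positive square root, and its inverse, are furnished by the Cayley--Hamilton-type formulas (cf. Remark~\ref{nelson} and \eqref{cayley-hamilton}): $\sqrt P=\big(P+\sqrt{\det P}\,I\big)\big/\sqrt{\tr P+2\sqrt{\det P}}$ together with the corresponding adjugate formula for $(\sqrt P)^{-1}$. I would therefore compute $\tr E$ and $\det E$, rewrite them with \eqref{1.10}, \eqref{box-bar} and \eqref{halfdelta} in terms of the operators $\Delta'$ and $\Delta''$ of \eqref{Gamma2} and the constant $c=(\ell+1)(n-s-\ell-1)$, and feed the result into these formulas; after clearing the scalar factors $|T|$, $c_{s+1,\ell}$, $\Box\Boxbar$, $\Delta'$, $\Delta''$ one is left with the asserted closed forms for $\big(A_{2,\ell}^*A_{2,\ell}\big)^{\half}$ and, on $Z^{p,q}$, for $\big(A_{2,\ell}^*A_{2,\ell}\big)^{-\half}$ with $\tilde M$, $M_{11}$, $M_{22}$ as in \eqref{Mmatrix}.

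To justify forming these square roots and quotients, and to obtain the bijectivity of $\big(A_{2,\ell}^*A_{2,\ell}\big)^{\half}$ on $Z^{p,q}$, I would argue as in the proof of Lemma~\ref{new-matrix-R}: from the discussion of $\Box,\Boxbar$ in Section~\ref{firstproperties} and the definitions \eqref{Xpq-Ypq}, the operators $\Box$, $\Boxbar$, hence $\Box\Boxbar$, are injective on the relevant factors of $Z^{p,q}$, while $\Delta'$ and $\Delta''$ are bounded below by positive multiples of the identity (here $c>0$, since $s+\ell+2\le n$ by Proposition~\ref{non-tr}), so the functional calculus of Lemma~\ref{s3.1}(v) applies; one then checks with \eqref{commutations} that $\tilde M$, $\Delta'$ and $\Delta''$ preserve the defining conditions $\de^*\xi=\bar\de^*\xi=0$ and $\de^*\eta=\bar\de^*\eta=0$, so $Z^{p,q}$ is mapped into itself, and invertibility on $Z^{p,q}$ reduces to that of $\Box\Boxbar\Delta'\Delta''$ there, handled exactly as the invertibility of $R_{22}$ was in Lemma~\ref{new-matrix-R} (with the $\widetilde R$-type regularization when $p=0$ or $q=0$, where $\Box$ or $\Boxbar$ has a kernel). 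The principal obstacle throughout is simply the size and delicacy of the explicit operator algebra in (i) and in the passage to square roots in (ii); conceptually nothing beyond the commutation calculus of Section~\ref{difforms}, Lemma~\ref{de*}, and the scalar $2\times2$ matrix square-root formula is required.
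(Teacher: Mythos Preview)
Your plan is essentially the paper's own proof. For (i) the paper defines $B_{2,\ell}$ as the operator on the right of \eqref{iB2} (which is exactly your $\Phi C$ written out), computes $B_{2,\ell}^*B_{2,\ell}$ on $(W_0^{p,q})^2$ entry by entry via Lemmas~\ref{ip2} and \ref{ip3} (the precomputed versions of the $i(d\theta)^\ell e(d\theta)^\ell$ and $\de^*\bar\de^*i(d\theta)^je(d\theta)^j\bar\de\de$ identities you propose to derive), and then invokes Lemma~\ref{rmrk} and Lemma~\ref{boxmap} to descend to $Z^{p,q}$; for (ii) it computes $\tr E$ and $\det E$, identifies $\Delta'=\tr E+2\sqrt{\det E}$, and plugs into the Cayley--Hamilton formula \eqref{cayley-hamilton} and its adjugate analogue, exactly as you outline. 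The only place your sketch diverges slightly is the bijectivity argument: rather than asserting that $\Delta'$ is bounded below (which is not immediate from its expression), the paper factors $E=-T^2\bpm\Box&0\\0&\Boxbar\epm E'$ with $\det E'=c\Delta''\ge c^2>0$ and applies the $\tilde E$-regularization trick, which is the cleaner route.
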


\begin{proof}
The proof of the formulas is postponed to the Appendix, where we also prove the identity
\begin{equation}\label{detE}
\det E= cT^4\Box\Boxbar(\Delta_H-T^2+c)\ .
\end{equation}

Hence we only prove here  that $\big( A_{2,\ell}^*A_{2,\ell}\big)^{\half} $ maps $Z^{p,q}$ bijectively onto itself, assuming the validity of \eqref{E} and \eqref{detE}.

We can factor $E$ as
$$
E=-T^2\bpm \Box&0\\0&\Boxbar\epm \,E'
$$
where 
$$
\det E'=c\Delta''\ge c^2>0\ .
$$

Applying Lemma \ref{s3.1} as in the proof of Lemma \ref{new-matrix-R}, we can conclude that the operator
$$
\tilde E=-T^2\bpm \Box+\del_{p,0}\cC&0\\0&\Boxbar+\del_{q,0}\bar\cC\epm \,E'
$$
maps bijectively $(W_0^{p,q})^2$ onto itself. Restricting to $Z^{p,q}$, we obtain the conclusion.
\end{proof}

Some cancellations occur when we proceed to computing the matrix product
$A_{2,\ell}
\tilde M$,  as the next lemma shows.

\begin{lemma}\label{7.9}
We have that
$$
A_{2,\ell} \tilde M =: e(d\theta)^\ell T P = e(d\theta)^\ell T \bpm P_{11}& P_{12}\\ P_{21}&
P_{22}\epm\ ,
$$
where

\begin{equation}
\aligned
P_{11} 
& = \overline{P_{12}} = -\bar\de\de \Big[ i(\ell+1) \Boxbar\big(
\Delta_H-T^2\big)  +T\big( c \Boxbar+
\sqrt{c\Box\Boxbar\Delta''} \big) \Big] 
- e(d\theta) \Box\Boxbar (\Delta_H-T^2)\ ,\\
P_{21} & = \overline{P_{22}}
=-\de\Big[c\Boxbar(\Delta_H-T^2)+(\Boxbar+i(\ell+1) T)( c\Boxbar +
\sqrt{c\Box\Boxbar\Delta''} \big)\Big]+\bar\de\Box( c\Boxbar +
\sqrt{c\Box\Boxbar\Delta''} \big)\ .
\endaligned 
\end{equation}
\end{lemma}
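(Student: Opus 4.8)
The plan is to compute the matrix product $A_{2,\ell}\tilde M$ directly, using the factored form \eqref{iB2} for $A_{2,\ell}$ and the expression for $\tilde M$ given in Lemma \ref{ip5}. First I would write $A_{2,\ell}=e(d\theta)^\ell B$, where
$$
B=\begin{pmatrix} \bar\de\de & \de\bar\de\\
T\inv\big[(\Boxbar+i\ell T)\de-(\Box+iT)\bar\de\big] &
T\inv\big[(\Box-i\ell T)\bar\de-(\Boxbar-iT)\de\big]\end{pmatrix}\ ,
$$
and split $\tilde M = \Box\Boxbar(\Delta_H-T^2)\left(\begin{smallmatrix}1&1\\1&1\end{smallmatrix}\right) + \operatorname{diag}(M_{11},M_{22})$, so that $B\tilde M$ is a sum of two products. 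Since all the entries here are scalar operators acting componentwise on $(p,q)$-forms (which commute with one another up to the relations \eqref{commutations}, \eqref{1.6}, \eqref{1.6bis}), the computation reduces to bookkeeping with the operators $\de,\bar\de,\Box,\Boxbar,e(d\theta),T$ and the square roots $\Gamma$-type symbols built from $\Delta_H-T^2$.

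Second, the key mechanism producing the cancellations is that $\xi,\eta\in Z^{p,q}\subset W_0^{p,q}$ satisfy $\de^*\xi=\bar\de^*\xi=\de^*\eta=\bar\de^*\eta=0$, so the identities of Lemma \ref{de*} (with $\ell$ replaced by $1$, or rather applied to the single $e(d\theta)$ factor) together with \eqref{1.6} allow one to simplify combinations like $\de\bar\de\de\xi+\bar\de\de\bar\de\xi$ and to convert $d_H^2$ into $-Te(d\theta)$. Concretely I would: (i) compute the contribution of the rank-one part $\Box\Boxbar(\Delta_H-T^2)\left(\begin{smallmatrix}1&1\\1&1\end{smallmatrix}\right)$, where the first row of $B$ gives $(\bar\de\de+\de\bar\de)\Box\Boxbar(\Delta_H-T^2) = d_H^2\Box\Boxbar(\Delta_H-T^2) = -Te(d\theta)\Box\Boxbar(\Delta_H-T^2)$ acting on the sum $\xi+\eta$ — this is exactly the $-e(d\theta)\Box\Boxbar(\Delta_H-T^2)$ summand appearing in $P_{11}$ after dividing by $T$; and similarly the second row contributes the analogous term to $P_{21}$; (ii) compute the contribution of $\operatorname{diag}(M_{11},M_{22})$, where the first row of $B$ now yields $\bar\de\de\, M_{11}$ in the $(1,1)$ slot and $\de\bar\de\, M_{22}$ in the $(1,2)$ slot, and the second row yields the $(2,1),(2,2)$ entries after substituting the formulas for $M_{11},M_{22}$ from \eqref{Mmatrix} and applying \eqref{commutations} to move the box-operators past $\de$ and $\bar\de$.

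Third, once the entries are assembled I would verify the asserted structure: that after extracting the common factor $e(d\theta)^\ell T$, the $(1,1)$ entry equals
$$
P_{11}=-\bar\de\de\Big[i(\ell+1)\Boxbar(\Delta_H-T^2)+T\big(c\Boxbar+\sqrt{c\Box\Boxbar\Delta''}\,\big)\Big]-e(d\theta)\Box\Boxbar(\Delta_H-T^2)\ ,
$$
and similarly for $P_{21}$, and that $P_{12}=\overline{P_{11}}$, $P_{22}=\overline{P_{21}}$ follow from the fact that interchanging $\de\leftrightarrow\bar\de$, $\Box\leftrightarrow\Boxbar$ and conjugating $T$ (hence $\ell T\to\ell T$ stays, $iT\to -iT$) swaps the two columns of $B$ and the two diagonal entries of $\tilde M$; the symmetry of $\Box\Boxbar$, $\Delta_H$, $\Delta''$ under this involution then gives the conjugation relation. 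The main obstacle I expect is purely computational: keeping track of the numerous terms involving $\Gamma$, $\sqrt{c\Box\Boxbar\Delta''}$ and the precise coefficients $(\ell+1)$, $c$, $n-s-\ell-1$ so that everything collapses into the compact form stated — but since the analogous (and genuinely different) computation for $A_{1,\ell}$ has already been carried through, and since no conceptual subtlety arises beyond the componentwise scalar calculus on $(p,q)$-forms, this is a matter of careful algebra that I would relegate to the Appendix, as the authors do for the parallel Lemmas \ref{ip4} and \ref{ip5}.
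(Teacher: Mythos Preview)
Your approach is essentially identical to the paper's: factor $A_{2,\ell}=e(d\theta)^\ell B$ with $B$ the matrix from \eqref{iB2}, split $\tilde M$ into its rank-one and diagonal parts, use $\bar\de\de+\de\bar\de=-Te(d\theta)$ from \eqref{1.6} for the first row of the rank-one contribution, and handle the second row by direct substitution of $M_{11},M_{22}$ together with \eqref{commutations} and \eqref{box-bar}; the conjugation symmetry you invoke is exactly how the paper obtains $P_{12}=\overline{P_{11}}$ and $P_{22}=\overline{P_{21}}$. The only slight imprecision is your phrase ``similarly the second row contributes the analogous term to $P_{21}$'': the rank-one part there yields $i(\ell+1)(\de-\bar\de)\Box\Boxbar(\Delta_H-T^2)$ rather than an $e(d\theta)$-term, and it is the subsequent algebraic reduction (using $\Box-\Boxbar=i(n-s)T$) that collapses it into the stated form --- but your step (ii) already anticipates this.
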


\proof
Let $A=\bpm A_{11}& A_{12} \\ A_{21}& A_{22}\epm$ denote the matrix on
the right hand side of \eqref{iB2}, and set $P=
T\inv A\tilde M$.  Then
$$
\aligned 
A\tilde M
&=\left(
\begin{array}{ccc}
  \de\bar\de+\bar\de\de  & \de\bar\de+\bar\de\de  &   \\
i(\ell+1)(\de-\bar\de)  &   -i(\ell+1)(\bar\de-\de)  
\end{array}
\right) \Box\Boxbar(\Delta_H-T^2)\\
& \qquad
+ \begin{pmatrix}  \bar\de\de M_{11}&  \de\bar\de M_{22}\\
T\inv\big[ (\Boxbar+i\ell T)\de -(\Box+iT)\bar\de\big] M_{11}&
T\inv \big[ (\Box-i\ell T)\bar\de -(\Boxbar-iT)\de\big]M_{22},
\end{pmatrix}
\endaligned
$$
where by \eqref{1.6}  $  \de\bar\de+\bar\de\de =-Te(d\theta).$ This implies that 
$$
P_{11} 
=-\bar\de\de \Big[ i(\ell+1) \Boxbar\big(
\Delta_H-T^2\big)  +T\big( c \Boxbar+
\sqrt{c\Box\Boxbar\Delta''} \big) \Big] 
- e(d\theta) \Box\Boxbar (\Delta_H-T^2)
$$
and $P_{12}=\overline{P_{11}},$ 
which proves the  statements about $P_{11}$ and $P_{12},$ and 
$$
\aligned 
P_{21}
&= iT\inv (\ell+1)(\de-\bar\de)\Box\Boxbar(\Delta_H-T^2) \\
& \qquad
+ 
T^{-2}   \Big[ (\Boxbar+i\ell T)\de -(\Box+iT)\bar\de\Big]
 \Big[ - i(\ell+1) T\Boxbar\big(
\Delta_H-T^2\big)  -T^2\big( c\Boxbar +
\sqrt{c\Box\Boxbar\Delta''} \big)
 \Big]
\endaligned 
$$
 where $P_{22}=\overline{P_{21}}.$

Next, using \eqref{commutations} and the identity \eqref{box-bar}, with
$s=p+q$ in place of $k,$ we have
$$
\aligned
P_{21}& = 
iT\inv (\ell+1)(\de-\bar\de)\Box\Boxbar(\Delta_H-T^2) \\
&\qquad +
T^{-1}   \Big[ \de(\Boxbar+i(\ell+1) T) -\bar\de\Box\Big]
 \Big[ - i(\ell+1) \Boxbar\big(
\Delta_H-T^2\big)  -T\big( c\Boxbar +
\sqrt{c\Box\Boxbar\Delta''} \big)
 \Big]\\
 &= \Big[iT\inv(\de-\bar\de)\Box-iT\inv \de\Boxbar
+(\ell+1)\de+iT\inv\bar\de\Box\Big](\ell+1)\Boxbar(\Delta_H-T^2)\\
 &\qquad - \Big[\de(\Boxbar+i(\ell+1) T)  -\bar\de\Box\Big]\big( c\Boxbar +
\sqrt{c\Box\Boxbar\Delta''} \big)\\
&=-c\de\Boxbar(\Delta_H-T^2) - \Big[\de(\Boxbar+i(\ell+1) T)  -\bar\de\Box\Big]\big( c\Boxbar +
\sqrt{c\Box\Boxbar\Delta''} \big)\\
&=-\de\Big[c\Boxbar(\Delta_H-T^2)+(\Boxbar+i(\ell+1) T)( c\Boxbar +
\sqrt{c\Box\Boxbar\Delta''} \big)\Big]
+\bar\de\Box( c\Boxbar +
\sqrt{c\Box\Boxbar\Delta''} \big)\ .
\endaligned
$$
This proves the lemma.

\qed

From the previous results we immediately get an explicit formula for
$U_{2,\ell},$  at least when $p\ne0$ and $q\ne 0.$ However, if $p=0$
or $q=0,$  our formulas, when properly interpreted, persist,  and we
obtain the following result: 

Recall that if $p=0,$ then $X^{p,q}=(I-\cC)X^{p,q},$ and if $q=0,$
then $Y^{p,q}=(I-\overline\cC)Y^{p,q}.$ Let us correspondingly put 
\begin{eqnarray*}
\Box_r=\begin{cases} 
     \Box  & \mbox{ if } p\ge 1, \\
     \Box' &  \mbox{ if } p=0,
\end{cases}
\qquad
\Boxbar_r=\begin{cases} 
     \Boxbar  & \mbox{ if } q\ge 1, \\
     \Boxbar' &  \mbox{ if } q=0,
\end{cases}
\end{eqnarray*}
so that $\Box_r$ is always invertible on $X^{p,q},$ and $\Boxbar_r$ on $Y^{p,q}.$

\begin{prop}\label{U2ell-prop}                        
The operator $U_{2,\ell},$ which acts on $Z^{p,q},$ is given by 
\begin{equation}\label{U2ell-eq}
U_{2,\ell} =\frac{e(d\theta)^\ell}{\sqrt{c_{s+1,\ell}\, c}}
\,H\,
\frac{1}{\sqrt{\Delta'\Delta''}} \ ,
\end{equation}
where the operator matrix $H=\bpm H_{11}& H_{12}\\ H_{21}&
H_{22}\epm$  is defined by 
\begin{equation}
\aligned
H_{11} 
& = \overline{H_{12}} = -\bar\Ri\Ri(\Boxbar+iT)^\half \Big[ i(\ell+1) \Boxbar^\half\big(
\Delta_H-T^2\big)  +T\big( c\, \Boxbar^\half+\Box^\half
\sqrt{c\Delta''} \big) \Big] \\
&\hskip2cm - e(d\theta) \Box^\half\Boxbar^{\half} (\Delta_H-T^2)\ ,\\
H_{21} & = \overline{H_{22}}
=-\Ri\Big[c\,\Boxbar^\half(\Delta_H-T^2)+(\Boxbar+i(\ell+1) T)( c\,\Boxbar^\half+
\Box^\half\sqrt{c\Delta''} \big)\Big]\\
&\hskip2cm +\bar\Ri( c\,\Boxbar\Box^\half+\Box\Boxbar^\half
\sqrt{c\Delta''} \big)\ ,
\endaligned 
\end{equation}
and where $\Delta', \Delta''$  and $c$ are given by Lemma \ref{ip5}.
\end{prop}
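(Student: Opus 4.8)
The plan is to read $U_{2,\ell}$ off from its polar--decomposition description together with Lemma \ref{ip5} and Lemma \ref{7.9}. By Proposition \ref{rs} and Proposition \ref{detlef}, the unitary intertwining operator attached to $A_{2,\ell}$ is $A_{2,\ell}\big(A_{2,\ell}^*A_{2,\ell}\big)^{-\half}$, up to the central unitary factor $\sigma(T)=i\inv T/|T|$; the hypotheses of Proposition \ref{detlef} are checked for $A=\overline{A_{2,\ell}}$, $\cD_1=Z^{p,q}$, $S_1=\Delta_0+i(q-p)T+(\ell+1)(n-k+\ell+1)$ and $\cD_2=\S_0\Lambda^k$, $S_2=\Delta_k$ exactly as was done above for $\A_{1,\ell}^-$: condition (iv) holds because, by Lemma \ref{ip5}(i), $A_{2,\ell}^*A_{2,\ell}$ is a matrix of \emph{scalar} operators, hence so is $|A_{2,\ell}|=\sqrt{A_{2,\ell}^*A_{2,\ell}}$, which therefore commutes with the scalar operator $S_1$; conditions (iii) and (v) are the bijectivity of $\big(A_{2,\ell}^*A_{2,\ell}\big)^{\half}$ on $Z^{p,q}$ established in Lemma \ref{ip5}; condition (vi) is immediate; and condition (vii) will be read off from the explicit formula. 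Thus everything comes down to computing $A_{2,\ell}\big(A_{2,\ell}^*A_{2,\ell}\big)^{-\half}$.

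For this I would insert the expression for $\big(A_{2,\ell}^*A_{2,\ell}\big)^{-\half}$ from Lemma \ref{ip5}(ii). On $Z^{p,q}$ the scalar prefactor $\big(c_{s+1,\ell}\,c\big)^{-\half}\,|T|\inv\,(\Box\Boxbar\Delta'\Delta'')^{-\half}$ commutes with $\tilde M$, whose entries are scalar operators, so it may be brought past $\tilde M$ and, after using $A_{2,\ell}\tilde M=e(d\theta)^\ell\,T\,P$ from Lemma \ref{7.9}, one gets
\[
U_{2,\ell}=e(d\theta)^\ell\,\frac{T}{|T|}\;P\;\frac{1}{\sqrt{c_{s+1,\ell}\,c}\;\sqrt{\Box\Boxbar\Delta'\Delta''}}\,,
\]
since $T$ and $|T|$ are central. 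The central factor $T/|T|$ is exactly what is absorbed by the modification with $\sigma(T)$, and the scalar operators $\Delta',\Delta''$ may be pulled out to the right; it then remains to identify $\big(\sqrt{\Box\Boxbar}\big)\inv$ applied on the right of the matrix $P$ of Lemma \ref{7.9} with the matrix $H$ of the statement.

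This identification is done entry by entry, distributing one factor $\Box^{-\half}$ and one factor $\Boxbar^{-\half}$ through each $P_{ij}$ and rewriting every $\de$, $\bar\de$, $\bar\de\de$ in terms of the holomorphic and antiholomorphic Riesz transforms by means of the factorizations $\de=\Box^{\half}\Ri=\Ri\Box^{\half}$, $\bar\de=\Boxbar^{\half}\bar\Ri=\bar\Ri\Boxbar^{\half}$ from \eqref{4.5}, \eqref{4.6} and the commutation rules \eqref{commutations}; the $\pm iT$ shifts that \eqref{commutations} produces whenever a box operator is moved across $\de$, $\bar\de$ or $e(d\theta)$ account for the factor $(\Boxbar+iT)^{\half}$ in $H_{11}$ and for the exact placement of the half--powers in $H_{21}$. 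In the boundary bidegrees $p=0$ and $q=0$, where $\Box$, respectively $\Boxbar$, fails to be injective, one uses instead the primed operators $\Box_r$, $\Boxbar_r$ together with the fact, from Lemma \ref{s4.8} and Lemma \ref{newpar}(ii), that $\ker\cC$ and $\ker\overline\cC$ do not meet $Z^{p,q}$, so that all half--powers act invertibly on the relevant components; this both makes the formula for $H$ meaningful and shows at the same time that $U_{2,\ell}$ carries $Z^{p,q}$ into $\S_0\Lambda^k$, which is hypothesis (vii).

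The main obstacle is precisely this last, purely algebraic, bookkeeping: keeping track of which box operator and which $\pm iT$ shift every half--power must be commuted through so as to land exactly on the asserted form of $H$, while simultaneously respecting the conventions $\Box_r$, $\Boxbar_r$ in the degenerate bidegrees. Everything else is either a direct appeal to Lemma \ref{ip5} and Lemma \ref{7.9}, whose own (long but routine) proofs are deferred to the Appendix, or a verbatim repetition of the polar--decomposition argument already carried out for $\A_{1,\ell}^-$.
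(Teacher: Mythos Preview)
Your proposal is correct and follows essentially the same route as the paper: combine the formula for $(A_{2,\ell}^*A_{2,\ell})^{-\half}$ from Lemma~\ref{ip5}(ii) with the computation $A_{2,\ell}\tilde M=e(d\theta)^\ell\,T\,P$ from Lemma~\ref{7.9}, absorb $T/|T|$ into the unitary factor $\sigma(T)$, and then rewrite $P\,(\Box\Boxbar)^{-\half}$ as $H$ via the Riesz factorizations $\de=\Ri\Box^{\half}$, $\bar\de=\bar\Ri\Boxbar^{\half}$, $\bar\de\de=\bar\Ri\Ri(\Boxbar+iT)^{\half}\Box^{\half}$. One small point of organization: the verification of the hypotheses of Proposition~\ref{detlef} that you sketch is not needed for the bare formula of Proposition~\ref{U2ell-prop} itself---the paper defers that to Proposition~\ref{u2l-inj}---and a minor slip: it is $\range\cC$ (i.e.\ $\ker\Box$), not $\ker\cC$, that $X^{p,q}$ avoids when $p=0$.
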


\bigskip

Finally, we have the following analogue of Proposition \ref{u1l-inj}.

\begin{prop}\label{u2l-inj}
The operator $U_{2,\ell}$ in Proposition \ref{U2ell-prop}  maps the
space $Z^{p,q}$ onto $V_{2,\ell}^{p,q}$ and intertwines $D:=\Delta_0
+i(q-p)T +(\ell+1)(n-k+\ell+1)$ with $\Delta_k$ on the core. Moreover  
\medskip
$U_{2,\ell}:\overline{Z^{p,q}}\to L^2\Lambda^k$  is a  linear isometry
onto $\cV_{2,\ell}^{p,q}$ which intertwines $D$   with the restriction
of $\Delta_k$ to $\cV_{2,\ell}^{p,q},$  i.e., 
\begin{equation}\label{diagonal2p+}
{\Delta_k }_{\big|_{\cV_{2,\ell}^{p,q}}} =  U_{2,\ell}\, D\,
(U_{2,\ell})\inv \quad \mbox{on}\quad  \dom {\Delta_k }_{\big|_{\cV_{2,\ell}^{p,q}}}
  \ .
\end{equation}

Here, $(U_{2,\ell})\inv$ denotes the inverse of $U_{2,\ell}$ when
viewed as an operator into its range $\cV_{2,\ell}^{p,q}.$ 

Finally, if we regard of $U_{2,\ell}$ as an operator mapping into
$L^2\Lambda^k,$ then
$P_{2,\ell}:=P_{2,\ell}^{p,q}:=U_{2,\ell}(U_{2,\ell})^*$ is the
orthogonal projection from $L^2\Lambda^k$ onto $\cV_{2,\ell}^{p,q}.$ 
\end{prop}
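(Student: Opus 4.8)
The plan is to apply Proposition \ref{detlef} to the closed operator $A$ obtained as the closure of $\A_{2,\ell}$, with the choices $H_1=\overline{Z^{p,q}}$, $\cD_1=Z^{p,q}$, $H_2=L^2\Lambda^k$, $\cD_2=\S_0\Lambda^k$, $S_1=D:=\Delta_0+i(q-p)T+(\ell+1)(n-k+\ell+1)$ and $S_2=\Delta_k$. The intertwining relation \eqref{str-intertwines1} on $\cD_1$ is exactly \eqref{j=2} from Proposition \ref{p6.1}, and $S_2(\cD_2)\subset\cD_2$ is clear since $\Delta_k$ is differential. Also, $\cD_1$ is a core for $D$ by Lemma \ref{s3.1}(v) (the operator $D$ being a scalar operator of the form $m(L,i\inv T)$ acting componentwise on $Z^{p,q}$), and $D(\cD_1)\subset\cD_1$ because the commutation relations \eqref{commutations} guarantee that applying $D$ preserves the conditions $\de^*\xi=\bar\de^*\xi=\de^*\eta=\bar\de^*\eta=0$ together with $C_p\xi=\bar C_q\eta=0$ defining $Z^{p,q}$. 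Conditions (i) and (ii) of Proposition \ref{detlef} are then in place, with $A(\cD_1)=V_{2,\ell}^{p,q}\subset\cD_2$.

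First I would verify conditions (iii) and (iv). By Lemma \ref{ip5}, $A^*A=-c_{s+1,\ell}T^{-2}E$ is a (matrix of) scalar operator(s), and Lemma \ref{ip5}(ii) gives the explicit form of $|A|=(A^*A)^{1/2}$, which Lemma \ref{ip5} also asserts maps $Z^{p,q}$ bijectively onto itself; this is exactly (iii). For (iv), since both $D$ and $|A|$ are scalar operators built from $L$ and $i\inv T$ (the square root formula \eqref{cayley-hamilton} expresses $|A|$ in terms of $\Box,\Boxbar,\Delta_H,T$, all of which are scalar on $(p,q)$-forms), they commute on the core, as already explained in Remark \ref{nelson}. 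Then $U_{2,\ell}$, defined in analogy with \eqref{u1pm} but with the extra central unitary factor $\sigma(T)=i\inv T/|T|$ absorbed (which commutes with everything and does not affect the polar decomposition structure), maps $\cD_1$ into $\cD_2$, intertwines $D$ with $\Delta_k$ on the core, and $P_{2,\ell}:=U_{2,\ell}U_{2,\ell}^*$ is the orthogonal projection onto $\overline{A(\cD_1)}=\cV_{2,\ell}^{p,q}$.

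Next I would check the additional conditions (v)--(vii) needed for the full statement that $U_{2,\ell}$ is a linear isometry and that \eqref{diagonal2p+} holds on $\dom\Delta_k|_{\cV_{2,\ell}^{p,q}}$. Condition (v), $\ker|A|=\{0\}$, follows from the bijectivity of $|A|$ on $Z^{p,q}$ in Lemma \ref{ip5}. Condition (vi), $(I-iD)^{-1}(\cD_1)\subset\cD_1$, holds because $(I-iD)^{-1}$ is again a scalar multiplier operator in $L$ and $i\inv T$, hence preserves the bidegree and the kernel conditions defining $Z^{p,q}$. Condition (vii), $P_{2,\ell}(\cD_2)=A(\cD_1)$, is the one I expect to require the explicit formula from Proposition \ref{U2ell-prop}: that formula shows $U_{2,\ell}$ maps $Z^{p,q}$ into $\S_0\Lambda^k$ (all operators involved --- $e(d\theta)$, $\Ri,\bar\Ri$, and the bounded scalar multipliers $1/\sqrt{\Delta'\Delta''}$ etc. --- preserve $\S_0$-forms), hence $U_{2,\ell}^*$ maps $\S_0\Lambda^k$ into $\overline{Z^{p,q}}$, in fact into $Z^{p,q}$ by the same argument applied to the adjoint; therefore $P_{2,\ell}(\S_0\Lambda^k)=U_{2,\ell}U_{2,\ell}^*(\S_0\Lambda^k)\subset U_{2,\ell}(Z^{p,q})=V_{2,\ell}^{p,q}$, and the reverse inclusion is trivial since $V_{2,\ell}^{p,q}\subset\S_0\Lambda^k$ is fixed by $P_{2,\ell}$. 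Granting (i)--(vii), Proposition \ref{detlef} yields $U_{2,\ell}(\dom D)=\dom\Delta_k^r$ and $\Delta_k^r=U_{2,\ell}DU_{2,\ell}^{-1}$, which is \eqref{diagonal2p+}.

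The main obstacle I anticipate is the careful handling of the degenerate cases $p=0$ and $q=0$, where $\Box$ (resp. $\Boxbar$) is not injective and must be replaced by $\Box_r$ (resp. $\Boxbar_r$) on the orthogonal complement of the relevant kernel, as recorded just before Proposition \ref{U2ell-prop}. One must check that the formula for $U_{2,\ell}$ in Proposition \ref{U2ell-prop}, when the half-powers $\Box^{1/2},\Boxbar^{1/2}$ are interpreted via $\Box_r,\Boxbar_r$, still defines the correct partial isometry --- i.e. that the kernel pieces on which these operators degenerate are precisely $C_pW_0^{p,q}$ and $\bar C_qW_0^{p,q}$, which by Lemma \ref{new-matrix-R} and Lemma \ref{newpar}(ii) are annihilated by $R_{22}$ (and correspondingly by the relevant factors of $A_{2,\ell}^*A_{2,\ell}$), so that restricting to $Z^{p,q}=X^{p,q}\times Y^{p,q}$ removes exactly the singular directions. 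Once this bookkeeping is done, the verification of (i)--(vii) is routine, the heavy computational content having already been packaged into Lemmas \ref{ip5}, \ref{7.9} and Proposition \ref{U2ell-prop}.
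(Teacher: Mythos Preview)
Your proposal is correct and follows essentially the same route as the paper: apply Proposition~\ref{detlef} with $\cD_1=Z^{p,q}$, $H_1=\overline{Z^{p,q}}$, $\cD_2=\S_0\Lambda^k$, $H_2=L^2\Lambda^k$, $S_1=D$, $S_2=\Delta_k$, and $A$ the closure of $\A_{2,\ell}$, verifying (i)--(vii) via Lemma~\ref{ip5} and the explicit formula in Proposition~\ref{U2ell-prop}. Two minor remarks: for condition (v) you invoke bijectivity of $|A|$ on the core $Z^{p,q}$, but what is needed is $\ker|A|=\{0\}$ on the full closure $\overline{Z^{p,q}}$; the paper closes this gap by appealing to Lemma~\ref{Ainv} together with Plancherel (pointwise invertibility of the symbol $E(\la,\xi)$ on $Z^{p,q}(\la,\xi)$ forces the $L^2$-kernel to be trivial). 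Also, in your discussion of the degenerate cases $p=0$ or $q=0$ you cite Lemmas~\ref{new-matrix-R} and~\ref{newpar}(ii), which belong to the $j=1$ analysis; for $j=2$ the relevant bookkeeping is already absorbed into Lemma~\ref{ip5} and the $\Box_r,\Boxbar_r$ convention, so no extra argument is needed there.
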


\proof
This will follow by applying  Proposition \ref{detlef} to $\A_{2,\ell}.$ To this end, we set 
\begin{eqnarray*}
&&\cD_1= Z^{p,q}, \quad H_1=\overline{Z^{p,q}}, \\
&&\cD_2=\S_0\Lambda^k, \quad H_2=L^2\Lambda^k,\\
&&S_1=D, \quad S_2=\Delta_k,
\end{eqnarray*}
and  denote by $A$ the closure of  $\A_{2,\ell}$ on $Z^{p,q}.$ The
commutation relation \eqref{str-intertwines1} is then satisfied
because of \eqref{j=2}.  Moreover, clearly $S_2(\cD_2)\subset \cD_2,$  
and $A$ maps $\cD_1$ bijectively onto $V_{2,\ell}^{p,q}\subseteq\cD_2.$  

\medskip
Next, according to Lemma \ref{ip5}, $A^*A$ is a positive matrix with
scalar operator entries, and  $S_1=S_1I$ is a scalar operator. But
then also $|A|=\sqrt{A^*A}$ is a matrix with scalar operator entries,
hence commutes with $S_1I,$ so that condition (iv) in Proposition
\ref{detlef} is satisfied too.  

In order to verify  conditions (iii), (v) and (vi), we can make use
of the   joint spectral theory of $L$ and $i^{-1}T$ described in
Section \ref{Lp}.  
Indeed, it is immediate  by means of the spectral decomposition of
$S_1$ that  (vi)  
is satisfied. 

Moreover, $|A|$ maps $Z^{p,q}$ into itself; this can be verified as follows:

The formula for $|A|=\big( A_{2,\ell}^*A_{2,\ell}\big)^{\half}$ in
Lemma \ref{ip5}  shows that it suffices to prove that the operator
matrix $E$ maps $Z^{p,q}$ into itself. This in return will be verified
if we can show that $E_{12}$ maps $Y^{p,q}$ into  
$X^{p,q},$  and $E_{21}$ maps $X^{p,q}$ into $Y^{p,q}.$  But,
according to Lemma \ref{boxmap} in the Appendix, $\Box\Boxbar$ maps
$W_0^{p,q}$ into $\Xi^{p,q}$, so that the latter claims are
immediate. 

  And,  the formula for $A_{2,\ell}^*A_{2,\ell}$ in Lemma \ref{ip5} in
  combination with Lemma \ref{Ainv} and Plancherel's theorem shows
  that $A_{2,\ell}^*A_{2,\ell}=|A|^2$  has a trivial kernel in $L^2,$
  and then the same applies to $|A|,$ which proves (v). 

\medskip

Finally, our explicit formulas for $U=U_{2,\ell}$  in Proposition
\ref{U2ell-prop} show that  $U$ maps the space $Z^{p.q}$ into
$\S_0\Lambda^k, $  so that $U^*$ maps $\S_0\Lambda^k $ into $Z^{p.q},$
and we see that $P(\cD_2)=P(\S_0\Lambda^k)=U(Z^{p,q})= 
A\Big(|A|^{-1}(Z^{p,q})\Big)=A(Z^{p,q})=A(\cD_1),$ where $P=UU^*.$
This shows that also condition (vii) is satisfied, which concludes the
proof of Proposition \ref{u2l-inj}. 

\qed

\bigskip

\setcounter{equation}{0}
\section{Decomposition of $L^2\Lambda^k$}\label{Hodge}

We are now in the position to completely describe the orthogonal
decomposition of $L^2\Lambda^k$ into $\Delta_k$-invariant subspaces
and the unitary intertwining operators that reduce $\Delta_k$ into
scalar form.

\begin{thm}\label{dec-k-le-n}
Let $0\le k\le n$.  Then $L^2\Lambda^k$ admits the orthogonal
decomposition
\begin{multline}\label{eq-dec-k}
L^2\Lambda^k =  
\osum_{\substack{p+q=k<n \\ p+q=n,\, pq=0 }}
\V_0^{p,q} \, \oplus\, 
\osum_{\eps=\pm}\osum_{p+q+2\ell=k-1} 
\V_{1,\ell}^{p,q,\eps} 
\, \oplus \osum_{p+q+2\ell=k-2}   \V_{2,\ell}^{p,q} \\
\oplus\, 
\osum_{p+q=k-1} R\,\V_0^{p,q}\,  \oplus\, 
\osum_{\eps=\pm}\osum_{p+q+2\ell=k-2}R\, 
\V_{1,\ell}^{p,q,\eps} 
\, \oplus \osum_{p+q+2\ell=k-3}  
R\, \V_{2,\ell}^{p,q}\ ,\medskip
\end{multline}
where $R=R_{k-1}$ denotes the Riesz transform.
\end{thm}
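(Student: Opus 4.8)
The theorem is the bookkeeping assembly of the constructions carried out in the preceding sections, and I would organize the proof around the primary orthogonal decomposition \eqref{maindecom},
$$
L^2\Lambda^k=(L^2\Lambda^k)_{d\ex}\oplus(L^2\Lambda^k)_{d^*\cl}\ ,
$$
whose two summands are closed, mutually orthogonal and $\Delta_k$-invariant by Proposition \ref{s4.5}. It then suffices to decompose each summand separately and concatenate the two lists.

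For the $d^*$-closed summand I would quote Proposition \ref{decom3}, which already furnishes the orthogonal decomposition
$$
(L^2\Lambda^k)_{d^*\cl}=\osum_{\substack{p+q=k<n\\ p+q=n,\,pq=0}}\V_0^{p,q}\oplus\osum_{\substack{j=1,2\\ p+q+j+2\ell=k}}\V_{j,\ell}^{p,q}
$$
into nontrivial $\Delta_k$-invariant subspaces, and then refine it: each $\V_{1,\ell}^{p,q}$ splits as the orthogonal sum $\V_{1,\ell}^{p,q,+}\oplus\V_{1,\ell}^{p,q,-}$, orthogonality being Corollary \ref{V1-orth} and $\Delta_k$-invariance of the two pieces being part of Proposition \ref{u1l-inj}. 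Rewriting the constraint $j=1$ as $p+q+2\ell=k-1$ and $j=2$ as $p+q+2\ell=k-2$ turns this refined decomposition into exactly the first line of \eqref{eq-dec-k}.

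For the $d$-exact summand the key fact is that the Riesz transform $R_{k-1}$ is a unitary isomorphism of $(L^2\Lambda^{k-1})_{d^*\cl}$ onto $(L^2\Lambda^k)_{d\ex}$ which intertwines $\Delta_{k-1}$ with $\Delta_k$ (with the convention $R_{-1}=0$, so for $k=0$ the second line of \eqref{eq-dec-k} is empty). Indeed, by Lemma \ref{s4.4} the operator $R_{k-1}$ is a partial isometry whose initial space $(\ker R_{k-1})^\perp=\overline{\range R_{k-1}^*}$ is precisely $(L^2\Lambda^{k-1})_{d^*\cl}$ and whose range is precisely $(L^2\Lambda^k)_{d\ex}$, by Proposition \ref{s4.5}; and since $R_{k-1}=\Delta_k^{-\half}d=d\Delta_{k-1}^{-\half}$ by \eqref{4.2}, the identities $\pi_{\la,\sigma}(R_{k-1})\pi_{\la,\sigma}(\Delta_{k-1})=\pi_{\la,\sigma}(\Delta_k)\pi_{\la,\sigma}(R_{k-1})$ hold for all $\la,\sigma$, so $R_{k-1}$ intertwines the two Laplacians on the core (Remark \ref{--} and Lemma \ref{s3.1}) and the intertwining passes to the closures. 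Applying the decomposition of $(L^2\Lambda^{k-1})_{d^*\cl}$ given by Proposition \ref{decom3} at degree $k-1$ (together with the $\pm$-refinement of its $j=1$ pieces) and transporting it through $R_{k-1}$ produces an orthogonal decomposition of $(L^2\Lambda^k)_{d\ex}$ into $\Delta_k$-invariant subspaces; since at degree $k-1$ the constraints become $p+q=k-1$ for the $\V_0$'s, $p+q+2\ell=k-2$ for the $\V_{1,\ell}^{\pm}$'s and $p+q+2\ell=k-3$ for the $\V_{2,\ell}$'s, the images are exactly $R\,\V_0^{p,q}$, $R\,\V_{1,\ell}^{p,q,\eps}$ and $R\,\V_{2,\ell}^{p,q}$ with the ranges in the second line of \eqref{eq-dec-k}. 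Combining the two lists and invoking the orthogonality of the two summands of \eqref{maindecom} completes the argument.

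There is no genuine obstacle left — all the analytic work lives in the cited propositions — and the only points needing a little attention are that $R_{k-1}$ really is a unitary intertwiner \emph{as a closed operator} between $(L^2\Lambda^{k-1})_{d^*\cl}$ and $(L^2\Lambda^k)_{d\ex}$, not merely on $\S_0$-forms (this is read off from Lemma \ref{s4.4} and the functional-calculus formalism of Section \ref{cores}), and the index arithmetic, which is the routine substitution $k\mapsto k-1$ in Proposition \ref{decom3}. Finally one records that every summand in \eqref{eq-dec-k} is nontrivial: for the first line this is asserted in Proposition \ref{decom3} (non-triviality of $W_0^{p,q}$ for $p+q\le n-1$ coming from Proposition \ref{non-triviality-lemma}, and of the spaces $\Xi^{p,q}$ occurring in the $\V_{1,\ell}^{p,q,-}$, cf. Remark \ref{projpar}), and for the second line it is inherited through the isometry $R_{k-1}$.
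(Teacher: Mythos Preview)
Your proposal is correct and follows the same approach as the paper's proof, which simply reads: ``This follows immediately from \eqref{maindecom}, Proposition \ref{decom3} and Proposition \ref{s4.5}, since, according to Lemma \ref{s4.4}, $R_{k-1}R_{k-2}=0$.'' You have unpacked this one-liner faithfully, and in fact you are more explicit than the paper on one point: you correctly invoke Corollary \ref{V1-orth} (and Proposition \ref{u1l-inj}) to pass from Proposition \ref{decom3}'s $\V_{1,\ell}^{p,q}$ to the finer orthogonal splitting $\V_{1,\ell}^{p,q,+}\oplus\V_{1,\ell}^{p,q,-}$ that the theorem actually asserts, whereas the paper's proof leaves this implicit.
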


\proof This follows immediately from \eqref{maindecom}, 
Proposition \ref{decom3} and Proposition \ref{s4.5}, since, according
to Lemma \ref{s4.4}, $R_{k-1}R_{k-2}=0.$ 
\qed

\bigskip

{\footnotesize
%\doublespacing
{\hspace{-1cm}
\begin{table}[htdp]
\begin{center}
\begin{tabular}{|c|c|c|c|}
\hline 
\sc{subspace} & \sc{scalar form} & \sc{intertwining} &
\sc{orthogonal}\\
& &\sc{(with domain)} & \sc{projection}
\\
\hline
$\begin{matrix}
\V_0^{p,q}\\
\text{{\scriptsize $(p+q=k<n$,}}\\
 \text{{\scriptsize or  $pq=0$ if $p+q=n)$}}
\end{matrix}$
& 
$\Delta_0 +i(q-p)T$ & $\begin{matrix} I\medskip\\(\V_0^{p,q})\end{matrix} $ & $\begin{matrix}
\phantom{\bigg|}\!\!\!I-RR^*&\!\!\!\!\! -\sum  P^{p,q,\pm}_{1,\ell}\\
& -\sum  P^{p,q}_{2,\ell}\medskip\end{matrix}$\\
\hline
$\begin{matrix}
R\V_0^{p,q}\\
\text{{\scriptsize $(p+q=k-1)$}}
\end{matrix}$ & 
$\Delta_0 +i(q-p)T$ & $\begin{matrix} R\medskip\\(\V_0^{p,q})\end{matrix} $ &  $\begin{matrix}
\phantom{\bigg|}\!\!RR^*&\!\!\!\!\! -\sum R P^{p,q,\pm}_{1,\ell}R^*\\
& -\sum  RP^{p,q}_{2,\ell}R^*\medskip\end{matrix}$\\
\hline
$\begin{matrix}
\V_{1,\ell}^{p,q,\pm}\\
\text{{\scriptsize $(p+q=k-2\ell-1)$}}
\end{matrix}$ &  
$\begin{matrix}
\phantom{\bigg|}\Delta_0 +i(q-p)T +\frac{n-p-q}{2} +\ell(n-p-q+\ell)\smallskip\ \\
\qquad
\pm\sqrt{\Delta_0 +i(q-p)T +\big(\frac{n-p-q}{2}\big)^2}\medskip
\end{matrix}$
& $\begin{matrix} U^{p,q,\pm}_{1,\ell}\medskip \\
(\W_0^{p,q},\,\text{resp. }\overline{\Xi^{p,q}})\end{matrix} $
& $\begin{matrix} P^{p,q,\pm}_{1,\ell} \\
\text{(Prop. \ref{u1l-inj})}\end{matrix} $ \\
\hline
$\begin{matrix}
R\, \V_{1,\ell}^{p,q,\pm}\\
\text{{\scriptsize $(p+q=k-2\ell-2)$}}
\end{matrix}$ & 
$\begin{matrix} 
\phantom{\bigg|}\Delta_0 +i(q-p)T +\frac{n-p-q}{2} +\ell(n-p-q+\ell)\smallskip\ \\
\qquad
\pm\sqrt{\Delta_0 +i(q-p)T +\big(\frac{n-p-q}{2}\big)^2}\medskip
\end{matrix}$
& $\begin{matrix} RU^{p,q,\pm}_{1,\ell}\medskip \\
(\W_0^{p,q},\,\text{resp. }\overline{\Xi^{p,q}})\end{matrix} $
& $R\, P^{p,q,\pm}_{1,\ell}R^* $ \\
\hline
$\begin{matrix}
\V_{2,\ell}^{p,q}\\
\text{{\scriptsize $(p+q=k-2\ell-2)$}}
\end{matrix}$ & 
$\Delta_0 +i(q-p)T +(\ell+1)(n-k+\ell+1) $
& $\begin{matrix}\phantom{\bigg|} U^{p,q}_{2,\ell}\medskip \\
(\overline{Z^{p,q}})\medskip\end{matrix} $
& $\begin{matrix} P^{p,q}_{2,\ell} \\
\text{(Prop. \ref{U2ell-prop})}\end{matrix} $ \\
\hline
$\begin{matrix}
R\, \V_{2,\ell}^{p,q}\\
\text{{\scriptsize $(p+q=k-2\ell-3)$}}
\end{matrix}$ & 
$\Delta_0 +i(q-p)T +(\ell+1)(n-k+\ell+2) $
& $\begin{matrix}\phantom{\bigg|} RU^{p,q}_{2,\ell}\medskip \\
(\overline{Z^{p,q}})\medskip\end{matrix} $
& $R\, P^{p,q}_{2,\ell}R^* $ \\
\hline
\end{tabular}
\end{center}
\bigskip
\caption{Components of $L^2\Lambda^k$, $0\le k\le n$}
\label{table}
\end{table}

}
}

%\singlespacing

The Hodge Laplacian  $\Delta_k$ leaves all the  subspaces in this
decomposition invariant, and we have seen that, after applying the
unitary intertwining operators derived in the previous sections, it
will assume a scalar form on each of the corresponding parameter
spaces.  

In Table \ref{table}, we list these  subspaces, the corresponding  scalar
forms of $\Delta_k ,$  the associated  unitary intertwining operators
as well as the orthogonal projections onto these subspaces. 

By $J,$  we denote the inclusion the operator of a given subspace into
$L^2\Lambda^k.$

\bigskip

\subsection{The $*$-Hodge operator and the case $n<k\le 2n+1$.}\quad
\medskip

We now remove the condition $0\le k\le n$ and prove a decomposition
theorem for $L^2\Lambda^k$ also in the case $n<k\le 2n+1$.

 We are going to use the $*$-Hodge
operator defined on an arbitrary Riemannian $d$-manifold  $M,$   acting for each point $m\in M$ as a linear mapping 
$$
*:\Lambda_m^k\rightarrow\Lambda_m^{d-k},
$$
where $\Lambda^k_m$ denotes the $k$-th exterior product of the dual of the tangent space at $m.$ It will be viewed also as a linear mapping acting on forms on $M.$  For its  definition and basic properties we refer to \cite{Range}. 
We summarize the main properties in the following statement.

\begin{prop}\label{*pro}
The  $*$-Hodge operator is almost involutive, i.e., ,   $*(*\om)=(-1)^{k(d-k)}\om,$ and the following properties hold true:
\begin{itemize}
\item[(1)] for $\omega_1,
\omega_2 \in L^2\Lambda^k(M)$,
$$
\int_M \omega_1\wedge *\overline{\omega_2} 
= \lan\omega_1,\, \omega_2\ran_{L^2\Lambda^k}\ ;
$$
\item[(2)] as a mapping  
$*:L^2\Lambda^k\rightarrow L^2\Lambda^{d-k},$
the operator $*$ is unitary;\smallskip
\item[(3)] $d^*=-*d*$;\smallskip
\item[(4)] $*\Delta_k=\Delta_{d-k}*$.
\end{itemize}
\end{prop}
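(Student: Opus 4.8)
The plan is to establish the four properties of the $*$-Hodge operator essentially by reducing them to the standard Riemannian-geometry facts, then noting that nothing in our setting introduces any obstruction. Since the statement is classical, the proof should be a brief verification rather than a computation. First I would recall that $*$ is defined pointwise by the requirement $\alpha\wedge *\beta=\langle\alpha,\beta\rangle_{\Lambda^k_m}\,\mathrm{vol}_m$ for all $\alpha,\beta\in\Lambda^k_m$, where $\mathrm{vol}_m$ is the Riemannian volume form determined by the left-invariant metric; this is well defined because the metric is nondegenerate. The almost-involutivity $**=(-1)^{k(d-k)}$ on $\Lambda^k_m$ is a pointwise linear-algebra identity, valid on any oriented inner-product space, so it holds here with $d=2n+1$.

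Property (1) follows by integrating the pointwise identity $\omega_1\wedge *\overline{\omega_2}=\langle\omega_1,\overline{\overline{\omega_2}}\rangle\,\mathrm{vol}=\langle\omega_1,\omega_2\rangle_{\Lambda^k_m}\,\mathrm{vol}_m$ over $H_n$ (taking complex conjugates appropriately), which gives exactly $\langle\omega_1,\omega_2\rangle_{L^2\Lambda^k}$; here one uses that the Riemannian volume form is, up to a constant, the Haar measure $dx$ on $H_n$ (indeed, with the chosen orthonormal frame $Z_j,\bar Z_j,T$ the volume form is the standard Lebesgue measure, since $H_n$ is unimodular). Property (2) is then immediate from (1) together with almost-involutivity: $*$ maps $L^2\Lambda^k$ to $L^2\Lambda^{d-k}$, preserves the $L^2$ inner product by (1), and has the two-sided inverse $(-1)^{k(d-k)}*$, hence is unitary. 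Property (3), $d^*=-*d*$, is the standard formula for the formal adjoint of $d$ on an oriented Riemannian manifold (with the sign $(-1)^{d(k+1)+1}$ which on the odd-dimensional manifold $H_n$ simplifies to $-1$); it is verified by integration by parts using Stokes' theorem applied to $d(\omega_1\wedge *\overline{\omega_2})$, the boundary terms vanishing for test forms, and then extended to $\S_0\Lambda^k$ by density. Property (4) follows formally from (3): since $*$ commutes with the grading shift $k\mapsto d-k$ and $\Delta=dd^*+d^*d$, one computes $*\Delta_k=*(dd^*+d^*d)=*d(-*d*)+(-*d*)d*\cdots$, and a direct substitution using $**=\pm I$ collapses this to $(dd^*+d^*d)*=\Delta_{d-k}*$; the signs $(-1)^{k(d-k)}$ cancel in pairs.

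The only point requiring a word of care — and the main (mild) obstacle — is that all these identities are first established on the core $\S_0\Lambda^k$, where $d$, $d^*$, $\Delta_k$ act as honest differential operators and integration by parts is legitimate, and must then be transferred to the $L^2$-closures. This is routine given the framework already set up: $*$ is a bounded (indeed isometric) zeroth-order operator, $\S_0\Lambda^k$ is a core for $\Delta_k$ by Lemma \ref{s3.1}, and the intertwining relation $*\Delta_k=\Delta_{d-k}*$ on the core together with $*$ being unitary forces $*(\dom\Delta_k)=\dom\Delta_{d-k}$ and the identity on the full domains. I would remark that, alternatively, one can verify all four properties by brute force in the explicit left-invariant coframe $\zeta_1,\dots,\zeta_n,\bar\zeta_1,\dots,\bar\zeta_n,\theta$, where $*$ acts by an explicit combinatorial formula on the basis monomials $\zeta^I\wedge\bar\zeta^{I'}$ and $\zeta^I\wedge\bar\zeta^{I'}\wedge\theta$; but since these are standard facts independent of the group structure, invoking the general Riemannian theory (e.g. \cite{Range}) is cleaner and I would simply cite it for the proofs of (1)–(4), recording only the verification that the Riemannian volume form coincides with Haar measure.
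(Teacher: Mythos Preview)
Your proposal is correct and aligns with the paper's treatment: the paper does not prove this proposition at all but simply states it, referring to \cite{Range} for the definition and basic properties of the Hodge star. Your sketch supplies more detail than the paper gives, but your ultimate recommendation---to cite the general Riemannian theory---is exactly what the paper does.
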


In our situation, $M= H_n$ and  $d=2n+1.$

It follows from property (4) above that a subspace $\V\subseteq L^2
\Lambda^k$  is $\Delta_k$-invariant if and only if $*\V\subseteq L^2
\Lambda^{d-k}$ is $\Delta_{d-k}$-invariant.
Thus,
we wish to describe the $\Delta_k$-invariant subspaces of 
 $L^2 \Lambda^k$, when $n< k\le 2n+1$.\medskip

We denote by $\Lambda^k_V$ the space of {\it vertical $k$-forms,} that is, the
forms 
$\om=\theta\wedge\om_2$, with $\om_2\in\Lambda^{k-1}_H,$  
and by $\mu=\theta\wedge d\theta\wedge\cdots\wedge d\theta$ the volume
element on $H_n.$ Similarly, $\mu_H= d\theta\wedge\cdots\wedge
d\theta$ will denote the corresponding volume element on the
horizontal structure. In the same way as the $*$-Hodge operator on
$H_n$ is determined by the relations
$\sigma\wedge*\overline{\omega}=\lan \sigma,\omega\ran \mu$ for all
$\sigma, \om\in \Lambda^k,$ we can introduce  the $*$-Hodge operator
$*_H$ acting  on the  
horizontal structure, by requiring that
$\sigma\wedge*_H\overline{\omega}=\lan \sigma,\omega\ran \mu_H$ for
all $\sigma, \om\in \Lambda_H^k.$ 

The following results are easy consequences of these defining relations.

\begin{lemma}\label{rem-abv}{\rm

Let $\om\in\S_0\Lambda_H^k.$ Then the following hold true:
\begin{itemize}
\item[(i)] if we put $\om':=(-1)^k*_H\om,$  then $*\om=\theta\wedge\om';$
\item[(ii)]  $ *_H\omega=*(\theta\wedge\omega).$
\end{itemize}
}
\end{lemma}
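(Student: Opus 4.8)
\textbf{Proof plan for Lemma \ref{rem-abv}.}

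The plan is to compute both sides of each identity directly from the defining relation of the $*$-operators, namely $\sigma\wedge*\overline\om=\lan\sigma,\om\ran\mu$ for $\sigma,\om\in\Lambda^k$, together with the analogous relation $\sigma\wedge*_H\overline\om=\lan\sigma,\om\ran\mu_H$ on the horizontal structure, and the factorization $\mu=\theta\wedge\mu_H$. The key structural input is the orthogonal splitting of each $\Lambda^j$ into its horizontal part $\Lambda^j_H$ and its vertical part $\theta\wedge\Lambda^{j-1}_H$, which is orthogonal because $\theta$ is orthogonal to all the $\zeta_i,\bar\zeta_i$ and has unit length; thus for a horizontal form $\om$ of degree $k$, $*\om$ must be a $(2n+1-k)$-form whose horizontal component is determined by testing against horizontal forms and whose vertical component is determined by testing against vertical forms.

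For part (i), I would first argue that $*\om$ is purely vertical when $\om$ is horizontal of degree $k\le n$. Indeed, if $\sigma\in\Lambda^{2n+1-k}_H$ is any horizontal form of complementary degree, then $\sigma\wedge\overline\om$ is a horizontal form of degree $2n+1-2k\le 2n+1$ sitting inside $\Lambda_H^{2n+1-2k}$; but $\mu=\theta\wedge\mu_H$ has a factor of $\theta$, so $\lan\sigma,*\om\ran\mu=\sigma\wedge\overline{*\om}$ forces the $\Lambda_H$-component of $\overline{*\om}$ to pair to zero with every horizontal $\sigma$, hence $*\om$ has no horizontal component (here one uses that wedging two horizontal forms never produces a multiple of $\mu$, since $\mu$ contains $\theta$). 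So $*\om=\theta\wedge\om'$ for a unique horizontal $\om'\in\Lambda_H^{2n+1-k}$. To identify $\om'$, I test against an arbitrary horizontal $k$-form $\tau$: on one hand $\tau\wedge\overline{*\om}=\tau\wedge\theta\wedge\overline{\om'}=(-1)^k\,\theta\wedge\tau\wedge\overline{\om'}$, and on the other hand this equals $\lan\tau,*(*\om)\ran\mu=\lan\tau,(-1)^{k(2n+1-k)}\om\ran\mu$; comparing with $\tau\wedge*_H\overline{\om'}=\lan\tau,\om'\ran\mu_H$ and with the relation defining $*_H$ applied in the form $\theta\wedge(\tau\wedge*_H\overline\om)=\lan\tau,\om\ran\theta\wedge\mu_H=\lan\tau,\om\ran\mu$, one gets $(-1)^k\om'=*_H\om$, i.e. $\om'=(-1)^k*_H\om$. (The parity bookkeeping simplifies because $k(2n+1-k)$ and $k$ differ by $k(2n-k)$ which is even.)

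Part (ii) is then essentially a rearrangement of what was just established: from (i), $*(\theta\wedge\om)$ is computed by noting $\theta\wedge\om$ is a vertical $(k+1)$-form, and the $*$-image of a vertical form is horizontal (by the mirror of the argument above, since wedging a vertical form with $\theta$ kills it), so $*(\theta\wedge\om)$ is horizontal of degree $2n+1-(k+1)=2n-k$, which is the right degree for $*_H\om$. Testing $*(\theta\wedge\om)$ against a horizontal form $\tau\in\Lambda_H^{2n-k}$ and using $\tau\wedge\overline{*(\theta\wedge\om)}=\lan\tau,\theta\wedge\om\ran\mu=\pm\lan\theta\wedge\tau,\theta\wedge\om\ran\mu$, one matches it against $\tau\wedge*_H\overline\om=\lan\tau,\om\ran\mu_H$ after wedging with $\theta$; the signs cancel and one obtains $*(\theta\wedge\om)=*_H\om$.

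The main obstacle, such as it is, will be getting all the sign conventions consistent: the $(-1)^k$ in (i) comes from commuting $\theta$ past a horizontal $k$-form, and one must be careful that the paper's convention $*(*\om)=(-1)^{k(d-k)}\om$ with $d=2n+1$ is used consistently, and that the orientation/volume normalization $\mu=\theta\wedge\mu_H$ (rather than $\pm\mu_H\wedge\theta$) matches the definition of $*_H$. Everything else is a short linear-algebra verification on the fibers, done pointwise and then extended to $\S_0\Lambda_H^k$ by $C^\infty(H_n)$-linearity of $*$ and $*_H$.
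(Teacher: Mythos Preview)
Your approach is correct and is exactly what the paper has in mind: the authors simply state that both identities are ``easy consequences of these defining relations'' for $*$ and $*_H$, and your plan unpacks precisely that verification via the orthogonal splitting $\Lambda^j=\Lambda^j_H\oplus(\theta\wedge\Lambda^{j-1}_H)$ and the factorization $\mu=\theta\wedge\mu_H$. Two small slips to clean up when you write it out: the degree of $\sigma\wedge\overline\om$ for horizontal $\sigma\in\Lambda^{2n+1-k}_H$ is $2n+1$ (not $2n+1-2k$), which is the point since $\Lambda^{2n+1}_H=0$; and the detour through $*(*\om)$ is unnecessary, since $\tau\wedge\overline{*\om}=\tau\wedge*\overline\om=\langle\tau,\om\rangle\mu$ directly (in any case $**=\id$ here because $d=2n+1$ is odd, so your parity remark, while not quite right as stated, is moot).
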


\bigskip

We set
\begin{equation}\label{Wstar0}
\stackrel{*}{W}_0^{r,s} =\bigl\{ \om'\in\S_0\Lambda^{r,s}:\, \de\om'=\bar\de\om'=0\big\}
\end{equation}
and define
\begin{equation}\label{Z0-2}
\aligned
Z_0^{r,s} & = \bigl\{ \om=\theta\wedge\om'\in\S_0\Lambda_V^k:\,
\om'\in \stackrel{*}{W}_0^{r,s}\big\}\ ,\smallskip \\
Z_1^{r,s} & = 
\bigl\{ \om=\theta\wedge\om'\in\S_0\Lambda_V^k:\, \om'=\de^*\sigma+\bar\de^*\tau,
\ \sigma,\tau\in \stackrel{*}{W}_0^{r,s} \big\}\ , \smallskip\\
Z_2^{r,s} & = 
\bigl\{ \om=\theta\wedge\om'\in\S_0\Lambda_V^k:\, \om'=\bar\de^*\de^*\sigma+\de^*\bar\de^*\tau,
\ \sigma,\tau\in\stackrel{*}{W}_0^{r,s} \big\}\ .
\endaligned
\end{equation}

We also set
\begin{equation}\label{Z1-2ell}
Z_{j,\ell}^{r,s} = i(d\theta)^\ell Z_1^{r,s}\ ,\qquad\qquad j=1,2.
\end{equation}

Notice that
$Z_0^{r,s}$ is a subspace of $\S_0\Lambda_V^k$, where $k=r+s+1$, 
$Z_1^{r,s}\subseteq\S_0\Lambda_V^k$ with $k=r+s$,
 and $Z_2^{r,s}\subseteq\S_0\Lambda_V^k$ with $k=r+s-1$.  Therefore,
$Z_{j,\ell}^{r,s}\subseteq \S_0\Lambda_V^k$ where $k=r+s+1-j-2\ell$, $j=1,2$.
\medskip

Observe also that from \eqref{1.11} it follows that $\om\in\S_0\Lambda^k$,
$\om=\om_1+\theta\wedge\om_2$  is $d$-closed
if and only if 
$
\om_1= T\inv d_H\om_2$.  

The mapping $\stackrel{*}{\Phi}:\S_0\Lambda_V^k \rightarrow
(\S_0\Lambda^{d-k})_{d\cl}$ defined by
$$
\stackrel{*}{\Phi}(\theta\wedge\om')=T\inv d_H \om' +\theta\wedge\om'\ ,
$$
where $\om'\in\S_0\Lambda_H^k$,
is an isomorphism.

\begin{lemma}\label{*transfer}
The following properties hold true:
\begin{itemize}
\item[(i)] $*(L^2\Lambda^k)_{d\ex}=
(L^2\Lambda^{d-k})_{d^*\ex}$ and 
$*(L^2\Lambda^k)_{d^*\cl}=  (L^2\Lambda^{d-k})_{d\cl}$;\smallskip
\item[(ii)] if $\om\in\S_0\Lambda_H^k$, then
$
\ *\Phi(\om)= \stackrel{*}{\Phi}(*\om)\ . 
$
\end{itemize}
Moreover, if for given $p,q$ we put $r=n-q$ and  $s=n-p,$ then
\begin{itemize}
\item[(iii)] $*(W_0^{p,q})=Z_0^{r,s},$ hence  $\ *(V_0^{p,q})=\stackrel{*}{\Phi}(Z_0^{r,s})$;\smallskip
\item[(iv)] $*(W_{1,\ell}^{p,q})=Z_{1,\ell}^{r,s},$ hence $\ *(V_{1,\ell}^{p,q})=\stackrel{*}{\Phi}(Z_{1,\ell}^{r,s})$;\smallskip
\item[(v)] $*(W_{2,\ell}^{p,q})=Z_{2,\ell}^{r,s},$ hence $\ *(V_{2,\ell}^{p,q})=\stackrel{*}{\Phi}(Z_{2,\ell}^{r,s})$.
\end{itemize}

Finally, the spaces $Z_{j,\ell}^{r,s}$, $j=1,2$ are non-trivial, and
$Z_0^{r,s}$ are non-trivial if and only if $r+s>n$ or, if $r+s=n$, $rs=0$.
\end{lemma}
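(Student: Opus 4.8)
The plan is to transport every object through the unitary operator $*$ and its horizontal analogue $*_H$, using only the elementary properties recorded in Proposition \ref{*pro} and Lemma \ref{rem-abv}. Part (i) is immediate: since $*$ is unitary and $d^*=-*d*$, one has $*d=\pm d^*\,*$ on forms (multiply on the right by $*$ and use $**=\pm I$), hence $*(\range d)=\range(d^*\,*)=\range d^*$ because $*$ is onto; passing to $L^2$-closures, which the unitary $*$ preserves, gives $*(L^2\Lambda^k)_{d\ex}=(L^2\Lambda^{d-k})_{d^*\ex}$. Dually, $d^*\om=0$ is equivalent to $d(*\om)=0$, so $*$ maps $\ker d^*$ onto $\ker d$, which is the second assertion of (i).

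For (ii) I would take $\om\in\S_0\Lambda_H^k$ and set $\om'=(-1)^k*_H\om$, so $*\om=\theta\wedge\om'$ by Lemma \ref{rem-abv}(i). Expanding $\Phi(\om)=\om+\theta\wedge T\inv d_H^*\om$ and using Lemma \ref{rem-abv}(ii) one gets
\[
*\Phi(\om)=\theta\wedge\om'+T\inv *_H d_H^*\om,\qquad
\stackrel{*}{\Phi}(*\om)=T\inv(-1)^k d_H *_H\om+\theta\wedge\om',
\]
so everything reduces to the horizontal identity $*_H d_H^*=(-1)^k d_H *_H$ on $\S_0\Lambda_H^k$. This I would derive from $\de^*=-*_H\bar\de\,*_H$ and $\bar\de^*=-*_H\de\,*_H$ (which sum to $d_H^*=-*_H d_H *_H$) together with $(*_H)^2=(-1)^{k-1}$ in the relevant degree.

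For (iii)--(v) I would use the same relations $\de^*=-*_H\bar\de\,*_H$, $\bar\de^*=-*_H\de\,*_H$, the Lefschetz-type identity $*_H e(d\theta)=c\,i(d\theta)\,*_H$ with $c\ne0$, and $(*_H)^2=\pm I$, noting that $*_H$ sends bidegree $(p,q)$ to bidegree $(r,s)=(n-q,n-p)$. These give: $\de^*\om=\bar\de^*\om=0$ iff $\de(*_H\om)=\bar\de(*_H\om)=0$, whence $*_H(W_0^{p,q})=\stackrel{*}{W}_0^{r,s}$; moreover $*_H$ turns $\de\xi$ and $\bar\de\eta$ into nonzero multiples of $\bar\de^*(*_H\xi)$ and $\de^*(*_H\eta)$ and intertwines $e(d\theta)^\ell$ with $i(d\theta)^\ell$ up to a constant, so that $*_H(W_{1,\ell}^{p,q})$ and $*_H(W_{2,\ell}^{p,q})$ are the horizontal generators of $Z_{1,\ell}^{r,s}$ and $Z_{2,\ell}^{r,s}$. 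Combined with Lemma \ref{rem-abv}(i) this yields $*(W_0^{p,q})=Z_0^{r,s}$ and $*(W_{j,\ell}^{p,q})=Z_{j,\ell}^{r,s}$, and the statements about $V_0^{p,q}$, $V_{1,\ell}^{p,q}$, $V_{2,\ell}^{p,q}$ then follow by applying $*$ to $V=\Phi(W)$ and using (ii) in the form $*\Phi(W)=\stackrel{*}{\Phi}(*W)$.

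Finally, non-triviality follows from Proposition \ref{non-triviality-lemma} and Proposition \ref{non-tr} via the bijection $*$, once the degree dictionary is checked. For $Z_0^{r,s}\subset\S_0\Lambda_V^k$ one has $k=r+s+1$ and $p+q=2n-r-s=2n+1-k$, so Proposition \ref{non-triviality-lemma} gives $Z_0^{r,s}=\{0\}$ exactly when $p+q=n$ and $pq\ne0$, i.e. when $r+s=n$ and $rs\ne0$, which is the claimed condition. For $Z_{j,\ell}^{r,s}\subset\S_0\Lambda_V^k$, $j=1,2$, one has $k=r+s+1-j-2\ell$, and the constraint $p+q+j+\ell\le n$ of Proposition \ref{non-tr} reads $r+s\ge n+j+\ell$, which holds automatically in the range $k>n$ since then $r+s=k-1+j+2\ell\ge n+j+2\ell$; hence these spaces are always nontrivial there. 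I expect the main obstacle to be the careful determination of the signs and constants in the commutation relations between $*_H$ and $\de,\bar\de,e(d\theta)$ used in (iii)--(v) — these are the usual K\"ahler-type identities, but they must be checked against the normalizations of Section \ref{difforms}, where $d_H$ is not a genuine differential — with the translation of the bidegree and length bookkeeping between the $(p,q,\ell)$ and $(r,s,\ell)$ parametrizations a secondary, purely combinatorial nuisance.
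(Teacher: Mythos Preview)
Your proposal is correct and follows essentially the same route as the paper: Proposition~\ref{*pro}(3) for (i), Lemma~\ref{rem-abv} plus the horizontal identity $*_H d_H^*=\pm d_H*_H$ for (ii), the relations between $\de,\bar\de,\de^*,\bar\de^*$ via $*_H$ together with the intertwining $*_H e(d\theta)=i(d\theta)\,*_H$ for (iii)--(v), and Propositions~\ref{non-triviality-lemma} and~\ref{non-tr} for the non-triviality. The paper in fact establishes the constant in $*_H e(d\theta)=c\,i(d\theta)*_H$ to be exactly $1$ (via the defining relation $\sigma\wedge*_H\overline\om=\lan\sigma,\om\ran\mu_H$), and writes the adjoint relation as $\de^*=-*_H\de*_H$ rather than your $\de^*=-*_H\bar\de*_H$; since in (iii)--(v) both $\de^*,\bar\de^*$ (resp.\ $\de,\bar\de$) conditions are always present together, this discrepancy is immaterial to the argument.
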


\proof
Property (i) follows from Proposition \ref{*pro} (3). 

If $\om\in \Lambda^k,$ we shall put
$$
\om':=(-1)^k*_H\om,
$$
so that according to Lemma \ref{rem-abv},  $*\omega=\theta\wedge \omega'.$  

Then
$$
\aligned
*\Phi(\om)
& = *\om +*\big(\theta\wedge T\inv d_H^* \om\big) = *\om + T\inv *_H d_H^* \om \\
& =*\om + (-1)^{(2n-k+1)(k-1)+1} T\inv d_H *_H \om \\
& = \theta\wedge\om' + T\inv d_H \om'\ ,
\endaligned
$$
which proves (ii).

Using Lemma \ref{rem-abv}, the fact that (on horizontal forms) $\de^*=-*_H\de*_H$ and the
analogous formula for $\bar\de^*$, for $\om\in W_0^{p,q}$ we obtain
$$
*\om = \theta\wedge(-1)^{p+q} *_H \om =\theta\wedge\om',
$$
where  here $\om'\in \S_0\Lambda^{r,s}$ and $\de\om' =\bar\de \om'=0$.  

This shows that $*(W_0^{p,q})\subset Z_0^{r,s},$ and in a similar way one proves that $*(Z_0^{r,s})\subset W_0^{p,q}.$ Combining this with (ii), we obtain (iii).
\medskip

Next, if $\om=\de\xi+\bar\de\eta$, with $\xi,\eta\in W_0^{p,q}$, then
\begin{equation}\label{case-ell=0}
\aligned
*\om 
& =  
(-1)^k \theta\wedge *_H(\de\xi+\bar\de\eta)\\
& = \theta\wedge (\de^* *_H\xi+\bar\de^* *_H\eta)
=: \theta\wedge  (\de^*\sigma +\bar\de^* \tau)
\endaligned
\end{equation}
where $\sigma,\tau\in \stackrel{*}{W}_0^{r,s},$ 
hence $\theta\wedge(\de^*\sigma +\bar\de^* \tau)\in Z_1^{r,s}$.
This shows that $*(W_1^{p,q})\subset Z_1^{r,s},$ and in a similar way one proves that $*(Z_1^{r,s})\subset W_1^{p,q},$ and we obtain (iv)  in the case $\ell=0$.  

For the general case, we observe that,
for all test forms $\om$ and $\sigma$,
$$
\aligned
\int\sigma\wedge *_H\overline{i(d\theta)\om}
& = \lan \sigma,\, i(d\theta)\om\ran = \lan e(d\theta)\sigma,\,
\om\ran \\
& = \int e(d\theta)\sigma\wedge *_H\overline{\om} = \int
\sigma\wedge\overline{e(d\theta)(*_H\om)}\ .
\endaligned
$$
It follows that
$$
*_H i(d\theta) = e(d\theta)  *_H\ , \qquad\text{and}\qquad
 *_H e(d\theta) = i(d\theta)  *_H\ .
$$
\smallskip

Hence, if $\om=\de\xi+\bar\de\eta$, with $\xi,\eta\in W_0^{p,q}$,
$$
\aligned
*\Phi\big( e(d\theta)^\ell \om)
& = \stackrel{*}{\Phi}\big(* e(d\theta)^\ell \om\big) \\
& = (-1)^k \theta
\wedge *_H  e(d\theta)^\ell \om + (-1)^k T\inv d_H *_H( e(d\theta)^\ell
\om)\\
& =  (-1)^k \theta \wedge i(d\theta)^\ell
 *_H  \om + (-1)^k T\inv d_H i(d\theta)^\ell
 *_H\om \\
& = \stackrel{*}{\Phi}\big( i(d\theta)^\ell \theta\wedge \om'\big)
\endaligned
$$
where $\theta\wedge\omega'=\theta\wedge(\de^*\sigma+\bar\de^*\tau)\in Z_1^{r,s}$, with
$\sigma,\tau$  as in \eqref{case-ell=0}. This shows  that
$*(W_{1,\ell}^{p,q})\subset Z_{1,\ell}^{r,s},$ and in a similar way
one proves that $*(Z_{1,\ell}^{r,s})\subset W_{1,\ell}^{p,q},$ and we
obtain (iv).   

The proof of  (v) follows along the same lines and is therefore  omitted.
\medskip

The proof about the non-triviality of these subspaces follows from
Propositions \ref{non-triviality-lemma} and \ref{non-tr}.
\qed

\begin{defn}{\rm
When $r+s\ge n$ we set
$$
Y^{r,s}_0 = \stackrel{*}{\Phi}(Z_0^{r,s})=Z_0^{r,s}\ ,\quad
Y^{r,s,\pm}_{1,\ell}  =
\stackrel{*}{\Phi}(Z_{1,\ell}^{r,s,\pm}) \ , \quad
Y^{r,s}_{2,\ell}  =
\stackrel{*}{\Phi}(Z_{2,\ell}^{r,s}) \ ,
$$
and denote by $\Upsilon^{r,s}_0, \Upsilon^{r,s,\pm}_{1,\ell}$
respectively  $\Upsilon^{r,s}_{2,\ell}$ the closures of these spaces
in $L^2\Lambda^k.$ 
}
\end{defn}

Let us finally observe that, in view of Lemmas \ref{s4.4} and
\ref{*pro},  the Riesz transforms on $H_n$ satisfy 
$$
*R_{2n-k}(\omega)=-R_{k+1}^* (*\om).
$$
Then, from Theorem \ref{dec-k-le-n}, Lemma \ref{*transfer} and
Proposition \ref{*pro} we immediately obtain the following
decomposition of  
$L^2\Lambda^k$ into $\Delta_k$-invariant subspaces  when  $n< k\le
2n+1$. 

\begin{thm}\label{dec-k>n}
Let $n< k\le 2n+1$.  Then $L^2\Lambda^k$ admits the orthogonal
decomposition
\begin{multline}\label{eq-dec-k}
L^2\Lambda^k =
\osum_{\substack{r+s=k-1>n \\ r+s=n,\, rs=0 }} \Upsilon_0^{r,s} \, \oplus\, 
\osum_{\eps=\pm}\osum_{r+s-2\ell=k} 
\Upsilon_{1,\ell}^{r,s,\eps} 
\, \oplus \osum_{r+s-2\ell=k+1}   \Upsilon_{2,\ell}^{r,s} \\
\oplus\, 
\osum_{r+s=k} R^*\,\Upsilon_0^{r,s}\,  \oplus\, 
\osum_{\eps=\pm}\osum_{r+s-2\ell=k+1}R^*\, 
\Upsilon_{1,\ell}^{r,s,\eps} 
\, \oplus \osum_{r+s-2\ell=k+2}  
R^*\, \Upsilon_{2,\ell}^{r,s}\ ,\medskip
\end{multline}
where $R^*=R^*_{k+1}.$ 

Moreover, since the $*$-Hodge operator transform the subspaces in this
decomposition into the corresponding subspaces in the decomposition
given by Theorem \ref{dec-k-le-n}, with $p:=n-s$ and $q:=n-r,$ the
unitary intertwining operators which transform $\Delta_k$ on each of
these subspaces into scalar forms are simply given by those from Table
\ref{table} at the end of Section \ref{Hodge}, composed on the right
hand 
side by the $*$-Hodge operator, and similar remarks apply to the
orthogonal projections and scalar forms. 
\end{thm}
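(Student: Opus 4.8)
The plan is to deduce Theorem \ref{dec-k>n} from Theorem \ref{dec-k-le-n} by transporting the whole structure through the $*$-Hodge operator, which by Proposition \ref{*pro} is unitary from $L^2\Lambda^k$ to $L^2\Lambda^{2n+1-k}$ and, by part (4) of that proposition, intertwines $\Delta_k$ with $\Delta_{2n+1-k}$. So first I would fix $k$ with $n<k\le 2n+1$, set $k':=2n+1-k$ so that $0\le k'\le n$, and apply Theorem \ref{dec-k-le-n} to $L^2\Lambda^{k'}$. Applying $*$ to that orthogonal decomposition and using unitarity together with Proposition \ref{*pro}(4), one immediately gets an orthogonal decomposition of $L^2\Lambda^k$ into $\Delta_k$-invariant subspaces; the entire content of the theorem then reduces to identifying the images of the six families of summands and checking the ranges of summation.

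The identification of the $d^*$-closed families is exactly Lemma \ref{*transfer}: with $r=n-q$ and $s=n-p$ one has $*\V_0^{p,q}=\Upsilon_0^{r,s}$, $*\V_{1,\ell}^{p,q}=\Upsilon_{1,\ell}^{r,s}$ and $*\V_{2,\ell}^{p,q}=\Upsilon_{2,\ell}^{r,s}$. For the $\pm$-refinement of the $W_1$-family I would observe that $*$ intertwines $\Delta_{k'}$ with $\Delta_k$, hence carries the spectral splitting of $\V_{1,\ell}^{p,q}$ induced by the two distinct scalar operators $D^{+}\ne D^{-}$ of \eqref{dpm} onto the corresponding splitting of $\Upsilon_{1,\ell}^{r,s}$, which yields $*\V_{1,\ell}^{p,q,\pm}=\Upsilon_{1,\ell}^{r,s,\pm}$. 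For the exact families I would use the identity $*R_{2n-k}=-R_{k+1}^{*}\,*$ recorded just before the statement (a consequence of Lemmas \ref{s4.4} and \ref{*pro}(3)): since the Riesz transform occurring in the degree-$k'$ decomposition is $R_{k'-1}=R_{2n-k}$, this turns each summand $R_{k'-1}\V$ into $R_{k+1}^{*}(*\V)$, so the six $R$-summands become the six $R^{*}$-summands with $R^{*}=R_{k+1}^{*}$. Finally I would carry out the index bookkeeping: under $(p,q)\mapsto(r,s)=(n-q,n-p)$ we have $p+q=2n-r-s$, so the constraints $p+q=k'<n$, $p+q=n$ with $pq=0$, $p+q+2\ell=k'-1$, $p+q+2\ell=k'-2$, $p+q=k'-1$, $p+q+2\ell=k'-2$, $p+q+2\ell=k'-3$ become, respectively, $r+s=k-1>n$, $r+s=n$ with $rs=0$, $r+s-2\ell=k$, $r+s-2\ell=k+1$, $r+s=k$, $r+s-2\ell=k+1$, $r+s-2\ell=k+2$, which are precisely the summation ranges in the statement.

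For the ``moreover'' part I would note that if $U_\nu$ is one of the unitary intertwining operators of Table \ref{table} in degree $k'$, mapping a parameter space onto $\V_\nu$ and carrying a scalar operator $m_\nu$ to $\Delta_{k'}|_{\V_\nu}$, then $*\circ U_\nu$ is again unitary and, by Proposition \ref{*pro}(4), carries the same $m_\nu$ to $\Delta_k|_{*\V_\nu}$; likewise, if $P_\nu$ denotes the orthogonal projection of $L^2\Lambda^{k'}$ onto $\V_\nu$, then $*P_\nu *^{-1}$ is the orthogonal projection of $L^2\Lambda^k$ onto $*\V_\nu$. Substituting $p=n-s$, $q=n-r$ in the relevant entries of Table \ref{table} (so that, for instance, $i(q-p)T$ becomes $i(s-r)T$, and $R$ is replaced by $R^{*}$) then produces the scalar forms and projections in degree $k$. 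I do not expect a genuine analytic obstacle here, since all the hard work has already been done in Theorem \ref{dec-k-le-n} and Lemma \ref{*transfer}; the only points needing care are the combinatorial matching of the six index families above and the mild subtlety that the $\pm$-refinement transports correctly, which is forced by the spectral uniqueness of the intertwining since $D^{+}\ne D^{-}$.
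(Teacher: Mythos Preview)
Your proposal is correct and follows essentially the same route as the paper: transport the decomposition of Theorem \ref{dec-k-le-n} in degree $k'=2n+1-k$ through the unitary $*$-Hodge operator, using Proposition \ref{*pro}, Lemma \ref{*transfer}, and the identity $*R_{2n-k}=-R_{k+1}^{*}\,*$, then rewrite the index constraints under $(p,q)\mapsto(n-q,n-p)$. One minor simplification: the transport of the $\pm$-refinement does not require a spectral-uniqueness argument, since in the paper the spaces $\Upsilon_{1,\ell}^{r,s,\pm}$ are (implicitly) \emph{defined} as the $*$-images of $\V_{1,\ell}^{p,q,\pm}$; so there is nothing to prove beyond the orthogonality and $\Delta_k$-invariance, both of which follow immediately from unitarity of $*$ and Proposition \ref{*pro}(4).
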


\bigskip
\setcounter{equation}{0}
\section{$L^p$-multipliers}\label{Lp}

\bigskip

The decomposition of $L^2\Lambda^k$ presented in the previous
sections, together with the  description of the 
action of $\Delta_k$ on the various subspaces, can be used for the $L^p$- 
functional calculus of $\Delta_k$.  For this purpose, we are
going to show that $L^p\Lambda^k$
admits the same decomposition when $1<p<\infty$.  Concretely, this
means proving that the orthogonal projections on the various
invariant subspaces and the intertwining operators that reduce
$\Delta_k$ to scalar forms are $L^p$-bounded.

\subsection{The multiplier theorem}\quad
\medskip

The joint spectrum of $L$ and $i\inv T$ is the {\it
Heisenberg fan} $F\subset\R^2$ defined as follows.  If
$$
\ell_{k,\pm}=\{(\la,\xi):\xi=\pm(n+2k)\la ,\la\in\R^*_+\}\ ,
$$
then
$$
F=\overline{\bigcup_{k\in\N}(\ell_{k,+}\cup\ell_{k,-})}\ .
$$

The variable $\la$ corresponds to $i\inv T$ and $\xi$ to $L$, i.e.,
calling $dE(\la,\xi)$ the spectral measure on $F$, then 
$$
i\inv T=\int_{F}\la\,dE(\la,\xi)\ ,\qquad
L=\int_{F}\xi\,dE(\la,\xi)\ . $$

If $m$ is any bounded, continuous function on $\R\times\R^*_+,$ we can
then define the associated multiplier operator $m(i^{-1}T,L)$ by  
$$
m(i^{-1}T,L):=\int_{F}m(\la,\xi)\,dE(\la,\xi),
$$ 
which is clearly bounded on $L^2(H_n).$

It follows from Plancherel's formula that the spectral measure of
the vertical half-line $\{(0,\xi):\xi\ge0\}\subset F$ is zero. A  
spectral multiplier is therefore a function $m(\la,\xi)$ on $F$  
whose restriction to each $\ell_k$ is measurable w.r. to $d\la$ for every $k.$

We shall use the following results from \cite{MRS1,MRS2} concerning  
$L^p$-boundedness of
spectral multipliers, see also Section 5 in \cite{MPR}.

Given $\rho,\sigma>0$, we say that a measurable function $f(\la,\xi)$ is in the
mixed  Sobolev space $L^2_{\rho,\sigma }=L^2_{\rho,\sigma }(\R^2)$ if
\begin{equation}\label{L2-rho-sigma}
\aligned
\|f\|_{L^2_{\rho,\sigma
}}^2:&=\int_{\R^2}(1+|\xi'|)^{2\rho}(1+|\la'|+|\xi'|)^{2\sigma}|\hat
f(\la',\xi')|^2\,d\la'\,d\xi'\\  
&=c\|(1+|\de_\xi|)^\rho(1+|\de_\la|+|\de_\xi|)^\sigma
f\|_2^2 <\infty\ .
\endaligned
\end{equation}

Let $\eta_0\in C_0^\infty(\R)$ be a non-trivial, non-negative,  
smooth bump function supported in $\R^*_+:=(0,\infty),$  put
$\eta_1(x):=\eta_0(x)+\eta_0(-x) $ and set
$\chi:=\eta_1\otimes\eta_0.$  If  $f(\la,\xi)$ is a continuous,
bounded function on $\R\times\R^*_+,$ then we put
$f^r(\la,\xi)=f(r_1\la,r_2\xi),\ r=(r_1,r_2)\in(\R^*_+)^2$,  and say
that $f$ lies in $L^2_{\rho,\sigma,\text{\rm   
sloc}}\big(\R\times\R^*_+\big)$ if for every
$r=(r_1,r_2)\in(\R^*_+)^2$, the function
$f^r \chi$ lies in  $L^2_{\rho,\sigma }$
and 
\begin{equation}\label{rho-sigma-norm} 
\|f\|_{L^2_{\rho,\sigma,\text{\rm sloc}}}:=\sup_r\|f^r\chi\|_{
L^2_{\rho,\sigma}} <\infty \ .
\end{equation}

\begin{defn}{\rm 
A function $m$ satisfying \eqref{rho-sigma-norm} is called a
{\em Marcinkiewicz multiplier of class} $(\rho,\sigma)$. A {\em smooth Marcinkiewicz multiplier} is a Marcinkiewicz multiplier of every class $(\rho,\sigma)$, i.e., satisfying the pointwise estimates
\begin{equation}\label{pointwise}
\big|\de_\la^j\de_\xi^km(\la,\xi)\big|\le C_{jk}|\la|^{-j}|\xi|^{-k}\ ,
\end{equation}
for every $j,k$.
}
\end{defn}

\begin{thm}\label{marcin}{\rm{\bf (\cite{MRS2})}}
Let $m$ be a Marcinkiewicz multiplier of class $(\rho,\sigma)$ for some $\rho>n$  
and
$\sigma>\half$.Then $m(i\inv T,L)$ is bounded on $L^p(H_n)$ for  
$1<p<\infty$,
with norm controlled by $\|m\|_{L^2_{\rho,\sigma,\text{\rm sloc}}}$.
\end{thm}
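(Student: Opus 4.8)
The statement in question is Theorem~\ref{marcin}, which is quoted verbatim from \cite{MRS2}; so strictly speaking the ``proof'' here is a pointer to that paper together with a sketch of how its proof goes. Let me lay out what the argument in \cite{MRS1, MRS2} actually does, since that is what a reader would want.

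\medskip

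\noindent\textbf{Plan of proof.} The plan is to reduce the $L^p$-boundedness of $m(i^{-1}T,L)$ to a Calder\'on--Zygmund estimate for its convolution kernel, exploiting the product structure of the Heisenberg fan under the two-parameter dilations $\delta_{r}$ acting on $(L, i^{-1}T)$ via $(\xi,\lambda)\mapsto(r_2\xi,r_1\lambda)$. First I would record that $m(i^{-1}T,L)$ is a left-invariant operator, hence convolution by a kernel $K=K_m$, and that the Marcinkiewicz hypothesis of class $(\rho,\sigma)$ is exactly a scale-invariant ($L^2$-Sobolev, mixed-order) bound on all the dyadic pieces $m^r\chi$. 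The key geometric input is that the Heisenberg group, although not a product, carries an ``automatic'' product-type structure adapted to this fan: one decomposes $K$ dyadically in the two parameters and uses the representation-theoretic description of $\pi_\lambda(L)$ (scalar $|\lambda|(2j+n)$ on $\mathcal P_j$) to write the kernel pieces explicitly in terms of rescaled Laguerre/Hermite expansions.

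\medskip

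The next step is the core of \cite{MRS2}: one proves a \emph{weighted $L^2$ (Plancherel-type) estimate} for each dyadic kernel piece $K_j$, with a weight that is a suitable power of the homogeneous Heisenberg norm in the horizontal variable and of $|t|$ in the central variable, the powers being governed by $\rho$ and $\sigma$ respectively. Summing these weighted estimates via Cauchy--Schwarz (the condition $\rho>n=Q_H/2$, where $Q_H=2n$ is the homogeneous dimension of $\C^n$, and $\sigma>1/2$ for the one-dimensional central direction, are precisely what make the sums converge) yields an $L^1$ bound for $K$ away from the diagonal of the form $\int_{|x|>2|y|}|K(y^{-1}x)-K(x)|\,dx\le C$, i.e. the H\"ormander integral condition. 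Combined with the trivial $L^2$-boundedness (from the spectral theorem) this gives weak-type $(1,1)$ and hence, by Marcinkiewicz interpolation and duality, $L^p$-boundedness for $1<p<\infty$, with the operator norm controlled by $\|m\|_{L^2_{\rho,\sigma,\mathrm{sloc}}}$.

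\medskip

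\noindent\textbf{Main obstacle.} The delicate point — and the reason \cite{MRS1, MRS2} are substantial papers rather than a corollary of standard CZ theory — is establishing the \emph{sharp} weighted Plancherel estimate for the kernel pieces with the correct (non-integer, mixed) orders $\rho, \sigma$. One cannot simply use the full homogeneous dimension $Q=2n+2$ of $H_n$: that would force $\rho>n+1$. The gain down to $\rho>n$ comes from treating the central variable separately and using that the ``$t$-Fourier transform'' of the kernel is controlled in a one-dimensional Sobolev norm of order $\sigma>1/2$; making this splitting rigorous requires the finite-propagation-speed / subelliptic estimates and the explicit Mehler-type formulas for $e^{is L}$ on $H_n$. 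Once that weighted estimate is in hand, the remaining interpolation and summation are routine. For the present paper, nothing further is needed: Theorem~\ref{marcin} is invoked as a black box, exactly as stated, and its role downstream is to feed the scalar multiplier estimates for the operators $\Delta_0+i(q-p)T+\cdots$ appearing in Table~\ref{table}.
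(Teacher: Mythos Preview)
Your proposal is correct and matches the paper's treatment: Theorem~\ref{marcin} is quoted from \cite{MRS2} with no proof given here, and you correctly identify this, then go further by supplying an accurate sketch of the Calder\'on--Zygmund/weighted-Plancherel argument from \cite{MRS1,MRS2} (including the correct explanation of why the thresholds $\rho>n$ and $\sigma>\half$ arise from the horizontal and central dimensions). Nothing to add.
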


\medskip

\subsection{Some classes of multipliers}\label{Psi-classes}\quad
\medskip

We introduce the classes $\Psi^{\rho, \sigma}_\tau$ of (possibly
unbounded) smooth multipliers, in terms of which we will understand
the behavior of the projections and intertwining operators presenteded
in the previous sections. 

These classes are defined by pointwise estimates on all derivatives,
in analogy to \eqref{pointwise}, which must be satisfied on some open
angle $\Gamma_{n-\eps}:=\{(\la,\xi)\in \R^2: \xi> 
(n-\eps)|\la|\}$ containing the Heisenberg
fan $F$ taken away the origin.

\begin{defn} 
We say that $m\in \Psi^{\rho, \sigma}_\tau$ ($\rho,\sigma,\tau\in\R$) if 
\begin{equation}\label{classes}
\big|\de^j_\la\de^k_\xi m(\la,\xi)\big|\lesssim 
\begin{cases} \xi^{\tau-j-k}&\text{ for } \xi\le 1\\
(\xi+\la^2)^{\rho-\frac j2}\xi^{\sigma-k}&\text{ for }\xi>1\ .
\end{cases}
\end{equation}
for every $j,k\in\N$.
We also say that $m\in {}^*\Psi{}^{\rho, \sigma}_\tau$ if 
$m\in \Psi^{\rho, \sigma}_\tau$ and, moreover, 
\begin{equation}\label{inverse}
m(\la,\xi)\gtrsim \begin{cases} \xi^\tau&\text{ for } \xi<1\\
(\xi+\la^2)^\rho\xi^\sigma&\text{ for }\xi>1\ .
\end{cases}
\end{equation}
\end{defn}

Prototypes are given by the smooth functions $m$ such that
$$
m(\la,\xi)=\begin{cases}(\xi+p\la+a\la^2)^\tau &\text{ for } \xi<1\\
  (\xi+\la^2)^\rho(\xi+q\la)^\sigma&\text{ for } \xi>2\ ,\end{cases} 
$$
with $|p|,|q|<n$. The following properties are easy to prove.

\begin{lemma}\label{properties}
The classes $\Psi^{\rho, \sigma}_\tau$ satisfy the following properties:
\begin{enumerate}
\item[\rm(i)] $\de_\la \Psi^{\rho, \sigma}_\tau\subset \Psi^{\rho-\half,
    \sigma}_{\tau-1}$, $\de_\xi \Psi^{\rho, \sigma}_\tau\subset
  \Psi^{\rho, \sigma-1}_{\tau-1}$; 
\item[\rm(ii)] $\Psi^{\rho, \sigma}_\tau \Psi^{\rho',
    \sigma'}_{\tau'}\subset \Psi^{\rho+\rho',
    \sigma+\sigma'}_{\tau+\tau'}$; 
\item[\rm(iii)] if $m\in {}^*\Psi{}^{\rho, \sigma}_\tau$ and 
then $m^s\in \Psi^{s\rho, s\sigma}_{s\tau}$ for every $s\in\R$ (for
$s\in\N$, $m\in \Psi^{\rho, \sigma}_\tau$ is sufficient); 
\item[\rm(iv)] if $\rho+\sigma\le\rho'+\sigma'$,
  $2\rho+\sigma\le2\rho'+\sigma'$ and $\tau\ge\tau'$, then
  $\Psi^{\rho, \sigma}_\tau\subset \Psi^{\rho', \sigma'}_{\tau'}$. 
\item[\rm(v)] 
In particular,  if $\rho+\sigma\le0$, $2\rho+\sigma\le0$ and
$\tau\ge0$, then $\Psi^{\rho, \sigma}_\tau\subset \Psi^{0,0}_0$,  and
$\Psi^{\rho, \sigma}_\tau$  
consists of Marcinkiewicz multipliers. 
\end{enumerate} \medskip
\end{lemma}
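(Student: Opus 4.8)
The statement to prove is Lemma~\ref{properties}, listing five routine properties of the symbol classes $\Psi^{\rho,\sigma}_\tau$. Here is my plan.

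\textbf{Overall approach.} All five items are established by direct bookkeeping with the defining estimates \eqref{classes} (and \eqref{inverse} for item (iii)), separately on the two regions $\{\xi\le 1\}$ and $\{\xi>1\}$ of the cone $\Gamma_{n-\eps}$. The key observation making item~(iv), and hence~(v), transparent is that on the region $\xi>1$ one has the two-sided comparison $\xi\sim\xi+\la^2$ when $|\la|\lesssim\sqrt\xi$ and $\xi+\la^2\sim\la^2$ when $|\la|\gtrsim\sqrt\xi$; but since we are inside the cone $\xi>(n-\eps)|\la|$, the variable $\la$ is bounded above by a constant times $\xi$, so $\xi+\la^2\lesssim \xi+\xi^2\lesssim \xi^2$ and $\xi+\la^2\gtrsim\xi$. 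Thus on $\Gamma_{n-\eps}\cap\{\xi>1\}$ we have $\xi\lesssim \xi+\la^2\lesssim\xi^2$, which is what converts monotonicity in the exponents into the claimed inclusions.

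\textbf{Step-by-step.} First I would prove (i) and (ii), which are immediate from \eqref{classes}: differentiating in $\la$ lowers the total order $j+k$ by one and the ``$\rho$-weight'' by $\tfrac12$ (because the weight $(\xi+\la^2)$ carries a $\de_\la$-gain of $(\xi+\la^2)^{-1/2}$ up to the factor $\la(\xi+\la^2)^{-1}$, which is bounded on the cone); differentiating in $\xi$ lowers $j+k$ by one and the $\sigma$-weight by one. For (ii), the Leibniz rule applied to a product $m m'$ produces sums of terms $\de^{j_1}_\la\de^{k_1}_\xi m\cdot \de^{j_2}_\la\de^{k_2}_\xi m'$ with $j_1+j_2=j$, $k_1+k_2=k$, and multiplying the two estimates gives exactly the bound for $\Psi^{\rho+\rho',\sigma+\sigma'}_{\tau+\tau'}$ on each region. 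Next, for (iii) I would use Fa\`a di Bruno: a derivative $\de^j_\la\de^k_\xi(m^s)$ is a finite sum of terms $m^{s-r}\prod_{i}\de^{j_i}_\la\de^{k_i}_\xi m$ with $\sum j_i=j$, $\sum k_i=k$, and $r$ factors; bounding $m^{s-r}$ above by the upper estimate \eqref{classes} (using \eqref{inverse} when the exponent $s-r$ is negative, which is where ${}^*\Psi$ is needed; for $s\in\N$ no negative power occurs, hence the parenthetical remark) and each differentiated factor by \eqref{classes} yields the claim after collecting exponents. Finally, for (iv), on $\{\xi\le1\}$ the condition $\tau\ge\tau'$ together with $\xi\le1$ gives $\xi^{\tau-j-k}\le\xi^{\tau'-j-k}$; on $\{\xi>1\}$, using $\xi\lesssim\xi+\la^2\lesssim\xi^2$ from the cone, one estimates $(\xi+\la^2)^{\rho-j/2}\xi^{\sigma-k}$ by inserting $(\xi+\la^2)^{\rho'-\rho}\cdot(\xi+\la^2)^{\rho-\rho'}$ and bounding $(\xi+\la^2)^{\rho-\rho'}$ by $\xi^{\rho-\rho'}$ or $\xi^{2(\rho-\rho')}$ according to the sign, so that the two conditions $\rho+\sigma\le\rho'+\sigma'$ and $2\rho+\sigma\le2\rho'+\sigma'$ are precisely what is needed to absorb the discrepancy into $\xi^{\sigma'-k}(\xi+\la^2)^{\rho'-j/2}$; (v) is then the special case $\rho'=\sigma'=\tau'=0$ combined with the definition of Marcinkiewicz multiplier and the fact that $\Psi^{0,0}_0\subset L^2_{\rho,\sigma,\mathrm{sloc}}$ for all $\rho,\sigma$ by scale-invariance of the pointwise bounds.

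\textbf{Main obstacle.} None of this is deep; the only point requiring care is the consistent handling of the mixed weight $(\xi+\la^2)$ under $\de_\la$ and under taking powers, and the verification that the two inequalities on $(\rho,\sigma)$ in (iv) are sharp for the two extreme regimes $|\la|\sim\sqrt\xi$ (giving the $\rho+\sigma$ condition) and $|\la|\sim\xi$ (giving the $2\rho+\sigma$ condition). I would therefore spell out the cone estimate $\xi\lesssim\xi+\la^2\lesssim\xi^2$ once at the beginning and then refer back to it throughout, which keeps the rest purely mechanical.
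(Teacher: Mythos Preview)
Your proposal is correct and is exactly the routine verification the paper has in mind; the paper itself gives no proof, simply stating that ``the following properties are easy to prove.'' Your handling of (iv) via the cone estimate $\xi\lesssim\xi+\la^2\lesssim\xi^2$ on $\Gamma_{n-\eps}\cap\{\xi>1\}$, splitting into the cases $\rho\ge\rho'$ and $\rho<\rho'$, is precisely the intended computation.
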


\begin{remark}\label{psi-remarks}\quad
\rm\begin{enumerate}
\item[(i)] Observe  that if $\chi$ is a smooth cut-off function on
  $\R$, compactly supported on $\R\setminus\{0\}$ and with
  $0\le\chi\le1$, then $\eta=\chi(\xi/|\la|)$ and $1-\eta$ are in
  $\Psi^{0,0}_0$. By Lemma \ref{properties} (ii), multiplication by
  $\eta$ or $1-\eta$ preserves the classes $\Psi^{\rho, \sigma}_\tau$.  
This property provides a certain amount of flexibility, of which we
give two examples.  
\item[(ii)] If we are given a multiplier $m$, which satisfies the
  inequalities \eqref{classes}, but is only defined on an angle
  $\Gamma$  leaving out a finite number of half-lines $\ell_{k,\pm}$
  of $F$, we can easily extend $m$ to a multiplier in $\Psi^{\rho,
    \sigma}_\tau$ which vanishes identically on the missing lines.  
\item[(iii)] Property (iii) in Lemma \ref{properties} also applies to
  the situation where $s>0$, \eqref{inverse} only holds on an angle
  omitting a finite number of half-lines in $F$, and $m$ vanishes
  identically on these half-lines. 
\end{enumerate}
\end{remark}

We denote by the same symbol $\Psi^{\rho, \sigma}_\tau$ the class of
operators defined by the multipliers in this  class. 
For notational convenience, we shall often use the same symbol to denote 
an operator $M\in \Psi^{\rho, \sigma}_\tau$ and (a convenient choice of) its multiplier
  $M(\la,\xi)$.

 \bigskip

\setcounter{equation}{0}
\section{Decomposition of $L^p\Lambda^k$ and boundedness of the Riesz transforms}\label{LrLambda}

\bigskip

Since the letter $p$ is already used to denote degrees of differential forms, the summability exponent will be denoted by $r$.

\medskip
If $\cV$ is any of the spaces $\cV^{p,q}_0$, $\cV^{p,q}_{1,\ell}$, $\Upsilon^{p,q}_{1,\ell}$, etc., by
$\rtrans \cV$ we shall denote  the closure of this space in
$L^{r}\Lambda^k.$ 
Our goal will be to prove the following theorem, whose parts (i) and (ii) extend Theorems~\ref
{dec-k-le-n} and  \ref{decom3}.

\begin{thm}\label{Lp-bdd}
Let $1<r<\infty.$   
\medskip
\begin{enumerate}
\item[$\rm(i)_k$]
 For $0\le k\le n$, $L^r\Lambda^k$ admits the
 direct sum  decomposition
\begin{multline}\label{eq-dec-k-r}
L^r\Lambda^k =
\osum_{\substack{p+q=k<n \\ p+q=n,\, pq=0 }} 
\rtrans\V_0^{p,q} \, \oplus\, 
\osum_{\eps=\pm}\osum_{p+q+2\ell=k-1} 
\rtrans\V_{1,\ell}^{p,q,\eps} 
\, \oplus \osum_{p+q+2\ell=k-2}   \rtrans\V_{2,\ell}^{p,q} \\
 \oplus\, 
\osum_{p+q=k-1} R_{k-1}\,\rtrans\V_0^{p,q}\,  \oplus\, 
\osum_{\eps=\pm}\osum_{p+q+2\ell=k-2}R_{k-1}\,
\rtrans\V_{1,\ell}^{p,q,\eps} 
\, \oplus \osum_{p+q+2\ell=k-3}  
R_{k-1}\,\rtrans\V_{2,\ell}^{p,q}\ ,\medskip
\end{multline}
where $R_{k-1}=d\Delta_{k-1}^{-\half}$ is the Riesz transform;
\medskip   
\item[$\rm(ii)_k$] 
For $n+1\le k\le 2n+1$, $L^r\Lambda^k$ admits the
 direct sum  decomposition
\begin{multline}\label{eq-dec-k}
L^2\Lambda^k =
\osum_{\substack{r+s=k-1>n \\ r+s=n,\, rs=0 }} \rtrans\Upsilon_0^{r,s} \, \oplus\, 
\osum_{\eps=\pm}\osum_{r+s-2\ell=k} 
\rtrans\Upsilon_{1,\ell}^{r,s,\eps} 
\, \oplus \osum_{r+s-2\ell=k+1}   \rtrans\Upsilon_{2,\ell}^{r,s} \\
\oplus\, 
\osum_{r+s=k} R^*\,\rtrans\Upsilon_0^{r,s}\,  \oplus\, 
\osum_{\eps=\pm}\osum_{r+s-2\ell=k+1}R^*\, 
\rtrans\Upsilon_{1,\ell}^{r,s,\eps} 
\, \oplus \osum_{r+s-2\ell=k+2}  
R^*\, \rtrans\Upsilon_{2,\ell}^{r,s}\ ,\medskip
\end{multline}
where $R^*=R^*_{k+1}.$    
\medskip
\item[$\rm(iii)_k$] For $0\le k\le 2n$, the Riesz transform $R_k$ is bounded from $L^r\Lambda^k$ to $L^r\Lambda^{k+1}$.
\end{enumerate}
\end{thm}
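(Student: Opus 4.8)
The three statements $\mathrm{(i)}_k$, $\mathrm{(ii)}_k$, $\mathrm{(iii)}_k$ will be proved simultaneously by induction on $k$, following the inductive scheme announced in the introduction: the validity of $\mathrm{(i)}_k$ (together with $\mathrm{(ii)}_k$, which is obtained from $\mathrm{(i)}_{2n+1-k}$ via the $*$-Hodge operator, an $L^r$-isometry up to constants since it acts pointwise on fibers) forces $\mathrm{(iii)}_k$, and $\mathrm{(iii)}_{k-1}$ in turn feeds into the proof of $\mathrm{(i)}_k$. The base case $k=0$ is trivial: $L^r\Lambda^0=L^r$, there are no nontrivial subspaces to split off, and $\mathrm{(iii)}_0$ is the $L^r$-boundedness of $d\Delta_0^{-1/2}=dL^{-1/2}(1+T^2L^{-1})^{-1/2}$ — a classical fact (each component $X_j\Delta_0^{-1/2}$ is a Calderón–Zygmund / Marcinkiewicz operator on $H_n$, covered by Theorem \ref{marcin} or directly by \cite{MPR}). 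The case $k=1$ is the content of \cite{MPR}.

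\textbf{From the decomposition to boundedness of $R_k$.} Assume $\mathrm{(i)}_k$ holds. By Proposition \ref{s4.5}, $R_k$ kills $(L^2\Lambda^k)_{d\ex}$ and is a partial isometry of $(L^2\Lambda^k)_{d^*\cl}$ onto $(L^2\Lambda^{k+1})_{d\ex}$; on the $L^r$-level, $R_k=R_k\circ(\text{projection onto }\bigoplus^r\mathcal V_\bullet)$ where the projection is the sum of the $L^r$-bounded projections furnished by $\mathrm{(i)}_k$. So it suffices to bound $R_k$ on each $\rtrans\mathcal V_\nu$ with $\mathcal V_\nu$ a $d^*$-closed summand, i.e. one of $\mathcal V_0^{p,q}$, $\mathcal V_{1,\ell}^{p,q,\pm}$, $\mathcal V_{2,\ell}^{p,q}$. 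On such a summand, Table \ref{table} identifies $\Delta_k$, via a unitary $U_\nu$ whose scalar components lie in the classes ${}^*\Psi^{\rho,\sigma}_\tau$ of Subsection \ref{Psi-classes}, with an explicit scalar operator $m_\nu(L,i^{-1}T)$; and $R_k U_\nu = R_k\Delta_{k+1}^{-1/2}\,\Delta_{k+1}^{1/2}U_\nu = \Delta_{k+1}^{-1/2}\,d\,U_\nu$ (using \eqref{4.2}), where $dU_\nu$ again has components in the $\Psi$-classes (exterior differentiation multiplies degrees, staying inside the calculus) and $\Delta_{k+1}^{-1/2}$, restricted to the image $\rtrans\mathcal V$ inside $(L^2\Lambda^{k+1})_{d\ex}$, is conjugate by $U$ of the next step to $m^{-1/2}\in\Psi^{\cdot,\cdot}_\cdot$. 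Composing and invoking Lemma \ref{properties}(ii),(iv),(v), every scalar entry of the resulting operator is a Marcinkiewicz multiplier of class $(\rho,\sigma)$ with $\rho>n$, $\sigma>\tfrac12$, hence $L^r$-bounded by Theorem \ref{marcin}. Care is needed for the summands with $p=0$ or $q=0$, where $\Box$ or $\Boxbar$ has a kernel and one works on $(\ker\Box)^\perp$ etc.; here one uses that the relevant projections $\mathcal C$, $\bar{\mathcal C}$ onto $\ker(L\pm inT)$ are themselves $L^r$-bounded (they are the $\Psi^{0,0}_0$-cutoffs $\eta$ of Remark \ref{psi-remarks} restricted to the appropriate half-lines of the fan), so the primed operators $\Box'$, $\Boxbar'$ still give multipliers in the calculus.

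\textbf{From boundedness of $R_{k-1}$ to the decomposition $\mathrm{(i)}_k$.} Assume $\mathrm{(iii)}_{k-1}$ and $\mathrm{(i)}_{k-1}$. First, $R_{k-1}$ bounded on $L^r$ and $R_{k-1}^*=d^*\Delta_{k}^{-1/2}$ bounded (by duality, since $R_{k-1}$ is bounded on $L^{r'}$) give the $L^r$-analogue of the primary splitting $L^r\Lambda^k = {}^r(L^2\Lambda^k)_{d\ex}\oplus{}^r(L^2\Lambda^k)_{d^*\cl}$: the projections are $R_{k-1}R_{k-1}^*$ and $I-R_{k-1}R_{k-1}^*$, both $L^r$-bounded. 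On the first summand, $R_{k-1}$ is an $L^r$-isomorphism onto it from ${}^r(L^2\Lambda^{k-1})_{d^*\cl}$, so the decomposition of the latter provided by $\mathrm{(i)}_{k-1}$ transfers. It remains to split ${}^r(L^2\Lambda^k)_{d^*\cl}=\Phi({}^r\mathcal S_0\Lambda^k_H$-closure$)$ into the $\rtrans\mathcal V_0^{p,q},\rtrans\mathcal V_{1,\ell}^{p,q,\pm},\rtrans\mathcal V_{2,\ell}^{p,q}$. The projections onto these are, by Propositions \ref{u1l-inj}, \ref{U2ell-prop}(\ref{u2l-inj}) and Table \ref{table}, the operators $U_\nu U_\nu^*$, $I-RR^*-\sum P^{p,q,\pm}_{1,\ell}-\sum P^{p,q}_{2,\ell}$, etc.; one must show each $U_\nu$ and $U_\nu^*$ is $L^r$-bounded. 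This is exactly where the $\Psi$-calculus does the work: Propositions \ref{U1ell-prop} and \ref{U2ell-prop} express $U_{1,\ell}$, $U_{2,\ell}$ as explicit matrices whose entries are products of $\partial,\bar\partial,e(d\theta)$ (which shift bidegree but have $\Psi^{1/2,0}_1$-type symbols — a half-derivative in the left-invariant calculus) with scalar functions $R_{jj}^{-1/2}$, $(\Delta'\Delta'')^{-1/2}$, $\Gamma$, etc. lying in ${}^*\Psi^{\rho,\sigma}_\tau$ by Lemma \ref{properties}(iii); multiplying exponents through Lemma \ref{properties}(ii),(iv) one lands in $\Psi^{0,0}_0$, i.e. Marcinkiewicz, and Theorem \ref{marcin} applies. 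The holomorphic/antiholomorphic Riesz transforms $\mathcal R,\bar{\mathcal R}$ that appear in $U_{2,\ell}$ are bounded by the same kind of argument (or by reduction to the scalar multiplier theorem as in \cite{MPR}). Directness of the $L^r$-sum, as opposed to mere spanning, follows from the fact that the finitely many projections are mutually ``orthogonal'' ($P_\mu P_\nu=\delta_{\mu\nu}P_\nu$ on $L^2$, hence on $L^r$ by density) and sum to the identity.

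\textbf{Main obstacle.} The genuinely delicate point is the bookkeeping in the $\Psi^{\rho,\sigma}_\tau$-calculus: one must check that every scalar entry produced by the polar-decomposition formulas of Section \ref{intws}, and by composing $R_k$ with the intertwiners, actually satisfies $\rho+\sigma\le0$, $2\rho+\sigma\le0$, $\tau\ge0$ after all cancellations — in particular that the square roots $\Gamma=\sqrt{\Delta_H-T^2+m^2}$, $\sqrt{\Delta'\Delta''}$, and the denominators $R_{jj}^{-1/2}$ never degenerate on the Heisenberg fan (their lower bounds $\gtrsim(\xi+\lambda^2)^\rho\xi^\sigma$ are precisely the ${}^*\Psi$-conditions verified in Lemmas \ref{new-matrix-R} and \ref{ip5}). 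The $p=0,q=0$ boundary cases, where one divides by $\Box'$ or $\Boxbar'$ only on the complement of a line in $F$ and must glue in a $\Psi^{0,0}_0$-cutoff (Remark \ref{psi-remarks}(ii),(iii)), require the most attention. Everything else is a formal induction.
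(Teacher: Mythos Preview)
Your inductive architecture and the step $(\mathrm{iii})_{k-1}+(\mathrm{i})_{k-1}\Rightarrow(\mathrm{i})_k$ are correct and match the paper closely: the projections onto the summands are $U_\nu U_\nu^*$, $R_{k-1}R_{k-1}^*$, etc., and Lemma~\ref{Psi-intertwinings} (together with $(\mathrm{iii})_{k-1}$ for the factors $R_{k-1},R_{k-1}^*$) gives their $L^r$-boundedness.

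The gap is in your step $(\mathrm{i})_k\Rightarrow(\mathrm{iii})_k$. You write $R_kU_\nu=\Delta_{k+1}^{-1/2}\,dU_\nu$ and then assert that $\Delta_{k+1}^{-1/2}$, restricted to the image in $(L^2\Lambda^{k+1})_{d\ex}$, ``is conjugate by $U$ of the next step to $m^{-1/2}$''. But the unitary intertwiner for the summand $R_k\mathcal V_\nu\subset L^2\Lambda^{k+1}$ is, by Table~\ref{table}, precisely $R_kU_\nu$ itself --- so you are invoking the boundedness of the very object you are trying to bound. This is circular. (The alternative direct route $R_kU_\nu=dU_\nu m_\nu^{-1/2}$, using $\Delta_k^{-1/2}U_\nu=U_\nu m_\nu^{-1/2}$, avoids the circularity but then the claim that ``$dU_\nu$ has components in the $\Psi$-classes'' needs substance: $d$ involves $\partial,\bar\partial,T,e(d\theta)$, none of which are scalar multipliers, so one must refactor each $\partial=\mathcal R\,\Box^{1/2}$, $\bar\partial=\bar{\mathcal R}\,\Boxbar^{1/2}$ and do the $\Psi$-bookkeeping component by component.)

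The paper's device bypasses all of this with a single clean factorization:
\[
R_k=d\Delta_k^{-1/2}=\bigl(d\Delta_0^{-1/2}\bigr)\bigl(\Delta_0^{1/2}\Delta_k^{-1/2}\bigr)=R_0\cdot\Delta_0^{1/2}\Delta_k^{-1/2}.
\]
Here $R_0$ is the scalar Riesz transform, bounded by $(\mathrm{iii})_0$, and the remaining factor is a \emph{scalar} operator on forms, so on each summand $\Delta_0^{1/2}\Delta_k^{-1/2}|_{\mathcal V_\nu}=\Delta_0^{1/2}U_\nu D_\nu^{-1/2}U_\nu^*$. Since $\Delta_0^{1/2}\in{}^*\Psi^{1/2,0}_{1/2}$ commutes through the matrix structure of $U_\nu$, the composition $\Delta_0^{1/2}U_\nu D_\nu^{-1/2}$ is pure multiplier arithmetic in the $\Psi$-calculus (with the $\mathcal R,\bar{\mathcal R},e(d\theta)$ factors harmlessly along for the ride), landing in $\Psi^{0,0}_0$ after checking the classes of $D_\nu^{-1/2}$ case by case. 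This is the missing idea.
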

\medskip

By $L^r$-boundedness of the $*$-Hodge operator, we can restrict ourselves to the case  $0\le k\le n$.
\smallskip

The proof is based on the following lemma.

\begin{lemma}\label{Psi-intertwinings}
Let $U=\begin{pmatrix}U_{11}&U_{12}\\U_{21}&U_{22}\end{pmatrix}$ denote any of the operators $U^{p,q}_{1,\ell}$ in \eqref{U1ell-eq} or $U^{p,q}_{2,\ell}$ in \eqref{U2ell-eq}. Then each component $U_{ij}$ of $U$ consists of a multiplier operator in $\Psi^{0,0}_0$, possibly composed with powers of $e(d\theta)$ and the holomorphic and antiholomorphic Riesz transforms $\Ri$, $\bar\Ri$.

In particular, for $1<r<\infty$, all these operators are $L^r$-bounded on the spaces of differential forms of the appropriate (bi-)degrees.
\end{lemma}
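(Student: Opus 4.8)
The plan is to verify Lemma~\ref{Psi-intertwinings} by directly inspecting the explicit formulas for $U^{p,q}_{1,\ell}$ and $U^{p,q}_{2,\ell}$ obtained in Propositions~\ref{U1ell-prop} and~\ref{U2ell-prop}, and checking that each scalar factor occurring in them lies in one of the classes $\Psi^{\rho,\sigma}_\tau$, then using the composition and power rules of Lemma~\ref{properties} to conclude that the products land in $\Psi^{0,0}_0$. The operators $e(d\theta)$, $\Ri$ and $\bar\Ri$ are to be isolated as separate factors, and disposed of at the end by quoting $L^r$-boundedness of $e(d\theta)$ (a bounded algebraic operation on the bundle) and of the holomorphic and antiholomorphic Riesz transforms, which are singular-integral operators of Calder\'on--Zygmund type on $H_n$ (cf.\ the discussion in Section~\ref{firstproperties} and \cite{MRS1,MRS2,MPR}).

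First I would catalogue the elementary building blocks. The operators $L$ and $i^{-1}T$ have joint spectrum on the Heisenberg fan; on that fan $\Delta_0=L-T^2$ corresponds to $\xi+\la^2$, while $\Delta_H=L+i(q-p)T$ corresponds to $\xi+(q-p)\la$ with $|q-p|\le n$, and $\Box$, $\Boxbar$ correspond to $\tfrac12(\xi+\la^2)\mp$ (linear in $\la$) terms; all of these, together with $\Gamma=\sqrt{\Delta_H-T^2+m^2}$, are of the prototype form displayed after the definition of the classes, hence $\Delta_0,\Delta_H,\Box,\Boxbar\in {}^*\Psi^{1,0}_1$ (modulo the bounded cut-off $\eta$ near $\xi\sim|\la|$, harmless by Remark~\ref{psi-remarks}), $\Gamma\in{}^*\Psi^{\frac12,0}_1$, and $T^{\pm1}\in\Psi^{0,\pm1}_{\pm\frac12}$ away from the vertical line. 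The ``square-root'' operators $\Delta'$, $\Delta''$ of Lemma~\ref{ip5} are likewise of prototype type: $\Delta''\in{}^*\Psi^{1,0}_1$ and a short count gives $\Delta'\in{}^*\Psi^{2,0}_2$ (it is, up to lower-order terms, $(\xi+\la^2)^2$ on the fan), so $1/\sqrt{\Delta'\Delta''}\in\Psi^{-\frac32,0}_{-\frac32}$ and $\Sigma_{jj}=R_{jj}^{-1/2}\in\Psi^{-1,0}_{-1}$ once one checks that $R_{11},R_{22}\in{}^*\Psi^{2,0}_2$; for this the positivity estimates already proved in Lemma~\ref{new-matrix-R} (that $R_{11}\ge 2m^3(2m-\ell)>0$, etc.) give the lower bound~\eqref{inverse}, and the upper bounds~\eqref{classes} are read off the explicit polynomial-plus-$\Gamma$ expressions. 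For $R_{22}$ on $C_pW_0^{p,q}\oplus\bar C_qW_0^{p,q}$ one must pass to the modified operator $\widetilde R_{22}$, as in the proof of Lemma~\ref{new-matrix-R}, and invoke Remark~\ref{psi-remarks}(iii) to handle the half-lines where it vanishes.

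Next I would assemble the matrices. For $U_{1,\ell}$, formula~\eqref{U1ell-eq} writes it as $c_{s+1,\ell}^{-1/2}e(d\theta)^{\ell-1}S\,\Sigma$ where $\Sigma$ is diagonal with entries in $\Psi^{-1,0}_{-1}$ and the entries of $S$ from~\eqref{S-components} are: sums of $\de Q$- and $\bar\de Q$-type terms (with $Q^{\eps}_{\del}\in{}^*\Psi^{\frac12,0}_1$ times a bounded $e(d\theta)$, and $\de=\Ri\,\Box^{1/2}$, $\bar\de=\bar\Ri\,\Boxbar^{1/2}$ factoring a Riesz transform off a $\Psi^{\frac12,0}_1$-operator, so the whole term is $\Ri$ or $\bar\Ri$ composed with something in $\Psi^{1,0}_2$), and, for $S_{21},S_{22}$, terms $\ell(\bar\de\de-\de\bar\de)$ and $e(d\theta)[\Delta_H\mp(\Gamma\mp m)(2m-\ell)]$, again in $\Psi^{1,0}_2$ after extracting the $e(d\theta)$ and the Riesz factors (here one uses $\bar\de\de=\bar\Ri\Ri\,\Box^{1/2}\Boxbar^{1/2}$-type rewritings, valid modulo the projection subtleties at $p=0$ or $q=0$, again Remark~\ref{psi-remarks}(iii)). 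Multiplying $\Psi^{1,0}_2\cdot\Psi^{-1,0}_{-1}=\Psi^{0,0}_1\subset\Psi^{0,0}_0$ by Lemma~\ref{properties}(ii),(v) gives that every $U_{ij}$ is a $\Psi^{0,0}_0$-operator times fixed powers of $e(d\theta)$, $\Ri$, $\bar\Ri$. For $U_{2,\ell}$ the argument is identical starting from~\eqref{U2ell-eq} and Lemma~\ref{7.9}: the matrix $H$ has entries that are Riesz transforms ($\Ri$, $\bar\Ri$, $\bar\Ri\Ri$, $e(d\theta)$) composed with operators of class $\Psi^{\frac52,0}_{?}$ (products of $\Box^{1/2},\Boxbar^{1/2},\Delta_H,\sqrt{c\Delta''}$ and one power of $T$), and these are balanced by the $1/\sqrt{\Delta'\Delta''}\in\Psi^{-\frac32,0}_{-\frac32}$ factor, the leftover again a nonpositive-exponent class contained in $\Psi^{0,0}_0$. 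Finally, $\Psi^{0,0}_0\subset$ Marcinkiewicz multipliers of all classes $(\rho,\sigma)$ by Lemma~\ref{properties}(v), so Theorem~\ref{marcin} gives $L^r$-boundedness; combined with $L^r$-boundedness of $e(d\theta)$ (bounded pointwise wedge operation) and of $\Ri,\bar\Ri$ (Calder\'on--Zygmund), this finishes the proof.

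The main obstacle I anticipate is \emph{bookkeeping of exponents}: one must track, for each of the many entries in~\eqref{S-components}, \eqref{Sigma-components} and Lemma~\ref{7.9}, the precise triple $(\rho,\sigma,\tau)$ and check that after all compositions the hypotheses $\rho+\sigma\le0$, $2\rho+\sigma\le0$, $\tau\ge0$ of Lemma~\ref{properties}(v) are met — in particular that the square roots $\Delta',\Delta''$ really contribute exponents $2$ and $1$ (not, say, $\frac32$) in the leading variable $\xi+\la^2$, which requires a careful look at the top-order term of $\Delta'$ in Lemma~\ref{ip5}. A secondary, more delicate point is the behavior near the ``missing'' half-lines of the fan (the cases $p=0$ or $q=0$, where $\Box$ or $\Boxbar$ degenerate and one works with $\Box'$, $\Boxbar'$, $\mathcal C$, $\bar{\mathcal C}$): there the lower bound~\eqref{inverse} fails on those lines, and one must systematically invoke Remark~\ref{psi-remarks}(ii),(iii) to replace each degenerate factor by an extension that vanishes on the relevant lines, so that its (fractional) powers still lie in the right $\Psi$-class. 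Everything else is routine symbolic calculus.
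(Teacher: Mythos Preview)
Your overall strategy is exactly the paper's: factor each entry of $U_{1,\ell}$ and $U_{2,\ell}$ as a scalar multiplier times powers of $e(d\theta)$, $\Ri$, $\bar\Ri$ (via $\de=\Ri\,\Box^{1/2}$, $\bar\de=\bar\Ri\,\Boxbar^{1/2}$, $\bar\de\de=\bar\Ri\Ri\,(\Boxbar+iT)^{1/2}\Box^{1/2}$), place each scalar factor in a class $\Psi^{\rho,\sigma}_\tau$, and use Lemma~\ref{properties} to land in $\Psi^{0,0}_0$. You also correctly flag the two genuine subtleties (exponent bookkeeping, and the degenerate half-lines when $p=0$ or $q=0$, handled via Remark~\ref{psi-remarks}).

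However, several of your specific class assignments are wrong, and they are wrong in a way that matters for the argument. The point of the two indices $\rho,\sigma$ is that $\rho$ tracks powers of $\xi+\la^2$ while $\sigma$ tracks powers of $\xi$; you repeatedly collapse everything into the form $\Psi^{?,0}_?$, which loses exactly the information needed for the lower bounds. Concretely: $\Box,\Boxbar$ have multiplier $\sim\xi$, so $\Box^{1/2},\Boxbar^{1/2}\in\Psi^{0,1/2}_{1/2}$, \emph{not} ${}^*\Psi^{1/2,0}_{?}$ (the lower bound $\gtrsim(\xi+\la^2)^{1/2}$ fails when $\la^2\gg\xi$, which does occur on the fan). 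Consequently $R_{11}\sim(\xi+\la^2)\xi$, so $R_{11}\in{}^*\Psi^{1,1}_0$ and $\Sigma_{11}\in\Psi^{-1/2,-1/2}_0$; your claim $R_{11}\in{}^*\Psi^{2,0}_2$ fails for the same reason, and then your $\Sigma_{11}\in\Psi^{-1,0}_{-1}$ is simply false as an upper bound. Similarly $\Delta'\in\Psi^{1,2}_3$ (its top term is $2\Box\Boxbar(\Delta_H-T^2)\sim\xi^2(\xi+\la^2)$, not $(\xi+\la^2)^2$), so $(\Delta'\Delta'')^{-1/2}\in\Psi^{-1,-1}_{-3/2}$, not $\Psi^{-3/2,0}_{-3/2}$. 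Finally, many of your $\tau$-values are off: $\Gamma$, $Q^\eps_\delta$, $R_{11}$, $\Delta''$ all carry positive additive constants, so they are $\sim 1$ for $\xi\le 1$ and belong to classes with $\tau=0$, not $\tau\ge 1$. If you redo the bookkeeping with the correct triples (as in Lemmas~\ref{lpu1lemma}--\ref{lpu3lemma} and \ref{mul-lem-7.8}), the products do fall into $\Psi^{0,0}_0$ and the proof goes through; with your assignments, the intermediate inverse-power steps are unjustified.
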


This lemma will be proved in the last part of this section. Taking it for granted, we give the proof of the theorem.

\begin{proof}[Proof of Theorem \ref{Lp-bdd}]
We prove the two parts of the theorem  simultaneously, via the inductive steps $\big({\rm(i)}_{k-1}+{\rm(ii)}_{k-1}\big)\Longrightarrow {\rm(i)}_k\Longrightarrow {\rm(ii)}_k$. The statement ${\rm(i)}_0$ is trivial, and ${\rm(ii)}_0$ and ${\rm(i)}_1$ are proved in~\cite{MPR}.

Assume that ${\rm(i)}_{k-1}$ and ${\rm(ii)}_{k-1}$ hold, and consider anyone of the orthogonal  projections in the last column of Table \ref{table}.  This is a product (or a sum of products) of factors, each of which can be either $R_{k-1}$, or its adjoint $R_{k-1}^*$, or  $P=UU^*$,  $U$ being  one of the operators in Lemma \ref{Psi-intertwinings}. 
Then ${\rm(i)}_k$ follows easily.

We prove now the implication ${\rm(i)}_k\Longrightarrow {\rm(ii)}_k$.   
Factoring
$$
R_k=d\Delta_k^{-\half}=R_0\Delta_0^\half\Delta_k^{-\half}\ ,
$$
and using ${\rm(ii)}_0$, it suffices to prove the boundedness of $\Delta_0^\half\Delta_k^{-\half}$ on $L^r\Lambda^k$. 

 Referring to the decomposition \eqref{eq-dec-k-r}, we disregard the $d$-exact components of $L^r\Lambda^k$ (i.e., those with $R_{k-1}$), on which $R_k=0$, and adopt the simplified notation
$$
(L^r\Lambda^k)_{d^*{\rm-cl}}={\sum_\beta}^\oplus\,\,\, \rtrans\V_\beta\ .
$$

Denote by $U_\beta:\rtrans Z_\beta\longrightarrow \rtrans\V_\beta$ the $L^r$-closure of the unitary intertwining operator in Table \ref{table}, with $\rtrans Z_\beta$ denoting the $L^r$-closure of the appropriate space $X^{p,q}$, $Y^{p,q}$ or $Z^{p,q}$ in \eqref{Xpq-Ypq}.
Let    $P_\beta=U_\beta U_\beta^*$ be the  projection of $L^r\Lambda^k$ onto $\rtrans \V_\beta$.

Decomposing $\om\in L^r\Lambda^k$ as
$$
\om=\sum_\beta \om_\beta=\sum_\beta U_\beta\sigma_\beta\ ,
$$
with $\sigma_\beta\in \rtrans Z_\beta$, we have
$$
\Delta_0^\half\Delta_k^{-\half}\om=\sum_\beta \Delta_0^\half U_\beta D_\beta^{-\half}\sigma_\beta\ ,
$$
where $D_\beta=U_\beta^*\Delta_kU_\beta$ is the scalar operator appearing in \eqref{j=0}, \eqref{j=2}, \eqref{diagonal}. Explicitely,

$$
D_\beta=\begin{cases}
\Delta_0+i(q-p)T&\text{if }\,\rtrans\V_\beta=\rtrans \V_0^{p,q} \\
 \Delta_0 +i(q-p)T +\ell(n-k+\ell)+ m 
\pm\sqrt{\Delta_0 +i(q-p)T +m^2}&\\
\hfill\big(m=\frac{n-p-q}2\big)
&\text{if }\,\rtrans\V_\beta=\rtrans \V_{1,\ell}^{p,q,\pm}\\
\Delta_0 +i(q-p)T +(\ell+1)(n-k+\ell+1)&\text{if }\,\rtrans\V_\beta=\rtrans \V_{2,\ell}^{p,q}\text\ .
\end{cases}
$$

Denote by  $m_\beta$ be the spectral multiplier of $D_\beta$. Then, for each of the above cases,
$$
m_\beta=\begin{cases}\xi+\la^2+(p-q)\la&\in {}^*\Psi{}^{1,0}_1\\
\xi+\la^2+(p-q)\la+\ell(n-k+\ell)+ m 
+\sqrt{\xi+\la^2 +(p-q)\la +m^2}&\in{}^*\Psi{}^{1,0}_0\\
\xi+\la^2+(p-q)\la+\ell(n-k+\ell)+ m 
-\sqrt{\xi+\la^2 +(p-q)\la +m^2}\\
&\!\!\!\!\!\!\!\!\!\!\!\!\!\!\!\!\!\!\!\!\!\!\!\!\!\!\!\!\!\!\!\!\!\!\!\!\!\!\!\!\!\!\!\!\!\!\!\!\!\!\!\!\!\!\!\!\!\!\!\!\!\!\!\!\!\!\!\! \in{}^*\Psi{}^{1,0}_1\text{ if } \ell=0\ ,\  {}^*\Psi{}^{1,0}_0\text{ otherwise}\\
\xi+\la^2 +(p-q)\la +(\ell+1)(n-k+\ell+1)&\in {}^*\Psi{}^{1,0}_0\ ,
\end{cases}
$$
respectively. By Lemma \ref{properties} (iii), $D_\beta^{-\half}$ is in $\Psi^{-\half,0}_{-\half}$ or in $\Psi^{-\half,0}_0$, depending on the case. Combining together Lemma \ref{Psi-intertwinings}, Lemma \ref{properties} (ii) and (v), and the fact that the multiplier $\xi+\la^2$ of $\Delta_0$ is in ${}^*\Psi{}^{1,0}_1$, we conclude that the composition $\Delta_0^\half U_\beta D_\beta^{-\half}$ has all its components in $\Psi^{0,0}_0$.

Therefore,
$$
\begin{aligned}
\|\Delta_0^\half\Delta_k^{-\half}\om\|_r&\le\sum_\beta \|\Delta_0^\half U_\beta D_\beta^{-\half}\sigma_\beta\|_r\\
&\le C \sum_\beta \|\sigma_\beta\|_r\\
&\le C\|\om\|_r\ .
\end{aligned}
$$
\end{proof}

\subsection[lpu1]{$L^p$-  boundedness of the intertwining operators $U_{1,\ell}^\pm$}\quad
\medskip

Our next goal will be to prove
\begin{prop}\label{lpu1prop}
Assume that $p+q+1+2\ell\le n$  and $1<r<\infty.$ Then there is a constant $C_r$ so that 
\begin{eqnarray*}
\|U_{1,\ell}^{+}\,\xi\|_{L^r} \le C_r\|\xi\|_{L^r} \qquad \mbox{for every} \ \xi\in W_0^{p,q},\\
\|U_{1,\ell}^{-}\,\eta\|_{L^r} \le C_r\|\eta\|_{L^r} \qquad \mbox{for every} \ \eta\in \Xi^{p,q}.
\end{eqnarray*}
\end{prop}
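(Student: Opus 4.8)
The plan is to deduce the $L^r$-boundedness of $U_{1,\ell}^\pm$ directly from the explicit formula for $U_{1,\ell}$ in Proposition \ref{U1ell-prop}, combined with the machinery of the classes $\Psi^{\rho,\sigma}_\tau$ introduced in Section \ref{Lp} and the (inductively available) $L^r$-boundedness of the holomorphic and antiholomorphic Riesz transforms. This is precisely the content of Lemma \ref{Psi-intertwinings}, so the real work is to verify that lemma in the case at hand, i.e.\ to check that every scalar component of the matrix in \eqref{U1ell-eq} is a multiplier in $\Psi^{0,0}_0$ possibly composed with a power of $e(d\theta)$ and with the operators $\Ri,\bar\Ri$. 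Since $e(d\theta)$ is a bounded operator between the appropriate bundles of horizontal forms and $\Ri_p$, $\bar\Ri_q$ are $L^r$-bounded by Proposition \ref{s4.6} together with the Calder\'on--Zygmund theory for the box operators (as used already for $k=1$ in \cite{MPR}), this will immediately give the stated estimates.

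First I would unwind the formula \eqref{U1ell-eq}: $U_{1,\ell}=c_{s+1,\ell}^{-\half}\,e(d\theta)^{\ell-1}S\,\mathrm{diag}(\Sigma_{11},\Sigma_{22})$, with $S_{ij}$ given by \eqref{S-components} and $\Sigma_{jj}=R_{jj}^{-\half}$ given by \eqref{Sigma-components}. The entries of $S$ involve $\de,\bar\de$, one factor of $e(d\theta)$, the operators $Q^\pm_\pm=\Gamma\pm m\mp iT$, and $\Delta_H$; here $\Gamma=\sqrt{\Delta_H-T^2+m^2}=\sqrt{L-T^2+i(q-p)T+m^2}$. In terms of the joint spectral variables $(\la,\xi)$ for $(i^{-1}T,L)$, the symbol of $\Delta_H$ is $\xi+(q-p)\la$, that of $-T^2$ is $\la^2$, and that of $\Gamma$ is $\sqrt{\xi+(q-p)\la+\la^2+m^2}$. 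I would check, using Lemma \ref{properties}, that $\Delta_H\in{}^*\Psi{}^{1,0}_1$, that $\Delta_H-T^2+m^2\in{}^*\Psi{}^{1,0}_1$ (the positivity bound $\ge m^2$ from \eqref{m-Gamma} giving the required lower bound on the angle $\Gamma_{n-\eps}$), hence $\Gamma\in{}^*\Psi{}^{1/2,0}_{1/2}$ by Lemma \ref{properties}(iii), and $Q^\pm_\pm\in\Psi^{1/2,0}_{1/2}$. For $R_{11}$ and $R_{22}$ I would use the explicit formulas in Lemma \ref{new-matrix-R} together with the lower bounds established in the proof of that lemma ($R_{11}\ge 2m^3(2m-\ell)>0$ and, for $R_{22}$ restricted to $\Xi^{p,q}$, the factorization \eqref{R22-R11inv}): this shows $R_{11}\in{}^*\Psi{}^{2,0}_2$ and $R_{22}\in{}^*\Psi{}^{2,0}_2$ on the relevant spaces, so $\Sigma_{jj}=R_{jj}^{-\half}\in\Psi^{-1,0}_{-1}$; the nuisance points where $R_{22}$ degenerates (namely $p=0$ or $q=0$, i.e.\ on $C_pW_0^{p,q}\oplus\bar C_qW_0^{p,q}$) are handled exactly as in the proof of Lemma \ref{new-matrix-R}, by the modification $\widetilde R_{22}=R_{22}+\delta_{p,0}\cC+\delta_{q,0}\bar\cC$ and Remark \ref{psi-remarks}(ii)--(iii). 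One also needs to absorb the factors $\de,\bar\de$ and $e(d\theta)$ in $S$: writing $\de=\Ri_p\Box_p^{\half}$ and $\bar\de=\bar\Ri_q\Boxbar_q^{\half}$ via \eqref{factorde1}, and noting $\Box,\Boxbar$ have symbols in ${}^*\Psi{}^{1,0}_1$, every $\de$ or $\bar\de$ contributes a bounded Riesz transform times a $\Psi^{1/2,0}_{1/2}$ multiplier.

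Next I would do the degree count. Each entry $U_{ij}$ of $U_{1,\ell}$ (after pulling out the harmless bounded factors $e(d\theta)^{\ell-1}$, $e(d\theta)$, $\Ri$, $\bar\Ri$) is a product of the form (multiplier of $S_{ij}$-type) $\times\ \Sigma_{jj}$. Tracking indices via Lemma \ref{properties}(ii): the $S$-entries are in $\Psi^{\rho,\sigma}_\tau$ with $\rho=1/2+1/2=1$ or $\rho=1$, $\sigma=0$, $\tau=1$ (e.g.\ $\de Q^+_-$ or $e(d\theta)\Delta_H$ both yield $\Psi^{1,0}_1$ after extracting one $\Ri$ or $e(d\theta)$), while $\Sigma_{jj}\in\Psi^{-1,0}_{-1}$; the product lies in $\Psi^{0,0}_0$. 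By Lemma \ref{properties}(v), a $\Psi^{0,0}_0$ multiplier is a Marcinkiewicz multiplier, so by Theorem \ref{marcin} the corresponding operator $m(i^{-1}T,L)$ is $L^r$-bounded for $1<r<\infty$. Applied componentwise to forms, and combined with $L^r$-boundedness of $e(d\theta)$-powers (trivial, being bounded bundle maps), $\Ri$, $\bar\Ri$, this gives $\|U_{1,\ell}^+\xi\|_{L^r}\le C_r\|\xi\|_{L^r}$ for $\xi\in W_0^{p,q}$ and $\|U_{1,\ell}^-\eta\|_{L^r}\le C_r\|\eta\|_{L^r}$ for $\eta\in\Xi^{p,q}$.

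The main obstacle, as already flagged by the authors' own comment that these computations are ``tedious and unenlightening,'' is purely bookkeeping: one must (a) check the symbol-class memberships of $R_{11},R_{22}$ and their inverse square roots with the correct lower bounds on an angle $\Gamma_{n-\eps}\supset F\setminus\{0\}$ — using that $0\le s=p+q\le n-1$ and $s+1+2\ell\le n$ so that $2m-\ell=n-s-\ell\ge 1>0$, which is what makes all the relevant operators positive and bounded below by a positive power of $(\xi+\la^2)$ on the fan — and (b) handle the boundary cases $p=0$, $q=0$ where $\Box_0$ or $\Boxbar_0$ fails to be injective, replacing them by the primed operators $\Box'_0,\Boxbar'_0$ and inserting the projections $I-\cC$, $I-\bar\cC$, which is legitimate precisely because $X^{p,q}=(I-\cC)W_0^{0,q}$ and analogously for $Y^{p,q}$ (Remark \ref{projpar}), so that on these parameter spaces the relevant box operators are invertible. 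No genuinely new idea is required beyond Lemma \ref{properties} and Theorem \ref{marcin}; the one place demanding care is making sure the square roots $\Gamma$, $R_{jj}^{-1/2}$ — defined a priori only by spectral calculus on $L^2$ — are realized by smooth symbols on the open angle, which follows from the strict positivity estimates and Lemma \ref{properties}(iii) together with Remark \ref{psi-remarks}(iii).
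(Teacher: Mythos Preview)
Your overall strategy coincides with the paper's: start from \eqref{U1ell-eq}, peel off $e(d\theta)^\ell$ and the holomorphic/antiholomorphic Riesz transforms via $\de=\Ri\,\Box^{1/2}$, $\bar\de=\bar\Ri\,\Boxbar^{1/2}$ (and $\bar\de\de=\bar\Ri\Ri\,(\Boxbar+iT)^{1/2}\Box^{1/2}$ for the second-row entries), then check that the remaining scalar factors lie in $\Psi^{0,0}_0$. But essentially every symbol-class assignment you write down is incorrect, and since this bookkeeping \emph{is} the content of the proof, the argument as written does not go through. The recurring error is conflating the $\rho$- and $\sigma$-indices: for $\xi>1$ the class $\Psi^{\rho,\sigma}_\tau$ bounds by $(\xi+\la^2)^\rho\xi^\sigma$, and on the angle $\Gamma_{n-\eps}$ one can have $\la^2\gg\xi$, so these are not interchangeable. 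Thus $\Delta_H,\Box,\Boxbar$ (symbol $\sim\xi$) lie in $\Psi^{0,1}_1$, \emph{not} ${}^*\Psi^{1,0}_1$; $\Box^{1/2}\in\Psi^{0,1/2}_{1/2}$; $\Delta_H-T^2+m^2\in{}^*\Psi^{1,0}_0$ (constant near $\xi=0$, so $\tau=0$, not $1$), hence $\Gamma+m,\,Q^+_\pm\in{}^*\Psi^{1/2,0}_0$; and $R_{11}\in{}^*\Psi^{1,1}_0$, whence $\Sigma_{11}\in\Psi^{-1/2,-1/2}_0$. Your claim $\Sigma_{11}\in\Psi^{-1,0}_{-1}$ would require $R_{11}^{-1/2}\lesssim(\xi+\la^2)^{-1}$, which is false when $\la^2\gg\xi$. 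With the correct indices a product like $\Box^{1/2}Q^+_-\Sigma_{11}$ lands in $\Psi^{0,1/2}_{1/2}\cdot\Psi^{1/2,0}_0\cdot\Psi^{-1/2,-1/2}_0=\Psi^{0,0}_{1/2}\subset\Psi^{0,0}_0$, but via a different count from yours.

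You also miss a genuine asymmetry between the $+$ and $-$ columns that the paper handles in separate lemmas (Lemmas \ref{lpu2lemma} and \ref{lpu3lemma}). Since $\Gamma-m=(\Delta_H-T^2)(\Gamma+m)^{-1}$ vanishes at the origin, one has $\Gamma-m,\,Q^-_\pm\in\Psi^{1/2,0}_1$ with $\tau=1$, unlike $\Gamma+m$. Correspondingly $R_{22}\in\Psi^{1,1}_2$ (via the factorization \eqref{R22-R11inv}) and $\Sigma_{22}(I-C_p-\bar C_q)\in\Psi^{-1/2,-1/2}_{-1}$, with a different $\tau$-index from $\Sigma_{11}$. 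Treating $Q^\pm_\pm$ and $R_{11},R_{22}$ uniformly, as you do, cannot produce a valid degree count.
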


\begin{proof}
According to Proposition \ref{U1ell-prop}, we have to prove the $L^r$-boundedness of the  operators:
\medskip
\begin{enumerate}
\item[(i)] $\de Q^+_- \Sigma_{11}$, $\bar\de Q^+_+\Sigma_{11}$, 
\item[(ii)] $(\bar\de\de-\de\bar\de)\Sigma_{11}$, when  $\ell\ge 1$,
\item[(iii)]  $\big[\Delta_H+(2m-\ell)(\Gamma+m)\big]\Sigma_{11}$, 
\end{enumerate}
\medskip
defined on $W_0^{p,q}$, with $p+q+2\ell+1\le n$ in (i) and (iii), and $p+q+2\ell\le n$ in (ii), and of the operators:
\medskip
\begin{enumerate}
\item[(i')] $\de Q^-_+ \Sigma_{22}$, $\bar\de Q^-_-\Sigma_{22}$,
\item[(ii')] $(\bar\de\de-\de\bar\de)\Sigma_{22}$, when  $\ell\ge 1$,
\item[(iii')]  $\big[\Delta_H-(2m-\ell)(\Gamma-m)\big]\Sigma_{22}$.
\end{enumerate}
\medskip

defined on $\Xi^{p,q}$, with $p+q+2\ell+1\le n$ in (i') and (iii'), and $p+q+2\ell\le n$ in (ii').
\medskip

Recall that if $p=0,$ then $\de=\de(I-\cC),$ and if $q=0,$ then
$\bar\de=\bar\de(I-\bar\cC),$ so that, putting again 
\begin{eqnarray*}
\Box_r=\begin{cases} 
     \Box,  & \mbox{ if } p\ge 1, \\
     \Box', &  \mbox{ if } p=0,
\end{cases}
\qquad
\Boxbar_r=\begin{cases} 
     \Boxbar,  & \mbox{ if } q\ge 1, \\
     \Boxbar', &  \mbox{ if } q=0,
\end{cases}
\end{eqnarray*}
we have
 
\begin{equation}\label{factordebar1}
\de=\Ri\Box^{\half}, \ \bar\de=\overline\Ri\,\Boxbar^{\half},
\end{equation}
where $\Ri$ and $\overline\Ri$ are the holomorphic and
antiholomorphic Riesz transforms of \eqref{4.5}, which are known to
be Calder\'on-Zygmund type singular integral operators, and
consequently  are $L^r$- bounded for $1<r<\infty$.

Moreover, observe that
$\bar\de\de-\de\bar\de=2\bar\de\de+Te(d\theta).$  Since this term
appears only when $\ell\ge 1$ and $p+q+2\ell\le n,$ we have $p+q\le
n-2,$ which easily implies that the operator $\Boxbar+iT$ is injective
on its domain in $L^2\Lambda^{p,q},$ so that we can factorize  
\begin{equation}\label{factordebar2}
\bar\de\de=\bar\Ri\Boxbar^\half\de=\bar\Ri\de(\Boxbar+iT)^{\half}=
\bar\Ri\Ri \, (\Boxbar+iT)^{\half}\Box^{\half} \mbox{  on  } W^{p,q},
\end{equation}
since, on the core, $\Boxbar\de=\de(\Boxbar+iT)$, 
hence  $\Boxbar^\half\de=\de(\Boxbar+iT)^\half.$

Observe also that $\Xi^{p,q}=(I-C_p-\bar C_q)(W_0^{p,q})$.
\medskip

Thus it will suffice to prove that the following scalar operators are in $\Psi^{0,0}_0$:
\begin{enumerate}
\item[(I)] $\Box^\half Q^+_- \Sigma_{11}$, $\Boxbar^\half
Q^+_+\Sigma_{11}$, $
\big[\Delta_H+(2m-\ell)(\Gamma+m)\big]\Sigma_{11}$, for $\ell\ge 0$; 
\item[(II)]  $(\Boxbar+iT)^{\half}\Box^{\half}\Sigma_{11}$, $i\inv T\Sigma_{11}$, for $\ell\ge 1$;
\end{enumerate}
\medskip
\begin{enumerate}
\item[(I')] $\Box^\half Q^-_+ \Sigma_{22}$, $\Boxbar^\half
Q^-_-\Sigma_{22}$, $
\big[\Delta_H-(2m-\ell)(\Gamma-m)\big]\Sigma_{22}$, for $\ell\ge 0$; 
\item[(II')]  $(\Boxbar+iT)^{\half}\Box^{\half}\Sigma_{22}$, $i\inv T\Sigma_{22}$, for $\ell\ge 1$.
\end{enumerate}
\medskip

This will be a direct consequence of the following Lemmas \ref{lpu1lemma}, \ref{lpu2lemma}, \ref{lpu3lemma}, on the basis of Lemma \ref{properties}.
\end{proof}
\medskip

Observe that $m=(n-p-q)/2\ge 1/2, \ 2m-\ell\ge 1$ in 
 (I) and (I'), and $m\ge 1,
\ 2m-\ell\ge 1$ in 
(II) and (II').

\begin{lemma}\label{lpu1lemma}
Assume that $p+q+1\le n$. Then the following hold true:
 \begin{itemize}
\item[(a)] $i\inv T\in \Psi_1^{\half,0};$
\item[(b)] 
$\Box^\half, \Boxbar^\half\in\Psi_\half^{0,\half},$
  and
  $(\Boxbar+iT)^\half\in\Psi_\half^{0,\half}$;
\item[(c)] $\Delta_H\in\Psi_1^{0,1}\subset \Psi_1^{1,0},$
  $\Delta_H-aT^2\in \Psi^{1,0}_1$ for every $a\in\C$, and
  $(\Delta_H-T^2+c)^\alpha\in\Psi_0^{\alpha,0}$ for every $c>0.$ 
 \end{itemize}
\end{lemma}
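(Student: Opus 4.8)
\textbf{Proof plan for Lemma \ref{lpu1lemma}.}
The plan is to verify the three claims by unwinding the defining multiplier symbols and checking the pointwise estimates \eqref{classes} directly. Recall that under $\pi_\la$ the operators $\Box,\Boxbar,\Delta_H$ act as scalar multipliers of $L$ and $i\inv T$: by \eqref{1.10}, on $(p,q)$-forms $\Box=\half L+i(\frac n2-p)T$, $\Boxbar=\half L-i(\frac n2-q)T$, and $\Delta_H=L+i(q-p)T$. Writing $\xi$ for the spectral variable of $L$ and $\la$ for that of $i\inv T$, these become the symbols $\half\xi+(\frac n2-p)\la$, $\half\xi-(\frac n2-q)\la$, and $\xi+(q-p)\la$. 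Since $|p|,|q|\le n$ and, under the running hypothesis $p+q+1\le n$, we have $|q-p|<n$ and $|\frac n2-p|,|\frac n2-q|<n$, all of these symbols are comparable to $\xi$ on the enlarged cone $\Gamma_{n-\eps}$ where $\xi>(n-\eps)|\la|$, hence strictly positive there. This positivity is what allows the fractional powers to be taken and estimated.

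First I would treat (a): the symbol of $i\inv T$ is just $\la$, and on $\Gamma_{n-\eps}$ one has $|\la|\le (n-\eps)^{-1}\xi\lesssim (\xi+\la^2)^{\half}$ when $\xi>1$ (since there $\la^2\lesssim\xi^2$ and $(\xi+\la^2)^{\half}\gtrsim\xi^{\half}$, while $|\la|\lesssim\xi$, and $\xi\lesssim\xi\cdot\xi^{-\half}\cdot\xi^{\half}$; more simply $|\la|\lesssim\xi\le (\xi+\la^2)^{\half}\xi^{\half}$ because $\xi\le(\xi+\la^2)^{\half}\xi^{\half}$ amounts to $\xi^{\half}\le(\xi+\la^2)^{\half}$), so $\la$ obeys the $\xi>1$ bound of $\Psi^{\half,0}_1$ with $\rho=\half$, $\sigma=0$, $\tau=1$; for $\xi\le1$ we get $|\la|\lesssim\xi=\xi^{\tau}$ with $\tau=1$. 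All $\la$- and $\xi$-derivatives of $\la$ of order $\ge1$ in $\xi$ or $\ge2$ in $\la$ vanish, and $\de_\la\la=1$ satisfies the required bound trivially, so $i\inv T\in\Psi^{\half,0}_1$. Next, for (b), the symbol of $\Box^\half$ is $(\half\xi+(\frac n2-p)\la)^{\half}$; on $\Gamma_{n-\eps}$ the base is $\asymp\xi$, so for $\xi>1$ it is $\asymp\xi^{\half}\le(\xi+\la^2)^0\,\xi^{\half}$, matching $\Psi^{0,\half}_\half$ with $\rho=0$, $\sigma=\half$, $\tau=\half$; for $\xi\le1$ it is $\asymp\xi^{\half}=\xi^\tau$. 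The derivative bounds \eqref{classes} follow because each $\de_\la$ or $\de_\xi$ brings down one power of the ($\asymp\xi$-sized) base, by the chain rule and the standard fact that derivatives of $u^{1/2}$ with $u\asymp\xi$ and $|\de^j u|\lesssim\xi^{1-j}$ satisfy $|\de^j(u^{1/2})|\lesssim\xi^{1/2-j}$; the same argument applies verbatim to $\Boxbar^\half$ and to $(\Boxbar+iT)^\half$, whose symbol is $(\half\xi-(\frac n2-q)\la+\la)^{\half}=(\half\xi+(q-\frac n2+1)\la)^{\half}$, with base again $\asymp\xi$ on $\Gamma_{n-\eps}$ since $|q-\frac n2+1|<n$ when $p+q+1\le n$ (indeed then $q\le n-1$, so $q-\frac n2+1\le\frac n2<n$ and $q-\frac n2+1\ge1-\frac n2>-n$).

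Finally, for (c): the symbol of $\Delta_H$ is $\xi+(q-p)\la$, which on $\Gamma_{n-\eps}$ is $\asymp\xi$; for $\xi\le1$ this is $\asymp\xi=\xi^1$, and for $\xi>1$ it is $\asymp\xi=(\xi+\la^2)^0\xi^1$, so $\Delta_H\in\Psi^{0,1}_1$, and the inclusion $\Psi^{0,1}_1\subset\Psi^{1,0}_1$ is Lemma \ref{properties}(iv) (since $0+1\le1+0$, $0+1\le2+0$, $1\ge1$). The symbol of $\Delta_H-aT^2$ is $\xi+(q-p)\la-a\la^2$; for $\xi\le1$ on $\Gamma_{n-\eps}$ we have $|\la|\lesssim\xi\le1$ so $|\la^2|\lesssim\xi$ and the whole symbol is $\lesssim\xi=\xi^1$, while for $\xi>1$ we have $|\la^2|\lesssim\xi^2\le(\xi+\la^2)\xi$ and $|\la|\lesssim\xi$, so the symbol is $\lesssim(\xi+\la^2)^1\xi^0$, giving membership in $\Psi^{1,0}_1$ after checking the (routine, terminating) derivative estimates. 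For $(\Delta_H-T^2+c)^\alpha$ with $c>0$: the base $\xi-(p-q)\la-\la^2+c$ satisfies, by the estimate $\Delta_H-T^2+m^2\ge m^2\ge\tfrac14$ used already after \eqref{m-Gamma} (here with $c$ in place of $m^2$, and the same convexity computation giving $\Delta_H-T^2+c\ge c>0$ as an operator), a two-sided bound: it is $\asymp(\xi+\la^2)$ for $\xi>1$ on $\Gamma_{n-\eps}$ (upper bound clear; lower bound because $\xi-(p-q)\la\gtrsim\xi\gtrsim|\la|^2$ there so the sum is $\gtrsim\xi+\la^2$) and $\asymp 1$ for $\xi\le1$. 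Hence the $\alpha$-th power is in $\Psi^{\alpha,0}_0$ by Lemma \ref{properties}(iii) applied to the prototype class, with the derivative bounds again following from the chain rule and the estimates on the base. The main obstacle, as in all these verifications, is the bookkeeping: one must keep track of which cone $\Gamma_{n-\eps}$ contains the fan and check, using only $p+q+1\le n$, that every coefficient appearing ($q-p$, $\tfrac n2-p$, $q-\tfrac n2+1$, etc.) has absolute value $<n$ so that the relevant symbols stay comparable to $\xi$ and positive; this is where the hypothesis $p+q+1\le n$ is essential, and I would state it once at the outset and invoke it uniformly.
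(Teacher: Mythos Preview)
Your approach---compute the symbols and verify the $\Psi$-class estimates directly---is the same as the paper's, and parts (a) and (c) go through. There is, however, a genuine gap in (b).

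Your key claim is that each linear coefficient has absolute value strictly less than $n$, so that the corresponding symbol is $\asymp\xi$ on $\Gamma_{n-\eps}$. But the symbol of $\Box$ is actually $\tfrac12\big(\xi-(n-2p)\la\big)$ (there is a sign slip in your convention for $iT$: since $i^{-1}T\leftrightarrow\la$, one has $iT\leftrightarrow-\la$), and the relevant condition for comparability to $\xi$ is $|n-2p|<n$, not $|\tfrac n2-p|<n$. When $p=0$ this fails: $|n-2p|=n$, and the symbol vanishes on the half-line $\ell_{0,+}=\{\xi=n\la,\ \la>0\}$, which lies in the Heisenberg fan. The same degeneration occurs for $\Boxbar$ when $q=0$, and for $\Boxbar+iT$---whose symbol is $\tfrac12\big(\xi+(n-2q-2)\la\big)$---when $q=n-1$ (which is allowed under $p+q+1\le n$ with $p=0$). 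So your blanket assertion that ``all of these symbols are comparable to $\xi$ on the enlarged cone'' is false precisely at these endpoints, and this is exactly the ``bookkeeping obstacle'' you flagged but did not resolve.

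The paper fixes this via Remark~\ref{psi-remarks}(iii): since the exponent $\tfrac12$ is positive and the base multiplier vanishes identically on the single exceptional half-line, one may apply Lemma~\ref{properties}(iii) on an angle omitting that half-line (where the lower bound \eqref{inverse} does hold) and then extend by zero. You need to invoke this mechanism explicitly for $\Box^{1/2}$ at $p=0$, for $\Boxbar^{1/2}$ at $q=0$, and for $(\Boxbar+iT)^{1/2}$ at $q=n-1$; otherwise the argument is incomplete.
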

\proof 
(a) is obvious. 

As for (b), note that

$(2\Box)^\half(\la,\xi)=(\xi-(n-2p)\la)^\half$. We have  
$$
\xi-(n-2p)\la\sim\xi\ ,
$$
on an angle containing the whole fan if $p\ge1$, and, if $p=0$, on an
angle avoiding just the half-line $\xi=n\la$, $\la>0$. By Lemma
\ref{properties} (iii), 
 $\Box^\half\in\Psi_\half^{0,\half},$ and a similar argument
applies to $\Boxbar^\half$ and $(\Boxbar+iT)^\half$. Remark
\ref{psi-remarks} (iii) must be used for $\Box^\half$ when $p=0$ and
for  $(\Boxbar+iT)^\half$ when $q=n-1$.

Moreover, $(2(\Boxbar+iT))^\half(\la,\xi)=(\xi +(n-2q-2)\la)^\half$,
where $p+q+2\le n,$ hence $2(q+1)\le 2n-2,$ i.e., $|n-2(q+1)|\le n-2.$
This implies that $(\Boxbar+iT)^\half\in\Psi_\half^{0,\half}.$ 

Finally, 
\begin{equation}\label{lpu1}
\Delta_H(\la,\xi)=\xi+(p-q)\la\sim \xi\ ,
\end{equation}
on an angle containing the full fan,
which shows that $\Delta_H\in\Psi_1^{0,1}$. In combination with (a)
and Lemma \ref{properties}, this easily yields (c). 
\qed

According to \eqref{lpu1}, the quantity $\xi+(p-q)\la$, which we will also denote by $\tilde\xi$,  is comparable to $\xi$.
We then set 
\begin{equation}\label{mult-Gamma}
\Gamma(\la,\xi)=(\tilde\xi+\la^2+m^2)^\half\ .
\end{equation}

Let $R_{11}$ be as in Lemma \ref{new-matrix-R}, so that, according to
\eqref{Sigma-components}, $\Sigma_{11}=R_{11}^{-\half}$.

\begin{lemma}\label{lpu2lemma}
For $p+q+2\ell
+1\le n,$ the following hold true: 
 \begin{itemize}
\item[(a)] $\Gamma +m\,,\,Q^+_+\,,\,Q^+_-\in{}^*\Psi_0^{\half,0}$; 
\item[(b)] $R_{11}\in{}^*\Psi_0^{1,1}$, 
consequently $\Sigma_{11}=R_{11}^{-\half}\in\Psi_0^{-\half,-\half}$
\end{itemize}
\end{lemma}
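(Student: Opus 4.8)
Lemma~\ref{lpu2lemma} asserts membership of the multipliers $\Gamma+m$, $Q^\pm_+$, $Q^\pm_-$, $R_{11}$ and $\Sigma_{11}$ in the respective classes $\Psi^{\rho,\sigma}_\tau$. The plan is to proceed directly from the definitions, treating the two regions $\xi\le 1$ and $\xi>1$ of \eqref{classes} separately, and to use the calculus of Lemma~\ref{properties} wherever possible rather than differentiating brute-force.

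First I would establish (a). Recall from \eqref{m-Gamma} that $\Gamma=\sqrt{\Delta_H-T^2+m^2}$, so that as a multiplier $\Gamma(\la,\xi)=(\tilde\xi+\la^2+m^2)^\half$ with $\tilde\xi=\xi+(p-q)\la\sim\xi$ on an angle containing the whole fan (this comparability is exactly \eqref{lpu1}, valid since $|p-q|\le p+q\le n-2\ell-1<n$). For $\xi\le 1$ the quantity $\tilde\xi+\la^2+m^2$ is bounded above and below by positive constants (using $\xi>(n-\eps)|\la|$ on $\Gamma_{n-\eps}$, so $|\la|\lesssim 1$ too), hence $\Gamma$ and $\Gamma+m$ are smooth, bounded below, and all their derivatives are bounded — this gives the $\tau=0$ estimates both for \eqref{classes} and \eqref{inverse}. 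For $\xi>1$, the base $\tilde\xi+\la^2+m^2\sim\xi+\la^2$ is $\gtrsim 1$ and a smooth positive function homogeneous of degree one in $(\la,\la^2)$-scaling, so $(\tilde\xi+\la^2+m^2)^\half\in{}^*\Psi^{\half,0}_0$ by Lemma~\ref{properties}(iii) applied to the exponent $s=\half$ (using Remark~\ref{psi-remarks}(iii) to handle the finitely many half-lines of $F$ that might be excluded from the angle on which the lower bound holds). Adding the constant $m$ does not affect the class since $m^2\le \tilde\xi+\la^2+m^2$, so $\Gamma+m\sim\Gamma+m\gtrsim (\xi+\la^2)^\half$ persists. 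For $Q^\pm_\delta=\Gamma\pm m-\delta iT$ (see the definition after \eqref{Qmatrix}), the term $iT$ has multiplier $\la$, and one checks $\la\in\Psi^{\half,0}_1\subset\Psi^{\half,0}_0$; but the sharper point is that $|Q^\pm_\delta(\la,\xi)|\gtrsim\Gamma\gtrsim(\xi+\la^2)^\half$ still holds because $\Gamma$ dominates $|{\pm m-\delta i\la}|$ up to a constant — indeed $\Gamma^2=\tilde\xi+\la^2+m^2\ge \la^2+m^2\ge\half(m+|\la|)^2$. So all three multipliers in (a) lie in ${}^*\Psi^{\half,0}_0$.

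For (b), rather than estimating the explicit polynomial expression for $R_{11}$ in Lemma~\ref{new-matrix-R} directly, I would use the product and sum rules of Lemma~\ref{properties}(ii),(iv) together with part~(a) and Lemma~\ref{lpu1lemma}. Reading off $R_{11}$,
\[
R_{11}=(\Gamma+m)^2\big(\Delta_H+2m(2m-\ell)\big)+2(\Gamma+m)\big((2m-\ell)\Delta_H-2mT^2\big)+\Delta_H(\Delta_H-T^2)+2m\ell T^2,
\]
each summand is a product of factors lying in classes we control: $(\Gamma+m)^2\in\Psi^{1,0}_0$ by (a) and Lemma~\ref{properties}(ii); $\Delta_H+2m(2m-\ell)\in\Psi^{1,0}_1$ (it is $\Delta_H$ plus a positive constant, and $\Delta_H=\tilde\xi\in\Psi^{0,1}_1\subset\Psi^{1,0}_1$ by Lemma~\ref{lpu1lemma}(c)); $(2m-\ell)\Delta_H-2mT^2\in\Psi^{1,0}_1$ by Lemma~\ref{lpu1lemma}(c) applied with $a=2m/(2m-\ell)$; and $2m\ell T^2\in\Psi^{1,0}_1$. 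Multiplying, every summand lies in $\Psi^{1,1}_1$ — for instance $(\Gamma+m)^2(\Delta_H+2m(2m-\ell))\in\Psi^{1,0}_0\cdot\Psi^{1,0}_1=\Psi^{2,0}_1$, which is contained in $\Psi^{1,1}_1$ by Lemma~\ref{properties}(iv) since $2+0\le 1+1$ and $4+0>2+1$ — wait, here I would instead extract a factor $\tilde\xi\sim\Delta_H$ to land correctly: write $(\Gamma+m)^2(\Delta_H+\text{const})=(\Gamma+m)^2\Delta_H+\text{const}\cdot(\Gamma+m)^2$; the first term is $\Psi^{1,0}_0\cdot\Psi^{0,1}_1=\Psi^{1,1}_1$, the second is in $\Psi^{1,0}_0$, and one checks $\Psi^{1,0}_0\subset\Psi^{1,1}_1$ fails on the $\tau$-index ($0\not\ge 1$) — so the constant term must be absorbed using the lower bound rather than upper bound. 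The clean way is: establish the matching lower bound \eqref{inverse} showing $R_{11}\in{}^*\Psi^{1,1}_1$ by bounding $R_{11}$ below, which the positivity computation already carried out in the proof of Lemma~\ref{new-matrix-R} gives ($R_{11}\ge 2m^3(2m-\ell)$ and, on $\xi>1$, a scaling argument gives $R_{11}\gtrsim(\xi+\la^2)\xi$), and then the upper bound follows from the product structure once one notes all summands are dominated by $(\xi+\la^2)\xi$ in the region $\xi>1$ and are bounded in the region $\xi\le 1$. With $R_{11}\in{}^*\Psi^{1,1}_1$ in hand, Lemma~\ref{properties}(iii) with $s=-\half$ yields $\Sigma_{11}=R_{11}^{-\half}\in\Psi^{-\half,-\half}_{-\half}\subset\Psi^{-\half,-\half}_{0}$ by Lemma~\ref{properties}(iv) (raising $\tau$ from $-\half$ to $0$ is allowed).

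The main obstacle I anticipate is the bookkeeping around the $\tau$-index (the behavior near $\xi\sim 0$, i.e.\ near the vertical half-line of the fan): the naive product rule over-counts the order there, so constant shifts like $\Delta_H+\text{const}$ must be handled by comparing against the genuine lower bound \eqref{inverse} rather than by the upper estimate \eqref{classes} alone. Getting $R_{11}\in{}^*\Psi^{1,1}_1$ therefore requires carefully recycling the positivity estimate from the proof of Lemma~\ref{new-matrix-R} and a short homogeneity argument on the region $\xi>1$; once that sharp two-sided bound is secured, everything else is a mechanical application of Lemma~\ref{properties}.
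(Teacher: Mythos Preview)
Your treatment of part (a) is essentially the paper's argument with more detail, and is fine. The real problem is in part (b), where your $\tau$-index bookkeeping goes wrong in two places and the final inclusion you invoke is false.

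First, the claim $\Delta_H+2m(2m-\ell)\in\Psi^{1,0}_1$ is incorrect: a nonzero constant does not lie in any class with $\tau=1$, since $|c|\not\lesssim\xi$ as $\xi\to 0$. Adding a positive constant to $\Delta_H$ therefore \emph{lowers} the $\tau$-index to $0$; the correct placement is $\Delta_H+2m(2m-\ell)\in\Psi^{0,1}_0$ (for $\xi\le 1$ the constant dominates and the expression is $\sim 1$; for $\xi>1$ it is $\sim\xi$). With this in hand the product rule gives the first summand of $R_{11}$ in $\Psi^{\half,0}_0\cdot\Psi^{\half,0}_0\cdot\Psi^{0,1}_0=\Psi^{1,1}_0$, and the remaining summands are easily seen to lie in $\Psi^{1,1}_0$ as well. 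This is exactly what the paper does.

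Second, and more seriously, your conclusion $R_{11}\in{}^*\Psi^{1,1}_1$ is false: the lower bound with $\tau=1$ would be fine, but the \emph{upper} bound $R_{11}\lesssim\xi$ for $\xi\le 1$ fails, precisely because of the positive constant term $2m^3(2m-\ell)$ you yourself extracted from the positivity argument. The correct class is ${}^*\Psi^{1,1}_0$. Finally, the inclusion $\Psi^{-\half,-\half}_{-\half}\subset\Psi^{-\half,-\half}_0$ that you use at the end is wrong: Lemma~\ref{properties}(iv) requires $\tau\ge\tau'$, and $-\half\not\ge 0$. (Intuitively, a multiplier blowing up like $\xi^{-\half}$ as $\xi\to 0$ is certainly not bounded there.) Once you place $R_{11}$ in ${}^*\Psi^{1,1}_0$, Lemma~\ref{properties}(iii) gives $\Sigma_{11}\in\Psi^{-\half,-\half}_0$ directly, with no further inclusion needed.
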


\proof
We have 
$$
\Gamma(\la,\xi)=(\tilde\xi+\la^2+m^2)^\half,
$$
and, since $\tilde\xi\sim\xi$, this shows that
$\Gamma\in{}^*\Psi_0^{\half,0}$. Then (a) follows easily.

As for $R_{11},$ recall that 
\begin{equation}\label{lpu2}
\begin{aligned}
R_{11}
& =(\Gamma+m)^2\big(\Delta_H+
2m(2m-\ell)\big)+2(\Gamma+m)\big((2m-\ell)\Delta_H-2mT^2\big)\\  
&\qquad+\Delta_H(\Delta_H-T^2)+2m\ell T^2\ .
\end{aligned}
\end{equation}
By Lemma \ref{properties}, and in view of what has been shown already, we find that
$$
R_{11}\in \Psi_0^{\half,0}\Psi_0^{\half,0}\Psi_0^{0,1}+\Psi_0^{\half,0}\Psi_1^{1,0}
+\Psi_1^{0,1}\Psi_1^{1,0}+\Psi_2^{1,0}\subseteq \Psi_0^{1,1}.
$$
Moreover, since here $\Gamma\ge 0,$ we have 
\begin{eqnarray*}
R_{11}&\ge& \Gamma^2(\Delta_H+2m(2m-\ell)) + 2m(-2mT^2)+2m\ell T^2\\
&=& (\tilde\xi+\la^2+m^2)(\tilde\xi+2m(2m-\ell)) +2m(2m-\ell)|T|^2\\
&\gtrsim& (\xi+\la^2 +1)(\xi+1),
\end{eqnarray*}
which shows that also the estimates from below for $R_{11}$ hold true,
so that $R_{11}^{-\half}\in\Psi_0^{-\half,-\half}.$  This concludes
the proof of (b). 
\qed

\begin{lemma}\label{lpu3lemma}
For $p+q+2\ell
+1\le n,$ the following hold true: 
 \begin{itemize}
\item[(a)] $\Gamma -m\,,\,Q^-_+\,,\,Q^-_-\in\Psi_1^{\half,0}$; 
\item[(b)] $R_{22}\in\Psi_2^{1,1}$ and $\Sigma_{22}(I-C_p-\bar C_q)=R_{22}^{-\half}(I-C_p-\bar C_q)\in\Psi_{-1}^{-\half,-\half};$
 \end{itemize}
\end{lemma}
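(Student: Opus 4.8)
The plan is to reduce all three assertions to the symbol calculus of Lemma \ref{properties}, handling (a) and the upper bound for $R_{22}$ by straightforward bookkeeping and extracting the sharp behaviour of $\Sigma_{22}$ from the factorization \eqref{R22-R11inv}. For part (a) the starting point is the algebraic identity $\Gamma^2-m^2=\Delta_H-T^2$, immediate from \eqref{m-Gamma}, which gives $\Gamma-m=(\Delta_H-T^2)(\Gamma+m)\inv$. By Lemma \ref{lpu2lemma}(a) one has $\Gamma+m\in{}^*\Psi_0^{\half,0}$, hence $(\Gamma+m)\inv\in\Psi_0^{-\half,0}$ by Lemma \ref{properties}(iii); since $\Delta_H-T^2\in\Psi_1^{1,0}$ by Lemma \ref{lpu1lemma}(c), Lemma \ref{properties}(ii) yields $\Gamma-m\in\Psi_1^{\half,0}$. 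As $i\inv T\in\Psi_1^{\half,0}$ by Lemma \ref{lpu1lemma}(a), the same membership holds for $Q^-_\pm=(\Gamma-m)\mp iT$.

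For the upper bound $R_{22}\in\Psi_2^{1,1}$ in part (b), I would use the explicit expression for $R_{22}$ in Lemma \ref{new-matrix-R} and estimate its four summands separately, invoking (a) together with $\Delta_H\in\Psi_1^{0,1}$, $\Delta_H-aT^2\in\Psi_1^{1,0}$ for every $a$ (Lemma \ref{lpu1lemma}(c)), and $\Delta_H+c\in\Psi_0^{0,1}$ for $c>0$. Concretely, $(\Gamma-m)^2(\Delta_H+2m(2m-\ell))\in\Psi_2^{1,0}\Psi_0^{0,1}$, $(\Gamma-m)((2m-\ell)\Delta_H-2mT^2)\in\Psi_1^{\half,0}\Psi_1^{1,0}$, $\Delta_H(\Delta_H-T^2)\in\Psi_1^{0,1}\Psi_1^{1,0}$, and $T^2\in\Psi_1^{1,0}$; each of these products is contained in $\Psi_2^{1,1}$ after applying Lemma \ref{properties}(ii) and (iv), whence $R_{22}\in\Psi_2^{1,1}$.

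The core of part (b) is the statement on $\Sigma_{22}=R_{22}^{-\half}$ restricted by $I-C_p-\bar C_q$. Here I would use \eqref{R22-R11inv}: since $\Delta_H-T^2+m^2$, $\Delta_H-T^2+\ell(2m-\ell)$, $\Box$, $\Boxbar$ and $R_{11}$ are all scalar operators they commute, so
$$
R_{22}^{-\half}=\frac14(\Delta_H-T^2+m^2)^{-\half}(\Delta_H-T^2+\ell(2m-\ell))^{-\half}\Box^{-\half}\Boxbar^{-\half}R_{11}^{\half}\ ,
$$
with $\Box^{-\half}$, $\Boxbar^{-\half}$ understood on $(\ker\Box)^\perp$, $(\ker\Boxbar)^\perp$. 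By Lemma \ref{s4.8} and Remark \ref{projpar}, when $p\ge1$ the operator $\Box$ is invertible and $C_p$ vanishes on $W_0^{p,q}$, so this factor is harmless on $\Xi^{p,q}$; when $p=0$ the factor $I-C_0=I-\cC$ sitting inside $I-C_p-\bar C_q$ turns $\Box^{-\half}$ into $(\Box')^{-\half}(I-\cC)$, which by Remark \ref{psi-remarks}(iii) is realized by a multiplier in $\Psi_{-\half}^{0,-\half}$ vanishing on the half-line $\{\xi=n\la,\ \la>0\}$ of the fan, and symmetrically for $q=0$. Then $(\Delta_H-T^2+m^2)^{-\half}\in\Psi_0^{-\half,0}$ and, for $\ell\ge1$, $(\Delta_H-T^2+\ell(2m-\ell))^{-\half}\in\Psi_0^{-\half,0}$ by Lemma \ref{lpu1lemma}(c); $\Box^{-\half},\Boxbar^{-\half}\in\Psi_{-\half}^{0,-\half}$, since on the relevant angle their multipliers are comparable to $\xi$; and $R_{11}^{\half}\in\Psi_0^{\half,\half}$ by Lemma \ref{lpu2lemma}(b). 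Multiplying these classes with Lemma \ref{properties}(ii) then gives $\Sigma_{22}(I-C_p-\bar C_q)\in\Psi_{-1}^{-\half,-\half}$.

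The hard part will be precisely this last step: keeping exact track of the degenerate cases $p=0$ and $q=0$, where $\Box$ or $\Boxbar$ has a nontrivial $L^2$-kernel concentrated on a single half-line of the fan, and—especially when $\ell=0$, where $\Delta_H-T^2+\ell(2m-\ell)=\Delta_H-T^2$ carries extra vanishing—organizing the cancellations in $R_{22}$ so that the product of symbol classes in the factorization above closes up to exactly $\Psi_{-1}^{-\half,-\half}$ rather than to a larger class, in the same spirit as the $\ell=0$ versus $\ell\ge1$ distinction already made in the proof of Theorem \ref{Lp-bdd}. Everything else is routine symbol arithmetic.
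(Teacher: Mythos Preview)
Your argument is correct and tracks the paper's proof closely. Part (a) is identical: both you and the paper write $\Gamma-m=(\Delta_H-T^2)(\Gamma+m)^{-1}\in\Psi^{1,0}_1\cdot\Psi^{-\half,0}_0\subset\Psi^{\half,0}_1$ and then absorb $\pm iT$. For the invertibility statement on $\Sigma_{22}$, both you and the paper rely on the factorization \eqref{R22-R11inv} together with Remark~\ref{psi-remarks} to handle the half-lines where $\Box$ or $\Boxbar$ degenerates; your honest flagging of the $\ell=0$ case (where the factor $\Delta_H-T^2+\ell(2m-\ell)$ loses its constant term) is well placed---the paper's sentence ``each of the remaining factors in \eqref{22} is in ${}^*\Psi^{1,0}_0$'' is literally stated only for $\ell\ge1$, and the $\ell=0$ case is likewise not spelled out there.

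The one genuine difference is the upper bound $R_{22}\in\Psi^{1,1}_2$: you estimate the four terms of the explicit formula in Lemma~\ref{new-matrix-R} individually, whereas the paper derives it in a single stroke from the same factorization \eqref{R22-R11inv}, writing the product \eqref{22} as an element of $\Psi^{2,2}_2$ (or $\Psi^{2,2}_3$ when $\ell=0$) and multiplying by $R_{11}^{-1}\in\Psi^{-1,-1}_0$ from Lemma~\ref{lpu2lemma}(b). Both routes are short; the paper's has the advantage of reusing the factorization already needed for the inverse estimate, while yours avoids invoking $R_{11}^{-1}$ for the upper bound.
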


\proof
We have
$$
\Gamma(\la,\xi)-m=(\tilde\xi+\la^2)\big(\Gamma(\la,\xi)+m\big)\inv\in \Psi^{1,0}_1\Psi^{-\half,0}_0\subset\Psi^{\half,0}_1\ .
$$

By \eqref{R22-R11inv}, on $W_0^{p,q}$ we have the identity
$$
R_{22}=16(\Delta_H-T^2+m^2) \big(\Delta_H-T^2+\ell(2m-\ell)\big)\Box\Boxbar R_{11}\inv\ ,
$$

where
\begin{equation}\label{22}
(\Delta_H-T^2+m^2) \big(\Delta_H-T^2+\ell(2m-\ell)\big)\Box\Boxbar \in\begin{cases} \Psi^{2,2}_2&\text{ if }\ell\ne0\\
\Psi^{2,2}_3&\text{ if }\ell=0\end{cases}\ \subset \Psi^{2,2}_2\ .
\end{equation}

Applying Lemma \ref{lpu2lemma} (b), we obtain that $R_{22}\in\Psi_2^{1,1}$. 

To prove the last part of the statement, observe that the presence of
the factor $I-C_p-\bar C_q$ allows us, on the basis  of Remark
\ref{psi-remarks} (ii), to restrict, if necessary,  our analysis to an
angle omitting one of the external half-lines of $F$, where the
multipliers of $\Box$ and $\Boxbar$ are non-zero, and their reciprocal
satisfy \eqref{inverse} with $\rho=0$ and $\sigma,\tau=-1$. Each of
remaining factors in \eqref{22} is in ${}^*\Psi^{1,0}_0$, and this,
together with Lemma \ref{lpu2lemma} (b), gives the conclusion. 

\qed 

\medskip

\subsection[lpu2]{$L^p$-  boundedness of the intertwining operators $U_{2,\ell}$}\quad
\medskip

We next turn to the intertwining operator $U_{2,\ell}.$ Our goal will be to  prove
\begin{prop}\label{lpu2prop}
Assume that $p+q+2+2\ell\le n$  and $1<r<\infty.$ Then there is a constant $C_r$ so that 
\begin{eqnarray*}
\|U_{2,\ell}\,(\xi,\eta)\|_{L^r} \le C_r\|(\xi,\eta)\|_{L^r} \qquad \mbox{for every} \ 
(\xi,\eta)\in Z^{p,q}.
\end{eqnarray*}
\end{prop}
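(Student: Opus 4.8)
The strategy is exactly parallel to that of Proposition \ref{lpu1prop}: reduce the $L^r$-boundedness of $U_{2,\ell}$ to showing that a finite list of scalar operators built from $\Box$, $\Boxbar$, $\Delta_H$, $T$, $\Gamma$-type square roots, and the quantities $\Delta',\Delta'',c$ lie in the class $\Psi^{0,0}_0$, after extracting the $L^r$-bounded factors $e(d\theta)^\ell$, $\Ri$, $\bar\Ri$ that already appear explicitly in the formula \eqref{U2ell-eq} for $U_{2,\ell}$. Concretely, I would start from Proposition \ref{U2ell-prop}, where
$$
U_{2,\ell}=\frac{e(d\theta)^\ell}{\sqrt{c_{s+1,\ell}\,c}}\,H\,\frac{1}{\sqrt{\Delta'\Delta''}}\ ,
$$
with $H=(H_{ij})$ given in terms of $\Ri$, $\bar\Ri$, $\bar\Ri\Ri$, $e(d\theta)$ and scalar operators in $\Box,\Boxbar,\Delta_H,T$. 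Since $e(d\theta)^\ell$ and the (holomorphic/antiholomorphic) Riesz transforms are $L^r$-bounded for $1<r<\infty$ by \eqref{4.5} and the fact that $\Ri,\bar\Ri$ are Calder\'on--Zygmund operators, it suffices to check that each scalar multiplier obtained by stripping these factors from $H_{ij}(\Delta'\Delta'')^{-1/2}$ belongs to $\Psi^{0,0}_0$, and then invoke Lemma \ref{properties} (v).

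The key preliminary computations are the analogues of Lemmas \ref{lpu1lemma}--\ref{lpu3lemma}. Under the standing hypothesis $s=p+q$ and $s+2+2\ell\le n$ (so $s\le n-2$, $c=(\ell+1)(n-s-\ell-1)>0$, and $m=(n-s)/2\ge1$), I would record: $\Box^{1/2},\Boxbar^{1/2},(\Boxbar+iT)^{1/2}\in\Psi^{0,1/2}_{1/2}$ exactly as in Lemma \ref{lpu1lemma} (b), using that $\xi\mp(n-2p)\la$ etc.\ are comparable to $\xi$ on an angle containing $F$ minus at most one external half-line, and Remark \ref{psi-remarks} (iii); $\Delta_H\in\Psi^{0,1}_1\subset\Psi^{1,0}_1$ and $\Delta_H-T^2+c\in{}^*\Psi^{1,0}_1$, hence $(\Delta_H-T^2)\in\Psi^{1,0}_1$ and $\Box\Boxbar\in\Psi^{0,1}_1\Psi^{0,1}_1\subset\Psi^{0,2}_2$. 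Then one checks, term by term in the expressions \eqref{Gamma2}, that $\Delta''=\Delta_H-T^2+c\in{}^*\Psi^{1,0}_1$ and
$$
\Delta'=(2\Box\Boxbar-(\ell+1)^2T^2)(\Delta_H-T^2)-T^2(-cT^2+2\sqrt{c\Box\Boxbar\Delta''})\ ,
$$
whose leading positive terms give $\Delta'\gtrsim(\xi+\la^2+1)^2(\xi+1)$ while all terms lie in $\Psi^{?,?}_?$ with exponents summing correctly, so that $\Delta'\in{}^*\Psi^{2,1}_3\subset{}^*\Psi^{2,1}_2$; hence $(\Delta'\Delta'')^{-1/2}\in\Psi^{-3/2,-1/2}_{-5/2}$ by Lemma \ref{properties} (iii), (iv). For the numerator, each entry of $H$ (see Proposition \ref{U2ell-prop}) is, after removing $\Ri,\bar\Ri,\bar\Ri\Ri,e(d\theta)$, a sum of products of $\Box^{1/2},\Boxbar^{1/2},(\Boxbar+iT)^{1/2},\Delta_H-T^2,T,\sqrt{c\Delta''}$, each of which I would place in the appropriate $\Psi$-class; multiplying out, the worst entry turns out to lie in $\Psi^{3/2,1/2}_{5/2}$, matching $(\Delta'\Delta'')^{-1/2}$, so the product is in $\Psi^{0,0}_0\subset$ Marcinkiewicz, and Theorem \ref{marcin} (or directly Lemma \ref{properties} (v)) gives $L^r$-boundedness.

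\textbf{Main obstacle.} The delicate point is not conceptual but bookkeeping: one must verify the lower bound \eqref{inverse} for $\Delta'$ (and for $\Delta''$) with the \emph{correct} exponents $(\rho,\sigma,\tau)$, i.e.\ that the cancellations among the many terms in \eqref{Gamma2} do not destroy positivity on the relevant angle $\Gamma_{n-\eps}$. This requires using $s+2+2\ell\le n$ (equivalently $c>0$ and $2m-\ell\ge\ell+2>0$) to control the sign of the cross-terms involving $T^2$ and $\sqrt{c\Box\Boxbar\Delta''}$, much as the estimate $R_{11}\gtrsim(\xi+\la^2+1)(\xi+1)$ was obtained in Lemma \ref{lpu2lemma}. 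Once $\Delta',\Delta''\in{}^*\Psi$ with sharp exponents is established, the rest is a routine application of the algebra of the classes $\Psi^{\rho,\sigma}_\tau$ from Lemma \ref{properties}, and the matching of numerator and denominator exponents in $H(\Delta'\Delta'')^{-1/2}$ is automatic from the homogeneity of $U_{2,\ell}$. As with $U_{1,\ell}^\pm$, the case $p=0$ or $q=0$ is handled by the remark preceding Proposition \ref{U2ell-prop} (replacing $\Box,\Boxbar$ by $\Box',\Boxbar'$ and using the projections $I-\cC$, $I-\bar\cC$ together with Remark \ref{psi-remarks} (ii)), so no separate argument is needed there. This completes the proof of Proposition \ref{lpu2prop}, and together with Proposition \ref{lpu1prop} it establishes Lemma \ref{Psi-intertwinings}, hence Theorem \ref{Lp-bdd}.
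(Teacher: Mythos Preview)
Your overall strategy matches the paper's: reduce to scalar operators via Proposition \ref{U2ell-prop}, strip off $e(d\theta)^\ell$, $\Ri$, $\bar\Ri$, $\bar\Ri\Ri$, and show the remaining multipliers lie in $\Psi^{0,0}_0$. However, there are two genuine problems.

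\textbf{Wrong exponents.} Your placements $\Delta''\in{}^*\Psi^{1,0}_1$ and $\Delta'\in{}^*\Psi^{2,1}_3$ are incorrect. Since $c>0$, the multiplier of $\Delta''=\Delta_H-T^2+c$ is $\sim 1$ (not $\sim\xi$) for $\xi\le1$, so $\Delta''\in{}^*\Psi^{1,0}_0$, as in Lemma \ref{lpu1lemma} (c). For $\Delta'$ the dominant term $2\Box\Boxbar(\Delta_H-T^2)$ gives $(\xi+\la^2)\xi^2$ for $\xi>1$, not $(\xi+\la^2)^2\xi$; the paper finds (Lemma \ref{mul-lem-7.8}) $\Delta'\in{}^*\Psi^{1,2}_3$ and hence $(\Delta'\Delta'')^{-1/2}\in\Psi^{-1,-1}_{-3/2}$, not your $\Psi^{-3/2,-1/2}_{-5/2}$. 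With the correct exponents the term-by-term check still goes through, but your numerator/denominator matching as stated does not.

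\textbf{The boundary cases $p=0$, $q=0$ do require a separate argument.} Your claim that ``no separate argument is needed'' is wrong, and this is the main gap. The first column $H_{11}(\Delta'\Delta'')^{-1/2}$, $H_{21}(\Delta'\Delta'')^{-1/2}$ acts on $X^{p,q}$, where $C_p\xi=0$ is guaranteed but $\bar C_q\xi=0$ is \emph{not}. Thus when $q=0$ you cannot replace $\Boxbar$ by $\Boxbar'$ on this column. On the ray $\rho=\{\la<0,\ \xi=n|\la|\}$ the multiplier of $\Boxbar$ vanishes, so $\xi+(n-2q)\la\not\sim\xi$ there, and the lower bound \eqref{inverse} for $\Delta'$ fails on the full fan; Remark \ref{psi-remarks} (ii) only saves operators that vanish on $\rho$. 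Most of the scalar pieces in the list do vanish there (they carry a factor $\Boxbar$ or $\Boxbar^{1/2}$), but two survive, namely
\[
\frac{T\,\Box_r^{1/2}(\Boxbar+iT)^{1/2}}{\sqrt{\Delta'}}\qquad\text{and}\qquad
\frac{(\Boxbar+i(\ell+1)T)\,\Box_r^{1/2}}{\sqrt{\Delta'}}\ .
\]
The paper handles these by restricting their multipliers to $\rho$, obtaining one-variable Mihlin--H\"ormander multipliers in $\la$, and composing with the $L^r$-bounded projection $\bar\cC$. The symmetric issue arises for the second column on $Y^{p,q}$ when $p=0$. Without this extra step your proof is incomplete.
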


\medskip
In view of the explicit expression for  $U_{2,\ell}$ in
Proposition \ref{U2ell-prop}, it will suffice to prove that the
operators $\frac{H_{11}}{\sqrt{\Delta' \Delta''}}$ and
$\frac{H_{21}}{\sqrt{\Delta' \Delta''}}$ are $L^r$-bounded on
$X^{p,q},$ and the operators $\frac{H_{12}}{\sqrt{\Delta' \Delta''}}$
and $\frac{H_{22}}{\sqrt{\Delta' \Delta''}}$  on $Y^{p,q}$  (notice
the the multiplier $\sigma(T)$ corresponds essentially to the Hilbert
transform along the center of the Heisenberg group, which is
$L^r$-bounded). 

\medskip
We shall prove the estimates on $X^{p,q}$ only, since the estimates on
$Y^{p,q}$ 
 follow along the same lines. 
 
 Using again the factorizations \eqref{factordebar1},
 \eqref{factordebar2} by means of Riesz transforms, we see that we are
 reduced to estimating the following scalar operators on $X^{p,q}$
 with respect to the $L^r$- norm: 
 
\begin{eqnarray*}
&{\rm(III)}& \qquad
\dfrac{(\Boxbar+iT)^\half\Boxbar^\half(\Delta_H-T^2)}{\sqrt{\Delta'
    \Delta''}}, \quad
\dfrac{T(\Boxbar+iT)^\half\Boxbar^\half}{\sqrt{\Delta' \Delta''}},
\quad \dfrac{\Box_r^\half\Boxbar^\half(\Delta_H-T^2)}{\sqrt{\Delta'
    \Delta''}}, 
\\
&&\qquad  \qquad\dfrac{T \Box_r^\half(\Boxbar+iT)^\half}{\sqrt{\Delta'}},
\\
&{\rm(IV)}& \qquad \dfrac{\Boxbar^\half(\Delta_H-T^2)}{\sqrt{\Delta' \Delta''}}, \quad
 \dfrac{(\Boxbar+i(\ell+1)T)\Boxbar^\half}{\sqrt{\Delta' \Delta''}},
 \quad \dfrac{\Box_r^\half\Boxbar}{\sqrt{\Delta' \Delta''}}, \quad 
 \dfrac{\Box_r\Boxbar^\half}{\sqrt{\Delta'}},
 \\
 &&\qquad  \qquad \dfrac{(\Boxbar+i(\ell+1)T)\Box_r^\half}{\sqrt{\Delta'}}\,.
\end{eqnarray*}

\begin{lemma}\label{mul-lem-7.8}
Let $\Delta',\Delta''$ be as in Lemma \ref{ip5}.  Then, the following
properties hold:
\begin{itemize}
\item[(a)] $(\Delta'')^{\alpha}\in \Psi^{\alpha,0}_0$ for every $\alpha\in\R$;\smallskip
\item[(b)] if $q\ge 1,$ then $(\Delta')^{\alpha}\in
  \Psi^{\alpha,2\alpha}_{3\alpha}$  for every $\alpha\in\R$; \smallskip
\item[(b)] if $q\ge 1,$ then $\big(\Delta'\Delta''\big)^{-\half}\in\Psi^{-1,-1}_{-\frac 32}$.
\end{itemize}
\end{lemma}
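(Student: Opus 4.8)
\textbf{Proof plan for Lemma \ref{mul-lem-7.8}.}

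The plan is to verify each estimate directly from the defining pointwise inequalities \eqref{classes} and \eqref{inverse}, using the calculus in Lemma \ref{properties} together with the symbol memberships already collected in Lemmas \ref{lpu1lemma}--\ref{lpu3lemma}. Throughout I shall write $\tilde\xi=\xi+(p-q)\la\sim\xi$ as in \eqref{lpu1}, and keep in mind the standing hypothesis $s=p+q\le n-2$ (which holds here since $p+q+2+2\ell\le n$), so that $c=(\ell+1)(n-s-\ell-1)\ge 1$; this positivity is what makes $\Delta''=\Delta_H-T^2+c$ strictly positive and gives the lower bound $\Delta''(\la,\xi)\gtrsim\max(1,\xi+\la^2)$ on the relevant angle. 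For part (a), observe that $\Delta_H-T^2\in\Psi^{1,0}_1$ by Lemma \ref{lpu1lemma}(c), hence $\Delta''=\Delta_H-T^2+c\in\Psi^{1,0}_0$; combined with the lower bound just noted, $\Delta''\in{}^*\Psi^{1,0}_0$, and Lemma \ref{properties}(iii) yields $(\Delta'')^\alpha\in\Psi^{\alpha,0}_0$ for every real $\alpha$.

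For part (b) I would analyse
$$
\Delta'=\Big(2\Box\Boxbar-(\ell+1)^2T^2\Big)(\Delta_H-T^2)-T^2\Big(-cT^2+2\sqrt{c\Box\Boxbar\Delta''}\Big)
$$
term by term. Since $q\ge1$ the multiplier of $\Boxbar$ is $\sim\xi$ on an angle containing the whole fan; and since $s\le n-2$ the multiplier of $\Box$ is likewise $\sim\xi$ on such an angle, so $\Box\Boxbar\in{}^*\Psi^{0,2}_2$ and $\Box\Boxbar\Delta''\in{}^*\Psi^{1,2}_2$, whence $\sqrt{c\Box\Boxbar\Delta''}\in{}^*\Psi^{1/2,1,0}_1$. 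Using $T^2\in\Psi^{1,0}_2$ and $\Delta_H-T^2\in{}^*\Psi^{1,0}_1$ one checks that every summand lies in $\Psi^{1,2}_3$: the first bracket is in $\Psi^{1,0}_2\cdot\Psi^{0,2}_2\subset\Psi^{1,2}_3$ after multiplying by $\Delta_H-T^2\in\Psi^{1,0}_1$ (actually $(2\Box\Boxbar-(\ell+1)^2T^2)\in\Psi^{1,2}_2$ and then times $\Delta_H-T^2$ gives $\Psi^{2,2}_3$, which sits inside $\Psi^{1,2}_3$ by Lemma \ref{properties}(iv)); the $cT^4$ term is in $\Psi^{2,0}_4\subset\Psi^{1,2}_3$; and $T^2\sqrt{c\Box\Boxbar\Delta''}\in\Psi^{1,0}_2\cdot\Psi^{1/2,1,0}_1$ lands in $\Psi^{3/2,1}_3\subset\Psi^{1,2}_3$. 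For the matching lower bound I expand $\Delta'$ on the fan, where $\xi\sim\la^2(n+2j)^2/\cdots$, and show $\Delta'(\la,\xi)\gtrsim(\xi+\la^2)\xi^2$ for $\xi\ge1$ and $\Delta'(\la,\xi)\gtrsim\xi^3$ for $\xi\le1$; the point is that the dominant contributions $2\Box\Boxbar\cdot\Delta_H$ (for large $\xi$) and $2\Box\Boxbar(-T^2)+cT^4$ together with the positivity of $c$ prevent cancellation. This gives $\Delta'\in{}^*\Psi^{1,2}_3$, and Lemma \ref{properties}(iii) then delivers $(\Delta')^\alpha\in\Psi^{\alpha,2\alpha}_{3\alpha}$.

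Part (c) is then immediate: by parts (a) and (b) with $\alpha=-\tfrac12$ we get $(\Delta')^{-1/2}\in\Psi^{-1/2,-1}_{-3/2}$ and $(\Delta'')^{-1/2}\in\Psi^{-1/2,0}_0$, and multiplying (Lemma \ref{properties}(ii)) gives $(\Delta'\Delta'')^{-1/2}\in\Psi^{-1,-1}_{-3/2}$. I expect the main obstacle to be the lower bound in part (b): unlike the upper bound, which is a mechanical application of the product and sum rules, establishing $\Delta'(\la,\xi)\gtrsim(\xi+\la^2)\xi^2$ uniformly on an angle $\Gamma_{n-\eps}$ requires showing that the square-root term $2T^2\sqrt{c\Box\Boxbar\Delta''}$, which carries an indefinite sign relative to the other terms, cannot overwhelm the positive leading contributions. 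Here one uses that on the fan $\Box\Boxbar\Delta''$ is a genuine product of three factors each comparable to $\xi$ up to the $\la^2$ shift in $\Delta''$, so that $\sqrt{c\Box\Boxbar\Delta''}\lesssim\sqrt{c}\,\xi\,(\xi+\la^2)^{1/2}$, while the competing positive term $2\Box\Boxbar\cdot\Delta_H\gtrsim\xi^2\cdot\xi$ and $cT^4\gtrsim c\,\la^4$ dominate after an elementary case split according to whether $\xi\gtrsim\la^2$ or $\xi\lesssim\la^2$; the constraint $s\le n-2$ (equivalently $c\ge1$, $m\ge1$) is exactly what makes this split work and is where one must be careful with the edge cases $p=0$ or $q=0$, handled as usual via Remark \ref{psi-remarks}(ii)--(iii) by passing to an angle omitting the offending external half-line of $F$.
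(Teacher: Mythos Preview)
Your approach is essentially the same as the paper's: verify the upper estimate for $\Delta'$ via the calculus of Lemma~\ref{properties} and the memberships in Lemma~\ref{lpu1lemma}, then check the lower estimate \eqref{inverse} directly. Part~(a) and part~(c) are identical to the paper's argument.

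One point you are overcomplicating: your ``expected main obstacle'' for the lower bound in (b) does not exist. In the multiplier of $\Delta'$, the operator $-T^2$ corresponds to $+\la^2$, so \emph{every} summand in
\[
\Big[\tfrac12(\xi-(n-2p)\la)(\xi+(n-2q)\la)+(\ell+1)^2\la^2\Big](\tilde\xi+\la^2)
+\la^2\Big[c\la^2+2\sqrt{c\Box\Boxbar\Delta''}\,\Big]
\]
is non-negative once \eqref{sim2} holds. There is no cancellation to control; the lower bound $\Delta'\gtrsim\xi^2(\xi+\la^2)$ for $\xi>1$ and $\gtrsim\xi^3$ for $\xi\le1$ follows immediately from the single term $2\Box\Boxbar(\Delta_H-T^2)$, which is why the paper dispatches it in one line. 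Your case split and the discussion of the square-root term ``carrying an indefinite sign'' are unnecessary.

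A minor imprecision: the claim that $\Box\sim\xi$ ``since $s\le n-2$'' is not the correct justification when $p=0$; what matters is that the operators act on $X^{p,q}$, which for $p=0$ already excludes the ray $\xi=n\la$. You do eventually invoke Remark~\ref{psi-remarks} for this, so the argument survives. (Also, $\Psi^{1/2,1,0}_1$ is a typo for $\Psi^{1/2,1}_1$.)
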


\proof 
(a) is immediate from Lemma \ref{lpu1lemma} (c).

As for (b), we first recall that $c>0.$ Moreover, $\Delta'$ has multiplier 
\begin{eqnarray*}
&&\Big[\half
\big(\xi-(n-2p)\la\big)\big(\xi+(n-2q)\la\big)
+(\ell+1)^2\la^2\Big]\big(\xi+(p-q)\la+\la^2\big)\\ 
&&
\qquad+\la^2\Big[c\la^2+\sqrt{c\big(\xi-(n-2p)\la\big)
\big(\xi+(n-2q)\la\big)\big(\xi+(p-q)\la+\la^2+c\big)}\,. 
\end{eqnarray*}
Here,  $\xi=(n+2k)\la,\ k\in\N,$ and $k\ge 1,$ if $\la>0$ and $p=0,$
since we are acting on $X^{p,q}.$ Since we are also assuming that
$q\ge 1,$ this shows that  
\begin{equation}\label{sim2}
\xi-(n-2p)\la\sim \xi,\qquad \xi+(n-2q)\la\sim \xi\,.
\end{equation}
By means of Lemma \ref{lpu1lemma} and Lemma \ref{properties}, we thus easily see that 
$$
\Delta'\in \Psi^{0,1}_1\Psi^{0,1}_1\Psi^{1,0}_1
+ \Psi^{1,0}_2\big(\Psi^{1,0}_2+\Psi^{\half,1}_{\frac 32}\big)
\subseteq \Psi^{1,2}_3+ \Psi^{2,0}_4+\Psi^{\frac 32,1}_{\frac 72}
\subseteq \Psi^{1,2}_3\ .
$$
Moreover, the inverse estimate  \eqref{inverse} holds true for
$\rho=1,\sigma=2$ and $\tau=3$  because of \eqref{sim2}, which yields
(b). 
Finally, (c) is a direct consequence of (a) and (b).
\qed

The lemmata \ref{mul-lem-7.8} and \ref{lpu1lemma} now easily imply that
\begin{eqnarray*}
&{\rm(III)}& \
\dfrac{(\Boxbar+iT)^\half\Boxbar^\half(\Delta_H-T^2)}{\sqrt{\Delta'
    \Delta''}}\in \Psi^{0,0}_\half, \quad
\dfrac{T(\Boxbar+iT)^\half\Boxbar^\half}{\sqrt{\Delta' \Delta''}} \in
\Psi^{-\frac12,0}_\half, \quad
\dfrac{\Box_r^\half\Boxbar^\half(\Delta_H-T^2)}{\sqrt{\Delta'
    \Delta''}} \in \Psi^{0,0}_\half, 
\\
&&\qquad  \qquad\dfrac{T
  \Box_r^\half(\Boxbar+iT)^\half}{\sqrt{\Delta'}} \in \Psi^{0,0}_\half
\, ;
\\
&{\rm(IV)}& \qquad \dfrac{\Boxbar^\half(\Delta_H-T^2)}{\sqrt{\Delta'
    \Delta''}}\in \Psi^{0,-\half}_0\, , \quad 
 \dfrac{(\Boxbar+i(\ell+1)T)\Boxbar^\half}{\sqrt{\Delta' \Delta''}}
 \in \Psi^{-1,\half}_0\, , 
 \quad \dfrac{\Box_r^\half\Boxbar}{\sqrt{\Delta' \Delta''}} \in
 \Psi^{-1,\half}_0\, ,
 \\ 
 &&\qquad \qquad
 \dfrac{\Box_r\Boxbar^\half}{\sqrt{\Delta'}} \in \Psi^{-\half,\half}_0 ,
\quad  
\dfrac{(\Boxbar+i(\ell+1)T)\Box_r^\half}{\sqrt{\Delta'}} \in
\Psi^{-\half,\half}_0\, .
\end{eqnarray*}

All these classes are contained in $\Psi^{0,0}_0,$ so that all these
operators are $L^r$-bounded Marcinkiewic type operators, for
$1<r<\infty.$ This proves Proposition \ref{lpu2prop} when $q\ge 1.$  
\medskip

The situation is slightly more complicated when $q=0.$ The problem is
that the second relation in \eqref{sim2} will fail to be true in this
case on the ray  
$$
\rho:=\{(\la,\xi): \la<0\mbox {  and }  \xi=n|\la|\}\subseteq F
$$
of the Heisenberg fan, on which the multiplier of $\Boxbar$ will
vanish identically. If we remove this ray, the preceding arguments
remain valid and we get $L^r$- boundedness of the restrictions of our
operators to the orthogonal complement of the kernel of $\Boxbar,$
i.e., on $(I-\bar\cC)(X^{p,q}).$ So, what remains is the restriction
on $\bar\cC(X^{p,q}).$ This corresponds to the restrictions of our
multipliers to the ray $\rho.$ However, all of the multipliers listed
in (III) and (IV) which contain a factor $\Boxbar$ or $\Boxbar^\half$
vanish identically on this ray,  
so what remains are the operators
$$
\dfrac{T \Box_r^\half(\Boxbar+iT)^\half}{\sqrt{\Delta'}}\mbox{    and     }  \dfrac{(\Boxbar+i(\ell+1)T)\Box_r^\half}{\sqrt{\Delta'}}.
$$
On the ray $\rho,$ the multipliers of these operators are given, up to
multiplicative constants,  by 
$$
\mu_1= \dfrac{\la^2}{\sqrt{\Big(c+(\ell+1)^2\Big)\la^4+(\ell+1)^2(n-p)|\la|^3}}
$$
and 
$$
 \mu_2=\dfrac{|\la|^{\frac 32}}{\sqrt{\Big(c+(\ell+1)^2\Big)\la^4+(\ell+1)^2(n-p)|\la|^3}}\,. 
$$
It is easy to see that these are Mihlin--H\"ormander multipliers in
$\la,$ so that $\dfrac{T
  \Box_r^\half(\Boxbar+iT)^\half}{\sqrt{\Delta'}}\bar\cC$ and
$\dfrac{(\Boxbar+i(\ell+1)T)\Box_r^\half}{\sqrt{\Delta'}}\bar\cC$ are
compositions of Calder\'on-Zygmund operators acting in the central
variable of the Heisenberg group with the singular integral operator
$\bar\cC,$ which shows that they are $L^r$-bounded, for $1<r<\infty,$
too. 

\medskip
This completes the proof of Proposition \ref{lpu2prop}.
\bigskip

Finally, let us denote by $\rho^\pm$ the rays
$$
\rho^\pm:=\{(\la,\xi): \xi=\pm \la, \la >0 \subseteq F
$$
of the Heisenberg fan $F,$ and define for  $(\la,\xi)\in F$  the spaces 
\begin{eqnarray*}
X^{p,q}(\la,\xi):= \begin{cases} 
     \{0\}, &  \mbox{  if  } p=0 \mbox { and  } (\la,\xi)\in \rho^+, \\
     \C, &  \mbox{  if  } p=0 \mbox { and  } (\la,\xi)\notin \rho^+, \mbox{ or if } p>0\, ,
\end{cases}
\end{eqnarray*}
\begin{eqnarray*}
Y^{p,q}(\la,\xi):= \begin{cases} 
     \{0\}, &  \mbox{  if  } q=0 \mbox { and  } (\la,\xi)\in \rho^-, \\
     \C, &  \mbox{  if  } q=0 \mbox { and  } (\la,\xi)\notin \rho^-, \mbox{ or if } q>0\, ,
    \end{cases}
\end{eqnarray*}
and 
$$Z^{p,q}(\la,\xi)=\Big\{
\begin{pmatrix}
\mu\\
\nu
\end{pmatrix}: \mu\in X^{p,q}(\la,\xi), \nu\in Y^{p,q}(\la,\xi)\Big\}.
$$

\begin{lemma}\label{Ainv}
For $(\la,\xi)\in F$ let $E(\la,\xi)=\begin{pmatrix}
E_{11}(\la,\xi) & E_{12}(\la,\xi)\\ E_{21}(\la,\xi) & E_{22}(\la,\xi)
\end{pmatrix},$
where the $E_{ij}$ are given in Lemma \ref{ip5}.  Then, when viewed as
a linear mapping from the space $Z^{p,q}(\la,\xi)$ into itself,
$E(\la,\xi)$ is invertible for almost every $(\la,\xi)$ with respect
to the Plancherel measure on $F.$ 
\end{lemma}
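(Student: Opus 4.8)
\textbf{Proof plan for Lemma \ref{Ainv}.}

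The strategy is to read off the invertibility of $E(\la,\xi)$ from the determinant formula \eqref{detE}, namely $\det E = cT^4\Box\Boxbar(\Delta_H-T^2+c)$, while being careful on the rays of the Heisenberg fan where $\Box$ or $\Boxbar$ degenerates. First I would treat the generic case: on the locus where the scalar symbols $\Box(\la,\xi)$ and $\Boxbar(\la,\xi)$ are both nonzero --- i.e. away from the rays $\rho^+=\{\xi=n|\la|,\la>0\}$ (where $\Box_0$ vanishes) and $\rho^-=\{\xi=n|\la|,\la<0\}$ (where $\Boxbar_0$ vanishes) --- one has $Z^{p,q}(\la,\xi)=\C^2$, and since $c=(\ell+1)(n-s-\ell-1)>0$ under the standing assumption $p+q+2+2\ell\le n$ (so $s=p+q\le n-2\ell-2$, giving $n-s-\ell-1\ge\ell+1\ge1$), and $\Delta_H-T^2+c\ge c>0$, the value $\det E(\la,\xi)=cT^4\Box(\la,\xi)\Boxbar(\la,\xi)(\Delta_H-T^2+c)(\la,\xi)$ is nonzero for a.e. $(\la,\xi)$ on this locus. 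Here I use that $T$ corresponds to $i\la$ with $\la\ne0$ a.e. with respect to Plancherel measure (the vertical half-line has zero spectral measure), so $T^4$ contributes a nonzero factor.

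Next I would handle the exceptional rays. Consider $p=0$ and $(\la,\xi)\in\rho^+$: then $\Box_0(\la,\xi)=0$, so $X^{p,q}(\la,\xi)=\{0\}$ and $Z^{p,q}(\la,\xi)$ reduces to $\{0\}\times Y^{p,q}(\la,\xi)$. On this ray I need $E$ to restrict to an invertible operator on that one-dimensional (or zero-dimensional, if also $q=0$, but $p=q=0$ forces $s=0$ hence is a distinct sub-case) space. The point is that the entries of $E$ in \eqref{E} each carry an explicit factor of $\Box$ or $\Boxbar$: $E_{12}=E_{21}=-\Box\Boxbar(\Delta_H-T^2)$ vanishes on $\rho^+$ since $\Box$ does, and $E_{11}=\Box\Boxbar(\Delta_H-T^2)+i(\ell+1)T\Box[\cdots]$ also vanishes on $\rho^+$; so the restricted matrix collapses to multiplication by $E_{22}=\Box\Boxbar(\Delta_H-T^2)-i(\ell+1)T\Boxbar[\Delta_H-T^2+i(n-s-\ell-1)T]$ acting on the $\nu$-coordinate. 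On $\rho^+$ one computes $\Box_0$ vanishes but $\Boxbar_0=\half(L-inT)$ evaluates to a nonzero scalar (since on $\rho^+$, $\xi=n|\la|=n\la$, $\la>0$, so $L-inT$ has symbol $n\la-in(i\la)=n\la+n\la=2n\la\ne0$), whence $E_{22}$ reduces to $-i(\ell+1)T\Boxbar\cdot i(n-s-\ell-1)T=(\ell+1)(n-s-\ell-1)T^2\Boxbar=cT^2\Boxbar$, which is nonzero for a.e. such $(\la,\xi)$. The symmetric argument, swapping the roles of $\Box\leftrightarrow\Boxbar$, $p\leftrightarrow q$, $\rho^+\leftrightarrow\rho^-$, covers $q=0$ on $\rho^-$. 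The degenerate case $p=q=0$ (so $s=0$, $Z^{p,q}(\la,\xi)=\{0\}$ on the relevant rays) is vacuous since the zero map on the zero space is trivially invertible.

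The main obstacle is the bookkeeping on the exceptional rays: one must verify that on each such ray the three columns/rows of $E$ that ought to ``see'' the vanishing symbol do in fact vanish identically there (so that $E$ genuinely restricts to the smaller space $Z^{p,q}(\la,\xi)$ rather than mapping out of it), and that the surviving diagonal entry is a nonzero polynomial in $\la$ --- this is exactly the kind of cancellation that was already exploited in the proof of Lemma \ref{ip5} to show that $\widetilde E$ maps $(W_0^{p,q})^2$ bijectively onto itself, so I would model the argument on that. Concretely I would substitute $\xi=n|\la|$ (resp. $\xi=n\la$ for $\la>0$) into the symbols of $\Box,\Boxbar,\Delta_H,T$ and check the vanishing and non-vanishing claims by direct inspection, invoking that a nonzero polynomial in $\la$ is nonzero for a.e. $\la$. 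This yields invertibility of $E(\la,\xi)$ on $Z^{p,q}(\la,\xi)$ for a.e. $(\la,\xi)\in F$, completing the proof.
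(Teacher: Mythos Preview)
Your approach is essentially the paper's: use the determinant formula \eqref{detE} on the generic part of the fan, and on the exceptional rays where $\Box$ (resp.\ $\Boxbar$) vanishes, reduce $E$ to the surviving diagonal entry $E_{22}$ (resp.\ $E_{11}$) and verify it is a nonzero polynomial in $\la$. One computational slip worth fixing: on $\rho^+$ with $p=0$ you cannot drop the $\Delta_H-T^2$ term from the bracket in $E_{22}$ (and the relevant operator is $\Boxbar_q$, not $\Boxbar_0$, since only $p=0$ is assumed); the correct reduction is $E_{22}=-i(\ell+1)T\,\Boxbar_q\,[\Delta_H-T^2+i(n-s-\ell-1)T]$, which on $\rho^+$ still evaluates to a nonzero polynomial in $\la$ (the paper records it as $-i(\ell+1)(n-q)\la(\la-n+\ell+2)$), so your conclusion is unaffected.
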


\proof

When $(\la,\xi)\in F\setminus(\rho^+\cup\rho^-),$ then
$\Box(\la,\xi)\ne 0, \Boxbar(\la,\xi)\ne 0,$ and since, according to
\eqref{detE},  $\det E= cT^4\Box\Boxbar\Delta'',$  the claim is
immediate.

Assume next that $(\la,\xi)\in \rho^+.$  Then, if $p>0,$ we can argue
as before. So, assume that $p=0.$ In this case, $\Box(\la,\xi)=0, \
Z^{p,q}(\la,\xi)=\Big\{ 
\begin{pmatrix}
0\\
\nu
\end{pmatrix}: \nu\in \C\Big\},$ and 
$E(\la,\xi)=\begin{pmatrix}
0& 0\\ 0 & E_{22}(\la,\xi)
\end{pmatrix},$
where $E_{22}(\la,\xi)=-i(\ell+1)(n-q)\la(\la-n+\ell+2).$ Since
$p+q+2\ell+2\le n,$ the factor $(n-q)$ is non-zero, and the claim
follows. 

Finally, the case where $(\la,\xi)\in \rho^-$ can be dealt with in a very similar way.
\qed

\bigskip

\setcounter{equation}{0}
 \section{Applications}\label{applications}

\subsection{Multipliers of $\Delta_k$}\quad
\medskip

We are in a position now to extend Theorem 6.8 of \cite{MPR} to forms
of any degree. A function $\mu$ definied on the positive half-line is
a Mihlin--H\"ormander multiplier  of class $\rho>0$ if, given a smooth
function $\chi$ supported on $[\half,4]$ and equal to 1 on $[1,2]$,  
$$
\|\mu\|_{\rho,{\rm sloc}}:=\sup_{t>0}\big\|\mu(t\cdot)\chi\big\|_{L^2_\rho}<\infty\ .
$$

\begin{thm}\label{hodgethm}
Let $m:\R\to \C$ be a bounded, continuous function in $L^2_{\rho,{\rm
    sloc}}(\R)$ for some $\rho>(2n+1)/2$. Then, for every
$k=0,\dots,2n+1,$ the operator $m(\Delta_k)$ is bounded on
$L^p(H_n)\Lambda^k$ for $1<p<\infty,$ with norm controlled by
$\|m\|_{\rho,{\rm sloc}}$. 
\end{thm}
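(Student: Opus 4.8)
The strategy is to reduce the multiplier theorem for $\Delta_k$ acting on $L^p\Lambda^k$ to the Marcinkiewicz--type multiplier theorem for the commuting pair $(i^{-1}T,L)$ on scalar $L^p(H_n)$, namely Theorem \ref{marcin}. The bridge is the decomposition of $L^p\Lambda^k$ obtained in Theorem \ref{Lp-bdd}: write $L^p\Lambda^k$ as the (finite) direct sum of the $L^p$-closures of the $\Delta_k$-invariant subspaces $\V_0^{p,q}$, $\V_{1,\ell}^{p,q,\pm}$, $\V_{2,\ell}^{p,q}$ (and their images under $R_{k-1}$, or, when $k>n$, the analogous $\Upsilon$-subspaces and their images under $R^*$), together with the $L^p$-boundedness of the associated projections and of the unitary intertwining operators $U$ from Table \ref{table}. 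By $L^p$-boundedness of the $*$-Hodge operator it suffices to treat $0\le k\le n$. On each summand $\cV_\beta$ one has $\Delta_k = U_\beta D_\beta U_\beta^{-1}$ on the appropriate core, where $D_\beta$ is one of the \emph{explicit scalar} operators listed in \eqref{j=0}, \eqref{diagonal}, \eqref{j=2} (and, on the $R$-images, the conjugate of such a $D_\beta$, which has the same spectral multiplier since $R_{k-1}$ and $R_{k-1}^*$ are partial isometries intertwining $\Delta_{k-1}$ and $\Delta_k$).

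The key observation is that each $D_\beta$ is of the form $q_\beta(i^{-1}T,L)$ where $q_\beta(\lambda,\xi)$ is, up to an additive nonnegative constant, either $\xi+\lambda^2+(p-q)\lambda$ or that expression plus/minus $\sqrt{\xi+\lambda^2+(p-q)\lambda+m^2}$ with $m=(n-p-q)/2$; in every case $q_\beta$ maps the Heisenberg fan $F$ into $[c_\beta,\infty)$ for some $c_\beta>0$, by Proposition \ref{s4.2} (injectivity of $\Delta_k$) and the spectral-gap estimate \eqref{4.1}, and moreover $q_\beta$ is a homeomorphism of $F$ (or of the relevant portion of $F$) onto its image, smooth with Marcinkiewicz-type derivative bounds — this is exactly the content of the $\Psi$-calculus: $q_\beta\in{}^*\Psi^{1,0}_{\tau}$ with $\tau\in\{0,1\}$, as noted in the proof of Theorem \ref{Lp-bdd}. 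Consequently, for a Mihlin--H\"ormander multiplier $m$ of class $\rho>(2n+1)/2$ on the half-line, the composition $m\circ q_\beta$ is a Marcinkiewicz multiplier on $\R^2$ of class $(\rho,\sigma)$ for every $\sigma$: away from the origin of $F$ the behaviour $q_\beta(\lambda,\xi)\sim \xi+\lambda^2$ together with the chain rule and Fa\`a di Bruno gives the required mixed Sobolev estimates, with $\|m\circ q_\beta\|_{L^2_{\rho,\sigma,\mathrm{sloc}}}\lesssim \|m\|_{\rho,\mathrm{sloc}}$ (one uses that $2n+1$ is the homogeneous dimension relevant to the parabolic scaling $(\lambda,\xi)\mapsto(r\lambda,r^2\xi)$, matching the threshold $\rho>n$, $\sigma>1/2$ of Theorem \ref{marcin} once the scaling is taken into account). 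Hence $m(D_\beta)=(m\circ q_\beta)(i^{-1}T,L)$ is bounded on scalar $L^p$ with norm $\lesssim\|m\|_{\rho,\mathrm{sloc}}$, and therefore, applying it componentwise to forms, $m(D_\beta)$ is bounded on $\S_0$-sections and extends to $L^p$ of the relevant homogeneous bundle.

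Putting the pieces together: for $\om\in L^p\Lambda^k$ decompose $\om=\sum_\beta \om_\beta$ along Theorem \ref{Lp-bdd}, write $\om_\beta=U_\beta\sigma_\beta$ with $\|\sigma_\beta\|_p\lesssim\|\om_\beta\|_p\lesssim\|\om\|_p$ (using $L^p$-boundedness of the projections and of $U_\beta^{*}$), observe that $m(\Delta_k)\om_\beta = U_\beta\, m(D_\beta)\,\sigma_\beta$ on the dense subspace of $\S_0$-forms and pass to the limit, and estimate
$$
\|m(\Delta_k)\om\|_p \le \sum_\beta \|U_\beta\|_{p\to p}\,\|m(D_\beta)\|_{p\to p}\,\|\sigma_\beta\|_p \lesssim \|m\|_{\rho,\mathrm{sloc}}\,\|\om\|_p,
$$
the sum being finite. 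The main obstacle — and the only genuinely non-formal point — is the verification that $m\circ q_\beta$ is a Marcinkiewicz multiplier with the correct quantitative bound: one must check that the square-root pieces $\sqrt{\xi+\lambda^2+(p-q)\lambda+m^2}$ do not destroy the derivative estimates (they do not, precisely because $\xi+\lambda^2+(p-q)\lambda\gtrsim \xi$ on $F$ and the square root lies in ${}^*\Psi^{1/2,0}_{0}$), and that differentiating the composition $m\circ q_\beta$ up to order $\rho$ only costs powers of $(\xi+\lambda^2)^{-1}$ and $\xi^{-1}$ already accounted for; this is a routine but slightly delicate application of the composition rules in Lemma \ref{properties} together with the scaling-covariance built into the classes $\Psi^{\rho,\sigma}_\tau$. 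Optimality of the exponent $\rho>(2n+1)/2$ (and the extension to the Dirac operator $d+d^*$, whose square is $\bigoplus_k\Delta_k$) then follows as stated in the introduction, the former by testing against the scalar component $\Delta_0=L-T^2$ where the sharp example of \cite{MRS2} applies.
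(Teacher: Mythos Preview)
Your proposal is correct and follows exactly the approach the paper intends: the paper's entire proof reads ``The proof follows the same lines as in \cite{MPR},'' and what you have written is precisely that argument---decompose $L^p\Lambda^k$ via Theorem~\ref{Lp-bdd}, conjugate $\Delta_k$ on each summand to a scalar operator $D_\beta=q_\beta(i^{-1}T,L)$, and check that $m\circ q_\beta$ satisfies the hypotheses of Theorem~\ref{marcin}. Your identification of the one non-formal step (the chain-rule verification that a one-variable Mihlin--H\"ormander condition of order $\rho>(2n+1)/2$ on $m$ yields a two-variable Marcinkiewicz condition of class $(\rho',\sigma')$ with $\rho'>n$, $\sigma'>1/2$ on $m\circ q_\beta$) is exactly right, and is carried out in detail in \cite{MPR}, Section~6.
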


The proof follows the same lines as in \cite{MPR}.

\subsection[exact]{Exact $L^p$-forms}\label{subsec-exact}\quad
\medskip

As a corollary to Theorem \ref{Lp-bdd} and its proof, we can derive
the following extension of Lemma 4.2 in \cite{MPR}. 

\begin{lemma}\label{exact1}
Let $r$ be such that $1/2-1/r=1/(2n+2).$ If $\om\in L^2\Lambda^k$ is
such that $\om=du$  in the distributional sense  for some $u\in
\D'\Lambda^{k-1},$ then there is some $v\in L^r\Lambda^{k-1}$ such
that $\om=dv$ in the sense of distributions. 
Moreover, $\om \in R_{k-1}( L^2\Lambda^{k-1}).$
\end{lemma}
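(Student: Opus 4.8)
\textbf{Proof proposal for Lemma \ref{exact1}.}

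The plan is to exploit the decomposition of $L^2\Lambda^k$ from Theorem \ref{dec-k-le-n} (and its $*$-Hodge counterpart, Theorem \ref{dec-k>n}) together with the $L^2$-characterization of exact forms in Proposition \ref{s4.5}. First I would observe that the distributional identity $\om = du$ for some $u\in\D'\Lambda^{k-1}$ means precisely $d^*\om = 0$ in the sense of distributions, hence, by Lemma \ref{s3.1}, $\om\in\dom d^*$ and $d^*\om=0$; thus $\om\in\ker d^* = (L^2\Lambda^k)_{d^*\cl}$ is orthogonal to the exact part, while $\om$ should also be $d$-closed. Actually the cleaner route: $\om=du$ distributionally gives $d\om = d^2 u = 0$ distributionally, so $\om\in\ker d$ as a distribution, which by part (vii)=(iv) of Proposition \ref{s4.5} (already established in the $L^2$-theory, since (i)--(vi) agree and (iv) is among them) places $\om$ in $\ker d = (L^2\Lambda^k)_{d\ex} = \range R_{k-1} = R_{k-1}(L^2\Lambda^{k-1})$. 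This already yields the last assertion $\om\in R_{k-1}(L^2\Lambda^{k-1})$ without any $L^p$-input; the first assertion is what requires the Sobolev-type gain.

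The heart of the argument is then to produce a primitive $v$ in $L^r\Lambda^{k-1}$. Since $\om\in R_{k-1}(L^2\Lambda^{k-1})$, write $\om = R_{k-1}\psi = d\Delta_{k-1}^{-1/2}\psi$ for some $\psi\in L^2\Lambda^{k-1}$; indeed one may take $\psi = R_{k-1}^*\om$, so $v_0 := \Delta_{k-1}^{-1/2}R_{k-1}^*\om = \Delta_{k-1}^{-1}d^*\om$-type object, and then $dv_0 = R_{k-1}R_{k-1}^*\om = \om$ by Lemma \ref{s4.4} (since $\om$ is in the range of $R_{k-1}R_{k-1}^*$). Thus it suffices to show that $v_0 = \Delta_{k-1}^{-1/2}\psi$ lies in $L^r\Lambda^{k-1}$ whenever $\psi\in L^2\Lambda^{k-1}$, with $1/2 - 1/r = 1/(2n+2)$. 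This is a Sobolev-type embedding for $\Delta_{k-1}^{-1/2}$: one reduces, via the intertwining operators $U_\nu$ of Table \ref{table} and their $L^r$-boundedness from Theorem \ref{Lp-bdd} (more precisely Lemma \ref{Psi-intertwinings} and Propositions \ref{lpu1prop}, \ref{lpu2prop}), to scalar operators of the form $D_\nu^{-1/2}$ acting componentwise, where each scalar multiplier $m_\nu$ is in ${}^*\Psi^{1,0}_1$ or ${}^*\Psi^{1,0}_0$ (the list appears in the proof of Theorem \ref{Lp-bdd}). For the pieces where $m_\nu\in{}^*\Psi^{1,0}_1$ the operator $D_\nu^{-1/2}$ is comparable to $(L-T^2)^{-1/2}$, i.e. $\Delta_0^{-1/2}$, which satisfies the scalar $L^2\to L^r$ Sobolev estimate on $H_n$ with exponent gain $1/(2n+2)$ (the homogeneous dimension of $H_n$ being $2n+2$); for the pieces where $m_\nu\in{}^*\Psi^{1,0}_0$, the extra smoothing only helps, and composition with the bounded operators $\Psi^{0,0}_0$ and the Riesz transforms $\Ri,\bar\Ri, R$ preserves $L^r$-boundedness by Lemma \ref{properties}. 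This is the same scheme as the proof of Lemma 4.2 in \cite{MPR}, now applied degree by degree.

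The main obstacle I anticipate is bookkeeping the scalar Sobolev inequality uniformly across all the summands in the decomposition: one must check that on every invariant subspace $\cV_\nu$ the operator $\Delta_{k-1}^{-1/2}$, transported to the parameter space by $U_\nu$, factors as (an $L^r$-bounded $\Psi^{0,0}_0$-operator and Riesz transforms) composed with $\Delta_0^{-1/2}$, so that the single scalar estimate $\|\Delta_0^{-1/2}f\|_r \lesssim \|f\|_2$ suffices. The subspaces $R_\nu\cV_\mu$ carrying an extra Riesz transform are harmless since $R$ is $L^r$-bounded for all $1<r<\infty$ by part (iii) of Theorem \ref{Lp-bdd}. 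Once $v_0\in L^r\Lambda^{k-1}$ is established, set $v=v_0$; then $dv=\om$ distributionally (it even holds in $L^2$), which completes the proof.
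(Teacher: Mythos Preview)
Your approach is correct and matches the paper's: define $v=\Delta_{k-1}^{-1/2}R_{k-1}^*\om$, use the $L^p$-theory of Section~\ref{LrLambda} to show $v\in L^r$, and verify $dv=R_{k-1}R_{k-1}^*\om=\om$. The paper packages the Sobolev step more compactly by writing $v=L^{-1/2}\big(L^{1/2}\Delta_{k-1}^{-1/2}\big)R_{k-1}^*\om$ and quoting directly the $L^p$-boundedness of $\Delta_0^{1/2}\Delta_{k-1}^{-1/2}$ established in the proof of Theorem~\ref{Lp-bdd}, rather than revisiting the subspace decomposition; your piecewise argument via $U_\nu D_\nu^{-1/2}U_\nu^*$ unwinds exactly that.

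One caution: do not invoke ``(vii)=(iv)'' of Proposition~\ref{s4.5} to place $\om$ in $\range R_{k-1}$, since the equality with (vii) is precisely what Corollary~\ref{exact2} derives \emph{from} this lemma, so that reference would be circular. What you actually use (and what the paper uses, in the step ``$R_k\om=\Delta_{k+1}^{-1/2}d\om=0$'') is only that $d\om=0$ distributionally, i.e.\ $\om\perp d^*(\D\Lambda^{k+1})=\overline{d^*(\D\Lambda^{k+1})}$; by (vi$'$) and Lemma~\ref{s4.4} this orthogonal complement is $\range R_{k-1}R_{k-1}^*$, which gives $\om\in R_{k-1}(L^2\Lambda^{k-1})$ without touching~(vii).
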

\proof
Define
$$
v:=L^{-\frac 12}(L^{\frac 12}\Delta_{k-1}^{-\frac 12})R_{k-1}^*\om.
$$
We have seen that the operator $\Delta_0^{\frac
  12}\Delta_{k-1}^{-\frac 12}$ is $L^p$-bounded for $1<p<\infty$, 
which implies that the same is true for $L^{\frac
  12}\Delta_{k-1}^{-\frac 12}=(L^{\frac 12}\Delta_0^{-\frac
  12})(\Delta_0^{\frac 12}\Delta_{k-1}^{-\frac 12}).$ As in the proof
of Lemma 4.2 in \cite{MPR}, we can thus conclude that $v\in
L^r\Lambda^{k-1}.$ And, if $\xi\in \S\Lambda^{k},$ then 
\begin{eqnarray*}
\lan dv,\xi\ran
=\lan \om, R_{k-1}(\Delta_{k-1}^{-\half}L^\half)L^{-\half} d^*\xi\ran
=\lan\om, R_{k-1}R_{k-1}^*\xi\ran=\lan R_{k-1}R_{k-1}^*\om,\xi\ran,
\end{eqnarray*}
so that $dv=R_{k-1}R_{k-1}^*\om\in L^2\Lambda^k.$ By Lemma \ref{s4.4},
this implies that  
$$
\om=dv+R_k^*R_k\om,
$$
and by the same lemma $R_k\om=\Delta_{k+1}^{-\half}d\om,$ where
$d\om=d^2u=0$ in the sense of distributions. This implies that
$\om=dv,$ and thus also that $\om\in R_{k-1}( L^2\Lambda^{k-1}).$ 
\endproof

\begin{cor}\label{exact2}
If $\om\in L^2\Lambda^k,$ then $\om \in R_{k-1}( L^2\Lambda^{k-1})$ if
and only if there is some $u\in \D'\Lambda^{k-1}$ such that $\om=du$
in the sense of distributions. 
\end{cor}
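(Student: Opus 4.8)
\textbf{Proof proposal for Corollary \ref{exact2}.}

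The plan is to deduce the corollary directly from Lemma \ref{exact1} together with the equivalences already established in Proposition \ref{s4.5}. The statement is a biconditional, so I would prove the two implications separately, and only one of them requires any real work.

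First I would dispose of the easy direction. Suppose $\om\in R_{k-1}(L^2\Lambda^{k-1})$, say $\om=R_{k-1}\mu$ with $\mu\in L^2\Lambda^{k-1}$. By Lemma \ref{s4.4} we have $R_{k-1}=\Delta_k^{-\half}d$ on the core, and more to the point, by Proposition \ref{s4.5} the space $R_{k-1}(L^2\Lambda^{k-1})$ coincides with $\overline{d(\D\Lambda^{k-1})}$. Hence $\om$ is an $L^2$-limit of forms $d\phi_j$ with $\phi_j\in\D\Lambda^{k-1}$; since $d$ is continuous from $\D'$ to $\D'$, it follows that $\om=du$ in the sense of distributions, where $u$ is the distributional limit of the $\phi_j$ (or, more cleanly, one sets $u:=\Delta_k^{-\half}\mu$ interpreted appropriately and checks $du=\om$ distributionally by testing against $\S\Lambda^k$, exactly as in the computation at the end of the proof of Lemma \ref{exact1}). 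Either way this direction is immediate from the identification of spaces (i)--(vii) in Proposition \ref{s4.5}, modulo the fact that (vii) was precisely the case postponed.

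For the converse — which is the substantive half — suppose $\om\in L^2\Lambda^k$ and $\om=du$ distributionally for some $u\in\D'\Lambda^{k-1}$. This is exactly the hypothesis of Lemma \ref{exact1}, and that lemma concludes $\om\in R_{k-1}(L^2\Lambda^{k-1})$. So the converse is nothing but a restatement of (the last sentence of) Lemma \ref{exact1}. I would therefore write the proof of the corollary in two or three lines: cite Lemma \ref{exact1} for ``$du \Rightarrow \om\in R_{k-1}(L^2\Lambda^{k-1})$'', and for the reverse inclusion invoke the equality (ii)$=$(vi) in Proposition \ref{s4.5}, namely $R_{k-1}(L^2\Lambda^{k-1})=\overline{d(\D\Lambda^{k-1})}\subseteq\{\om: \om=du \text{ distributionally}\}$, the last inclusion being clear since each element of $d(\D\Lambda^{k-1})$ is literally a differential and $d$ is distributionally continuous. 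Combining, all of (i)--(vii) in Proposition \ref{s4.5} now genuinely agree.

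The only real obstacle is hidden inside Lemma \ref{exact1}, not in the corollary itself: there one needs the $L^p$-boundedness of $\Delta_0^{1/2}\Delta_{k-1}^{-1/2}$ (hence of $L^{1/2}\Delta_{k-1}^{-1/2}$) for the relevant $r$, which is supplied by Theorem \ref{Lp-bdd}(iii) and its proof, plus the Sobolev-type bound $L^{-1/2}:L^2\to L^r$ with $1/2-1/r=1/(2n+2)$ borrowed from \cite{MPR}. For the corollary proper there is essentially nothing to overcome; the point is simply that by this stage of the paper all the analytic machinery (the Riesz transform decomposition, the $L^p$-theory) needed to close the chain of equivalences in Proposition \ref{s4.5} is finally available. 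I would keep the written proof correspondingly short.
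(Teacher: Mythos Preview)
Your identification of which implication follows from Lemma \ref{exact1} is correct: the direction ``$\om=du$ distributionally $\Rightarrow \om\in R_{k-1}(L^2\Lambda^{k-1})$'' is indeed just the last sentence of that lemma.

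The gap is in what you call the ``easy'' direction. Your final argument for $R_{k-1}(L^2\Lambda^{k-1})\subseteq\{\om:\om=du\text{ distributionally}\}$ is that $R_{k-1}(L^2\Lambda^{k-1})=\overline{d(\D\Lambda^{k-1})}$ and that ``$d$ is distributionally continuous''. But continuity of $d$ on $\D'$ only tells you that \emph{if} $\phi_j\to u$ in $\D'$ then $d\phi_j\to du$; it says nothing when you only know $d\phi_j\to\om$ in $L^2$. There is no reason the $\phi_j$ themselves should converge in $\D'$, so you cannot produce $u$ this way. Equivalently, you are tacitly assuming that $\{\om\in L^2:\om=du\text{ for some }u\in\D'\}$ is $L^2$-closed, which is precisely the point at issue.

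The paper handles this direction exactly by the construction you mentioned parenthetically and then dropped: given $\om\in R_{k-1}(L^2\Lambda^{k-1})$, one has $\om=R_{k-1}R_{k-1}^*\om$ by Lemma \ref{s4.4} and Proposition \ref{s4.5}, and then one defines $v:=L^{-\half}(L^{\half}\Delta_{k-1}^{-\half})R_{k-1}^*\om$ just as in the proof of Lemma \ref{exact1}. The $L^p$-boundedness of $L^{\half}\Delta_{k-1}^{-\half}$ and the Sobolev mapping property of $L^{-\half}$ give $v\in L^r\Lambda^{k-1}\subset\D'\Lambda^{k-1}$, and the same integration-by-parts computation shows $dv=R_{k-1}R_{k-1}^*\om=\om$. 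So both halves of the corollary ultimately rest on the same explicit formula for $v$; the ``easy'' direction is not actually easier. (Your parenthetical ``$u:=\Delta_k^{-\half}\mu$'' should read $\Delta_{k-1}^{-\half}$, and ``interpreted appropriately'' is exactly the $L^r$ realization above.)
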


\proof
One implication is immediate by Lemma \ref{exact1}. To prove the
converse  implication, let us assume that $\om\in
R_{k-1}(L^2\Lambda^{k-1}).$  Then, according to Lemma \ref{s4.4} and
Proposition \ref{s4.5}, $\om=R_{k-1}R_{k-1}^*\om.$ Moreover, if we
define $v$ as in the proof of Lemma \ref{exact1}, then $v\in
L^r\Lambda^{k-1}$ and $dv=R_{k-1}R_{k-1}^*\om,$  hence $dv=\om.$ We
may  thus choose $u=v$. 
\endproof

 \subsection{The Dirac operator}\label{dirac}\quad
\medskip
 
Let us denote by $\Lambda=\osum\limits_{k=0}^{2n+1}\Lambda^k$  the
Grassmann algebra of $\h_n^*,$ and by
$L^p\Lambda=L^p(H_n)\Lambda=\osum\limits_{k=0}^{2n+1} L^p\Lambda^k,
\S\Lambda$ etc. the space of $L^p$-section,  $\S$-sections etc. of the
corresponding bundle over $H_n.$  \medskip

The {\it Dirac operator} acting on $\S\Lambda$ is given by 
\begin{equation}\label{Dirac-op}
D:=d+d^*\ .
\end{equation}

Notice that
$D^2=\Delta$
on $\dom(\Delta)$,
that is,
the Dirac operator  $D$ and the Hodge Laplacian $\Delta$ commute as
differential operators.

However, in order to reduce the spectral theory of $D$  to that one of
$\Delta$ 
we need to show that $D$ and $\Delta$ {\em strongly commute}, in the
sense that the all spectral projections in the spectral decompositions of
$D$ and $\Delta$ commute.

\begin{prop}\label{Dirac-Delta}
We have that
$\overline{D}^2=\Delta$. In particular, 
$D$ and $\Delta$  strongly commute.
\end{prop}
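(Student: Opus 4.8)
The statement to prove is that $\overline D{}^2=\Delta$ as self-adjoint operators on $L^2\Lambda$, which then yields strong commutativity of $D$ and $\Delta$ (since $\Delta=\overline D{}^2$ is a Borel function of $\overline D$, all spectral projections of $\Delta$ lie in the von Neumann algebra generated by those of $\overline D$, hence commute with them). The first and essential observation is that $D=d+d^*$ is symmetric on the core $\S_0\Lambda$, and in fact $\S_0\Lambda$ is a core for $\overline D$: this is exactly the situation of Lemma \ref{s3.1}, applied with $V=V'=\Lambda$, $\rho=\rho'=\osum_k\rho_k$, since $D$ is left-invariant, $U(n)$-equivariant, maps $\S_0\Lambda$ continuously into itself, and the finite-dimensional operators $\pi_{\la,\sigma}(D)$ are symmetric; hence by Lemma \ref{s3.1}(iv) $D$ is essentially self-adjoint on $\S_0\Lambda$.

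\textbf{Main step.} The crux is to show that $\dom\overline D{}^2=\dom\Delta$ together with $\overline D{}^2\om=\Delta\om$ there. On the core one has the algebraic identity $D^2=(d+d^*)^2=dd^*+d^*d=\Delta$ (using $d^2=(d^*)^2=0$ on $\S_0\Lambda$), so $\overline D$ and $\Delta$ agree on $\S_0\Lambda$. The inclusion $\dom\overline D{}^2\supseteq\dom\Delta$: for $\om\in\dom\Delta$, since $\S_0\Lambda$ is a core for $\Delta$ there is a sequence $\om_j\in\S_0\Lambda$ with $\om_j\to\om$, $\Delta\om_j\to\Delta\om$; we need to control $\overline D\om_j$, i.e. $\|D\om_j\|_2$, in terms of $\|\om_j\|_2$ and $\|\Delta\om_j\|_2=\|D^2\om_j\|_2$. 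By $\langle D^2\om_j,\om_j\rangle=\|D\om_j\|_2^2$ (using symmetry of $D$ on the core), we get $\|D\om_j\|_2^2\le\|\Delta\om_j\|_2\|\om_j\|_2$, which is Cauchy; so $D\om_j$ converges, $\om\in\dom\overline D$, and then a second application of the same estimate applied to $D\om_j-D\om_i$ (which lies in $\S_0\Lambda$ since $D$ preserves the core, and $D(D\om_j-D\om_i)=\Delta\om_j-\Delta\om_i\to0$) shows $D\om_j\in\dom\overline D$ with $\overline D(D\om_j)=\Delta\om_j$ Cauchy, giving $\om\in\dom\overline D{}^2$ and $\overline D{}^2\om=\Delta\om$. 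For the reverse inclusion one uses that $\overline D{}^2$ is a self-adjoint operator (square of a self-adjoint operator) which is an extension of $\Delta|_{\S_0\Lambda}$; since $\Delta$ is itself self-adjoint and $\S_0\Lambda$ is a core for it, any self-adjoint extension of $\Delta|_{\S_0\Lambda}$ equals $\Delta$, forcing $\overline D{}^2=\Delta$.

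\textbf{Strong commutativity and the expected obstacle.} Once $\overline D{}^2=\Delta$ is established, strong commutativity of $D$ and $\Delta$ is automatic from the spectral theorem: writing $\overline D=\int_{\R}t\,dE(t)$, one has $\Delta=\int_{\R}t^2\,dE(t)$, so the spectral projections of $\Delta$ are $E$-measurable functions of those of $\overline D$ and hence commute with them, and in particular bounded functional calculus of $\Delta$ is subordinate to that of $\overline D$. The step I expect to require the most care is the verification that $\S_0\Lambda$ is genuinely a core for both $\overline D$ and $\Delta$ simultaneously and that the domain-chasing above is airtight — in particular that $D$ maps $\S_0\Lambda$ into itself (clear, as $d,d^*$ are left-invariant differential operators with the support and decay properties preserving each $\S_{\del,R,N}$) and that the Cauchy estimate $\|D\varphi\|_2^2\le\|D^2\varphi\|_2\|\varphi\|_2$ may be iterated. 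Compared to the Kato--Rellich argument already used for $\Delta_k$ in Lemma \ref{s4.1}, this is soft; the only genuine subtlety is avoiding the Nelson-type pathology flagged in Remark \ref{nelson}, but here it does not arise precisely because $\overline D{}^2=\Delta$ is proved as an operator identity on the nose, rather than merely on a core, so no commutation of spectral resolutions needs to be assumed.
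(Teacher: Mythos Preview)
Your proof is correct and takes a genuinely different, softer route than the paper's. You rely only on essential self-adjointness of $D$ on $\S_0\Lambda$ (via Lemma~\ref{s3.1}(iv)), the graph-norm estimate $\|D\varphi\|_2^2=\langle\Delta\varphi,\varphi\rangle\le\|\Delta\varphi\|_2\|\varphi\|_2$ on the core to obtain $\Delta\subseteq\overline D{}^2$, and then the maximality of self-adjoint operators to conclude $\overline D{}^2=\Delta$; strong commutativity is then immediate from the spectral calculus. All steps are sound (in particular, the square of a self-adjoint operator, defined with its natural domain, coincides with the spectral square and is self-adjoint).

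The paper instead proceeds constructively: it introduces the bounded projections $P_\pm=\frac1{\sqrt2}\big(I\pm(R+R^*)\big)$ built from the Riesz transform, verifies $DP_\pm=\pm\Delta^{1/2}P_\pm$, and from the spectral measure $E$ of $\Delta$ manufactures an explicit spectral measure $F$ for $D$ via $F(A)=\tilde E(A_+)P_++\tilde E(-A_-)P_-$, then checks that $\tilde D:=\int s\,dF(s)$ agrees with $\overline D$ by a core argument. This is more laborious for the proposition itself, but it yields as a byproduct the identity $m(D)=m(\Delta^{1/2})P_++m(-\Delta^{1/2})P_-$, which is precisely what is invoked to derive Corollary~\ref{diracthm} from Theorem~\ref{hodgethm} together with the $L^p$-boundedness of $P_\pm$. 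Your argument proves Proposition~\ref{Dirac-Delta} cleanly, but you would still need to supply this decomposition formula (or an equivalent reduction) separately before the multiplier theorem for the Dirac operator can follow.
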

\proof
Recall from the previous section that the Riesz transform
$R=d\Delta^{-\half}$ and its adjoint
$R^*=\Delta^{-\half}d^*=d^*\Delta^{-\half}$ are $L^p$-bounded for
$1<p<\infty.$ Let us put  
$$
P_\pm:=
\frac1{\sqrt{2}}\Big( I\pm D\Delta^{-\half}\Big)=\frac 1{\sqrt{2}}\Big( I\pm (R+R^*)\Big).
$$
One easily verifies that $P_\pm ^2=P_\pm$ and $P_\pm^*=P_\pm,$ so that
$P_+$ and $P_-$ are orthogonal projections, which are in fact
$L^p$-bounded for $1<p<\infty.$ Moreover, 
\begin{equation}\label{dirac1}
DP_\pm=\pm \Delta^\half P_\pm, 
\end{equation}
i.e., 
$$
D=\Delta^\half P_+ - \Delta^\half P_-\ .
$$

Let $\Delta=\int_0^{+\infty}\la\, dE(\la)$, so that
$$
\Delta^\half=\int_0^{+\infty}\sqrt\la\, dE(\la)
= \int_0^{+\infty}s\, d\tilde E(s)\ ,
$$ 
where $\tilde E$ denotes the image of the spectral measure $E$ under
the mapping $\la\mapsto\sqrt\la$.  Therefore,
$$
\begin{aligned}
D
& =\Delta^\half P_+ - \Delta^\half P_-
& = \int_0^{+\infty}s\, d(\tilde E P_+)(s) - 
\int_0^{+\infty}s\, d(\tilde E_s P_-)(s) 
\end{aligned}
$$

Now, if $A\subset\R$ is a Borel set, let us put
\begin{equation}\label{F(A)}
F(A):= \tilde E(A_+)P_+ +\tilde E (-A_-)P_-\ ,
\end{equation} 
where $A_+:=A\cap[0,+\infty)$ and $A_-:=A\cap(-\infty,0).$
Then   $F$ is a spectral measure on $\R,$ and 
$$
D=\int_{-\infty}^{+\infty} s\, dF(s)\qquad \mbox{on}\  \ \S\Lambda\ .
$$
Indeed, notice that, since the operators $R,R^*$ are bounded and commute with $\Delta$
on the core, they also commute with the spectral projections $\tilde
E(B)$, i.e.,
\begin{equation}\label{E(B)}
\tilde E(B)P_\pm = P_\pm \tilde E(B)\ .
\end{equation}
Moreover, we clearly have 
$$
F(A)=F(A_+)+F(A_-)\ ,
$$
and $P_+P_-=P_-P_+=0.$ This implies that $F(A)$ is an orthogonal
projection, and that $F$ is a spectral measure on $\R$.

We set
\begin{equation}\label{D-tilde}
\tilde D:= \int_{-\infty}^{+\infty} s\, dF(s)\ ,
\end{equation}
as a closed operator, and claim that indeed $\tilde D=\overline D.$ 

To verify this, denote by $D_0$ the restriction of $D$ to $\S_0\Lambda.$ Since   $D=\tilde D$ on  $\S\Lambda,$  we then have 
 $\overline{
D_0}\subset \overline D\subset \tilde D$, and clearly 
$$
\tilde D^2 = \int_{-\infty}^{+\infty} s^2\, dF(s)
= \int_0^{+\infty} \la\, dE(\la) =\Delta\ .
$$
Thus, it remains to show that $\tilde D\subset \overline{D_0}$.

Let $\xi\in\dom\tilde D$.  It suffices
to assume that there is an interval $I=[a,b]$, with $0<a<b$, so that
$F(\R\setminus K)\xi=0$, where $K=I\cup(-I)$; hence
$\tilde D\xi=\int_K s\, dF(s) \xi$.

Let $\varphi$ be a smooth cut-off function, even, identically $1$ on
$K$ and with support contained in $K'=I'\cup(-I')$, where $I'=[a',b']$
with
$0<a'<a<b<b'$. Then, 
$$
\Delta \xi=\tilde D^2 \xi =\int_K  s^2\, dF(s) \xi =
\int_K  s^2\varphi(s) \, dF(s) \xi
=\int_0^\infty  \la\psi(\la) \, dE(\la) \xi\ ,
$$
where $\psi$ is a smooth function with compact support in
${K'}^2=\{s^2:\, s\in K'\}$.  

Hence, if $Q:=\psi(\Delta)=\int_0^\infty \psi(\la) dE(\la)$, we have that
$ \Delta \xi= \Delta Q \xi$.
It is clear  that $Q$ is given by right-convolution with a Schwartz function, and that $Q$ is $U(n)$-equivariant,  hence it preserves the core $\S_0\Lambda$ for $\Delta.$

\smallskip
Choose a sequence $\{\xi_n\}\subset\S_0\Lambda$ such that $\xi_n\rightarrow\xi$
and $\Delta\xi_n \rightarrow\Delta\xi$. 

Then  $\{Q\xi_n\}\subset\S_0$, $Q\xi_n\rightarrow Q\xi$ and
$\Delta Q\xi_n \rightarrow\Delta \xi$. Therefore,
we may assume that $\xi_n=Q\xi_n$ and $\xi=Q\xi$.  Then
$$
\begin{aligned}
 \Delta^\half \xi
 & = \Delta^\half Q\xi = \int_0^\infty  \la^\half \psi(\la)\, dE(\la) \xi\\
& = \lim_{n\rightarrow+\infty} \int_0^\infty  \la^\half \psi(\la)\, dE(\la) \xi_n \\
& = \lim_{n\rightarrow+\infty} \int_0^{\infty} s\varphi(s) \, dF(s)
\xi_n\ .
\end{aligned}
$$

Hence, by \eqref{F(A)} it follows that
$$
\tilde D\xi = \lim_{n\rightarrow+\infty} \tilde D\xi_n=\lim_{n\rightarrow+\infty} D_0\xi_n\ .
$$
This implies that $\S_0\Lambda$ is a core also for $\tilde D$; hence $\tilde
D=\overline {D_0}.$ We have thus seen that 
$$
\overline D = \int_{-\infty}^{+\infty} s \, dF(s)\ .
$$

By \eqref{F(A)} and \eqref{E(B)} $F(A)$ and $E(B)$ commute; hence
$\overline D$ and $\Delta$ strongly commute.
\qed

Moreover, if $m$ is a bounded, Borel measurable spectral
multiplier of $\R,$ then  
\begin{equation}\label{dirac2}
m(D)=m(\Delta^\half)P_+ +m(-\Delta^\half)P_-.
\end{equation}

As an immediate consequence of Theorem \ref{hodgethm} and Theorem \ref{Lp-bdd}
we therefore obtain
\begin{cor}\label{diracthm}
Let $m:\R\to \C$ be a bounded, continuous function in $L^2_{\rho,{\rm
    sloc}}(\R)$ for some $\rho>(2n+1)/2.$ Then $m(D)$ is bounded on
$L^p(H_n)\Lambda$ for $1<p<\infty,$ with norm controlled by
$\|m\|_{\rho,{\rm sloc}}.$ 
\end{cor}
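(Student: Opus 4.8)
\textbf{Proof proposal for Corollary \ref{diracthm}.}
The plan is to reduce the multiplier theorem for the Dirac operator $D$ directly to the one already available for the Hodge Laplacians $\Delta_k$ (Theorem \ref{hodgethm}), using the structural decomposition established in Proposition \ref{Dirac-Delta} together with the explicit formula \eqref{dirac2}. First I would observe that everything splits according to the degree of forms: $\Lambda=\osum_{k=0}^{2n+1}\Lambda^k$ is $\overline D$-invariant only as a whole, but $\Delta=\osum_k\Delta_k$ respects the grading, so it suffices to bound each of the two ``halves'' $m(\Delta^\half)P_+$ and $m(-\Delta^\half)P_-$ of $m(D)$ on $L^p\Lambda$, and for these the grading is available through $\Delta^{1/2}$. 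Concretely, $m(\pm\Delta^\half)=\tilde m_\pm(\Delta)$, where $\tilde m_\pm(\lambda):=m(\pm\sqrt\lambda)$, and this acts on the summand $L^p\Lambda^k$ as $\tilde m_\pm(\Delta_k)$.

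The key step is then to check that $\tilde m_\pm$ satisfies a Mihlin--H\"ormander condition of the same order $\rho>(2n+1)/2$ as $m$, so that Theorem \ref{hodgethm} applies to $\tilde m_\pm(\Delta_k)$ for each $k$ and each $p\in(1,\infty)$. This is the standard and routine observation that the map $\lambda\mapsto\sqrt\lambda$ (and more generally any smooth change of variable on $(0,\infty)$ homogeneous of a fixed degree) preserves the scale-invariant local Sobolev norms $\|\cdot\|_{\rho,\mathrm{sloc}}$ up to a constant: writing $\tilde m_\pm(t\,\cdot)\chi = \bigl(m(\sqrt t\,\cdot)\cdot(\text{smooth cut-off})\bigr)\circ(\lambda\mapsto\sqrt\lambda)$ and using that the substitution $\lambda\mapsto\sqrt\lambda$ is a diffeomorphism of $[1/2,4]$ onto a fixed compact subinterval of $(0,\infty)$, one gets $\|\tilde m_\pm\|_{\rho,\mathrm{sloc}}\le C_\rho\|m\|_{\rho,\mathrm{sloc}}$. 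Hence $\|\tilde m_\pm(\Delta_k)\|_{L^p\Lambda^k\to L^p\Lambda^k}\le C_{p,\rho}\|m\|_{\rho,\mathrm{sloc}}$, and summing over the finitely many degrees $k=0,\dots,2n+1$ gives $\|m(\pm\Delta^\half)\|_{L^p\Lambda\to L^p\Lambda}\le C_{p,\rho}\|m\|_{\rho,\mathrm{sloc}}$.

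To conclude, I would combine this with the $L^p$-boundedness of the projections $P_\pm=\frac1{\sqrt2}\bigl(I\pm(R+R^*)\bigr)$, which follows from the $L^p$-boundedness of the Riesz transforms $R=d\Delta^{-\half}$ and $R^*=d^*\Delta^{-\half}$ established in Theorem \ref{Lp-bdd}(iii) (applied in each degree, together with the adjoint statement, the adjointness holding by duality and the $*$-Hodge symmetry). Then \eqref{dirac2} gives
$$
\|m(D)f\|_{L^p\Lambda}\le \|m(\Delta^\half)P_+f\|_{L^p\Lambda}+\|m(-\Delta^\half)P_-f\|_{L^p\Lambda}\le C_{p,\rho}\|m\|_{\rho,\mathrm{sloc}}\,\|f\|_{L^p\Lambda}\ ,
$$
which is the assertion. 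The only point that requires genuine input, rather than bookkeeping, is the $L^p$-boundedness of the Riesz transforms and of the spectral multipliers of $\Delta_k$; but both are already proved (Theorems \ref{Lp-bdd} and \ref{hodgethm}), so the main ``obstacle'' here is merely the elementary verification that composing the multiplier with $\lambda\mapsto\sqrt\lambda$ does not deteriorate the Mihlin--H\"ormander order --- and that composition with a fixed finite-degree homogeneous change of variables is harmless for the $\mathrm{sloc}$ norm. I expect no real difficulty, which is consistent with the statement being phrased as a corollary.
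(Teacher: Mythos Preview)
Your proposal is correct and follows exactly the route the paper takes: apply formula \eqref{dirac2}, bound $m(\pm\Delta^{\half})$ degree by degree via Theorem \ref{hodgethm} (after the routine check that $\lambda\mapsto\sqrt\lambda$ preserves the $\mathrm{sloc}$ norm), and bound $P_\pm$ via the $L^p$-boundedness of the Riesz transforms from Theorem \ref{Lp-bdd}. The paper states the corollary as an immediate consequence of those two theorems and \eqref{dirac2}, without writing out the details you supplied.
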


\setcounter{equation}{0}
\section{Appendix}\label{appendix}

In this final section we collect some technical facts and proofs that
we have previously set aside.

We need some preliminary
computations. \medskip

Recall first from Lemma \ref{de*} that 
\begin{equation}\label{commute-e}
[\de^*,e(d\theta)^\ell]=i\ell\bar\de e(d\theta)^{\ell-1}\
,\qquad[\bar\de^*,e(d\theta)^\ell]=-i\ell\de e(d\theta)^{\ell-1}\
. 
\end{equation}

Taking adjoints, this implies
\begin{equation}\label{commute-i}
[\de,i(d\theta)^\ell]=i\ell\bar\de^* i(d\theta)^{\ell-1}\ , \qquad
[\bar\de,i(d\theta)^\ell]=-i\ell\de^* i(d\theta)^{\ell-1}\ . 
\end{equation}

\begin{lemma}\label{ip2} If $\sigma\in\ker
  i(d\theta)\subset\Lambda_H^s$ and $s+2j\le n$,
  then
$$
i(d\theta)^j e(d\theta)^j\sigma=c_{s,j}\sigma \ ,
$$
where the coefficients $c_{s,j}$ are defined in \eqref{csj}, i.e., 
$$
c_{s,j}=\frac{j!(n-s)!}{(n-s-j)!}\, .
$$
Moreover,  the following relations hold:
$$
j ^2c_{s+1,j-1}= c_{s,j}- c_{s+1,j},\qquad
c_{s,j}(n-s-j)= c_{s+1,j}(n-s), \qquad jc_{s+1,j-1}(n-s-j)=c_{s+1,j}\,  .
$$
\end{lemma}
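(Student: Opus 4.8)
\textbf{Proof plan for Lemma \ref{ip2}.}

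The plan is to prove the formula $i(d\theta)^j e(d\theta)^j\sigma = c_{s,j}\sigma$ by induction on $j$, using the commutator identity \eqref{1.7tris}, namely $[i(d\theta),e(d\theta)] = (n-\deg)\id$ on forms of the indicated degree. The case $j=0$ is trivial since $c_{s,0}=1$. For the inductive step, suppose the identity holds for $j-1$ on all $\ker i(d\theta)$-forms of degree $s$ with $s+2(j-1)\le n$. Write
$$
i(d\theta)^j e(d\theta)^j\sigma = i(d\theta)^{j-1}\bigl(i(d\theta) e(d\theta)\bigr) e(d\theta)^{j-1}\sigma.
$$
The inner factor $i(d\theta)e(d\theta)$ acts on the $(s+2j-2)$-form $e(d\theta)^{j-1}\sigma$; by \eqref{1.7tris} it equals $e(d\theta)i(d\theta) + (n-s-2j+2)\id$. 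On the other hand, from the generalized commutator \eqref{iem} (established in the proof of Proposition \ref{non-tr}), $i(d\theta)e(d\theta)^{j-1}\sigma = (j-1)(n-s-j+1)e(d\theta)^{j-2}\sigma$ when $\sigma\in\ker i(d\theta)$. Hence
$$
i(d\theta)^j e(d\theta)^j\sigma
= i(d\theta)^{j-1}\Bigl[(j-1)(n-s-j+1)e(d\theta)^{j-1}\sigma + (n-s-2j+2)e(d\theta)^{j-1}\sigma\Bigr].
$$
Combining the two coefficients gives $(j-1)(n-s-j+1) + (n-s-2j+2) = j(n-s-j+1)$, so the bracket equals $j(n-s-j+1)e(d\theta)^{j-1}\sigma$, and by the induction hypothesis applied to the degree-$s$ form $\sigma$ we get $j(n-s-j+1)c_{s,j-1}\sigma$. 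It remains to check $j(n-s-j+1)c_{s,j-1} = c_{s,j}$, which is immediate from the defining formula $c_{s,j}=j!(n-s)!/(n-s-j)!$: indeed $j(n-s-j+1)\cdot (j-1)!(n-s)!/(n-s-j+1)! = j!(n-s)!/(n-s-j)!$.

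For the three auxiliary relations, these are purely algebraic identities among the $c_{s,j}$ and require only manipulation of factorials. For $c_{s,j}(n-s-j) = c_{s+1,j}(n-s)$: the left side is $j!(n-s)!(n-s-j)/(n-s-j)! = j!(n-s)!/(n-s-j-1)!$, and the right side is $(n-s)\cdot j!(n-s-1)!/(n-s-1-j)! = j!(n-s)!/(n-s-j-1)!$; these agree. For $jc_{s+1,j-1}(n-s-j) = c_{s+1,j}$: using the first relation just proved (with $j-1$ in place of $j$ and $s+1$ in place of $s$) one has $c_{s+1,j-1}(n-s-j) = c_{s+2,j-1}(n-s-1)$, but it is cleaner to expand directly: $jc_{s+1,j-1}(n-s-j) = j(j-1)!(n-s-1)!(n-s-j)/(n-s-j)! = j!(n-s-1)!/(n-s-j-1)! = c_{s+1,j}$. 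Finally $j^2 c_{s+1,j-1} = c_{s,j} - c_{s+1,j}$: write $c_{s,j} = j!(n-s)!/(n-s-j)!$ and $c_{s+1,j} = j!(n-s-1)!/(n-s-j-1)!$, factor out $j!(n-s-1)!/(n-s-j)!$ to obtain $c_{s,j}-c_{s+1,j} = \bigl(j!(n-s-1)!/(n-s-j)!\bigr)\bigl[(n-s) - (n-s-j)\bigr] = j\cdot j!(n-s-1)!/(n-s-j)!$, and compare with $j^2 c_{s+1,j-1} = j^2(j-1)!(n-s-1)!/(n-s-j)! = j\cdot j!(n-s-1)!/(n-s-j)!$; these coincide.

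I do not anticipate a serious obstacle here: the only point requiring care is making sure the degree bookkeeping in the inductive step is correct, i.e. that $e(d\theta)^{j-1}\sigma$ is indeed a form of degree $s+2(j-1)$ so that \eqref{1.7tris} applies with the coefficient $n-s-2j+2$, and that the hypothesis $s+2j\le n$ guarantees all intermediate coefficients $n-s-j+1$, $n-s-2j+2$ are nonnegative (so nothing degenerates prematurely), though in fact the algebraic identity holds regardless of sign. The auxiliary relations are routine factorial algebra and could even be left to the reader, but including the one-line verifications above makes the appendix self-contained.
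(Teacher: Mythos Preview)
Your approach is correct and essentially the same as the paper's: both argue by induction on $j$, using the commutator of $i(d\theta)$ with powers of $e(d\theta)$ to reduce to the previous step; the paper simply quotes the identity $i(d\theta)e(d\theta)^j\sigma=j(n-s-j+1)e(d\theta)^{j-1}\sigma$ from \cite{MPR} and peels off one $i(d\theta)$ from the left, whereas you split in the middle and re-derive everything from \eqref{1.7tris} and \eqref{iem}. One small arithmetic slip to fix: applying \eqref{iem} with $\ell=j-1$ to the degree-$s$ form $\sigma$ gives $i(d\theta)e(d\theta)^{j-1}\sigma=(j-1)(n-s-j+2)\,e(d\theta)^{j-2}\sigma$, not $(j-1)(n-s-j+1)$; with the correct coefficient the combination $(j-1)(n-s-j+2)+(n-s-2j+2)=j(n-s-j+1)$ indeed holds.
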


\proof Use formula (2.8) in \cite{MPR} to compute
  $i(d\theta)e(d\theta)^j\sigma$. 
Observe that $\omega_j$ there corresponds to our $\sigma$ and the value
$k=p+q$ there is our $s+2j$. Therefore, 
$$
i(d\theta)e(d\theta)^j\sigma=j(n-s-j+1)e(d\theta)^{j-1}\sigma\ .
$$

Consequentely,
$$
i(d\theta)^j
e(d\theta)^j\sigma=j(n-s-j+1)i(d\theta)^{j-1}e(d\theta)^{j-1}\sigma\ ,
$$
and the statement follows inductively.
\endproof

If $\xi\in W^s_0$, then
$\xi,\de\xi,\bar\de\xi$ are in $\ker i(d\theta)$, and consequently we see that

\begin{equation}\label{ipn1}
i(d\theta)^j e(d\theta)^j\xi=c_{s+1,j}\xi\ ,\quad i(d\theta)^j
e(d\theta)^j\de\xi=c_{s+1,j}\de\xi\ ,\quad i(d\theta)^j
e(d\theta)^j\bar\de\xi=c_{s+1,j}\bar\de\xi\ . 
\end{equation}

This is not necessarily the case for $\de\bar\de\xi$,
$\bar\de\de\xi$. We therefore need some more computations to simplify
the expressions 
$$
\de^*\bar\de^*i(d\theta)^\ell e(d\theta)^\ell \bar\de\de\ ,\
i(d\theta)^\ell e(d\theta)^{\ell-1}\bar\de\de\ ,\ \text{ etc.} \ .
$$

\begin{lemma} \label{ip3}
For $\xi\in W_0^s$,
$$
\begin{aligned}
\de^*\bar\de^*i(d\theta)^j
e(d\theta)^j\bar\de\de\xi&=c_{s+1,j}\big(\barBox+i(j+1)
T\big)\Box\xi\ ,\\ 
\bar\de^*\de^*i(d\theta)^j
e(d\theta)^j\de\bar\de\xi&=c_{s+1,j}\big(\Box-i(j+1)
T\big)\barBox\xi\ ,\\ 
\de^*\bar\de^*i(d\theta)^j
e(d\theta)^j\de\bar\de\xi&=\bar\de^*\de^*i(d\theta)^j
e(d\theta)^j\bar\de\de\xi=-c_{s+1,j}\barBox\Box\xi\ . 
\end{aligned}
$$
\end{lemma}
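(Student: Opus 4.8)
The plan is to reduce the three displayed identities to the first one by complex conjugation, and to prove the first (together with the parallel identity having $\bar\de\de$ replaced by $\de\bar\de$) in two steps: first put $i(d\theta)^je(d\theta)^j\bar\de\de\xi$ into a Lefschetz normal form, then apply $\de^*\bar\de^*$. The conjugation $\om\mapsto\bar\om$ on forms fixes $d\theta$ (hence commutes with $e(d\theta)$ and $i(d\theta)$), interchanges $\de\leftrightarrow\bar\de$, $\de^*\leftrightarrow\bar\de^*$ and $\Box\leftrightarrow\barBox$, fixes $T$, and maps $W_0^{p,q}$ onto $W_0^{q,p}$; conjugating the first identity (stated on $W_0^{q,p}$) gives the second, and conjugating it again gives $\bar\de^*\de^*\,i(d\theta)^je(d\theta)^j\bar\de\de\xi=-c_{s+1,j}\barBox\Box\xi$. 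So only $\de^*\bar\de^*\,i(d\theta)^je(d\theta)^j\bar\de\de\xi$ and $\de^*\bar\de^*\,i(d\theta)^je(d\theta)^j\de\bar\de\xi$ must be computed directly; the case $j=0$ is the immediate computation $\de^*\bar\de^*\bar\de\de\xi=(\barBox+iT)\Box\xi$, $\de^*\bar\de^*\de\bar\de\xi=-\barBox\Box\xi$ (using $\de^*\xi=\bar\de^*\xi=0$ and \eqref{1.6bis}, \eqref{1.8}, \eqref{commutations}), so I may assume $j\ge1$, where $n-s\ge2$ in the dimension range in which $c_{s+1,j}$ and $c_{s+2,j}$ are defined.

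For the normal form: since $\xi\in W_0^s\subset\ker i(d\theta)$ (by \eqref{1.6}, \eqref{1.7}), the relations \eqref{1.7} give $i(d\theta)\bar\de\de\xi=i\Box\xi$ and $i(d\theta)\de\bar\de\xi=-i\barBox\xi$; since moreover $\Box\xi,\barBox\xi\in W_0^s\subset\ker i(d\theta)$ (by \eqref{commutations}) and $[i(d\theta),e(d\theta)]=(n-k)\id$ on $k$-forms (\eqref{1.7tris}), one obtains the decompositions
\[
\bar\de\de\xi=P+\frac{i}{n-s}\,e(d\theta)\Box\xi\ ,\qquad \de\bar\de\xi=P'-\frac{i}{n-s}\,e(d\theta)\barBox\xi\ ,
\]
with $P,P'\in\S_0\Lambda_H^{s+2}$ in $\ker i(d\theta)$. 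Now $i(d\theta)^je(d\theta)^j$ multiplies $P,P'$ by $c_{s+2,j}$ (Lemma \ref{ip2}); and iterating $j$ times the identity $i(d\theta)e(d\theta)^m\sigma=m(n-s-m+1)e(d\theta)^{m-1}\sigma$ from the proof of Lemma \ref{ip2} (valid for $\sigma\in\ker i(d\theta)$ of degree $s$) shows $i(d\theta)^je(d\theta)^{j+1}\sigma=(j+1)c_{s+1,j}\,e(d\theta)\sigma$ for such $\sigma$. Using $c_{s+2,j}=\frac{n-s-j-1}{n-s-1}c_{s+1,j}$ (the relation $c_{s,j}(n-s-j)=c_{s+1,j}(n-s)$ of Lemma \ref{ip2} with $s$ replaced by $s+1$) one then gets
\[
i(d\theta)^je(d\theta)^j\bar\de\de\xi=c_{s+2,j}\,\bar\de\de\xi+\frac{ij\,c_{s+1,j}}{n-s-1}\,e(d\theta)\Box\xi\ ,
\]
and the same identity with $(\bar\de\de,\Box)$ replaced by $(\de\bar\de,\barBox)$ and the sign of the last term reversed.

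Applying $\de^*\bar\de^*$ and using $\de^*\xi=\bar\de^*\xi=0$, \eqref{1.6bis} and \eqref{commutations}, one finds $\bar\de^*\bar\de\de\xi=\barBox\de\xi$, hence $\de^*\bar\de^*\bar\de\de\xi=(\barBox+iT)\Box\xi$, and similarly $\de^*\bar\de^*\de\bar\de\xi=-\barBox\Box\xi$; while from $[\bar\de^*,e(d\theta)]=-i\de$ (\eqref{1.7}) and $\bar\de^*\Box\xi=\bar\de^*\barBox\xi=0$ one gets $\de^*\bar\de^*e(d\theta)\Box\xi=-i\Box^2\xi$ and $\de^*\bar\de^*e(d\theta)\barBox\xi=-i\barBox\Box\xi$. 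Substituting these, using $c_{s+2,j}=\frac{n-s-j-1}{n-s-1}c_{s+1,j}$ once more and rewriting $\Box=\barBox+i(n-s)T$ on $s$-forms (\eqref{1.10}), a short computation collapses the coefficient of $\Box\xi$ to $c_{s+1,j}(\barBox+i(j+1)T)$ and the coefficient in the $\de\bar\de$ computation to $-c_{s+1,j}$, which are exactly the asserted right-hand sides. The main obstacle here is purely organizational: keeping signs straight when pushing $\de^*$ and $\bar\de^*$ past $e(d\theta)$ and past $\Box,\barBox$ — remembering that these two operators anticommute only up to $Ti(d\theta)$ (\eqref{1.6}) — and verifying the elementary identities among the $c_{s,j}$ recorded in Lemma \ref{ip2}, on which every cancellation in the last step depends.
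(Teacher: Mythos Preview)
Your argument is correct, and the conjugation device to reduce the second and fourth identities to the first and third is a nice economy. But your route to the first and third identities is genuinely different from the paper's.

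You Lefschetz-decompose $\bar\de\de\xi$ (and $\de\bar\de\xi$) as a primitive $(s+2)$-form plus $e(d\theta)$ applied to a primitive $s$-form, then let $i(d\theta)^je(d\theta)^j$ act on the two pieces separately, producing the constants $c_{s+2,j}$ and $(j+1)c_{s+1,j}$; the final collapse then relies on the combinatorial identity $c_{s+2,j}=\tfrac{n-s-j-1}{n-s-1}\,c_{s+1,j}$ together with $\Box=\barBox+i(n-s)T$.

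The paper instead never decomposes $\bar\de\de\xi$. It commutes the inner $\bar\de$ past $i(d\theta)^j$ via the identity $[i(d\theta)^j,\bar\de]=ij\,\de^*i(d\theta)^{j-1}$ (from \eqref{commute-i}), then uses $\de^*\bar\de^*\de^*=T\,i(d\theta)\de^*$ (a consequence of \eqref{1.6} and $\de^{*2}=0$). This reduces everything to $i(d\theta)^je(d\theta)^j\de\xi=c_{s+1,j}\de\xi$, since $\de\xi$ is already primitive of degree $s+1$. Only $c_{s+1,j}$ appears, and the computation is two or three lines.

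What the paper's approach buys is brevity and the avoidance of the boundary case $n-s=j+1$, where $c_{s+2,j}$ is not literally given by the factorial formula (it should be read as $0$, and indeed $e(d\theta)^jP=0$ in that case, so your formula still holds---but this needs a word of justification that you only gesture at). What your approach buys is a transparent geometric picture: you are explicitly tracking the Lefschetz components, which makes the structure of $W_2^{p,q}$ visible in the computation.
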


\proof We have, by \eqref{commute-i}
$$
\begin{aligned}
\de^*\bar\de^*i(d\theta)^j
e(d\theta)^j\bar\de\de\xi&=\de^*\bar\de^*[i(d\theta)^j,\bar\de
]e(d\theta)^j\de\xi
+\de^*\bar\de^*\bar\de i(d\theta)^j e(d\theta)^j\de\xi\\ 
&=ij\de^*\bar\de^*\de^*i(d\theta)^{j-1}e(d\theta)^j\de\xi
+c_{s+1,j}\de^*\bar\de^*\bar\de\de\xi\\
&=ij T\de^*i(d\theta)^j e(d\theta)^j\de\xi
+c_{s+1,j}\de^*\bar\de^*\bar\de\de\xi\\
&=c_{s+1,j}\de^*(\barBox+ij T)\de\xi\\
&=c_{s+1,j}\big(\barBox+i(j+1) T\big)\Box\xi\ .
\end{aligned}
$$
The second identity follows in a similar way. Finally, we have 
$$
\begin{aligned}
\de^*\bar\de^*i(d\theta)^j e(d\theta)^j\de\bar\de\xi
&=\de^*\bar\de^*[i(d\theta)^j,\de ]e(d\theta)^j\bar\de\xi
+\de^*\bar\de^*\de i(d\theta)^j e(d\theta)^j\bar\de\xi\\
&=c_{s+1,j}\de^*\bar\de^*\de\bar\de\xi\\
&=-c_{s+1,j}\de^*\de\bar\de^*\bar\de\xi\\
&=-c_{s+1,j}\barBox\Box\xi\ .
\end{aligned}
$$
This proves the lemma.
\endproof

\begin{lemma}\label{boxmap}
The following properties hold true.
\begin{itemize}
\item[(i)] For all $p,q$, $\Box W^{p,q}_0=  X^{p,q}$ and
$\Boxbar W^{p,q}_0=  Y^{p,q}$.
\smallskip
\item[(ii)] In particular, $\Box$ maps $X^{p,q}$ into itself, and  $\Boxbar$ maps $Y^{p,q}$
into itself. We therefore set 
\begin{equation}\label{BoxYBoxbarX}
\Box_X =\Box_{|_{X^{p,q}}}:X^{p,q}\rightarrow X^{p,q}\, ,\qquad 
\Boxbar_Y =\Boxbar_{|_{Y^{p,q}}}:Y^{p,q}\rightarrow Y^{p,q}\, .
\end{equation}
Then $\Box_X\inv$ and $\Boxbar_Y\inv$ are well defined on $X^{p,q}$
and $Y^{p,q}$, respectively.
 \end{itemize}
\end{lemma}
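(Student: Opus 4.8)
\textbf{Proof proposal for Lemma \ref{boxmap}.}

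The plan is to prove part (i) first and then deduce part (ii) as an immediate corollary. Recall that by definition \eqref{Xpq-Ypq}, $X^{p,q}=\{\xi\in W_0^{p,q}:C_p\xi=0\}$, and by Lemma \ref{s4.8} the operator $C_p$ is the orthogonal projection of $L^2\Lambda^{p,q}$ onto $\ker\de$. I would split the argument by the value of $p$, mirroring the case analysis already used throughout Section \ref{firstproperties} and in Lemma \ref{modi}.

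First, if $1\le p\le n-1$, then $\Box=\Box_p$ is injective and hypoelliptic on $(p,q)$-forms (as recalled after Proposition \ref{s4.5}), and by Remark \ref{projpar} we have $X^{p,q}=W_0^{p,q}$. So I must show $\Box W_0^{p,q}=W_0^{p,q}$. Since $\Box$ commutes with $\de^*$ and $\bar\de^*$ up to the shifts in \eqref{commutations} (explicitly $\Box\de^*=\de^*(\Box+iT)$, $\Box\bar\de^*=\bar\de^*(\Box+iT)$), the operator $\Box$ preserves the conditions $\de^*\om=\bar\de^*\om=0$, hence maps $W_0^{p,q}$ into itself; surjectivity onto $W_0^{p,q}$ follows from injectivity of $\Box$ on $\S_0\Lambda^{p,q}$ together with Lemma \ref{density-lemma} applied to $\Box$ (which is $U(n)$-equivariant, left-invariant and homogeneous of degree $2$), restricted to the invariant subspace $W_0^{p,q}$ via Proposition \ref{subspaces}(ii). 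Concretely, given $\om\in W_0^{p,q}$, the form $\Box^{-1}\om$ lies in $\S_0\Lambda^{p,q}$ by Lemma \ref{s3.1}, and applying $\de^*,\bar\de^*$ and the shifted commutation relations shows it again satisfies \eqref{om'}; thus it lies in $W_0^{p,q}$. The analogous statement for $\Boxbar$ and $Y^{p,q}$ is proved the same way. For $p=n$, $X^{p,q}=\{0\}$ by Remark \ref{projpar}, and $\Box_n=\de\de^*$ annihilates $\ker\de^*=W_0^{n,q}$ (since $W_0^{n,q}\subset\ker\de^*$ by definition), so $\Box W_0^{n,q}=\{0\}=X^{n,q}$, consistent with the claim.

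The genuinely delicate case is $p=0$, where $\Box=\Box_0=\de^*\de=\tfrac12(L+inT)$ has the nontrivial kernel $\cC L^2=\ker\de$, and $X^{0,q}=(I-\cC)W_0^{0,q}$ by Remark \ref{projpar} and the proof of Lemma \ref{modi}. Here I would argue as in Lemma \ref{modi}: by Lemma \ref{density-lemma} the core splits as $\S_0\Lambda^{0,q}=(\ker\Box\cap\S_0)\oplus(\range\Box\cap\S_0)$, and the commutation relation $\bar\de^*\Box=(\Box-iT)\bar\de^*$ shows $\Box$ maps $W_0^{0,q}$ (the $\bar\de^*$-closed forms) onto $(I-\cC)W_0^{0,q}=X^{0,q}$, bijectively on the orthogonal complement of $\ker\Box\cap W_0^{0,q}$. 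That $\Box$ indeed maps $W_0^{0,q}$ into $X^{0,q}$ and not just into $W_0^{0,q}$ follows because $\range\Box\perp\ker\Box=\ker\de$, so $C_0\Box=\cC\Box=0$. The reverse inclusion $X^{0,q}\subseteq\Box W_0^{0,q}$ is obtained by writing any $\om\in X^{0,q}$ as $\om=(I-\cC)\om=\Box(\Box'^{-1}\om)$, where $\Box'^{-1}$ is the inverse of $\Box$ on $(\ker\de)^\perp$ defined earlier in Section \ref{firstproperties}, and checking $\Box'^{-1}\om\in W_0^{0,q}$ via the shifted commutation relation again.

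Part (ii) is then immediate: by part (i), $\Box X^{p,q}=\Box(I-C_p)W_0^{p,q}\subseteq\Box W_0^{p,q}=X^{p,q}$ (using that $\Box$ and $C_p$ commute up to the shift, so $\Box X^{p,q}\subseteq X^{p,q}$ directly follows from $\Box$ preserving $W_0^{p,q}$ and $\range\Box\cap\ker\de=\{0\}$), and the restriction $\Box_X$ is a bijection of $X^{p,q}$ by the injectivity statements above (on $X^{0,q}=(\ker\de)^\perp\cap W_0^{0,q}$ the kernel of $\Box$ is trivial by construction, and for $1\le p\le n-1$ injectivity on all of $\S_0\Lambda^{p,q}$ gives it), hence $\Box_X^{-1}$ is well defined; likewise $\Boxbar_Y^{-1}$. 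The main obstacle is bookkeeping the $p=0$ (resp.\ $q=0$) case correctly — ensuring that the passage to $(I-\cC)$ is compatible with the $\bar\de^*$-closedness condition — but this is exactly the content of Lemma \ref{modi}, which can be quoted almost verbatim.
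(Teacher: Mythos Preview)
Your approach---case analysis on $p$, using the commutation relations \eqref{commutations} to show that $\Box$ preserves $W_0^{p,q}$, with special handling of the projection $\cC$ when $p=0$---is exactly the paper's. One small correction: you write $\Box\de^*=\de^*(\Box+iT)$, but in fact $\Box\de^*=\de^*\Box$ (the shift in $p$ under $\de^*$ exactly compensates the shift coming from $L\bar Z_\ell=\bar Z_\ell(L-2iT)$); the other relation $\Box\bar\de^*=\bar\de^*(\Box+iT)$ is the correct one from \eqref{commutations}, and in either case your conclusion that $\Box$ preserves $\ker\de^*\cap\ker\bar\de^*$ stands.

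You are in fact more thorough than the paper on one point: the paper's proof only verifies the inclusion $\Box W_0^{p,q}\subseteq X^{p,q}$ in each case and leaves the reverse inclusion (which is what makes $\Box_X^{-1}$ well defined in part~(ii)) implicit, whereas you argue it explicitly via $\Box^{-1}$, respectively $\Box'^{-1}$, together with the same commutation relations.
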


\proof
(ii) is immediate from (i). To prove (i),  we verify the statements
concerning the spaces $X^{p,q},$ the discussion of the spaces
$Y^{p,q}$ being similar. Observe first that $\Box$  leaves $W^{p,q}_0$
invariant because of \eqref{commutations}. 

Thus, in  view of Remark \ref{projpar}, if $1\le p\le n-1,$ then
$X^{p,q} =W^{p,q}_0,$ and we are done.  And, if $p=0$ and $ \xi\in
W^{0,q}_0,$ then $\Box\xi=\de^*\de\xi,$ so that by Lemma \ref{s4.8}
$C_0\Box\xi=0,$ hence $\Box\xi\in X^{p,q}.$
Finally, if $p=n$  and $ \xi\in W^{n,q}_0,$ then $\Box\xi=\de\de^*\xi=0.$ 
\qed

\proof[Proof of Lemma \ref{ip4}]
Define $B_{1,\ell}$ to be the unbounded operator
from $L^2\Lambda^{p,q}$ to $(L^2\Lambda^k)^2$ 
defined by the matrix on the right-hand side of \eqref{i4}, with core $\S_0\Lambda^{p,q},$  and set
\begin{equation}\label{iB}
\begin{aligned}
B_{1,\ell}^*{B_{1,\ell}}_{\displaystyle|_{(W^{p,q}_0)^2}}=: B
& = \bpm B_{11}& B_{12}\\ B_{21} &B_{22} \epm \ .
\end{aligned}
\end{equation}

We will use Lemma \ref{rmrk} to see that
$$
A_{1,\ell}^*A_{1,\ell}=B_{1,\ell}^*{B_{1,\ell}}_{\displaystyle|_{(W^{p,q}_0)^2}}\ .
$$

Then, the matrix entries of $B$ are:
$$
\begin{aligned}
B_{11}
&=\de^*i(d\theta)^\ell e(d\theta)^\ell\de\\
& \qquad +\Big(i\ell T\inv
\de^*\bar\de^*i(d\theta)^{\ell-1}-T\inv \Box i(d\theta)^\ell\Big)\Big(i\ell
T\inv e(d\theta)^{\ell-1}\bar\de\de+T\inv e(d\theta)^\ell\Box\Big)\\ 
B_{22}
&=\bar\de^*i(d\theta)^\ell e(d\theta)^\ell\bar\de\\
& \qquad
+\Big(-i\ell
T\inv
\bar\de^*\de^*i(d\theta)^{\ell-1}-T\inv \barBox i(d\theta)^\ell\Big)\Big(-i\ell
T\inv e(d\theta)^{\ell-1}\de\bar\de+T\inv e(d\theta)^\ell\barBox\Big)\\ 
B_{12}&=B_{21}^*
=\de^*i(d\theta)^\ell e(d\theta)^\ell\bar\de\\
& \qquad -\Big(i\ell T\inv
\de^*\bar\de^*i(d\theta)^{\ell-1}-T\inv \Box i(d\theta)^\ell\Big)\Big(i\ell
T\inv e(d\theta)^{\ell-1}\de\bar\de-T\inv e(d\theta)^\ell\barBox\Big)\ . 
\end{aligned}
$$

By \eqref{ipn1} and Lemma \ref{ip3}  we have that
\begin{equation}\label{i5}
\begin{aligned}
B_{11}
&=c_{s+1,\ell}\Box +T^{-2} \Big[ 
-\ell^2 \de^* \bar\de^* i(d\theta)^{\ell-1} e(d\theta)^{\ell-1} \bar\de\de
+i\ell \de^* \bar\de^* i(d\theta)^{\ell-1}e(d\theta)^\ell \Box \\
& \qquad\qquad
-i\ell \Box i(d\theta)^\ell  e(d\theta)^{\ell-1} \bar\de\de
-\Box i(d\theta)^\ell e(d\theta)^\ell \Box \Big] \\ 
& = c_{s+1,\ell}\Box +T^{-2} \Big[ -\ell^2 c_{s+1,\ell-1}
(\Boxbar+i\ell T)\Box \\
& \qquad\qquad
-i\ell T\inv \de^* \bar\de^* i(d\theta)^{\ell-1}e(d\theta)^{\ell-1}(\de\bar\de+\bar\de\de) \Box \\
& \qquad\qquad
-i\ell T\inv \Box (\de^*\bar\de^*+\bar\de^*\de^*)i(d\theta)^{\ell-1}  e(d\theta)^{\ell-1} \bar\de\de
-c_{s,\ell}\Box^2 \Big] \ . 
\end{aligned}
\end{equation}

Now notice that by Lemma \ref{ip3}
\begin{eqnarray*}
\de^* \bar\de^*
i(d\theta)^{\ell-1}e(d\theta)^{\ell-1}(\de\bar\de+\bar\de\de)
&=&-c_{s+1,\ell-1}\Boxbar\Box+c_{s+1,\ell-1}(\barBox+i\ell T)\Box \\
&=& i\ell c_{s+1,\ell-1}T \Box \ ,
\end{eqnarray*}

and, by taking adjoints, 
$$
(\de^*\bar\de^*+\bar\de^*\de^*)i(d\theta)^{\ell-1}
e(d\theta)^{\ell-1} \bar\de\de
= 
i\ell c_{s+1,\ell-1}T \Box \ .
$$
Therefore, substituting into \eqref{i5} and applying the identities from Lemma \ref{ip2}, we obtain that
$$
\begin{aligned}
B_{11}
& =  c_{s+1,\ell}\Box +T^{-2} \Big[ -\ell^2 c_{s+1,\ell-1}
(\Boxbar+i\ell T)\Box + 2\ell^2 c_{s+1,\ell-1} \Box^2 -c_{s,\ell}\Box^2 \Big] \\
& = T^{-2}\Box \Big[ c_{s+1,\ell}T^2 
 +\ell^2 c_{s+1,\ell-1}(\Box-\Boxbar-i\ell T)
+(-c_{s,\ell} +\ell^2 c_{s-1,\ell-1})\Box \Big]\\
& = T^{-2}\Box \Big[ c_{s+1,\ell}T^2 
 +i\ell^2 c_{s+1,\ell-1}(n-s-\ell)T
-c_{s+1,\ell}\Box \Big]\\
& = c_{s+1,\ell}T^{-2}\Box \Big[ T^2 +i\ell T -\Box\Big]\ .
\end{aligned}
$$

By conjugation, we also get
$$
B_{22}
= c_{s+1,\ell}T^{-2}\Boxbar \Big[T^2 -i\ell T -\Boxbar\Big]\ .
$$

Finally, arguing in  a similar way, we find that 
$$
\begin{aligned}
B_{12}
& =  T^{-2}\Big[ \ell^2\de^*\bar\de^* i(d\theta)^{\ell-1}
e(d\theta)^{\ell-1} \de\bar\de -c_{s,\ell}  \Box\Boxbar
+i\ell \de^*\bar\de^*  i(d\theta)^{\ell-1} e(d\theta)^\ell \Boxbar \\
& \qquad 
+i\ell \Box  i(d\theta)^\ell e(d\theta)^{\ell-1} \de\bar\de \Big]\\
& = T^{-2}\Big[ -(\ell^2 c_{s+1,\ell-1} +c_{s,\ell})  \Box\Boxbar
-i\ell T\inv \de^*\bar\de^*  i(d\theta)^{\ell-1}
 e(d\theta)^{\ell-1}(\de\bar\de+\bar\de\de) \Boxbar \\
& \qquad \qquad
+i\ell T\inv \Box(\de^*\bar\de^*+\bar\de^*\de^*)  
i(d\theta)^{\ell-1} e(d\theta)^{\ell-1} \de\bar\de \Big]\\ 
& = T^{-2}\Box\Boxbar \Big[  -\ell^2 c_{s+1,\ell-1} -c_{s,\ell}
+i\ell c_{s+1,\ell-1} T\inv  \Boxbar 
-i\ell c_{s+1,\ell-1} T\inv  (\Boxbar +i\ell T)\\
& \qquad 
-i\ell c_{s+1,\ell-1} T\inv \Box +i\ell c_{s+1,\ell-1} T\inv
(\Box-i\ell T) \Big] \\
& = T^{-2}\Box\Boxbar \Big[  -\ell^2 c_{s+1,\ell-1} -c_{s,\ell}
+2\ell^2 c_{s+1,\ell-1}  \Big]\\
& = -c_{s+1,\ell} T^{-2}\Box\Boxbar \ .
\end{aligned}
$$

These computations show that
$$
B= B_{1,\ell}^*{B_{1,\ell}}_{\displaystyle|_{(W^{p,q}_0)^2}} =
-c_{s+1,\ell}T^{-2} 
\bpm
\Box(\Box-i\ell T -T^2) & \Box\Boxbar \\
\Box\Boxbar & \Boxbar(\Boxbar+i\ell T- T^2) 
\epm\ .
$$
It is obvious that $B$ maps $(W^{p,q}_0)^2$ into itself, so
that,
by Lemma \ref{rmrk}, 
$ A_{1,\ell}^*A_{1,\ell}=
B_{1,\ell}^*{B_{1,\ell}}_{\displaystyle|_{(W^{p,q}_0)^2}}$.
This proves the lemma. 
\endproof

\begin{proof}[
Completion of the proof of Lemma \ref{new-matrix-R}]
Since $Q$ maps the subspace $(W_0^{p,q})^2$ into itself, we have that
$$
\begin{aligned}
R&=\bpm -Q^\Piu_-&Q^\Piu_+\\ -Q^\Meno_+&Q^\Meno_-\epm   \bpm
\Box(\Box-i\ell T -T^2) & \Box\Boxbar \\
\Box\Boxbar & \Boxbar(\Boxbar+i\ell T- T^2) 
\epm
  \bpm -Q^\Piu_-&-Q^\Meno_+\\ Q^\Piu_+&Q^\Meno_-\epm\\
  &=\bpm R_{11}& R_{12}\\R_{21}&R_{22}\epm\ .
  \end{aligned}
  $$

We compute first $R_{11}$. Using the identity $\Box-\barBox=2imT$ and
\eqref{Q-iden}, we have
$$
\begin{aligned}
-T^2R_{11}&=(Q^*NQ)_{11}\\
&=\big(-Q^\Piu_-\Box(\Box-i\ell T -T^2)+Q^\Piu_+\Box\barBox\big)(-Q^\Piu_-)+
\big(-Q^\Piu_-\Box\barBox+Q^\Piu_+ \Boxbar(\Boxbar+i\ell T- T^2)
\big)Q^\Piu_+\\ 
&=(Q^\Piu_-)^2\Box(\Box-i\ell T
-T^2)-2Q^\Piu_+Q^\Piu_-\Box\barBox+(Q^\Piu_+)^2\Boxbar(\Boxbar+i\ell T- T^2)\\ 
&=(\Gamma+m+iT)^2\Box(\Box-i\ell T
-T^2)-2(\Gamma+m-iT)(\Gamma+m+iT)\Box\barBox\\ 
&\qquad+(\Gamma+m-iT)^2\Boxbar(\Boxbar+i\ell T- T^2)\\ 
&=(\Gamma+m)^2\Big[\Box(\Box-i\ell T
-T^2)-2\Box\barBox+\Boxbar(\Boxbar+i\ell T- T^2)\Big]\\ 
&\qquad+2iT(\Gamma+m)\Big[\Box(\Box-i\ell T
-T^2)-\Boxbar(\Boxbar+i\ell T- T^2)\Big]\\ 
&\qquad-T^2\Big[\Box(\Box-i\ell T
-T^2)+2\Box\barBox+\Boxbar(\Boxbar+i\ell T- T^2)\Big]\\ 
&=(\Gamma+m)^2\Big[(\Box-\barBox)^2-i\ell
T(\Box-\barBox)-T^2(\Box+\barBox)\Big]\\ 
&\qquad+2iT(\Gamma+m)\Big[(\Box+\barBox)(\Box-\barBox)-i\ell
T(\Box+\barBox)-T^2(\Box-\barBox)\Big]\\ 
&\qquad-T^2\Big[(\Box+\barBox)^2-i\ell
T(\Box-\barBox)-T^2(\Box+\barBox)\Big]\\ 
&=-T^2\Big[(\Gamma+m)^2\big(\Delta_H+
2m(2m-\ell)\big)+2(\Gamma+m)\big((2m-\ell)\Delta_H-2mT^2\big)\\  
&\qquad+\big(\Delta_H(\Delta_H-T^2)+2m\ell T^2\big)\Big]\ .
\end{aligned}
$$

In the same way one finds that
$$
\begin{aligned}
-T^2R_{22}&=(Q^*NQ)_{22}\\
&=-T^2\Big[(\Gamma-m)^2\big(\Delta_H+2m(2m-\ell)\big)
-2(\Gamma-m)\big((2m-\ell)\Delta_H-2mT^2\big)\\
&\qquad+\big(\Delta_H(\Delta_H-T^2)+2m\ell T^2\big)\Big]\ .
\end{aligned}
$$

Finally, using the formulas in \eqref{Q-iden} we see that
$$
\begin{aligned}
-T^2R_{12}&=-T^2R_{21}=(Q^*NQ)_{12}\\
&=\big(-Q^\Piu_-\Box(\Box-i\ell T -T^2)+Q^\Piu_+\Box\barBox\big)(-Q^\Meno_+)\\
&\qquad+ \big(-Q^\Piu_-\Box\barBox+Q^\Piu_+ \Boxbar(\Boxbar+i\ell T- T^2) \big)Q^\Meno_-\\
&=Q^\Piu_-Q^\Meno_+\Box(\Box-i\ell T -T^2)-Q^\Piu_+Q^\Meno_+\Box\barBox-Q^\Piu_-Q^\Meno_-\Box\barBox\\
&\qquad+Q^\Piu_+Q^\Meno_-\Boxbar(\Boxbar+i\ell T- T^2)\\
&=2\Box\Boxbar(\Box-i\ell T -T^2)-(\Delta_H-2T^2-2iT\Gamma)\Box\barBox\\
&\qquad-(\Delta_H-2T^2+2iT\Gamma)\Box\barBox+2\Box\Boxbar(\barBox+i\ell T -T^2)\\
&=0\ . 
\end{aligned}
$$
\end{proof}

\bigskip

\proof[
Proof of Lemma \ref{ip5} (i)]

In order to compute $A_{2,\ell}^*A_{2,\ell}$ we apply again Lemma
\ref{rmrk}. We first define $B_{2,\ell}$ as the term on the right 
hand side of \eqref{iB2} acting as an unbounded operator from $\bigl(L^2
\Lambda^{p,q}\bigr)^2$ to $\bigl(L^2
\Lambda^{p,q}\bigr)^2,$ with core $\bigl(\S_0
\Lambda^{p,q}\bigr)^2,$ and then we compute $B:=
{B_{2,\ell}^*B_{2,\ell}}_{\displaystyle|_{(W^{p,q}_0)^2}}$.  If we
then show that $B$ maps $Z^{p,q}$ into itself, the equality
$A_{2,\ell}^*A_{2,\ell}=B_{\displaystyle|_{Z^{p,q}}}$, which in turn equals $-c_{s+1,\ell}T^{-2}E$,
will follow. \medskip

We have that
\begin{multline*}
B_{2,\ell}^*B_{2,\ell}
= 
\begin{pmatrix}  \de^*\bar\de^* & -T\inv
\big[ \de^*(\Boxbar+i\ell T) -\bar\de^* (\Box+iT)\big] \\
\bar\de^* \de^* & 
-T\inv \big[ \bar\de^*(\Box-i\ell T) -\de^*(\Boxbar-iT)\big]
\end{pmatrix}
\\
\times i(d\theta)^\ell
e(d\theta)^\ell \begin{pmatrix}  \bar\de\de &  \de\bar\de\\
T\inv\big[ (\Boxbar+i\ell T)\de -(\Box+iT)\bar\de\big] &
T\inv \big[ (\Box-i\ell T)\bar\de -(\Boxbar-iT)\de\big]
\end{pmatrix}\ .
\end{multline*}

Using Lemmas \ref{ip3} and \ref{ip2}, and recalling that we are acting
on elements in $W^{p,q}_0$, we see  that 
the matrix entries of $B_{2,\ell}^*B_{2,\ell}$ are
$$
\begin{aligned}
B_{11}
&=\de^* \bar\de^*i(d\theta)^\ell e(d\theta)^\ell\bar\de\de\\
& \qquad\quad -T^{-2} 
\big[ \de^*(\Boxbar+i\ell T) -\bar\de^* (\Box+iT)\big]i(d\theta)^\ell
e(d\theta)^\ell 
\big[ (\Boxbar+i\ell T)\de -(\Box+iT)\bar\de\big] \\
& = c_{s+1,\ell} \big[\Boxbar+i(\ell+1)T\big]\Box\\
& \qquad\quad - c_{s+1,\ell} T^{-2} 
\big[ \de^*(\Boxbar+i\ell T) -\bar\de^* (\Box+iT)\big]
\big[ (\Boxbar+i\ell T)\de -(\Box+iT)\bar\de\big] \\
& = c_{s+1,\ell} \big[\Boxbar+i(\ell+1)T\big]\Box \\
& \qquad\quad - c_{s+1,\ell} T^{-2} 
\big[ (\Boxbar+i(\ell+1) T)\de^* -\Box\bar\de^* \big]
\big[ (\Boxbar+i\ell T)\de -(\Box+iT)\bar\de\big] \\
& = c_{s+1,\ell}T^{-2} 
\Big[\big[\Boxbar+i(\ell+1)T\big]\Box T^2 
- (\Boxbar+i(\ell+1) T)^2 \Box
- \Box^2\Boxbar\Big] \\
& = c_{s+1,\ell}T^{-2} \Box
\Big[ \big[\Boxbar+i(\ell+1)T\big] T^2 
- \big(\Boxbar+i(\ell+1) T\big)^2 - \Box\Boxbar\Big] \ .
\end{aligned}
$$
From this it follows that
$$
\begin{aligned}
E_{11}
& = -\Box
\Big[ \big(\Boxbar+i(\ell+1)T\big) T^2 
-\big(\Boxbar+i(\ell+1) T\big)^2
-
\Box\Boxbar\Big] \\
& = \Box\Boxbar (\Delta_H-T^2 )  
+i(\ell+1) T\Box\big[2\Boxbar -T^2+i(\ell+1)T \big] \\
& = \Box\Boxbar (\Delta_H-T^2 )  
+i(\ell+1) T\Box\big[\Delta_H -T^2-i(n-s-\ell-1)T \big] \ ,
\end{aligned}
$$
where we have used the the equality \eqref{1.10}.

Thus, the statement  
for $E_{11}$ follows.  The term $E_{22}$ is
its complex conjugate and thus it follows as well.

Finally, we compute $E_{12}$, and hence $E_{21}$ too.  We have that
$$
\begin{aligned}
B_{12} 
&  =\de^* \bar\de^* i(d\theta)^\ell e(d\theta)^\ell\de\bar\de \\
& \quad 
- T^{-2} \Big[
\de^* (\Boxbar +i\ell T) -\bar\de^*(\Box+iT)  \Big] i(d\theta)^\ell e(d\theta)^\ell
\Big[(\Box -i\ell T)\bar\de -(\Boxbar-iT)\de\Big]\ 
 .
\end{aligned}
$$
Therefore, using Lemmas \ref{ip3} and \ref{ip2},
$$
\begin{aligned}
B_{12}
&=-c_{s+1,\ell}  \Box\Boxbar
+c_{s+1,\ell} T^{-2} \Big[
(\Boxbar+i(\ell+1) T)\de^* (\Boxbar-iT)\de +\Box\de^* (\Box-i\ell
T)\bar\de \Big]  \\
& =-c_{s+1,\ell}  \Box\Boxbar
+c_{s+1,\ell} T^{-2} \Big[
\big(\Boxbar+i(\ell+1) T\big)\Box\Boxbar +\big(\Box-i(\ell+1) T\big)\Box\Boxbar\Big]  \\
& = c_{s+1,\ell} T^{-2} \Box\Boxbar(\Delta_H -T^2)\ .
\end{aligned}
$$
Recalling that $B_{12}=-c_{s+1,\ell}T^{-2}E_{12}$, the assertion follows.
\medskip

It is now easy to check that $B$ maps $Z^{p,q}$ into itself.  For, 
suppose that $\xi\in X^{p,q}$ and  $\eta\in Y^{p,q}, $ and put
$\sigma= B_{11}\xi +B_{12}\eta.$  
Observe that  both $B_{11}$ and $B_{22}$ factor as $B_{1j}=\Box
D_{1j}, \ j=1,2,$ where $D_{1j} $ leaves $W_0^{p,q}$ invariant.
Therefore Lemma \ref{boxmap} shows that  
 $\sigma \in X^{p,q}.$  In a similar way, one shows that $B_{21}\xi
 +B_{22}\eta\in Y^{p,q,}$ which concludes the proof. 
\qed

To compute the  square root of a matrix,  
we shall make use of the following formula, which is an application of Cayley-Hamilton's
theorem: 
For a positive definite $2\times2$ matrix $A$ we have that
\begin{equation}\label{cayley-hamilton}
A^{\half} =\frac{A+\sqrt{\det A}\, I}{\sqrt{\tr A+2\sqrt{\det A}}} \ .
\medskip
\end{equation}

\proof[Proof of Lemma \ref{ip5} (ii)]
We have
$$
(A_{2,\ell}^*A_{2,\ell})^{\half}=\frac{\sqrt{c_{s+1,\ell}}}{|T|}E^\half, \quad 
(A_{2,\ell}^*A_{2,\ell})^{-\half}
= \frac{|T|}{\sqrt{c_{s+1,\ell}}} E^{-\half},
$$
where the matrix $E$ is as in Lemma \ref{ip5}.  

Then, recalling that we set $c=(\ell+1)(n-s-\ell-1)$, 
$$
\begin{aligned}
\tr E
&= 2\Box\Boxbar(\Delta_H -T^2) +i(\ell+1)T(\Box-\Boxbar)(\Delta_H-T^2)
+ cT^2\Delta_H \\
& = 2\Box\Boxbar(\Delta_H -T^2) -(\ell+1)(n-s)T^2(\Delta_H-T^2)
+ cT^2\Delta_H\\
& = \big[2\Box\Boxbar-(\ell+1)^2T^2\big] (\Delta_H -T^2) 
+ cT^4\ .
\end{aligned}
$$

Moreover, using \eqref{1.10} and recalling that the operators are
acting on $s$-forms, we have
$$
\begin{aligned}
\det E
& = \Box\Boxbar(\Delta_H -T^2) \Big[i(\ell+1)T\Box
(\Delta_H-T^2 -i(n-s-\ell-1)T)\\
& \qquad\qquad\qquad\qquad
-i(\ell+1)T\Boxbar (\Delta_H-T^2 +i(n-s-\ell-1)T) \Big] \\
& \qquad\qquad
+(\ell+1)^2T^2\Box\Boxbar\Big[ (\Delta_H-T^2)^2+(n-s-\ell-1)^2T^2
\Big] \\
& = \Box\Boxbar(\Delta_H -T^2) \Big[i(\ell+1)T(\Delta_H -T^2)(\Box
-\Boxbar)+c T^2\Delta_H \Big] \\
& \qquad\qquad
+(\ell+1)^2T^2\Box\Boxbar\Big[ (\Delta_H-T^2)^2+(n-s-\ell-1)^2T^2
\Big] \\
& = \Box\Boxbar(\Delta_H -T^2) T^2\Big[-(\ell+1)(n-s) (\Delta_H -T^2)
+c \Delta_H \Big] \\
& \qquad\qquad
+(\ell+1)^2T^2\Box\Boxbar\Big[ (\Delta_H-T^2)^2+(n-s-\ell-1)^2T^2
\Big] \\
& = c\Box\Boxbar T^4 \Big[
\Delta_H-T^2 +c\Big]\\
& = c\Box\Boxbar T^4 \Delta'' \ .
\end{aligned}
$$
This implies in particular that 
$$
\begin{aligned}
\Delta'  & := \tr E +2\sqrt{\det E} \\
& =  \big[2\Box\Boxbar-(\ell+1)^2T^2\big] (\Delta_H -T^2) 
+ cT^4 + 2\sqrt{c\Box\Boxbar T^4 \Delta''}\\
& = \big[2\Box\Boxbar-(\ell+1)^2T^2\big] (\Delta_H -T^2) 
-T^2\Big( -cT^2 + 2\sqrt{c\Box\Boxbar \Delta''}\Big)\ .
\end{aligned}
$$
The formula for $(A_{2,\ell}^*A_{2,\ell})^{\half}$ in Lemma \ref{ip5} (ii) is now immediate.

And, if we write 
$$
E\inv =\frac{1}{\det E} E^{\text{(co)}}
$$
then we have
$$
\begin{aligned}
(A_{2,\ell}^*A_{2,\ell})^{-\half}
& = \frac{|T|}{\sqrt{c_{s+1,\ell}}\sqrt{\det E}} {E^{\text{(co)}}}^{\half} \\
& = \frac{|T|}{\sqrt{c_{s+1,\ell}}\sqrt{\det E}} 
\frac{1}{\sqrt{\tr E^{\text{(co)}} +2\sqrt{\det E^{\text{(co)}}}}} \big(E^{\text{(co)}}+\sqrt{\det E^{\text{(co)}}}I\big) \\
& = \frac{|T|}{\sqrt{c_{s+1,\ell}}\sqrt{\det E}} 
\frac{1}{\sqrt{\tr E +2\sqrt{\det E}}} \big(E^{\text{(co)}}+\sqrt{\det E}I\big) \\
& =
\frac{1}{\sqrt{c_{s+1,\ell}}|T|\sqrt{c\Box\Boxbar\Delta''}}\frac{1}{\sqrt{\Delta'}} 
 \big(E^{\text{(co)}}+\sqrt{\det E}I\big) \ .
\end{aligned}
$$

The result for  $(A_{2,\ell}^*A_{2,\ell})^{-\half}$ now follows easily, since
$$
\begin{aligned}
& E^{\text{(co)}}+\sqrt{\det E}\, I \\
& =  \Box\Boxbar (\Delta_H-T^2 ) \bpm 1 & 1\\ 1& 1\epm \\
& \qquad + 
\left(
\begin{array}{cc}
 \Boxbar\big[-i(\ell+1)T\big(\Delta_H -T^2\big)-cT^2 \big] & 0 \\
0 &  \Box\big[i(\ell+1)T\big(\Delta_H -T^2\big)-cT^2 \big]
\end{array}
\right)\\
& \qquad\qquad -T^2\sqrt{c\Box\Boxbar\Delta''}\,I \\
& = \Box\Boxbar (\Delta_H-T^2 ) \bpm 1 & 1\\ 1& 1\epm + \bpm M_{11} &
\\ & M_{22} \epm
\ ,
\end{aligned}
$$
where $M_{11}$ and $M_{22}$ are as claimed in \eqref{Mmatrix}.
\qed

\newpage


\begin{thebibliography}{MauMe}

\bibitem[A]{A}
G. Alexopoulos, $L^p$ Fourier multipliers on Riemannian symmetric spaces of the noncompact type, {\em Proc. Amer. math. Soc.}, {\bf 120} (1994), 973-979.

\bibitem[An]{Ank}
J.-Ph. Anker, Spectral multipliers on Lie groups of polynomial growth, {\em Ann. Math.}, {\bf 132} (1990), 597-628.

\bibitem[AnLoh]{AnkLoh}
J.-Ph. Anker, N. Lohou\'e, Multiplicateurs sur certains espaces sym\'etriques, {\em Amer. J. Math.}, {\bf 108} (1986), 1303-1353.


\bibitem[ACDH]{ACDH}
P. Auscher, T. Coulhon, X. Duong, S. Hofmann, Riesz transforms on
manifolds and heat kernel regularity, {\em Ann. Sci. Ec. Norm. Sup. Paris}, {\bf 37} (2004), 911-957.

\bibitem[Ch]{Ch}
M. Christ,
$L^p$ bounds for spectral multipliers on nilpotent groups,
{\em Trans. Amer. Math. Soc.},  {\bf 328} (1991), 73-81.

\bibitem[ChM]{CM}
M. Christ ans D. M\"uller,
On $L^p$ spectral multipliers for a solvable Lie group,
{\em Geom. Funct. Anal.},  {\bf 6} (1996), 860-876.

\bibitem[ClS]{ClS}
 J.-L. Clerc, E. M. Stein,
$L^p$ multipliers for noncompact symmetric spaces,  {\em Proc. Nat. Acad. Sci. USA}  {\bf 71}
(1974),  
3911-3912.

\bibitem[CD]{CD}
Th. Coulhon, X. T. Duong,
Riesz transform and related inequalities on noncompact Riemannian manifolds,
{\em Comm. Pure Appl. Math.},  {\bf 56} (2003), 1728-1751.

\bibitem[CMZ]{CMZ}
Th. Coulhon, D. M\"uller, J. Zienkiewicz,
About Riesz transforms on the Heisenberg groups,
{\em Math. Ann.},  {\bf 305} (1996), 369-379.



\bibitem[CoKSi]{CKS}
M. Cowling, O. Klima, and A. Sikora,
Spectral multipliers for the Kohn sublaplacian on the sphere in
$\mathbb{C}^n$,
{\em Trans. Amer. Math. Soc.},  {\bf 363} (2011), 611-631.

\bibitem[F]{Fo}
G. B. Folland, {\em Harmonic Analysis in Phase Space}, Ann. Math. Studies 122, Princeton Univ. Press, Princeton 1989.

\bibitem[FS]{FS}
G. B. Folland, E. M. Stein, 
Estimates for the $\bar\de_b$ complex and analysis on the
Heisenberg group, {\em Comm. Pure Appl. Math.}  {\bf 27}  (1974), 429-522. 

\bibitem[GSj]{GS}
G. Gaudry, P. Sj\"ogren, 
Singular integrals on the complex affine group, {\em Coll. Math.}  {\bf 75}  (1998), 133-148. 



\bibitem[He]{Hebish}
W. Hebisch, Multiplier theorem on generalized Heisenberg groups, {\em
Colloq. Math.}  {\bf 65} (1993),
231-239.

\bibitem[HeZ]{HZ}
 W. Hebish, J. Zienkiewicz,
Multiplier theorems on generalized Heisenberg groups, II, {\em Coll. Math.}  {\bf 69}
(1995),  
29-36.


\bibitem[Hu]{H}
A. Hulanicki, A functional calculus for Rockland operators on nilpotent Lie groups, {\em Studia Math.}
 {\bf78} (1984), 253-266. 



\bibitem[Ka]{Kato}
T. Kato, {\em Perturbation theory for linear operators}, 
Springer-Verlag New York, Inc., New York 1966.

\bibitem[Li]{Li}
Xiang-Dong Li, On the strong $L^p$-Hodge decomposition 
over complete Riemannian manifolds, {\em J. Funct. Anal.} {\bf 257} (2009), 3617-3646.

\bibitem[Loh]{Loh}
Lohou\'e N., Estimation des projecteurs de De Rham-Hodge de certaines
vari\'et\'es riemanniennes non compactes, unpublished manuscript,
1984. 

\bibitem[Loh2]{Loh2}
\bysame, Transform\'ees de Riesz et fonctions sommables, {\em Amer. J. Math.} {\bf 114} (1992), 875-922.

\bibitem[LohMu]{LohMu}
Lohou\'e N., S. Mustapha, Sur les transform\'ees de Riesz sur les
groupes de Lie moyennables et sur certains espaces homog\`enes, {\em
  Can. J. Math.} {\bf 50} (1998), 1090-1104. 

\bibitem[Lot]{L}
J. Lott,
Heat kernels on covering spaces and topological invariants,
{\em J. Diff. Geom.}, {\bf  35} (1992),  471-510.

\bibitem[LuM]{LuM}
J. Ludwig, D. M\"uller,
Sub-Laplacians of holomorphic $L^p$-type on rank one $AN$ -groups and related solvable groups,
{\em J. Funct. Anal.}, {\bf  170} (2000),  366-427.

\bibitem[LuMS]{LuMS}
J. Ludwig, D. M\"uller, S. Souaifi,
Holomorphic-type for sub-Laplacians on connected Lie groups,
{\em J. Funct. Anal.}, {\bf  255} (2008),  1297-1338.

\bibitem[L\"u]{Lu}
W. L\"uck, {\em $L^2$-Invariants: Theory and Applications to Geometry
  and $K$-Theory}, Springer-Verlag, Berlin 2002.

\bibitem[Mar]{Mar}
A. Martini,
Analysis of joint spectral multipliers on Lie groups of polynomial growth, arXiv:1010.1186v2.


\bibitem[MauMe]{MM}
G. Mauceri, S. Meda,
Vector-valued multipliers on stratified groups,  {\em Rev. Math. Iberoam.}  {\bf 6}
(1990),  
141-154.

\bibitem[MPR1]{MPR}
    D. M\"uller, M. M. Peloso, F. Ricci,
$L^p$-spectral multipliers for the Hodge Laplacian acting on
1-forms on the Heisenberg group,  {\em Geom. Funct. Anal.}  {\bf 17}
(2007),  
852-886.

\bibitem[MPR2]{MPR2}
\bysame,
Eigenvalues of the Hodge Laplacian on
a quotient of
the Heisenberg group,
{\it Collect. Math.} Vol Extra (2006),  327-342.

\bibitem[MRS1]{MRS1}
D. M\"uller, F. Ricci, E. M. Stein,
Marcinkiewicz multipliers and multi-parameter structure on  
Heisenberg(-type) groups, I,
{\em Inv. Math.}
{\bf 119} (1995), 199-233.

\bibitem[MRS2]{MRS2}
\bysame,
Marcinkiewicz multipliers and multi-parameter structure on  
Heisenberg(-type) groups, II,
{\em Math. Z.}, {\bf 221}
(1996), 
267-291.


\bibitem[MS]{MS}
D. M\"uller, E. M. Stein, On spectral multipliers for Heisenberg and
related groups, 
{\em J. Math. Pures Appl.} {\bf 73} (1994), 413Ð440.

\bibitem[Ra]{Range}
M. Range, {\em Holomorphic functions and integral representations in
  several complex variables}, Graduate Texts in Mathematics, Springer-Verlag, Berlin 1986.

\bibitem[RS]{RS} 
M. Reed, B. Simon, {\it Methods of Modern Mathematical Physics, I, Functional Analysis}, Acad. Press, New York 1980.

\bibitem[Ru1]{Rumin1}
M. Rumin,
Formes diff\'erentielles sur les vari\'et\'es de contact,
{\em J. Diff. Geom.}, {\bf 39}
(1994),
281-330.

\bibitem[R2]{Rumin2}
\bysame,
Sub-Riemannian limit of the differential form spectrum of
contact manifolds,
{\em  GAFA}, {\bf  10} (2000), 407-452.


\bibitem[Si]{Sik}
A. Sikora, 
Riesz transform, Gaussian bounds and the method of wave equation, {\em J. Funct. Anal.} {\bf 52} (1983), 48-79.

\bibitem[St]{Str}
R. Strichartz, 
Analysis of the Laplacian on the complete Riemannian manifold, {\em Math. Z.} {\bf 247} (2004), 643Ð662.

\bibitem[Ta]{Taylor}
M. Taylor,
$L^p$-estimates on functions of the Laplace operator,
{\em Duke Math. J.} {\bf 58} (1989), 773-793.

\bibitem[Th]{Thanga}
S. Thangavelu, {\em Harmonic analysis on the Heisenberg group}, Progress in Mathematics, Birkh\"auser, Berlin 1998.

\bibitem[W]{W}
A. Weil, {\em Introduction \`a l'\'etude des vari\'et\'es k\"ahl\'eriennes}, Hermann, 1958.


\end{thebibliography}
\end{document}